\newtheorem{theorem}{Theorem}[section]
\newtheorem{proposition}[theorem]{Proposition}
\newtheorem{corollary}[theorem]{Corollary}
\newtheorem{conjecture}[theorem]{Conjecture}
\newtheorem{definition}[theorem]{Definition}
\newtheorem{example}[theorem]{Example}
\newtheorem{remark}[theorem]{Remark}
\newtheorem{problem}[theorem]{Problem}
\newtheorem{question}[theorem]{Question}
\newcommand{\asc}{{\rm asc}}
\newcommand{\Asc}{{\rm Asc}}
\newcommand{\ades}{{\rm ades}}
\newcommand{\aDes}{{\rm aDes}}
\newcommand{\cat}{{\rm Cat}}
\newcommand{\ch}{{\rm ch}}
\newcommand{\cox}{{\rm Cox}}
\newcommand{\cros}{{\rm cros}}
\newcommand{\des}{{\rm des}}
\newcommand{\Des}{{\rm Des}}
\newcommand{\DEX}{{\rm DEX}}
\newcommand{\esd}{{\rm esd}}
\newcommand{\exc}{{\rm exc}}
\newcommand{\Exc}{{\rm Exc}}
\newcommand{\fexc}{{\rm fexc}}
\newcommand{\fix}{{\rm fix}}
\newcommand{\inv}{{\rm inv}}
\newcommand{\link}{{\rm link}}
\newcommand{\maj}{{\rm maj}}
\newcommand{\nc}{{\rm NC}}
\newcommand{\nest}{{\rm nest}}
\newcommand{\sd}{{\rm sd}}
\newcommand{\stat}{{\rm stat}}
\newcommand{\Stab}{{\rm Stab}}
\newcommand{\Sum}{{\rm sum}}
\newcommand{\SYB}{{\rm SYB}}
\newcommand{\SYT}{{\rm SYT}}
\newcommand{\wexc}{{\rm wexc}}
\newcommand{\Wexc}{{\rm Wexc}}
\newcommand{\aA}{{\mathcal A}}
\newcommand{\bB}{{\mathcal B}}
\newcommand{\cC}{{\mathcal C}}
\newcommand{\dD}{{\mathcal D}}
\newcommand{\iI}{{\mathcal I}}
\newcommand{\lL}{{\mathcal L}}
\newcommand{\pP}{{\mathcal P}}
\newcommand{\qQ}{{\mathcal Q}}
\newcommand{\CC}{{\mathbb C}}
\newcommand{\RR}{{\mathbb R}}
\newcommand{\sS}{{\mathcal S}}
\newcommand{\tT}{{\mathcal T}}
\newcommand{\wW}{{\mathcal W}}
\newcommand{\NN}{{\mathbb N}}
\newcommand{\ZZ}{{\mathbb Z}}
\newcommand{\bx}{{\mathbf x}}
\newcommand{\by}{{\mathbf y}}
\newcommand{\bz}{{\mathbf z}}
\newcommand{\fS}{{\mathfrak S}}
\renewcommand{\to}{\rightarrow}
\newcommand{\sm}{{\smallsetminus}}
\begin{document}
\title[Gamma-positivity in combinatorics and geometry]
{Gamma-positivity in combinatorics and geometry}

\author{Christos~A.~Athanasiadis}

\address{Department of Mathematics \\
National and Kapodistrian University of Athens\\
Panepistimioupolis\\
15784 Athens, Greece}
\email{caath@math.uoa.gr}

\date{}
\thanks{ 
2010 \textit{Mathematics Subject Classification.} 
Primary 05A15, 05E45; \, Secondary 05A05, 05E05, 06A11, 
        20F55.}
\thanks{ \textit{Key words and phrases}. 
Gamma-positivity, unimodality, Eulerian
polynomial, flag triangulation, $h$-polynomial, 
local $h$-polynomial, quasisymmetric function.}

\begin{abstract}
Gamma-positivity is an elementary property that 
polynomials with symmetric coefficients may have, 
which directly implies their unimodality. The 
idea behind it stems from work of Foata, 
Sch\"utzenberger and Strehl on the Eulerian 
polynomials; it was revived independently by 
Br\"and\'en and Gal in the course of their study 
of poset Eulerian polynomials and face 
enumeration of flag simplicial spheres, respectively, 
and has found numerous applications since then. 
This paper surveys some of the main results and 
open problems on gamma-positivity, appearing in 
various combinatorial or geometric contexts, as 
well as some of the diverse methods that have 
been used to prove it.  
\end{abstract}

\maketitle

\tableofcontents

\section{Introduction}
\label{sec:intro}

The unimodality of polynomials is a major theme 
which has occupied mathematicians for the past 
few decades. At least three surveys
\cite{Bra15, Bre94b, Sta89} about unimodal, 
log-concave and real-rooted polynomials, showing 
the enormous variety of methods which are 
available to prove these properties, have been 
written. This survey article focuses on a related
elementary property, that of $\gamma$-positivity,
which directly implies symmetry and unimodality 
and has provided a new exciting approach to this 
topic. Gamma-positivity appears surprisingly often 
in combinatorial and geometric contexts; this 
article aims to discuss some of the main examples, 
results, methods and open problems around it. 

This introductory section provides basic 
definitions and related comments, as well as an 
outline of the remainder of the article. We recall 
that a polynomial $f(x) = \sum_i a_i x^i \in \RR[x]$
is called

\begin{itemize}
\item[$\bullet$] \emph{symmetric}, with center of
  symmetry $n/2$, if $a_i = a_{n-i}$ for all $i 
  \in \ZZ$ (where $a_i = 0$ for negative values 
  of $i$),
\item[$\bullet$] \emph{unimodal}, if $0 \le a_0 
  \le a_1 \le \cdots \le a_k \ge a_{k+1} \ge 
  \cdots$ for some $k \in \NN$,
\item[$\bullet$] \emph{$\gamma$-positive}, if 
  \begin{equation} \label{eq:def-gamma}
    f(x) \ = \ \sum_{i=0}^{\lfloor n/2 \rfloor} 
    \gamma_i x^i (1+x)^{n-2i}
  \end{equation}
  for some $n \in \NN$ and nonnegative reals 
  $\gamma_0, \gamma_1,\dots,\gamma_{\lfloor n/2 
  \rfloor}$, and
\item[$\bullet$] \emph{real-rooted}, if every 
  root of $f(x)$ is real, or else $f(x) = 0$.
  \end{itemize}

The notion of $\gamma$-positivity appeared 
first in the work of D.~Foata and 
M.~Sch\"utzenberger \cite{FSc70} and
subsequently of D.~Foata and 
V.~Strehl~\cite{FS74, FS76} on the classical 
Eulerian polynomials, discussed in detail in 
Sections~\ref{subsubsec:Euler} 
and~\ref{subsec:hopping}. After having implicitly 
reappeared in the theory of enriched poset 
partitions of J.~Stembridge~\cite{Ste97} (see 
also Section~\ref{subsec:enriched}),
it was brought again to light independently by 
P.~Br\"and\'en \cite{Bra04, Bra08} and 
\'S.~Gal \cite{Ga05} in the course of their study 
of poset Eulerian polynomials and face 
enumeration of flag triangulations of spheres, 
respectively (see  
Sections~\ref{subsubsec:posetEuler} 
and~\ref{subsec:gal}). These works made it clear 
that $\gamma$-positivity is a concept of independent 
interest which provides a powerful approach to 
the problem of unimodality for symmetric 
polynomials. Symmetry and real-rootedness of a 
polynomial $f(x) \in \RR_{\ge 0}[x]$ implies its 
$\gamma$-positivity \cite[Lemma~4.1]{Bra04} 
\cite[Remark~3.1.1]{Ga05}. On the other hand, 
every $\gamma$-positive polynomial (whether 
real-rooted or not) is symmetric and unimodal, 
as a sum of symmetric and unimodal polynomials 
with a common center of symmetry (a fact 
already implicit in \cite[p.~136]{Ga98}). Thus,
$\gamma$-positivity can be applied to general 
situations and may lead to a more elementary 
proof of the 
unimodality of $f(x)$, even when the latter is 
real-rooted. Moreover, an explicit expression 
of the form (\ref{eq:def-gamma}) gives additional
information about $f(x)$ (for instance, it 
implies a formula for $f(-1)$ and predicts its 
sign) and the problem to interpret algebraically 
or combinatorially the coefficients $\gamma_i$ 
is often of independent interest.

The present article is an expanded version of the 
author's lectures at the \emph{77th S\'eminaire 
Lotharingien de Combinatoire} (September 2016,
Strobl, Austria). Section~\ref{sec:comb} discusses 
the variety of examples of $\gamma$-positive 
polynomials in combinatorics, beginning with the 
prototypical example of Eulerian polynomials,
together with some general results. Many of these 
examples come up again in the geometric contexts 
of Section~\ref{sec:geom}. That section centers 
around Gal's conjecture, claiming that 
$h$-polynomials of flag simplicial homology 
spheres are $\gamma$-positive. This conjecture 
and its relatives provide a general framework 
under which a lot of the seemingly unrelated 
$\gamma$-positivity phenomena of 
Section~\ref{sec:comb} can be considered, partially 
explains their abundance and allows for tools from 
geometric combinatorics to be applied to their study.
Section~\ref{sec:methods} discusses the plethora 
of methods used in the literature to prove 
$\gamma$-positivity. Section~\ref{sec:gen} is 
devoted to generalizations and variations of the 
concept of $\gamma$-positivity, giving an emphasis
to equivariant and symmetric function generalizations. 
The choice of topics is strongly affected by the 
author's knowledge and personal taste; in particular, 
possible probabilistic aspects of $\gamma$-positivity 
are not treated. Some previously unpublished 
statements and open problems are included. Other 
expositions on $\gamma$-positivity can be found 
in \cite[Section~3]{Bra15} \cite[Chapter~4]{Pet15}.

\medskip
\noindent
\textbf{Notation}.
For integers $a \le b$ we set $[a, b] := \{a, 
a+1,\dots,b\}$ and use the abbreviation $[n] := [1, 
n]$ for $n \in \NN$. We will denote by $|\Omega|$ the 
cardinality, and by $2^\Omega$ the set of all subsets, 
of a finite set $\Omega$. For $\Omega \subseteq \ZZ$, 
we will also denote by $\Stab(\Omega)$ the set of 
all subsets of $\Omega$ which do not contain two 
consecutive integers.

\section{Gamma-positivity in combinatorics}
\label{sec:comb}

This section describes instances of 
$\gamma$-positivity in combinatorics. Much of 
the motivation comes from the study of Eulerian 
polynomials which, along with generalizations
and variations, are discussed in detail. For 
any undefined notation or terminology, we refer 
to Stanley's textbooks \cite{StaEC2, StaEC1}.

\subsection{Variations of Eulerian polynomials}
\label{subsec:Euler}

\subsubsection{Eulerian polynomials}
\label{subsubsec:Euler}

One of the most important polynomials in combinatorics
is the \emph{Eulerian polynomial}, defined by any of 
the equivalent formulas 
\begin{equation}
\label{eq:defAn}
A_n(x) \ := \ \sum_{w \in \fS_n} x^{\asc(w)} \ = \ 
              \sum_{w \in \fS_n} x^{\des(w)} \ = \ 
              \sum_{w \in \fS_n} x^{\exc(w)} \ = \ 
              \sum_{w \in \fS_n} x^{\wexc(w) - 1} 
\end{equation}
for every positive integer $n$. Here $\asc(w)$, 
$\des(w)$, $\exc(w)$ and $\wexc(w)$ denotes the 
cardinality of the set $\Asc(w)$ of \emph{ascents}, 
$\Des(w)$ of \emph{descents}, $\Exc(w)$ of 
\emph{excedances} and $\Wexc(w)$ of \emph{weak 
excedances}, respectively, of the permutation $w 
\in \fS_n$, defined by 

\begin{itemize}
\item[$\bullet$] $\Asc(w) := \{ i \in [n-1]: w(i) 
                                     < w(i+1) \}$, 
\item[$\bullet$] $\Des(w) := \{ i \in [n-1]: w(i) 
                                     > w(i+1) \}$, 
\item[$\bullet$] $\Exc(w) := \{ i \in [n-1]: w(i) 
                                    > i \}$,
\item[$\bullet$] $\Wexc(w) := \{ i \in [n]: w(i) 
                                       \ge i \}$.
  \end{itemize}
For the first few values of $n$, we have
\[  A_n(x) \ = \ \begin{cases}
    1, & \text{if \ $n=1$} \\
    1 + x, & \text{if \ $n=2$} \\
    1 + 4x + x^2, & \text{if \ $n=3$} \\
    1 + 11x + 11x^2 + x^3, & \text{if \ $n=4$} \\
    1 + 26x + 66x^2 + 26x^3 + x^4, & 
                           \text{if \ $n=5$} \\
    1 + 57x + 302x^2 + 302x^3 + 57x^4 + x^5, & 
                           \text{if \ $n=6$} \\
    1 + 120x + 1191x^2 + 2416x^3 + 1191x^4 
         + 120x^5 + x^6, & \text{if \ $n=7$}.  
                \end{cases} \]

The Eulerian polynomial $A_n(x)$, which is clearly 
symmetric, provides the prototypical example of a 
$\gamma$-positive polynomial in combinatorics. The 
corresponding $\gamma$-coefficients for the first 
few values of $n$ are determined from the 
expressions 
\[  A_n(x) \ = \ \begin{cases}
    1 + x, & \text{if \ $n=2$} \\
    (1 + x)^2 + 2x, & \text{if \ $n=3$} \\
    (1 + x)^3 + 8x(1 + x), & \text{if \ $n=4$} \\
    (1 + x)^4 + 22x(1 + x)^2 + 16x^2, & 
                           \text{if \ $n=5$} \\
    (1 + x)^5 + 52x(1 + x)^3 + 136x^2(1 + x), & 
                           \text{if \ $n=6$} \\
    (1 + x)^6 + 114x(1 + x)^4 + 720x^2(1 + x)^2 + 
                    272x^3, & \text{if \ $n=7$}.  
                \end{cases} \]

The $\gamma$-positivity of $A_n(x)$ (which, of 
course, follows from the well known fact that 
$A_n(x)$ is real-rooted for all $n$) was first 
shown combinatorially by Foata and 
Sch\"utzenberger~\cite[Theorem~5.6]{FSc70}. An 
explicit combinatorial interpretation of the  
corresponding $\gamma$-coefficients follows from
the results of~\cite{FS76}. Recall that a
\emph{double excedance} of $w \in \fS_n$ is any 
index $1 \le i \le n$ such that $w(i) > i > 
w^{-1}(i)$ and that $w \in \fS_n$ is called an 
\emph{up-down} permutation, if $\Asc(w) = \{1, 
3, 5,\dots\} \cap [n-1]$. The first four, as well 
as the last two, interpretations of $\gamma_{n,i}$ 
in the following fundamental result are easily 
shown to be equivalent 
to one another; the fifth one follows from the 
fourth and the bijection (one of the fundamental 
transformations of Foata and 
Sch\"utzenberger~\cite{FSc70}) of 
\cite[Proposition~1.3.1]{StaEC1}. An additional 
interpretation, in terms of increasing binary 
trees, is discussed in \cite[p.~136]{Ga98}.
\begin{theorem} \label{thm:FSSAn}
{\rm (cf. Foata--Strehl~\cite{FS76})}
For all $n \ge 1$,
\begin{equation}
\label{eq:FSSAn}
A_n(x) \ = \ \sum_{i=0}^{\lfloor (n-1)/2 \rfloor} 
\gamma_{n,i} x^i (1+x)^{n-1-2i},
\end{equation} 
where $\gamma_{n,i}$ is equal to each of the 
following:

\begin{itemize}
\item[$\bullet$] the number of $w \in \fS_n$ for 
which $\Asc(w) \in \Stab([n-2])$ has $i$ elements,
\item[$\bullet$] the number of $w \in \fS_n$ for 
which $\Asc(w) \in \Stab([2,n-1])$ has $i$ elements,
\item[$\bullet$] the number of $w \in \fS_n$ for 
which $\Des(w) \in \Stab([n-2])$ has $i$ elements,
\item[$\bullet$] the number of $w \in \fS_n$ for 
which $\Des(w) \in \Stab([2,n-1])$ has $i$ elements,
\item[$\bullet$] the number of $w \in \fS_n$ with 
$i$ excedances and no double excedance, for which the 
smallest of the maximum elements of the cycles of $w$ 
is a fixed point,
\item[$\bullet$] the number of $w \in \fS_n$ with $i$ 
excedances and no double excedance, for which the 
largest of the minimum elements of the cycles of $w$ 
is a fixed point.
  \end{itemize}
In particular, $A_n(x)$ is $\gamma$-positive and
\begin{equation}
\label{eq:x=-1An}
  A_n(-1) \ = \ \begin{cases}
  \, 0, & \text{if $n$ is even}, \\
  \, (-1)^{(n-1)/2} \, \gamma_{n,(n-1)/2}, & 
  \text{if $n$ is odd}  \end{cases}  
\end{equation}
for all $n$, where $\gamma_{n,(n-1)/2}$ is the 
number of up-down permutations in $\fS_n$. 
\end{theorem}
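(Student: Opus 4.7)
My plan is to establish the $\gamma$-expansion via the Foata--Strehl valley-hopping action on $\fS_n$, transfer the resulting interpretation across the four ascent/descent variants using two standard involutions, convert into the two excedance variants via the fundamental transformation of Foata--Sch\"utzenberger, and finally read off the formula for $A_n(-1)$ by specialisation. The principal technical obstacle lies in the first step: verifying that the valley-hopping involutions commute and preserve the type (peak, valley, double ascent, double descent) of every unaffected value, and that the orbit sum of $x^{\des}$ computes exactly to $x^v (1+x)^{n-1-2v}$.

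For the valley-hopping step, extend each $w \in \fS_n$ by $w(0) = w(n+1) = 0$ and, for $x \in [n]$ at position $j = w^{-1}(x)$, classify $x$ as a peak, valley, double ascent, or double descent according to the sign pattern of $(w(j-1), w(j), w(j+1))$. For each $x$ that is a double ascent or double descent, let $\varphi_x(w)$ be the permutation obtained by sliding $x$ along the maximal monotone sub-block around position $j$ to the unique other slot in that block; this is an involution that flips $x$ between the two double types while preserving the local type of every other value, and the various $\varphi_x$ commute, so the orbit $\mathcal{O}$ of $w$ is a Boolean cube indexed by the doubles of $w$. A short count using the boundary convention gives $\#\text{peaks} = \#\text{valleys} + 1$, whence $\#\text{doubles} = n - 1 - 2v$ with $v$ the number of valleys, together with $\des(w) = v + \#\{\text{double descents of }w\}$; therefore
\[
\sum_{u \in \mathcal{O}} x^{\des(u)} \ = \ x^{v}(1 + x)^{n - 1 - 2v}.
\]
Summing over orbits yields \eqref{eq:FSSAn} with $\gamma_{n,v}$ equal to the number of orbits with $v$ valleys. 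The canonical orbit representative with no double descents has $\Des(w) \in \Stab([n-2])$ of size $v$, establishing the third interpretation; the representative with no double ascents has $\Asc(w) \in \Stab([2, n-1])$ of size $v$, giving the second. The remaining two variants follow from the complementation $w \mapsto \bar w$, with $\bar w(i) = n + 1 - w(i)$, which interchanges $\Asc$ and $\Des$ while fixing positions, and the reversal $w^r(i) = w(n+1-i)$, which sends $\Stab([n-2])$ bijectively onto $\Stab([2, n-1])$ via $j \mapsto n - j$.

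For the excedance statements, apply the fundamental transformation of \cite[Proposition~1.3.1]{StaEC1}: write $w$ in cycle form with each cycle led by its maximum and cycles listed in increasing order of these maxima, then read the concatenation as a new permutation in one-line form. This bijection sends descent sets to excedance sets and the ``no two consecutive descents'' condition to the absence of double excedances, while the condition ``no descent at position $n-1$'' translates (by inspecting how the final cycle of the preimage is written) to the smallest cycle-maximum being a fixed point; this yields the fifth interpretation, and conjugating with the complementation $i \mapsto n + 1 - i$ produces the sixth. Finally, substituting $x = -1$ into \eqref{eq:FSSAn} eliminates every term except $i = \lfloor (n-1)/2 \rfloor$, giving $A_n(-1) = 0$ for even $n$ and $A_n(-1) = (-1)^{(n-1)/2}\gamma_{n,(n-1)/2}$ for odd $n = 2k + 1$. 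The first interpretation identifies $\gamma_{n,k}$ with the number of permutations in $\fS_n$ whose ascent set is a $k$-element subset of $\Stab([2k-1])$; since $\{1, 3, \ldots, 2k-1\}$ is the unique such subset, these are precisely the up-down permutations.
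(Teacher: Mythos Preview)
Your approach is essentially the paper's. For the $\gamma$-expansion you run valley hopping with boundary values $w(0)=w(n+1)=0$; the paper (Section~4.1) uses $\infty$ instead, so its canonical representative has no double \emph{ascent} and yields interpretation~1 directly, whereas yours yields interpretations~2 and~3. The passage among all four ascent/descent variants via reversal and complementation, the appeal to the Foata--Sch\"utzenberger fundamental transformation for the two excedance variants, and the specialisation $x=-1$ all match the paper's outline.

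That said, your excedance paragraph contains a concrete error. With the convention you specify (each cycle led by its maximum, cycles in increasing order of maxima), the bijection $w\mapsto\hat w$ does \emph{not} carry $\exc(w)$ to $\des(\hat w)$: for instance $w=4132\in\fS_4$ lies in the fifth interpretation for $i=1$ (one excedance, no double excedance, smallest cycle-maximum $3$ is fixed), yet $\hat w=3421$ has $\Des(\hat w)=\{2,3\}$, which belongs to neither $\Stab([n-2])$ nor $\Stab([2,n-1])$. Your proposed translation ``no descent at $n-1$ $\leftrightarrow$ smallest cycle-maximum fixed'' is likewise incorrect under this convention; what is true is that ``no descent at position~$1$'' (interpretation~4, the paper's starting point) forces $\hat w(1)$ and $\hat w(2)$ to both be left-to-right maxima, so the first cycle is the singleton $(\hat w(1))$ and the smallest cycle-maximum is fixed. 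The paper, being a survey, also leaves this step as a bare pointer to \cite[Proposition~1.3.1]{StaEC1} without verifying details; to make the argument rigorous you must choose the convention with care (or compose with a reversal/complement) and then check all three side conditions (the statistic match, the ``no two consecutive'' condition versus ``no double excedance'', and the boundary condition) separately.
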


We will now describe some refinements and variations 
of this theorem (more related results appear in the 
sequel). The following two theorems refine the 
third interpretation of the $\gamma$-positivity of 
$A_n(x)$, given in Theorem~\ref{thm:FSSAn}. 
For $w \in \fS_n$, we denote by $\inv(w)$ the 
number of inversions (pairs $(i, j) \in [n] \times 
[n]$ such that $i < j$ and $w(i) > w(j)$) of $w$, 
and let
\begin{itemize}
\item[$\bullet$] (2-13)$w$ be the number of pairs 
$(i, j) \in [n-1] \times [n-1]$ such that $i < j$ 
and $w(j) < w(i) < w(j+1)$,
\item[$\bullet$] (31-2)$w$ be the number of pairs 
$(i, j) \in [n] \times [n]$ such that $i+1 < j$ 
and $w(i+1) < w(j) < w(i)$,
\item[$\bullet$] $\cros(w)$ be the number of pairs 
$(i, j) \in [n] \times [n]$ such that $i < j \le w(i) 
< w(j)$, or $w(i) < w(j) < i < j$,
\item[$\bullet$] $\nest(w)$ be the number of pairs 
$(i, j) \in [n] \times [n]$ such that $i < j \le w(j) 
< w(i)$, or $w(j) < w(i) < i < j$.
\end{itemize}
Equations~(\ref{eq:SZ1aAn})
and~(\ref{eq:SZ1bAn}) appear as the specialization
$u = v = w = 1$ of~\cite[Theorem~2]{SZ12} and  
as~\cite[Corollary~6]{SZ12}; the expansion 
(\ref{eq:SZ1bAn}) for the left-hand side of 
(\ref{eq:SZ1aAn}) was originally shown by Br\"and\'en
\cite[Section~5]{Bra08}. Equation~(\ref{eq:SZ2An})
is the main statement of~\cite[Theorem~1]{SZ16}; the 
positivity of the coefficient of $x^i(1+x)^{n-1-2i}$, 
as a polynomial in $q$, was conjectured 
in a preprint version of~\cite{BP14}. A related 
result appears in~\cite[Remark~3.4]{BP14}.
\begin{theorem} \label{thm:SZAn}
{\rm (Br\"and\'en~\cite{Bra08}, 
Shin--Zeng~\cite{SZ12, SZ16})}
For all $n \ge 1$,

\begin{align}
\label{eq:SZ1aAn}
\sum_{w \in \fS_n} p^{(2-13)w} q^{(31-2)w} x^{\des(w)} 
& \ = \ \sum_{w \in \fS_n} p^{\nest(w)} q^{\cros(w)}
x^{\wexc(w) - 1} \\
& \ = \ \sum_{i=0}^{\lfloor (n-1)/2 \rfloor} 
        a_{n,i} (p,q) \, x^i (1+x)^{n-1-2i} 
\label{eq:SZ1bAn}
\end{align}
and 
\begin{equation}
\label{eq:SZ2An}
\sum_{w \in \fS_n} q^{\inv(w)-\exc(w)} x^{\exc(w)} 
\ = \ \sum_{i=0}^{\lfloor (n-1)/2 \rfloor} a_{n,i} 
                    (q^2, q) \, x^i (1+x)^{n-1-2i},
\end{equation}
where 
\begin{equation}
\label{eq:SZa}
a_{n,i} (p, q) \ = \ \sum_w p^{(2-13)w} q^{(31-2)w} 
\end{equation}
and the sum runs through all permutations $w \in 
\fS_n$ for which $\Des(w) \in \Stab([n-2])$ has 
$i$ elements.
\end{theorem}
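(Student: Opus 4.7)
The plan is to break the theorem into three stages: (i) the identity (\ref{eq:SZ1aAn}) between the two joint generating functions; (ii) the $\gamma$-expansion (\ref{eq:SZ1bAn}); and (iii) the $q$-analog (\ref{eq:SZ2An}). All three rest on the interplay between the vincular patterns $(2\text{-}13)$ and $(31\text{-}2)$ and the crossing/nesting statistics of a permutation, together with a Foata--Strehl-type group action on $\fS_n$.

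For (i), I would construct a bijection $\Phi:\fS_n\to\fS_n$ carrying the triple $(\des, (2\text{-}13), (31\text{-}2))$ to $(\wexc-1, \nest, \cros)$. The natural framework is the classical Foata--Zeilberger encoding of a permutation as a labeled Motzkin path: the step types (up/down/level) at position $i$ are determined by whether $i$ is an excedance, anti-excedance, or fixed point of the output permutation, and the integer labels at each step record local insertion data. The equality (\ref{eq:SZ1aAn}) then follows from a direct matching of the two linear-order vincular statistics on one side with the arc-diagram nesting and crossing counts on the other, together with the correspondence between descents of the input word and strict excedances of the output.

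For (ii), I would apply the modified Foata--Strehl (valley-hopping) involutions $\varphi_x$ for $x\in[n]$, which act on $\fS_n$ by flipping, for each letter $x$ whose immediate neighbours in the padded word of $w$ are both smaller or both larger than $x$, its status between double ascent and double descent (while leaving valleys, peaks, and all other letters fixed). The group $\langle \varphi_x : x \in [n] \rangle$ is elementary abelian, and each of its orbits contains a unique permutation $\tilde w$ with $\Des(\tilde w)\in\Stab([n-2])$; moreover, in an orbit of size $2^{n-1-2\des(\tilde w)}$, the $x^{\des}$ weights telescope to $x^{\des(\tilde w)}(1+x)^{n-1-2\des(\tilde w)}$. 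The crux, and the hardest step, is to verify that the statistics $(2\text{-}13)$ and $(31\text{-}2)$ are \emph{constant on each orbit}; this reduces to a local analysis of how patterns centered on or straddling the moving letter $x$ transform under a single flip, with the key observation that relocating $x$ between a double-ascent and a double-descent configuration preserves the underlying order relations on the triples of letters counted by each pattern. Summing over orbits then yields (\ref{eq:SZ1bAn}) with $a_{n,i}(p,q)$ as in (\ref{eq:SZa}).

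For (iii), I would apply the classical Foata--Zeilberger identity $\inv(w)-\exc(w)=2\nest(w)+\cros(w)$, which rewrites the left-hand side of (\ref{eq:SZ2An}) as $\sum_w (q^2)^{\nest(w)} q^{\cros(w)} x^{\exc(w)}$. To convert this into the form of (\ref{eq:SZ1bAn}) specialized at $p=q^2$, one employs a statistic-preserving bijection on $\fS_n$ (essentially the one from stage (i), restricted appropriately so that fixed points are transferred between the $\wexc$ and $\exc$ sides) mapping $(\exc,\nest,\cros)$ to $(\wexc-1,\nest,\cros)$. Substituting $p=q^2$ into (\ref{eq:SZ1bAn}) then delivers (\ref{eq:SZ2An}). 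The chief obstacle throughout the argument is the orbit-invariance of the two vincular statistics under the Foata--Strehl involutions in step (ii); once this is in hand, the remaining ingredients are either well-established bijections or straightforward algebraic substitutions.
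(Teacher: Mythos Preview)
Your stage (ii) is correct and is precisely Br\"and\'en's argument in \cite[Section~5]{Bra08}: the modified Foata--Strehl action leaves both $(2\text{-}13)$ and $(31\text{-}2)$ invariant, and summing over orbits gives (\ref{eq:SZ1bAn}) with the coefficients (\ref{eq:SZa}). The present survey does not reprove Theorem~\ref{thm:SZAn} but cites this and the Shin--Zeng papers; the latter establish (\ref{eq:SZ1aAn}) and (\ref{eq:SZ2An}) via \emph{continued fractions} rather than explicit bijections, by showing that the generating functions on each side admit identical Jacobi-type expansions. A bijective route to (\ref{eq:SZ1aAn}) along the lines you sketch is plausible, but it is not the argument being cited.

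Your stage (iii), however, has a genuine gap. The claimed pointwise identity $\inv(w)-\exc(w)=2\,\nest(w)+\cros(w)$ is false: for $w=(2,3,1)\in\fS_3$ one computes $\inv(w)=\exc(w)=2$, hence $\inv-\exc=0$, whereas $\nest(w)=0$ and $\cros(w)=1$ (the pair $(i,j)=(1,2)$ satisfies $1<2\le w(1)=2<w(2)=3$), giving $2\,\nest+\cros=1$. What holds is only a \emph{distributional} equality, and this is exactly what Shin and Zeng prove in \cite{SZ16} by matching continued fractions. Your proposed follow-up --- a bijection sending $(\exc,\nest,\cros)$ to $(\wexc-1,\nest,\cros)$ --- is likewise not something one can simply invoke: since $\wexc=\exc+\fix$, such a map would have to redistribute fixed points while keeping the crossing and nesting counts intact, and you give no construction. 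Without a correct replacement for this step, stage (iii) does not go through; the continued-fraction approach of \cite{SZ16} bypasses the issue entirely.
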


The following theorem of J.~Shareshian and M.~Wachs, 
the proof of which is sketched in 
Section~\ref{subsec:sfmethods}, follows from the 
methods used to prove the specialization $p=1$ 
in~\cite[Theorem~4.4]{SW17}. This special case 
was stated in~\cite[Remark~5.5]{SW10} without 
giving an explicit interpretation for the coefficients 
$\gamma_i(1,q)$; it is also implicit in~\cite{LSW12} 
(see Equations~(1.4) and~(6.1) there) and is reproven
by different methods in~\cite{LZ15}. For $w \in \fS_n$,
we set
\[  \des^*(w) \ := \ \begin{cases}
    \des(w), & \text{if \ $w(1)=1$} \\
    \des(w) - 1, & \text{if \ $w(1) > 1$}  
                \end{cases} \]
and denote by $\maj(w)$ the major index (sum of the
elements of $\Des(w)$) of $w$.
\begin{theorem} \label{thm:SWAn}
{\rm (cf. \cite[Section~4]{SW17})}
For all $n \ge 1$
\begin{equation}
\label{eq:SWAn}
\sum_{w \in \fS_n} p^{\des^*(w)} q^{\maj(w)-\exc(w)} 
x^{\exc(w)} \ = \ 
\sum_{i=0}^{\lfloor (n-1)/2 \rfloor} \gamma_{n,i} 
        (p,q) \, x^i (1+x)^{n-1-2i},
\end{equation}
where 
\begin{equation}
\label{eq:SWgamman}
\gamma_{n,i} (p,q) \ = \ \sum_w p^{\des(w^{-1})} 
                         q^{\maj(w^{-1})}
\end{equation}
and the sum runs through all permutations $w \in 
\fS_n$ for which $\Des(w) \in \Stab([n-2])$ has 
$i$ elements.
\end{theorem}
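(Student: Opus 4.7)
The strategy is to lift \eqref{eq:SWAn} to an identity of symmetric (really, quasisymmetric) functions via the Eulerian quasisymmetric functions of Shareshian--Wachs, and then recover it by a stable principal specialization. Recall from \cite{SW10} the Eulerian quasisymmetric function
\begin{equation*}
Q_{n, j}(\mathbf{x}; q) \ := \ \sum_{\substack{w \in \fS_n \\ \exc(w) = j}} q^{\maj(w) - \exc(w)} \, F_{n, \DEX(w)}(\mathbf{x}),
\end{equation*}
where $F_{n, S}$ denotes Gessel's fundamental quasisymmetric function of degree $n$ and $\DEX(w)$ is the modified descent set introduced there, under which each $Q_{n, j}$ is genuinely symmetric. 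The first step is a specialization lemma: the left-hand side of \eqref{eq:SWAn} is obtained from $\sum_j Q_{n, j}(\mathbf{x}; q) \, x^j$ by the stable principal specialization $\mathbf{x} \mapsto (1, p, p^2, \ldots)$, multiplied by $(p; p)_n$. This rests on the standard identity $(p; p)_n \, F_{n, S}(1, p, p^2, \ldots) = p^{\mathrm{comaj}(S)}$, combined with a direct bookkeeping that identifies the $p$-power arising from $\DEX(w)$ with $\des^*(w)$, after the correction already absorbed into $q^{\maj - \exc}$.

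The heart of the proof is the symmetric-function $\gamma$-expansion
\begin{equation*}
\sum_{j \ge 0} Q_{n, j}(\mathbf{x}; q) \, x^j \ = \ \sum_{i = 0}^{\lfloor (n - 1)/2 \rfloor} \tilde{\gamma}_{n, i}(\mathbf{x}; q) \, x^i \, (1 + x)^{n - 1 - 2i},
\end{equation*}
where $\tilde{\gamma}_{n, i}(\mathbf{x}; q) = \sum_w q^{\maj(w^{-1})} \, F_{n, \Des(w^{-1})}(\mathbf{x})$, the sum running over $w \in \fS_n$ with $\Des(w) \in \Stab([n - 2])$ and $|\Des(w)| = i$. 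This is the $p = 1$ case of \cite[Theorem~4.4]{SW17}. I would prove it by constructing a Foata--Strehl style valley-hopping action on $\fS_n$, refined to the excedance setting, with the following properties: (a) every orbit contains a unique representative $w_0$ with $\Des(w_0) \in \Stab([n - 2])$; (b) the weight $q^{\maj(w) - \exc(w)}$ is constant along each orbit and equals $q^{\maj(w_0^{-1})}$ on the representative; (c) when $|\Des(w_0)| = i$, the orbit has $2^{n - 1 - 2i}$ elements, along which $F_{n, \DEX(w)}(\mathbf{x}) \, x^{\exc(w)}$ telescopes to $F_{n, \Des(w_0^{-1})}(\mathbf{x}) \, x^i (1 + x)^{n - 1 - 2i}$.

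The main obstacle is the construction of this refined action and the verification of (c): the classical Foata--Strehl action does not respect $\DEX$, so a more delicate action attuned to the cycle/excedance structure (as carried out in \cite{SW17}) is required, and the orbit-telescoping of $\sum F_{n, \DEX(w)}$ is most easily checked via an explicit bijection between the orbit of $w_0$ and the Boolean lattice $2^{[n - 1 - 2i]}$ compatible with $\DEX$. Once this quasisymmetric identity is in hand, applying the specialization of the first paragraph to both sides, together with a final descent-set bookkeeping that reconciles the specialization of $F_{n, \Des(w^{-1})}$ with the weight $p^{\des(w^{-1})}$ appearing in \eqref{eq:SWgamman}, produces \eqref{eq:SWAn} along with the claimed interpretation of $\gamma_{n, i}(p, q)$.
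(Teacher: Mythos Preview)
Your overall plan---lift to a quasisymmetric identity and then specialize---is the right one, and it is the paper's strategy too. But the specialization step as you describe it does not work. The stable principal specialization $\bx \mapsto (1,p,p^2,\ldots)$ applied to $F_{n,S}$ produces (after multiplying by $(p;p)_n$) the power $p^{\Sum(S)}$, not $p^{|S|}$. Since $\Sum(\DEX(w)) = \maj(w)-\exc(w)$, your procedure yields $\sum_w (pq)^{\maj(w)-\exc(w)} x^{\exc(w)}$, not $\sum_w p^{\des^*(w)} q^{\maj(w)-\exc(w)} x^{\exc(w)}$; the statistic $\des^*(w) = |\DEX(w)|$ simply cannot be recovered from a one-variable stable specialization. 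The paper fixes this by using the \emph{two-variable} principal specialization
\[
\sum_{m\ge 1} F^*_{n,S}(1,q,\dots,q^{m-1})\,p^{m-1} \;=\; \frac{p^{|S|}\,q^{\Sum(S)}}{(1-p)(1-pq)\cdots(1-pq^n)},
\]
which extracts both $|S|$ and $\Sum(S)$ simultaneously; that is how both $p^{\des^*(w)}$ and $q^{\maj(w)-\exc(w)}$ appear. (Relatedly, inserting an explicit $q^{\maj-\exc}$ into $Q_{n,j}$ is redundant: it merely multiplies each $F_{n,S}$ by $q^{\Sum(S)}$, so it carries no information beyond the $q=1$ identity.)

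There is also a difference in how the quasisymmetric $\gamma$-expansion is established. You propose a Foata--Strehl type action on permutations compatible with $\DEX$ and the orbit-telescoping property~(c); you correctly identify this as the main obstacle, and indeed the classical action does not respect $\DEX$. The paper avoids this entirely: it uses the Schur expansion $\sum_w F_{n,\DEX(w)}(\bx)\,t^{\exc(w)} = \sum_\lambda P_\lambda(t)\,s_\lambda(\bx)$ from~\cite{SW10}, applies the two-variable specialization to get $A_n(p,q,t) = \sum_\lambda P_\lambda(t)\,f^\lambda(p,q)$, invokes the already-proved $\gamma$-expansion of $P_\lambda(t)$ (Corollary~\ref{cor:PRgamma}, which in turn follows from Gessel's identity~(\ref{eq:Ge1}) proved by valley hopping on \emph{words}), and finishes with the Robinson--Schensted correspondence to turn pairs of tableaux into permutations. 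This route requires no new action on permutations and no orbit-telescoping of $F_{n,\DEX(w)}$, and it delivers the interpretation~(\ref{eq:SWgamman}) directly.
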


For further generalizations of the 
$\gamma$-positivity of $A_n(x)$ see, for instance,
\cite[Section~4]{Bra08} and \cite[Section~4]{NPT11}.

We close this section by discussing an interesting 
variant of $A_n(x)$, defined as 
\begin{equation}
\label{eq:defbinAn}
\widetilde{A}_n(x) \ := \ 1 \, + \, x \, 
\sum_{k=1}^n \binom{n}{k} A_k(x)
\end{equation}
and called a \emph{binomial Eulerian polynomial}. 
For the first few values of $n$, we have
\[  \widetilde{A}_n(x) \ = \ \begin{cases}
    1 + x, & \text{if \ $n=1$} \\
    1 + 3x + x^2, & \text{if \ $n=2$} \\
    1 + 7x + 7x^2 + x^3, & \text{if \ $n=3$} \\
    1 + 15x + 33x^2 + 15x^3 + x^4, & 
                           \text{if \ $n=4$} \\
    1 + 31x + 131x^2 + 131x^3 + 31x^4 + x^5, & 
                           \text{if \ $n=5$} \\
    1 + 63x + 473x^2 + 883x^3 + 473x^4 
         + 63x^5 + x^6, & \text{if \ $n=6$}.  
                \end{cases} \]

This polynomial first appeared in an 
enumerative-geometric context in
\cite[Section~10.4]{PRW08}, where it was shown to 
equal the $h$-polynomial of the $n$-dimensional 
stellohedron (see Section~\ref{sec:geom} for an 
explanation of these terms); its symmetry 
follows from this interpretation and was 
rediscovered in~\cite{CGK10}. 

The $\gamma$-positivity of $\widetilde{A}_n(x)$ 
follows from a more general theorem of A.~Postnikov,
V.~Reiner and L.~Williams~\cite[Theorem~11.6]{PRW08} 
(see also Theorem~\ref{thm:PRW} in the sequel); for 
the first few values of $n$, we have 
\[  \widetilde{A}_n(x) \ = \ \begin{cases}
    1 + x, & \text{if \ $n=1$} \\
    (1 + x)^2 + x, & \text{if \ $n=2$} \\
    (1 + x)^3 + 4x(1 + x), & \text{if \ $n=3$} \\
    (1 + x)^4 + 11x(1 + x)^2 + 5x^2, & 
                           \text{if \ $n=4$} \\
    (1 + x)^5 + 26x(1 + x)^3 + 43x^2(1 + x), & 
                           \text{if \ $n=5$} \\
    (1 + x)^6 + 57x(1 + x)^4 + 230x^2(1 + x)^2 + 
                    61x^3, & \text{if \ $n=6$}.  
                \end{cases} \]
We prefer to state a version due to 
Shareshian and Wachs, which is similar to 
Theorem~\ref{thm:FSSAn} and affords a $q$-analog, 
similar to that of Theorem~\ref{thm:SWAn} for 
$A_n(x)$.
\begin{theorem} \label{thm:SWbinAn}
{\rm (Shareshian--Wachs~\cite[Theorem~4.5]{SW17})}
For all $n \ge 1$
\begin{equation}
\label{eq:SWqbinAn}
1 \, + \, x \, \sum_{k=1}^n \binom{n}{k}_q \,
\sum_{w \in \fS_k} q^{\maj(w)-\exc(w)} x^{\exc(w)} 
\ = \ \sum_{i=0}^{\lfloor n/2 \rfloor} 
\widetilde{\gamma}_{n,i} (q) \, x^i (1+x)^{n-2i},
\end{equation}
where $\binom{n}{k}_q$ is a $q$-binomial coefficient,
\begin{equation}
\label{eq:SWqbingamman}
\widetilde{\gamma}_{n,i} (q) \ = \ 
\sum_w q^{\maj(w^{-1})} \ = \ \sum_w q^{\inv(w)}
\end{equation}
and the sums run through all permutations $w \in 
\fS_n$ for which $\Des(w) \in \Stab([n-1])$ has 
$i$ elements. In particular,
\begin{equation}
\label{eq:SWbinAn}
\widetilde{A}_n(x) \ = \ \sum_{i=0}^{\lfloor n/2 
\rfloor} \widetilde{\gamma}_{n,i} x^i (1+x)^{n-2i},
\end{equation}
where $\widetilde{\gamma}_{n,i}$ is equal to each 
of the following:

\begin{itemize}
\item[$\bullet$] the number of $w \in \fS_n$ for 
which $\Asc(w) \in \Stab([n-1])$ has $i$ elements,
\item[$\bullet$] the number of $w \in \fS_n$ for 
which $\Des(w) \in \Stab([n-1])$ has $i$ elements,
\item[$\bullet$] the number of $w \in \fS_n$ with 
$i$ excedances and no double excedance.
  \end{itemize}
Moreover,
\begin{equation}
\label{eq:x=-1binAn}
  \widetilde{A}_n(-1) \ = \ \begin{cases}
  \, 0, & \text{if $n$ is odd}, \\
  \, (-1)^{n/2} \, \widetilde{\gamma}_{n,n/2}, & 
  \text{if $n$ is even}  \end{cases}  
\end{equation}
where $\widetilde{\gamma}_{n,n/2}$ is the number 
of up-down permutations in $\fS_n$. 
\end{theorem}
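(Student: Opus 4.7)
The plan is to deduce Equation~(\ref{eq:SWqbinAn}) from Theorem~\ref{thm:SWAn} via a convolution argument, and then to obtain Equation~(\ref{eq:SWbinAn}) together with the three combinatorial interpretations of $\widetilde{\gamma}_{n,i}$ as the specialization $q = 1$.

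First I would apply Theorem~\ref{thm:SWAn} with $p = 1$: for each $k \ge 1$,
\begin{equation*}
\sum_{w \in \fS_k} q^{\maj(w) - \exc(w)} x^{\exc(w)} \ = \
\sum_{j \ge 0} \gamma_{k,j}(1, q) \, x^j (1+x)^{k-1-2j},
\end{equation*}
where $\gamma_{k,j}(1, q) = \sum_v q^{\maj(v^{-1})}$ with $v$ running over permutations in $\fS_k$ for which $\Des(v) \in \Stab([k-2])$ has $j$ elements. Substituting into the left-hand side of~(\ref{eq:SWqbinAn}) yields a double sum indexed by pairs $(T, v)$, with $T \subseteq [n]$ of size $k$ and $v \in \fS_k$ ``reduced'' in this sense, carrying weight $\binom{n}{k}_q \, q^{\maj(v^{-1})}$ and polynomial factor $x^{j+1} (1+x)^{k-1-2j}$.

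The second, main step is to reassemble this double sum into $\sum_i \widetilde{\gamma}_{n,i}(q) \, x^i (1+x)^{n-2i}$, which is non-trivial since the individual terms $x^{j+1} (1+x)^{k-1-2j}$ are not themselves symmetric with center $n/2$. I would construct a weight-preserving bijection that builds a permutation $\sigma \in \fS_n$ from the data $(T, v)$ by inserting the $n-k$ elements of $[n] \sm T$ into prescribed slots of $v$, equipped with a binary ascent-or-descent choice at each ``free'' slot. The factor $\binom{n}{k}_q$ should be absorbed into the inversion statistic from shuffling $T$ with its complement, while the extension $(1+x)^{k-1-2j} \to (1+x)^{n-2i}$ with $i = j+1$ mirrors the modified Foata--Strehl valley-hopping action of~\cite{FS76}, adapted to this augmented setting. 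The shift $i = j+1$ and the relaxation from $\Stab([n-2])$ to $\Stab([n-1])$ both originate from an auxiliary tail position accompanying the insertion of elements outside $T$.

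For the remaining assertions at $q=1$: the descent and ascent interpretations of $\widetilde{\gamma}_{n,i}$ are equivalent via the reversal $w(i) \mapsto w(n+1-i)$, while equivalence with the excedance interpretation (permutations with $i$ excedances and no double excedance) follows from the Foata--Sch\"utzenberger fundamental transformation, exactly as in Theorem~\ref{thm:FSSAn}. Equation~(\ref{eq:x=-1binAn}) follows from~(\ref{eq:SWbinAn}) by direct substitution; and for even $n$, the unique subset of $\Stab([n-1])$ of size $n/2$ is $\{1, 3, \ldots, n-1\}$, so $\widetilde{\gamma}_{n, n/2}$ counts $w \in \fS_n$ with $\Asc(w) = \{1, 3, \ldots, n-1\}$, i.e., the up-down permutations of length $n$. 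The main obstacle I expect is the bijective step: aligning the $q$-binomial inversion weight with the change in $\maj(\cdot^{-1})$ under insertion requires careful tracking of how each inserted letter modifies the inverse descent set; a cleaner alternative is to carry out the argument entirely within the $q$-exponential generating function and quasisymmetric function framework of Shareshian--Wachs, deriving the identity $\sum_w q^{\maj(w^{-1})} = \sum_w q^{\inv(w)}$ over the restricted set as a byproduct.
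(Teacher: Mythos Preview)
Your outline has a genuine gap at the step that carries all the weight. After invoking Theorem~\ref{thm:SWAn} at $p=1$ you arrive at
\[
1 \ + \ \sum_{k=1}^n \binom{n}{k}_q \sum_{j\ge 0}\gamma_{k,j}(1,q)\,x^{j+1}(1+x)^{k-1-2j},
\]
and the terms in this sum have centers of symmetry $(k+1)/2$, not $n/2$. You recognize this and propose to repair it by a bijection that inserts the $n-k$ missing letters ``with a binary ascent-or-descent choice at each free slot,'' simultaneously absorbing $\binom{n}{k}_q$ into an inversion weight. But this bijection \emph{is} the theorem: nothing nontrivial has been proved until it is written down and shown to preserve both the $x$-grading and the $q$-weight. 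Even the coarse bookkeeping is off: with $i=j+1$ you need an extra factor $(1+x)^{n-k-1}$, not $(1+x)^{n-k}$, so one of your $n-k$ insertions must be constrained rather than free; and on the $q$-side, $\binom{n}{k}_q$ is the generating function for $\inv$ of a shuffle, whereas the target statistic is $\maj(\sigma^{-1})$ (equivalently $\inv(\sigma)$) on the \emph{assembled} permutation, which is not simply the sum of $\inv$(shuffle) and $\maj(v^{-1})$. None of these obstacles is addressed.

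The paper does not attempt such a bijection. For the $q$-analog~(\ref{eq:SWqbinAn}) it defers to the symmetric and quasisymmetric function machinery of~\cite{SW17}, the same method sketched in Section~\ref{subsec:sfmethods} for Theorems~\ref{thm:SWAn} and~\ref{thm:SWdn}: one works with the Schur expansion of a suitable generating function, applies principal specialization via~(\ref{eq:speF*S(x)})--(\ref{eq:speSchur}), and reads off the $\gamma$-coefficients through the Robinson--Schensted correspondence. You yourself suggest this as a ``cleaner alternative''; it is in fact the only route the paper indicates for the $q$-statement. For the $q=1$ case the paper gives a separate, short argument in Remark~\ref{rem:dnBinAn}: it rewrites $\widetilde{A}_n(x)=\sum_{k}\binom{n}{k}d_k(x)(1+x)^{n-k}$, so that each summand already has center of symmetry $n/2$, and then inserts the $\gamma$-expansion of $d_k(x)$ from Theorem~\ref{thm:ASdn}. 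This sidesteps entirely the recentering problem that your convolution with $A_k(x)$ creates. Your final paragraph (reversal, fundamental transformation, substitution $x=-1$, identification of up-down permutations) is fine and matches the standard arguments.
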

For an alternative approach to the 
$\gamma$-positivity of $\widetilde{A}_n(x)$, see 
Remark~\ref{rem:dnBinAn}. 
\begin{problem}
Find a $p$-analog of Theorem~\ref{thm:SWbinAn},
similar to Theorem~\ref{thm:SWAn}. 
\end{problem}

\subsubsection{Poset Eulerian polynomials}
\label{subsubsec:posetEuler}

Given a partially ordered set (poset, for short) $\pP$ 
with $n$ elements, any bijective map $\omega: \pP \to 
[n]$ is called a \emph{labeling}. Let us write 
permutations $w \in \fS_n$ in one-line notation 
$(w(1), w(2),\dots,w(n))$.
\begin{definition} 
\label{def:posetEuler}
{\rm (Stanley~\cite{Sta72} 
\cite[Section~3.15.2]{StaEC1})}
Let $\omega: \pP \to [n]$ be a labeling of a poset
$\pP$. The $(\pP, \omega)$-Eulerian polynomial is 
defined as 
\[ A_{\pP, \omega} (x) \ = \ 
   \sum_{w \in \lL(\pP, \omega)} x^{\des(w)}, \]
where $\lL(\pP, \omega)$ is the set which consists of 
all permutations $(a_1, a_2,\dots,a_n) \in \fS_n$ 
such that $\omega^{-1} (a_i) <_\pP 
\omega^{-1} (a_j) \Rightarrow i < j$.  
\end{definition}

The polynomial $A_{\pP, \omega} (x)$ plays a major 
role in Stanley's theory of $(\pP,\omega)$-partitions 
\cite{Sta72} \cite[Section~3.15]{StaEC1}; it reduces
to the Eulerian polynomial $A_n(x)$ when $\pP$ is an
antichain on $n$ elements. For the labeled poset of
Figure~\ref{fg:poset} we have $\lL(\pP, \omega) = 
\{ (1,4,2,3)$, $(1,4,3,2)$, $(4,1,2,3)$, $(4,1,3,2)$, 
$(1,3,4,2) \}$ and $A_{\pP, \omega} (x) = 3x + 2x^2$. 
Examples where $A_{\pP, \omega} (x)$ is not real-rooted
were given by P.~Br\"and\'en~\cite{Bra04b} and 
J.~Stembridge~\cite{Ste07} (see also 
\cite[Section~6]{Bra15}), thus disproving
long-standing conjectures of J.~Neggers~\cite{Ne78} 
\cite[Conjecture~1]{Sta89} and 
R.~Stanley~\cite[Conjecture~1]{Bre89} 
\cite[Conjecture~3.9]{Bre94b}.

\begin{figure}
\begin{center}
\begin{tikzpicture}[scale=0.9]
\label{fg:poset}

   \draw(0,2) node(1){3};
   \draw(0,0) node(2){1};
   \draw(2,2) node(3){2};
   \draw(2,0) node(4){4};
   \draw(1) -- (2) -- (3) -- (4);

\end{tikzpicture}
\caption{A labeled poset with four elements}
\end{center}
\end{figure}
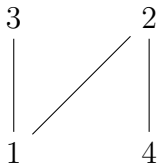

The polynomial $A_{\pP, \omega} (x)$ does not depend 
on $\omega$, when the latter is assumed to be order 
preserving (such labelings are called \emph{natural}),
and is thus denoted simply by $A_\pP (x)$. For example,
the labeling obtained from the one of 
Figure~1 by swapping 2 and 4 is natural 
and shows that for this poset $\aA_\pP (x) = 1 + 3x + 
x^2$. Moreover, as a consequence of the reciprocity
theorem~\cite[Theorem~3.15.10]{StaEC1} for 
$(\pP,\omega)$-partitions, $\aA_\pP(x)$ is symmetric 
if $\pP$ is graded; its unimodality in this case was 
first shown by V.~Reiner and V.~Welker~\cite{RW05}, 
whose proof relied on a deep result from algebraic 
geometry. Br\"and\'en \cite{Bra04} 
\cite[Section~6]{Bra08} gave two beautiful 
combinatorial proofs of the $\gamma$-positivity of 
$\aA_\pP(x)$ for the more general class of 
sign-graded posets.
\begin{theorem}
\label{thm:BrAP(x)}
{\rm (Br\"and\'en~\cite{Bra04})}
The polynomial $A_\pP (x)$ is $\gamma$-positive 
for every finite graded poset $\pP$.
\end{theorem}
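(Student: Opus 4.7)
The plan is to prove the theorem by adapting the Foata--Strehl ``valley-hopping'' action, which proves the $\gamma$-positivity of $A_n(x)$ in Theorem~\ref{thm:FSSAn}, to the set of linear extensions $\lL(\pP, \omega)$ for a suitably chosen natural labeling $\omega$. The aim is to produce commuting involutions on $\lL(\pP, \omega)$ whose orbits each contribute a single summand $x^i (1+x)^{n-1-2i}$ to $A_\pP(x)$, so that taking orbit representatives yields a nonnegative combinatorial formula for the $\gamma$-coefficients.

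First, since $A_\pP(x)$ is independent of the natural labeling, I would replace $\omega$ by a natural labeling compatible with the rank function $\rho$, that is, one in which $\omega(p) < \omega(q)$ whenever $\rho(p) < \rho(q)$. Such a labeling exists because $\pP$ is graded. Next, for each $a \in [n]$ define $\varphi_a$ on $w \in \fS_n$ by the classical Foata--Strehl rule: factor $w = w_1 \, w_2 \, a \, w_3 \, w_4$, where $w_2 w_3$ is the maximal contiguous block of letters smaller than $a$ surrounding $a$, and set $\varphi_a(w) = w_1 \, w_3 \, a \, w_2 \, w_4$. These are commuting involutions on $\fS_n$ whose orbits partition $\fS_n$ and yield the expansion (\ref{eq:FSSAn}), with orbit representatives given by the permutations $w$ such that $\Des(w) \in \Stab([n-2])$.

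The main step, and the principal obstacle, is to verify that each $\varphi_a$ restricts to an involution on $\lL(\pP, \omega)$. This amounts to showing that no cover relation $p \lessdot q$ of $\pP$ has its endpoints $\omega(p)$ and $\omega(q)$ straddling $a$ across the two blocks $w_2$ and $w_3$ after the hop. In Br\"and\'en's approach this is achieved by refining the choice of labeling further into a \emph{sign labeling}, which assigns a sign to each rank in such a way that Hasse edges crossing any given rank are uniform; this is the notion of a sign-graded poset, of which graded posets form the special case where all signs are equal. With this refinement one checks, cover by cover, that the sign constraints force each bad straddling situation to occur on the ``safe'' side of the hop, so that $\varphi_a$ preserves $\lL(\pP, \omega)$; the orbit analysis then yields
\[ A_\pP(x) \ = \ \sum_{i=0}^{\lfloor (n-1)/2 \rfloor} \gamma_{\pP, i} \, x^i (1+x)^{n-1-2i}, \]
where $\gamma_{\pP, i}$ counts the $w \in \lL(\pP, \omega)$ such that $\Des(w) \in \Stab([n-2])$ has $i$ elements, hence $\gamma_{\pP, i} \ge 0$. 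The careful construction of the sign labeling and the cover-by-cover verification that the hopping action respects it form the technically delicate heart of the proof.
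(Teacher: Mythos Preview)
The paper does not prove this theorem itself; it cites Br\"and\'en and remarks (just before the statement and again in Section~4) that he gave two distinct proofs, the original one in~\cite{Bra04} and a second in~\cite[Section~6]{Bra08}. In Section~4 the paper explicitly classifies the~\cite{Bra04} argument under ``explicit combinatorial formulas'': $A_\pP(x)$ is expressed as a sum of products of polynomials already known to be $\gamma$-positive, all sharing a common center of symmetry. Your outline is instead along the lines of the~\cite{Bra08} proof.

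There is, however, a genuine gap in your proposal. The involutions $\varphi_a$ you write down do \emph{not} preserve $\lL(\pP,\omega)$, and no rank-compatible natural labeling repairs this. Take $\pP$ to be the ``bowtie'' on four elements with $1,2$ minimal, $3,4$ maximal, and all four cover relations present. Then $\lL(\pP,\omega)=\{1234,1243,2134,2143\}$, but your $\varphi_3$ sends $1234$ to $3124\notin\lL(\pP,\omega)$ and your $\varphi_4$ sends it to $4123\notin\lL(\pP,\omega)$; in fact the full Foata--Strehl orbit of $1234$ in $\fS_4$ has eight elements, only two of which are linear extensions, while $1243$ and $2143$ lie in other orbits. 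Your account of sign-graded posets is also off: the sign is attached to each cover relation (it records whether $\omega$ increases or decreases along that cover), not to ranks, and a naturally labeled graded poset is already sign-graded with all signs $+1$, so there is no further ``refinement'' to perform. Note too that the center of symmetry of $A_\pP(x)$ for a graded poset of rank $r$ on $n$ elements is $(n-1-r)/2$, not $(n-1)/2$; for the bowtie it is $1$ and $A_\pP(x)=(1+x)^2$, which already tells you the orbit sizes you are aiming for are wrong. Br\"and\'en's actual argument in~\cite[Section~6]{Bra08} is considerably more delicate than the naive restriction you sketch, and the~\cite{Bra04} proof that the paper highlights avoids the action entirely.
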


A generalization to finite crystallographic root 
systems was given by Stembridge~\cite{Ste08}.

For a different $\gamma$-positivity result for 
posets, due to Stembridge~\cite{Ste97}, which 
generalizes the $\gamma$-positivity of $A_n(x)$, 
see Section~\ref{subsec:enriched}.

\subsubsection{Coxeter Eulerian polynomials}
\label{subsubsec:CoxeterEuler}

Let $(W, S)$ be a Coxeter system, with Coxeter 
length function $\ell_S: W \to \NN$ 
(see~\cite[Chapter~1]{BB05} for definitions). 
Assuming that $W$ is finite, the \emph{$W$-Eulerian 
polynomial} is defined as 
\begin{equation}
\label{eq:defW(x)}
W(x) \ := \ \sum_{w \in W} x^{\des(w)}, 
\end{equation}
where $\des(w)$ is the number of right descents 
(elements $s \in S$ such that $\ell_S (ws) < \ell_S
(w)$) of $w \in W$. The polynomial $W(x)$ was first 
studied systematically by F.~Brenti~\cite{Bre94a}. 
It has similar properties as $A_n(x)$, to which 
it reduces when $W$ is the symmetric group $\fS_n$; 
in particular, $W(x)$ is easily verified to be 
symmetric for all finite Coxeter groups $W$. The 
following statement combines Theorem~\ref{thm:FSSAn} 
with results of C.~Chow~\cite{Ch08} and 
J.~Stembridge~\cite{Ste08} (the proof that the 
$W$-Eulerian polynomials are real-rooted was 
completed more recently; see~\cite{SV15} and 
references therein).
\begin{theorem}
\label{thm:W(x)}
{\rm (\cite{Ch08} \cite[Theorem~1.2]{Ste08})} 
The $W$-Eulerian polynomial is $\gamma$-positive 
for every finite Coxeter group $W$.
\end{theorem}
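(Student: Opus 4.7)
The plan is to reduce to irreducible $W$ via a product formula, and then to dispatch each family in the Coxeter--Dynkin classification separately. If $W = W_1 \times W_2$ with the product Coxeter system, then right multiplication by a simple reflection in $W_i$ only alters the $i$-th component, so $\des(w_1, w_2) = \des(w_1) + \des(w_2)$ and hence $W(x) = W_1(x) W_2(x)$. Moreover, a product of two $\gamma$-positive polynomials is $\gamma$-positive: if $f(x) = \sum_i \gamma_i x^i(1+x)^{n-2i}$ and $g(x) = \sum_j \gamma'_j x^j (1+x)^{m-2j}$, then
\[ f(x)g(x) \ = \ \sum_k \Bigl( \sum_{i+j=k} \gamma_i \gamma'_j \Bigr) x^k (1+x)^{(n+m)-2k}, \]
a $\gamma$-positive expansion of degree $n+m$. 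So it suffices to treat irreducible $W$.

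The classification yields the families $A_{n-1}$, $B_n$, $D_n$, the dihedral groups $I_2(m)$, and the six exceptionals $H_3, H_4, F_4, E_6, E_7, E_8$. Type $A_{n-1}$ is exactly Theorem~\ref{thm:FSSAn} under the standard identification of right Coxeter descents with descents of the one-line notation. Types $B_n$ and $D_n$ are Chow's result~\cite{Ch08}; his proof mimics the Foata--Strehl strategy underlying Theorem~\ref{thm:FSSAn}, producing a valley-hopping involution on (even-)signed permutations whose fixed points carry a descent set in a suitable $\Stab$ class, from which one reads off a manifestly nonnegative $\gamma$-expansion. The crystallographic exceptionals $F_4, E_6, E_7, E_8$ fall under Stembridge's uniform theorem~\cite[Theorem~1.2]{Ste08} for arbitrary finite Weyl groups.

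This leaves the noncrystallographic cases $I_2(m), H_3, H_4$. The dihedral case is immediate: among the $2m$ elements, exactly one has zero descents, one has two, and $2m-2$ have one, so
\[ W(x) \ = \ 1 + (2m-2)x + x^2 \ = \ (1+x)^2 + (2m-4)\, x, \]
which is $\gamma$-positive for every $m \geq 2$. The groups $H_3$ and $H_4$, of orders $120$ and $14400$, are disposed of by a finite direct computation of $W(x)$ and explicit expansion in the basis $\{x^i(1+x)^{n-2i}\}$, checking nonnegativity of each $\gamma$-coefficient by inspection.

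The main obstacle is the type $B$ and $D$ step: in type $B$ one has to accommodate the sign of $w(1)$ inside the Coxeter descent statistic, while in type $D$ the exceptional generator $s_0'$ twists the first two positions simultaneously, so the right descent set is not the descent set of the underlying signed sequence and the naive valley-hopping action must be adjusted accordingly. A noncombinatorial alternative is to combine the (easily verified) symmetry of $W(x)$ with the real-rootedness result of~\cite{SV15} and the implication ``$f \in \RR_{\ge 0}[x]$ symmetric and real-rooted $\then$ $\gamma$-positive'' recalled in the introduction, which gives a uniform but less informative derivation of the same conclusion.
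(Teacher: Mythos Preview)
Your proposal is correct and follows the same classification-based route that the paper itself relies on: the paper gives no proof of Theorem~\ref{thm:W(x)} but simply cites \cite{Ch08} and \cite[Theorem~1.2]{Ste08}, and then immediately poses as an open problem to find a proof \emph{not} using the classification. Your reduction to irreducibles via the multiplicativity of $W(x)$ and of $\gamma$-positivity, followed by dispatching each type, is exactly the structure of the argument in those references.

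One small remark: Stembridge's result in \cite{Ste08} actually applies to arbitrary finite Coxeter groups, not only to Weyl groups, so your separate handling of $I_2(m)$, $H_3$, $H_4$ is redundant (though harmless, and the dihedral computation is of course correct). Also, your alternative via real-rootedness is valid and is noted in the paper as well, though the paper points out that the real-rootedness of $W(x)$ was only completed later in \cite{SV15}, so historically this route was not available when Theorem~\ref{thm:W(x)} was first established.
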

\begin{problem}
Find a proof which does not use the classification of
finite Coxeter groups. 
\end{problem}

A common generalization of Theorems~\ref{thm:BrAP(x)}
and~\ref{thm:W(x)} is provided by 
\cite[Corollary~7.10]{Ste08}, mentioned earlier.

Just as is the case for the symmetric groups and the 
classical Eulerian polynomial, the corresponding
$\gamma$-coefficients admit interesting combinatorial 
interpretations for the other infinite families of 
finite Coxeter groups as well. We now describe such 
interpretations for the hyperoctahedral groups $\bB_n$.
Recall that $\bB_n$ consists of all permutations $w$ 
of the set $\Omega_n := \{1, -1, 2, -2,\dots,n, -n\}$
satisfying $w(-a) = -w(a)$ for each $a \in \Omega_n$. 
These can be viewed as signed permutations of length 
$n$ (see Section~\ref{subsubsec:colored} for the more 
general notion of $r$-colored permutation). The total
order 
\begin{equation}
\label{eq:def<r}
-1 <_r -2 <_r -3 <_r \cdots <_r 0 <_r 1 <_r 2 <_r 3 
<_r \cdots 
\end{equation}
of $\ZZ$ is convenient to use when $\bB_n$ is thought 
of as a colored permutation (rather than as a Coxeter) 
group. The $\bB_n$-Eulerian polynomial is given by
\begin{equation}
\label{eq:defBn}
B_n(x) \ = \ \sum_{w \in \bB_n} x^{\des^B(w)} \ = \ 
              \sum_{w \in \bB_n} x^{\des_B(w)},
\end{equation}
where
\begin{itemize}
\item[$\bullet$] $\des^B(w)$ is the number of 
indices $i \in \{0, 1,\dots,n-1\}$ such that $w(i) 
> w(i+1)$, 
\item[$\bullet$] $\des_B(w)$ is the number of 
indices $i \in \{0, 1,\dots,n-1\}$ such that $w(i) 
>_r w(i+1)$
  \end{itemize}
for $w \in \bB_n$, with $w(0) := 0$. For the first 
few values of $n$, we have
\[  B_n(x) \ = \ \begin{cases}
    1 + x, & \text{if \ $n=1$} \\
    1 + 6x + x^2, & \text{if \ $n=2$} \\
    1 + 23x + 23x^2 + x^3, & \text{if \ $n=3$} \\
    1 + 76x + 230x^2 + 76x^3 + x^4, & 
                           \text{if \ $n=4$} \\
    1 + 237x + 1682x^2 + 1682x^3 + 237x^4 + x^5, & 
                           \text{if \ $n=5$} \\ 
    1 + 722x + 10543x^2 + 23548x^3 + 10543x^4 + 
        722x^5 + x^6, &  \text{if \ $n=6$}.  
                \end{cases} \]

A \emph{descending run} 
of a permutation $w \in \fS_n$ is any maximal string 
$\{a, a+1,\dots,b\}$ of integers such that $w(a) > 
w(a+1) > \cdots > w(b)$. A \emph{left peak} of $w$ is 
any index $i \in [n-1]$ such that $w(i-1) < w(i) > 
w(i+1)$, where $w(0) := 0$ (note that $1$ can be a 
left peak, but $n$ cannot). The following result 
combines \cite[Theorem~4.7]{Ch08} with 
\cite[Proposition~4.15]{Pet07} and provides a 
$\bB_n$-analog to Theorem~\ref{thm:FSSAn}. The 
permutation obtained from $w \in \bB_n$ by 
forgetting all signs is denoted by $|w|$.
\begin{theorem} \label{thm:CPBn}
{\rm (\cite{Ch08, Pet07})}
For all $n \ge 1$,
\begin{equation}
\label{eq:CPBn}
B_n(x) \ = \ \sum_{i=0}^{\lfloor n/2 \rfloor} 
\gamma^B_{n,i} \, x^i (1+x)^{n-2i},
\end{equation} 
where $\gamma^B_{n,i}$ is equal to each of the 
following:

\begin{itemize}
\item[$\bullet$] the number of permutations $w \in 
\fS_n$ with $i$ left peaks, multiplied by $4^i$,
\item[$\bullet$] the number of signed permutations 
$w \in \bB_n$ with $\des^B(w) = i$, such that $|w| 
\in \fS_n$ has $i$ descending runs of size at least 
two.
\end{itemize}
In particular,
\begin{equation}
\label{eq:x=-1Bn}
  B_n(-1) \ = \ \begin{cases}
  \, 0, & \text{if $n$ is odd}, \\
  \, (-1)^{n/2} \, \gamma^B_{n,n/2}, & 
  \text{if $n$ is even}  \end{cases}  
\end{equation}
for all $n$, where $\gamma^B_{n,n/2}$ is the 
number of up-down permutations in $\fS_n$, multiplied 
by $4^{n/2}$. 
\end{theorem}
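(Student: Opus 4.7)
The plan is to adapt the Foata--Strehl valley-hopping argument underlying Theorem~\ref{thm:FSSAn} to the hyperoctahedral setting, following the strategies of Petersen~\cite{Pet07} and Chow~\cite{Ch08}. The essential device is the convention $w(0):=0$, relative to which an unsigned permutation $v \in \fS_n$ has \emph{left peaks}, \emph{left valleys}, and double-ascent/descent positions in the usual sense. For each fixed $v \in \fS_n$ I would decompose $\bB_n$ as the fiber over $v$ by writing each signed permutation as $\epsilon \cdot v$ with a sign vector $\epsilon \in \{\pm 1\}^n$, and compute $\sum_{\epsilon} x^{\des^B(\epsilon \cdot v)}$ by a link-by-link analysis. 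A direct case check shows that the descent indicator at the link between $(\epsilon\cdot v)(i-1)$ and $(\epsilon\cdot v)(i)$ is controlled by exactly one of the two adjacent signs: $\epsilon_i$ when the link is an ascent of $v$, and $\epsilon_{i-1}$ when it is a descent (with the boundary link from $0$ to $(\epsilon\cdot v)(1)$ treated as ascending). Consequently each sign $\epsilon_i$ ``responds'' to zero, one, or two links, according as position $i$ is a left valley, a double ascent/descent, or a left peak of $v$; and the end position $n$ contributes a free sign exactly when $v(n-1)>v(n)$. Summing $\epsilon_i$ over $\{\pm 1\}$ yields a local factor $(1+x)$ at a double ascent/descent, a factor $2x$ at each left peak (whose two flanking links contribute exactly one descent regardless of the peak's sign), and a factor $2$ at each left valley. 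A short balancing identity---the number of left peaks equals the number of left valleys, adjusted by the direction of the final link---collapses these local factors, together with the boundary factor at position $n$, into the clean product $(1+x)^{n-2\,\mathrm{lpk}(v)}(4x)^{\mathrm{lpk}(v)}$. Summing over $v \in \fS_n$ yields the first interpretation of $\gamma^B_{n,i}$.

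For the second interpretation, I would identify $\bB_n$ with $\fS_n \times \{\pm 1\}^n$ and double-count pairs $(v,\epsilon)$ with $\mathrm{lpk}(v)=i$ and $\des^B(\epsilon\cdot v)=i$. By the local factorization just established, the coefficient of $x^i$ in $(1+x)^{n-2i}(4x)^i$ is $4^i$, so exactly $4^i$ sign vectors $\epsilon$ achieve $\des^B(\epsilon\cdot v)=i$ for each $v$ with $\mathrm{lpk}(v)=i$. The resulting set of signed permutations $w=\epsilon\cdot v$ is precisely the one enumerated in the second interpretation, once one observes that $\mathrm{lpk}(|w|)$ equals the number of maximal descending runs of $|w|$ of size $\ge 2$ (a left peak at position $i$ of $|w|$ initiates exactly one such run, and conversely).

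Finally, formula~\eqref{eq:x=-1Bn} is immediate from~\eqref{eq:CPBn}: all terms with $2i<n$ vanish at $x=-1$, leaving $B_n(-1)=0$ when $n$ is odd and $B_n(-1)=(-1)^{n/2}\gamma^B_{n,n/2}$ when $n$ is even. The extremal value $\mathrm{lpk}(v)=n/2$ is attained precisely when the left peaks of $v$ occupy all odd positions in $[n-1]$, which (given $v(0)=0$) forces $v(1)>v(2)<v(3)>\cdots$, i.e.\ $v$ to be up-down; hence $\gamma^B_{n,n/2}=4^{n/2}$ times the number of up-down permutations in $\fS_n$. The main obstacle is the local factorization in Step~1: each sign $\epsilon_i$ potentially influences two adjacent links, so one must carefully fix the ``responsibility'' assignment---ascending link $\leftrightarrow$ right sign, descending link $\leftrightarrow$ left sign---on which the whole argument rests. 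The $w(0):=0$ convention is essential here, as it breaks the ambiguity at the left boundary and is precisely what distinguishes the $\mathrm{lpk}$-expansion of $B_n(x)$ from a two-sided peak expansion.
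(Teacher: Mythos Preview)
Your argument is correct and self-contained. The paper itself does not prove this theorem; it is a survey statement attributed to Chow~\cite{Ch08} and Petersen~\cite{Pet07}, so there is no in-paper proof to compare against line by line. That said, your route---fibering $\bB_n$ over $\fS_n$ via $w\mapsto |w|$ and showing that on each fiber the $\des^B$-generating function factors as $(4x)^{\mathrm{lpk}(v)}(1+x)^{n-2\,\mathrm{lpk}(v)}$---is closer in spirit to Petersen's enriched $P$-partition/peak computation than to Chow's modified Foata--Strehl action, but it is more elementary than either: you never invoke an action on $\bB_n$ or any quasisymmetric machinery, only the local ``responsibility'' assignment (ascending link $\leftrightarrow$ right sign, descending link $\leftrightarrow$ left sign), which you correctly identify as the crux. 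The balancing identity you use (left peaks $=$ left valleys, adjusted by the direction of the final step) is exactly what collapses the boundary factor at position $n$ into the uniform product, and your derivation of the second interpretation by reading off the coefficient of $x^i$ in $(4x)^i(1+x)^{n-2i}$ is clean.

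One small terminological slip: in your final paragraph, the condition $v(1)>v(2)<v(3)>\cdots$ describes a \emph{down-up} (alternating, starting with a descent) permutation, not an up-down permutation in the paper's sense ($\Asc(v)=\{1,3,5,\dots\}\cap[n-1]$). The two sets are equinumerous (via complementation or reversal), so the count $\gamma^B_{n,n/2}=4^{n/2}\cdot(\text{tangent number})$ is unaffected, but you should either switch the name or insert the bijection explicitly.
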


For the first few values of $n$, the numbers 
$\gamma^B_{n,i}$ are determined from the expressions 
\[  B_n(x) \ = \ \begin{cases}
    1 + x, & \text{if \ $n=1$} \\
    (1 + x)^2 + 4x, & \text{if \ $n=2$} \\
    (1 + x)^3 + 20x(1 + x), & \text{if \ $n=3$} \\
    (1 + x)^4 + 72x(1 + x)^2 + 80x^2, & 
                           \text{if \ $n=4$} \\
    (1 + x)^5 + 232x(1 + x)^3 + 976x^2(1 + x), & 
                           \text{if \ $n=5$} \\
    (1 + x)^6 + 716x(1 + x)^4 + 7664x^2(1 + x)^2 
              + 3904x^3,  & \text{if \ $n=6$}.  
                \end{cases} \]

An interesting extension of Theorem~\ref{thm:W(x)}
to affine Weyl groups was found by K.~Dilks, 
T.K.~Petersen and J.~Stembridge~\cite{DPS09}. Let 
$(W, S)$ be a Coxeter system with Coxeter 
length function $\ell_S: W \to \NN$, as before, and 
assume $W$ is finite, crystallographic and 
irreducible. Then, $W$ has a longest element $s_0$ 
and the set of \emph{affine right descents} of 
$w \in W$ is defined as
\[  \aDes(w) \ := \ \begin{cases}
    \Des(w), & \text{if \ $\ell_S(ws_0) < \ell_S(w)$} 
    \\
    \Des(w) \cup \{s_0\}, & \text{if \ $\ell_S(ws_0) 
    > \ell_S(w)$.}   \end{cases} \]
The \emph{affine Eulerian polynomial} associated to 
$W$ is defined as 
\begin{equation}
\label{eq:defaffW(x)}
W_a(x) \ := \ \sum_{w \in W} x^{\ades(w)}, 
\end{equation}
where $\ades(w)$ is the number of affine right descents
of $w \in W$.
\begin{theorem}
\label{thm:DPSaffW}
{\rm (Dilks--Petersen--Stembridge~\cite[Theorem~4.2]{DPS09})}
The affine Eulerian polynomial $W_a(x)$ 
is $\gamma$-positive for every finite irreducible 
crystallographic Coxeter group $W$.
\end{theorem}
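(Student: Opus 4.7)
Plan: My strategy is to combine a combinatorial decomposition of $W$ according to right multiplication by $s_0$ with the $\gamma$-positivity of $W(x)$ already established in Theorem~\ref{thm:W(x)}, and with its parabolic refinement \cite[Corollary~7.10]{Ste08}.

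As a first step, I would partition $W = W^+ \sqcup W^-$, where
$W^\pm := \{ w \in W : \ell_S(ws_0) \gtrless \ell_S(w) \}$.
By the very definition of affine descents one has $\ades(w) = \des(w)$ for $w \in W^-$ and $\ades(w) = \des(w) + 1$ for $w \in W^+$, which gives
\begin{equation}
\label{eq:affsplit-plan}
W_a(x) \ = \ A^-(x) \, + \, x \, A^+(x), \qquad
A^\pm(x) \ := \ \sum_{w \in W^\pm} x^{\des(w)}.
\end{equation}
The involution $w \mapsto ws_0$ interchanges $W^+$ and $W^-$; coupled with standard facts on the action of the longest element on $\Des$, this yields a palindromic relation between $A^+$ and $A^-$ which, through \eqref{eq:affsplit-plan}, establishes the symmetry of $W_a(x)$ about $|S|/2$.

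To obtain $\gamma$-positivity, \eqref{eq:affsplit-plan} alone is not enough, since neither $A^+(x)$ nor $A^-(x)$ is separately symmetric. I would instead refine the sum using the parabolic coset decomposition $W = \bigsqcup_{J \subseteq S} W^J \, W_J$, attempting to write
\[ W_a(x) \ = \ \sum_{J} x^{a_J} (1+x)^{b_J} \, W_J(x) \]
for a suitable family of proper subsets $J \subsetneq S$ and nonnegative integers $a_J, b_J$. Since each factor $W_J(x)$ is $\gamma$-positive by Theorem~\ref{thm:W(x)} applied to $(W_J, J)$, such an identity would imply the claim.

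The key difficulty --- and, I expect, the main obstacle --- is that producing a clean uniform identity of this shape seems out of reach, so one must descend to a type-by-type analysis. For the classical types I would adapt the Foata--Schützenberger--Strehl valley-hopping argument behind Theorem~\ref{thm:FSSAn} to a cyclic setting: for type $A_{n-1}$, treat permutations in $\fS_n$ as cyclic arrangements and define a hopping action whose orbits have sum $x^i(1+x)^{n-2i}$; for types $B_n/C_n$ and $D_n$, combine cyclic hopping with sign flips, using the signed-permutation framework behind Theorem~\ref{thm:CPBn}; for the exceptional types $E_6, E_7, E_8, F_4, G_2$, verify $\gamma$-positivity by direct computation of $W_a(x)$. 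The delicate point is that the naive valley-hopping action does not interact well with the extra affine descent contributed by $s_0$, so one must introduce a "wrap-around" modification of the hop --- effectively toggling the descent between the last and first entries of a one-line representation --- in order to match the orbits to the blocks $x^i(1+x)^{n-2i}$ of the $\gamma$-expansion.
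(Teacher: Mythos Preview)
The paper is a survey and does not supply its own proof of this theorem; it simply attributes the result to \cite[Theorem~4.2]{DPS09}. So there is no in-paper argument to compare against directly. What the survey does say (Section~\ref{sec:methods}) is that the proof in \cite{DPS09} falls under ``combinatorial decompositions'' and is also related to cd-index methods.

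Your overall strategy---acknowledge that no uniform reduction to parabolic Eulerian polynomials is available, and instead proceed type by type, using a cyclic/wrap-around variant of valley hopping for the classical families and direct computation for the exceptional types---is in the same spirit as \cite{DPS09}. As a plan it is reasonable.

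Two remarks on the details. First, a small slip: since $s_0=w_0$ is the longest element, one has $\des(ws_0)=|S|-\des(w)$, and substituting into your splitting $W_a(x)=A^-(x)+xA^+(x)$ with $A^-(x)=x^{|S|}A^+(1/x)$ shows that $W_a(x)$ is palindromic with center $(|S|+1)/2$, not $|S|/2$. Second, and more substantively, the tentative identity $W_a(x)=\sum_J x^{a_J}(1+x)^{b_J}W_J(x)$ over proper parabolics is not known to hold in any uniform way, and neither the survey nor \cite{DPS09} suggests one; you correctly flag this and abandon it, but you should not leave it in a final write-up as though it were a step. The real content lies in making the ``wrap-around'' hopping action precise for each classical type and verifying that its orbits contribute exactly $x^i(1+x)^{|S|+1-2i}$. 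Your sketch stops short of specifying this action, so at present it is a plan rather than a proof.
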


We close this section with an intriguing related 
problem, posed by I.~Gessel in 2005 (see 
\cite[Conjecture~10.2]{Bra08} 
\cite[Conjecture~1]{Pet13b} 
\cite[Problem~4.12]{Pet15}). The \emph{two-sided 
Eulerian polynomial} associated to a finite Coxeter 
group $W$ is defined as
\begin{equation}
\label{eq:def2sided}
  W(x, y) \ = \ \sum_{w \in W} x^{\des(w)} 
                  y^{\des(w^{-1})}.
\end{equation} 
The specialization $W(x,x)$ appeared also in work 
of A.~Hultman \cite[Example~5.9]{Hu07}. The 
following result was conjectured by Gessel 
(unpublished) for the symmetric groups and, more 
generally, by Petersen~\cite[Conjecture~1]{Pet16} 
for finite Coxeter groups.
\begin{theorem} \label{thm:2sided}
{\rm (Lin~\cite{Li16})}
Let $W$ be a symmetric or hyperoctahedral group. 
Then, there exist nonnegative integers $\gamma_{i, 
j} = \gamma_{i,j}(W)$ such that
\begin{equation}
\label{eq:thm2sided}
W(x, y) \ = \ \sum_{2i+j \le n} \gamma_{i,j} 
              (xy)^i (x + y)^j (1 + xy)^{n-2i-j},
\end{equation}
where $n$ is the rank of $W$. 
\end{theorem}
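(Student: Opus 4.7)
The strategy I propose is to produce a combinatorial interpretation of $\gamma_{i,j}$ that makes nonnegativity manifest and then verify the expansion identity via an action-based bijective argument. I begin with $W = \fS_n$, where the rank is $n-1$ and the target expansion reads $\sum_{2i+j \le n-1} \gamma_{i,j}(xy)^i(x+y)^j(1+xy)^{n-1-2i-j}$. Following the pattern of Theorem~\ref{thm:FSSAn}, I look for a group action on $\fS_n$ whose orbits refine the Foata--Strehl valley-hopping action and simultaneously control $\Des(w)$ and $\Des(w^{-1})$. Each orbit $O$ should possess a canonical representative $w_O$ with both $\Des(w_O)$ and $\Des(w_O^{-1})$ lying in $\Stab([n-2])$, and its contribution to $W(x,y)$ should factor as $(xy)^{i(w_O)}(x+y)^{j(w_O)}(1+xy)^{n-1-2i(w_O)-j(w_O)}$. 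The natural guess for the statistics is $i(w) = |\Des(w) \cap \Des(w^{-1})|$ and $j(w) = |\Des(w) \triangle \Des(w^{-1})|$ at the canonical representative, giving
$$\gamma_{i,j} \ = \ \#\{ w \in \fS_n : \Des(w), \Des(w^{-1}) \in \Stab([n-2]), \ i(w) = i, \ j(w) = j \}.$$

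The principal obstacle is that the classical valley-hopping action operates position-wise on $w$, whereas $\Des(w^{-1}) = \{ i : w^{-1}(i) > w^{-1}(i+1) \}$ is controlled by the relative positions of consecutive \emph{values} of $w$. The naive action therefore does not preserve $\des(w^{-1})$. To overcome this I would refine each Foata--Strehl orbit using an auxiliary involution that hops a given double-ascent or double-descent position of $w$ only when the corresponding move is likewise legal on $w^{-1}$; equivalently, one operates on the bipermutation $(w, w^{-1})$ and allows only moves that preserve descents on both factors. Verifying that each orbit then contributes the predicted factored polynomial in $x$ and $y$ is a careful case analysis according to which positions of $w_O$ and of $w_O^{-1}$ are simultaneously free to hop.

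Should the bijective construction prove intractable, an alternative route is to expand the joint generating function as a Shin--Zeng style continued fraction in $x$ (compare Theorem~\ref{thm:SZAn}) with coefficients polynomial in $y$, and then regroup so that the Jacobi entries manifestly lie in $\NN[x+y, \, 1+xy]$. For the hyperoctahedral case $W = \bB_n$, the same outline should go through, using the signed valley-hopping action underlying Theorem~\ref{thm:CPBn} and replacing ordinary descents by type-$B$ descents; the factor $4^i$ from sign choices on interior descents ought to combine cleanly with $(xy)^i$. Adapting the refined action to the signed setting, and tracking $\Des(w^{-1})$ when $w \in \bB_n$, will require substantially more bookkeeping than in type~$A$, and is where I expect the proof for $\bB_n$ to be hardest.
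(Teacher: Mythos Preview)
Your proposed combinatorial interpretation of $\gamma_{i,j}(\fS_n)$ is already false for $n=3$. One checks directly that
\[
\sum_{w \in \fS_3} x^{\des(w)} y^{\des(w^{-1})} \ = \ 1 + 4xy + x^2y^2 \ = \ (1+xy)^2 + 2xy,
\]
so $\gamma_{1,0}(\fS_3) = 2$. On the other hand, the permutations $w \in \fS_3$ with $\Des(w), \Des(w^{-1}) \in \Stab(\{1\})$ (i.e., with neither $w$ nor $w^{-1}$ having a descent at position~$2$) are exactly $123$ and $213$; only $213$ has $|\Des(w) \cap \Des(w^{-1})| = 1$, so your count gives $1$, not $2$. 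The difficulty is not a matter of bookkeeping: the positions indexed by $\Des(w)$ and $\Des(w^{-1})$ live in entirely different coordinate systems (positions versus values), and their intersection and symmetric difference carry no obvious combinatorial meaning. Your own diagnosis that valley hopping on $w$ does not interact well with $\Des(w^{-1})$ is exactly right, and the proposed fix of restricting to moves legal on both $w$ and $w^{-1}$ does not produce orbits whose generating polynomials factor as required.

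The paper does not reproduce Lin's argument but describes it (see the discussion of recursive methods in Section~\ref{sec:methods}): Lin's proof proceeds by establishing recursions for the $\gamma_{i,j}$ and verifying nonnegativity inductively, without supplying any combinatorial interpretation. Indeed, the paper explicitly records that finding a combinatorial interpretation of the $\gamma_{i,j}(W)$, even for $W = \fS_n$, remains an interesting open problem. So the bijective route you outline, if it could be made to work, would resolve an open question rather than recover the known proof; but as it stands the candidate interpretation is simply wrong, and no refinement of the Foata--Strehl action along the lines you suggest is known to succeed. If you want to prove the theorem, the realistic path is to follow Lin and derive and analyze recursions for the coefficients $\gamma_{i,j}$.
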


It remains an interesting open problem to find a 
combinatorial interpretation for the numbers 
$\gamma_{i,j}(W)$, even in the symmetric 
group case. 

\subsubsection{Derangements}
\label{subsubsec:derange}

Counting derangements (permutations without fixed
points) in $\fS_n$ by the number of excedances leads
to a well-behaved analog of the Eulerian polynomial
$A_n(x)$, defined by 
\begin{equation}
\label{eq:defdn}
d_n(x) \ := \ \sum_{w \in \dD_n} x^{\exc(w)},
%% \ = \ \sum_{k=0}^n (-1)^{n-k} \binom{n}{k} A_k(x), 
\end{equation}
where $\dD_n$ denotes the set of all derangements
in $\fS_n$. For the first few values of $n$, we have
\[ d_n(x) \ = \ \begin{cases}
    0, & \text{if \ $n=1$} \\
    x, & \text{if \ $n=2$} \\
    x + x^2, & \text{if \ $n=3$} \\
    x + 7x^2 + x^3, & \text{if \ $n=4$} \\
    x + 21x^2 + 21x^3 + x^4, & 
                           \text{if \ $n=5$} \\
    x + 51x^2 + 161x^3 + 51x^4 + x^5, & 
                           \text{if \ $n=6$} \\
    x + 113x^2 + 813x^3 + 813x^4 + 113x^5 + x^6, 
                           & \text{if \ $n=7$}.  
                \end{cases} \]

The polynomial $d_n(x)$ (often called the 
\emph{$n$th derangement polynomial}) was first 
considered in a purely combinatorial context
in~\cite[p.~530]{Sta89} by Stanley who, however,
seems to have been motivated by a geometric 
interpretation~\cite[Proposition~2.4]{Sta92} 
of $d_n(x)$; see Section~\ref{subsec:exa} for more 
explanation. While the symmetry of $d_n(x)$ is 
nearly obvious, its unimodality was derived by
Brenti~\cite[Corollary~1]{Bre90} from a more 
general result in the theory of symmetric 
functions, although it also follows from deep 
results of Stanley~\cite{Sta92} on local 
$h$-polynomials, discussed in 
Section~\ref{subsec:local}. A more elementary
combinatorial proof was later given by 
Stembridge~\cite[Section~2]{Ste92}. More 
recently, using methods discussed in 
Section~\ref{subsec:gmethods}, M.~Juhnke-Kubitzke,
S.~Murai and R.~Sieg~\cite[Corollary~4.2]{KMS17} 
found the recurrence
\begin{equation}
\label{eq:KMSdn}
d_n(x) \ = \ \sum_{k=0}^{n-2} \binom{n}{k} d_k(x)
(x + x^2 + \cdots + x^{n-1-k}),
\end{equation}
which directly implies the unimodality of $d_n(x)$
by induction on $n$.

The question of $\gamma$-positivity of $d_n(x)$ 
arises naturally. Just as is the case with $A_n(x)$,
the polynomial $d_n(x)$ turns out to be 
real-rooted for all $n$~\cite{Zha95}, so the 
interesting part of the question is to find a proof
of $\gamma$-positivity which provides a combinatorial
interpretation for the $\gamma$-coefficients. For the 
first few values of $n$, we have 
\[  d_n(x) \ = \ \begin{cases}
    x, & \text{if \ $n=2$} \\
    x(1 + x), & \text{if \ $n=3$} \\
    x(1 + x)^2 + 5x^2, & \text{if \ $n=4$} \\
    x(1 + x)^3 + 18x^2(1 + x), & 
                           \text{if \ $n=5$} \\
    x(1 + x)^4 + 47x^2(1 + x)^2 + 61x^3, & 
                           \text{if \ $n=6$} \\
    x(1 + x)^5 + 108x^2(1 + x)^3 + 479x^3(1 + x), 
                         & \text{if \ $n=7$}.  
                \end{cases} \]

For a permutation $w \in \fS_n$, a \emph{double 
descent} is any index $2 \le i \le n-1$ such that 
$w(i-1) > w(i) > w(i+1)$; a \emph{left to right 
maximum} is any index $1 \le j \le n$ such that 
$w(i) < w(j)$ for all $1 \le i < j$.
\begin{theorem} \label{thm:ASdn}
{\rm (cf. \cite[Theorem~1.4]{AS12})}
We have $d_n(x) = \sum_{i=0}^{\lfloor n/2 \rfloor} 
\xi_{n,i} x^i (1+x)^{n-2i}$, where $\xi_{n,i}$ is 
equal to each of the following:

\begin{itemize}
\item[$\bullet$] the number of derangements $w \in 
\dD_n$ with $i$ excedances and no double excedance,
\item[$\bullet$] the number of $w \in \fS_n$ for 
which $\Asc(w) \in \Stab([2,n-2])$ has $i-1$ elements,
\item[$\bullet$] the number of $w \in \fS_n$ for 
which $\Des(w) \in \Stab([2,n-2])$ has $i-1$ elements,
\item[$\bullet$] the number of permutations $w \in 
\fS_n$ with $i$ descents and no double descent, 
such that every left to right maximum of $w$ is a 
descent.
  \end{itemize}
In particular, $d_n(x)$ is $\gamma$-positive and
\begin{equation}
\label{eq:x=-1dn}
  d_n(-1) \ = \ \begin{cases}
  \, 0, & \text{if $n$ is odd}, \\
  \, (-1)^{n/2} \, \xi_{n,n/2}, & 
  \text{if $n$ is even}  \end{cases}  
\end{equation}
for all $n$, where $\xi_{n,n/2}$ is the number of
up-down permutations in $\fS_n$. 
\end{theorem}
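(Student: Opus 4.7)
\emph{Proof plan.}
The strategy is to prove the $\gamma$-positive expansion $d_n(x) = \sum_i \xi_{n,i} x^i (1+x)^{n-2i}$ with $\xi_{n,i}$ realised as the first interpretation (derangements with $i$ excedances and no double excedance) via a cycle analogue of the Foata--Strehl valley-hopping action, and then to match the other three interpretations through classical bijections.

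For the main step, one writes each cycle $C$ of $w \in \dD_n$ in the form $(a_0, a_1, \dots, a_{k-1})$ with $a_0 = \max C$ and classifies each $a_j$ with $j \ne 0$ as a \emph{valley}, \emph{peak}, or \emph{slope} of $C$ according to whether both, neither, or exactly one of its cyclic neighbours is larger than it; the key observation is that the double excedances of $w$ are exactly the slopes of ascending type $a_{j-1} < a_j < a_{j+1}$. For each slope $a$ define an involution $\varphi_a$ on $\dD_n$ that transfers $a$ to the opposite side of its cycle, preserving the peaks, valleys and cycle maxima and changing only the image $w(a)$, thereby toggling the excedance status of $a$. These involutions pairwise commute and generate a $\ZZ_2^{s}$-action, where $s$ is the total number of slopes. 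Since peaks and valleys alternate cyclically, a cycle of length $k$ with $V$ valleys has $V$ peaks (including $a_0$) and $k - 2V$ slopes; hence summing $x^{\exc}$ over the orbit of a representative $w_0$ with no ascending slope yields $x^i (1+x)^{n-2i}$, where $i = \exc(w_0)$ coincides with the total number of valleys. The orbit representatives are precisely the derangements with no double excedance, giving interpretation~1 together with the $\gamma$-positivity.

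The remaining interpretations reduce to the first by standard bijections. The complement map $w \mapsto (n+1-w(1),\dots,n+1-w(n))$ swaps $\Asc$ with $\Des$ while fixing the set of positions, giving the equivalence of interpretations~2 and~3. The Foata--Sch\"utzenberger fundamental transformation $\Phi \colon \fS_n \to \fS_n$ (write each cycle of $w$ starting from its maximum, order the cycles by increasing maximum, and concatenate) sends $\exc(w)$ to $\des(\Phi(w))$ and ``no double excedance'' to ``no double descent'', and restricts to a bijection from $\dD_n$ onto the permutations whose left-to-right maxima are exactly the cycle-boundary positions---each necessarily a descent because every cycle of a derangement has length $\ge 2$. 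This matches the condition of interpretation~4. Finally, a further bijection such as reversal $\sigma \mapsto \sigma^R$ reconciles interpretations~3 and~4, converting a descent set in $\Stab([2,n-2])$ of size $i-1$ into a descent set of size $i$ beginning at position~$1$ with no two consecutive elements.

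The identity for $d_n(-1)$ is then immediate: each term $x^i (1+x)^{n-2i}$ with $n - 2i > 0$ vanishes at $x = -1$, so only the top term survives, and it is present exactly when $n$ is even; under interpretation~3 the top coefficient $\xi_{n,n/2}$ counts permutations with $\Des(w) = \{2, 4, \dots, n-2\}$, which are precisely the up-down permutations of $\fS_n$. The main obstacle is the cycle valley-hopping step: one has to verify carefully that the slope-flipping involutions are well defined on $\dD_n$, pairwise commute, and alter the excedance statistic by exactly $\pm 1$ per flip. The cyclic setting is subtler than the linear Foata--Strehl action used for $A_n(x)$, particularly in cycles of small length and when the descending side becomes empty under the action.
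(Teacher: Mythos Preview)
Your main step---a cycle analogue of Foata--Strehl valley hopping establishing the first interpretation---is correct and is precisely the approach of Sun--Wang~\cite{SuWa14} (see also~\cite[Section~4]{AS12}), one of the references the paper cites. The paper is a survey and does not give a self-contained proof; it records that the four interpretations were obtained independently by different methods (valley hopping for the first, Rees products of posets~\cite{LSW12} for the third, symmetric-function identities for others) and that they agree because each yields the $\gamma$-expansion of the same polynomial $d_n(x)$.

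Two points in your sketch need correction. First, the fundamental transformation as you describe it (each cycle listed from its maximum in the \emph{forward} direction $a\mapsto w(a)$, cycles ordered by increasing maximum) sends a derangement $w$ to a word $\hat w$ with $\des(\hat w)=n-\exc(w)$, not $\exc(w)$: consecutive letters $a,\,w(a)$ within a cycle form a descent exactly when $a$ is an anti-excedance, and in a derangement every anti-excedance sits in such a position. You want the \emph{backward} version, listing $\max,\,w^{-1}(\max),\,w^{-2}(\max),\dots$; then consecutive letters are $w(a),\,a$, so descents match excedances, double descents match double excedances, and the left-to-right maxima of $\hat w$ (the cycle maxima) are all descents because every cycle has length at least two. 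This repairs the equivalence of interpretations~1 and~4.

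The genuine gap is your claimed reversal bijection between interpretations~3 and~4. Reversal satisfies $\Des(\sigma^R)=\{\,n-j:j\in\Asc(\sigma)\,\}$, so a permutation with $i-1$ descents is sent to one with $n-i$ descents; this equals $i$ only when $n=2i$. It does not even preserve the ``no two consecutive descents'' condition: for $n=5$ and $\Des(\sigma)=\{3\}\in\Stab([2,3])$ one gets $\Des(\sigma^R)=\{1,2,4\}$. And interpretation~4 carries a constraint on left-to-right maxima that a purely positional map such as reversal has no reason to respect. In the literature, interpretations~2 and~3 are established by separate arguments (the Rees-product computation of~\cite{LSW12}, or the symmetric-function route recorded here as Corollary~\ref{cor:PRgamma}); their equality with interpretation~1 then follows because both sides expand $d_n(x)$. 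If you want a fully bijective proof linking all four interpretations, you will need to supply a new argument for this step.
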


The first three interpretations appeared 
(implicitly or explicitly) in various contexts in 
independent works by several authors, roughly at
the same time, who employed different methods; 
see~\cite[Equations~(1.3) and (3.2)]{AS12} 
\cite[Section~4]{LSW12} \cite[Section~5]{SW10}
\cite[Section~6]{SW17} \cite{SZ12} \cite{SuWa14}. 
These references provide various interesting 
refinements, some of which are
described in the sequel. A symmetric function 
generalization is discussed in 
Section~\ref{subsec:sym}. 

Three refinements of the first interpretation 
given in Theorem~\ref{thm:ASdn} were found by H.~Shin 
and J.~Zeng~\cite{SZ12, SZ16}. We denote by $c(w)$ the 
number of cycles of $w \in \fS_n$ and note that the 
meanings of $\inv(w)$ and $\nest(w)$ have been 
explained earlier, before the statement of 
Theorem~\ref{thm:SZAn}. The result in the following 
theorem about $\nest(w)$ is the specialization $q=1$ 
of~\cite[Corollary~9]{SZ12}, the one about $c(w)$ 
(which also follows from the proof of 
Theorem~\ref{thm:ASdn} given 
in~\cite[Section~4]{AS12}) is a restatement 
of~\cite[Theorem~11]{SZ12} and the one about $\inv(w)$ 
appears as~\cite[Theorem~2]{SZ16}. 
\begin{theorem} \label{thm:SZdn}
{\rm (Shin--Zeng \cite{SZ12, SZ16})}
For all positive integers $n$ and for each of 
the statistics $\stat(w) \in \{ c(w), \inv(w), 
\nest(w) \}$  
\begin{equation}
\label{eq:SZdn}
\sum_{w \in \dD_n} q^{\stat(w)} x^{\exc(w)} \ = \ 
\sum_{i=0}^{\lfloor n/2 \rfloor} b_{n,i} (q) \, 
                    x^i (1+x)^{n-2i},
\end{equation}
where 
\begin{equation}
\label{eq:SZxin}
b_{n,i} (q) \ = \ \sum_{w \in \dD_n(i)} 
q^{\stat(w)}
\end{equation}
and $\dD_n(i)$ consists of all elements of $\dD_n$ 
with exactly $i$ excedances and no double excedance.
\end{theorem}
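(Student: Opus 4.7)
The plan is to refine the proof of Theorem~\ref{thm:ASdn} by tracking each statistic through its orbit decomposition. Recall that the first interpretation of $\xi_{n,i}$ there arises from a cycle version of the Foata--Sch\"utzenberger--Strehl valley-hopping action on $\dD_n$: for each value $a \in [n]$ one defines an involution $\varphi_a$ which is the identity when $a$ is a cycle peak (both $w^{-1}(a) < a$ and $w(a) < a$) or a cycle valley (both $w^{-1}(a) > a$ and $w(a) > a$), and otherwise removes $a$ from its cycle and reinserts it so as to toggle its status between ``double excedance'' and ``double anti-excedance''. The involutions commute pairwise, so the group they generate partitions $\dD_n$ into orbits, each containing a unique representative $\hat w$ with no double excedance; the $\exc$-statistic varies across an orbit in a controlled way that contributes the factor $x^{\exc(\hat w)}(1+x)^{n-2\exc(\hat w)}$.

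To upgrade this to the weighted identity (\ref{eq:SZdn}), I would verify that each of $c(w)$, $\inv(w)$, and $\nest(w)$ is invariant under every $\varphi_a$. For $c(w)$ this is immediate, since $\varphi_a$ only reorders elements within a single cycle and cannot split or merge cycles. For $\inv(w)$ the verification is a direct case analysis of the positions: when $\varphi_a$ swaps the roles of $a$'s predecessor $p$ and successor $s$ within its cycle, the sign of the change in the inversion count of the new arc $(p,s)$ exactly cancels the change in inversions involving $a$, using that either $p < a < s$ or $s < a < p$. The verification for $\nest(w)$ has the same shape, but one must compare the four-coordinate quantities $\nest$ evaluates before and after the hop, and check that the local gain and loss of nestings are equal.

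The main obstacle is the invariance of $\nest(w)$ (and, with less bookkeeping, of $\inv(w)$), since these statistics are naturally described in one-line notation whereas the action lives on cycles. The cleanest way to circumvent this, as in Shin--Zeng \cite{SZ12,SZ16}, is to transport the action through the Foata--Zeilberger bijection, which encodes $w \in \dD_n$ as a Motzkin-type path whose step weights record precisely the pairs $(\exc, c)$, $(\exc, \inv)$, or $(\wexc-1, \nest)$. Under this encoding each $\varphi_a$ acts locally on a pair of consecutive steps of the path, so invariance of the statistic collapses into a transparent weight-preservation identity on the two relevant step types.

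Summing the resulting weight-preserving orbit decomposition, the left-hand side of (\ref{eq:SZdn}) becomes $\sum_{\hat w} q^{\stat(\hat w)} x^{\exc(\hat w)}(1+x)^{n-2\exc(\hat w)}$ where $\hat w$ ranges over canonical orbit representatives, i.e.\ derangements with no double excedance. Grouping representatives by their excedance number yields (\ref{eq:SZdn}) with $b_{n,i}(q) = \sum_{w \in \dD_n(i)} q^{\stat(w)}$, as claimed. As an alternative to the bijective verification of invariance, one may derive (\ref{eq:SZdn}) for $\stat \in \{\inv, \nest\}$ by contracting the J-type continued fraction expansion of the left-hand side into the T-fraction form whose coefficients are manifestly the $b_{n,i}(q)$; this is the route that underlies \cite[Corollary~9]{SZ12} and \cite[Theorem~2]{SZ16}.
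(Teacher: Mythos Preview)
The paper is a survey and does not supply its own proof of this theorem; the paragraph preceding the statement simply attributes the three cases to specific results of Shin--Zeng (\cite[Corollary~9]{SZ12} for $\nest$, \cite[Theorem~11]{SZ12} for $c$, \cite[Theorem~2]{SZ16} for $\inv$) and remarks that the $c(w)$ case also follows from the valley-hopping proof of Theorem~\ref{thm:ASdn} given in \cite[Section~4]{AS12}. So there is no single ``paper's proof'' to compare against, only these pointers.

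Your outline is consistent with those pointers in the following sense: the cycle valley-hopping action does preserve cycle structure, so the $c(w)$ case goes through exactly as you describe, and this is the approach the paper singles out for that statistic. For $\inv$ and $\nest$, the route actually taken in \cite{SZ12,SZ16} is the continued-fraction contraction you mention in your final sentence, not a direct invariance check; so what you present as an ``alternative'' is in fact the primary method in the cited sources.

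Where your proposal is thin is the middle portion. You assert that $\inv(w)$ and $\nest(w)$ are invariant under every $\varphi_a$ and that this reduces to ``a direct case analysis,'' but you do not carry it out, and it is not a routine computation: both statistics are global in one-line notation while the hop is local in cycle notation, so one has to control contributions from all positions, not just the predecessor and successor of $a$. Your proposed remedy --- transporting the action through ``the Foata--Zeilberger bijection, which encodes \dots\ the pairs $(\exc,c)$, $(\exc,\inv)$, or $(\wexc-1,\nest)$'' --- conflates several distinct bijections (Biane, Foata--Zeilberger, Fran\c{c}on--Viennot) that track different pairs of statistics; no single one of them simultaneously linearizes all three cases, and the cycle hop does not in general become a move on a single pair of adjacent path steps. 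If you want a genuine group-action proof for the $\inv$ or $\nest$ refinements, you would need to specify the correct bijection for each case and verify the invariance there, or else fall back on the continued-fraction argument you already cite.
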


The following theorem refines the third 
interpretation of the $\gamma$-positivity of $d_n(x)$,
given in Theorem~\ref{thm:ASdn}. This result was stated 
in~\cite[Remark~5.5]{SW10} without giving an explicit
interpretation for the coefficients $\xi_i(p,q)$. The 
combinatorial interpretation which appears here 
follows from the methods used to prove the 
specialization $p=1$ in~\cite[Theorem~6.1]{SW17}; 
the proof will be sketched in 
Section~\ref{subsec:sfmethods}. The special case 
$p=1$ is also implicit in~\cite{LSW12} (see 
Equation~(1.3) and Corollary~3.7 there) and is 
proven by different methods in~\cite{LZ15}.
\begin{theorem} \label{thm:SWdn}
{\rm (Shareshian--Wachs~\cite[Remark~5.5]{SW10}
\cite[Section~6]{SW17})}
For all $n \ge 1$
\begin{equation}
\label{eq:SWdn}
\sum_{w \in \dD_n} p^{\des(w)} q^{\maj(w)-\exc(w)} 
x^{\exc(w)} \ = \ 
\sum_{i=0}^{\lfloor n/2 \rfloor} \xi_{n,i} (p,q) \, 
                    x^i (1+x)^{n-2i},
\end{equation}
where 
\begin{equation}
\label{eq:SWxin}
\xi_{n,i} (p,q) \ = \ p \cdot \sum_w p^{\des(w^{-1})} 
                         q^{\maj(w^{-1})}
\end{equation}
and the sum runs through all permutations $w \in 
\fS_n$ for which $\Des(w) \in \Stab([2,n-2])$ has 
$i-1$ elements.
\end{theorem}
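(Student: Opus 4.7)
The plan is to lift the identity to the level of quasisymmetric functions and then recover it by a stable principal specialization. Introduce the Shareshian--Wachs derangement Eulerian quasisymmetric function
\[
D_n(\bx;q,t) \ := \ \sum_{w \in \dD_n} q^{\maj(w)-\exc(w)} \, t^{\exc(w)} \, F_{n,\Des(w)}(\bx),
\]
where $F_{n,S}(\bx)$ is Gessel's fundamental quasisymmetric function. The stable principal specialization $\mathrm{ps}_p$ satisfies $\mathrm{ps}_p \big(F_{n,\Des(w^{-1})}(\bx)\big) = p^{\maj(w)}/(p;p)_n$, so that applying it (after clearing the common denominator) converts each $F_{n,\Des(w)}(\bx)$ into $p^{\des(w)}$, up to standard factors. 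Thus Theorem~\ref{thm:SWdn} will follow once one has a quasisymmetric $\gamma$-expansion.

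First, I would prove the quasisymmetric identity
\[
D_n(\bx;q,t) \ = \ t \sum_{i=1}^{\lfloor n/2 \rfloor} \Xi_{n,i}(\bx;q) \, t^{i-1} (1+t)^{n-2i},
\]
where
\[
\Xi_{n,i}(\bx;q) \ = \ \sum_{w} q^{\maj(w^{-1})} F_{n,\Des(w^{-1})}(\bx),
\]
summed over those $w \in \fS_n$ with $\Des(w) \in \Stab([2,n-2])$ of size $i-1$. The strategy mirrors the $p = 1$ case handled in \cite{SW17}: partition $\dD_n$ into orbits of a valley-hopping (modified Foata--Strehl) action in which each letter that is neither a double excedance nor part of the canonical frame may be toggled between an excedance position and a non-excedance position. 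An orbit with $i$ forced excedances and $n-2i$ toggleable letters contributes exactly $t^{i}(1+t)^{n-2i}$ to the $t$-polynomial; its canonical representative is the derangement in the orbit with no double excedance.

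The second step is to identify the canonical orbit representatives with the permutations indexing $\Xi_{n,i}$. Applying a cycle-form variant of the Foata--Sch\"utzenberger fundamental transformation converts a derangement with $i$ excedances and no double excedance into a permutation $w \in \fS_n$ whose descent set lies in $\Stab([2,n-2])$ and has size $i-1$. The transformation must be chosen so that it carries $\maj - \exc$ on the derangement to $\maj(w^{-1})$ on the image and $\Des$ on the derangement to $\Des(w^{-1})$ on the image; this is precisely what makes the refined quasisymmetric statement tight. Once this correspondence is set up, applying $\mathrm{ps}_p$ to both sides of the quasisymmetric $\gamma$-expansion yields the asserted $p,q$-identity, with the prefactor $p$ in $\xi_{n,i}(p,q)$ coming from the fact that every derangement contributes at least one excedance, shifting the exponent of $t$.

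The main obstacle is the joint preservation of inverse descent data. Valley hopping was designed to preserve $\Des(w)$, and the $p = 1$ case in \cite{SW17} only needs control over $\maj(w^{-1})$; in the $p,q$-refinement one must control the full set $\Des(w^{-1})$ through both the orbit action and the fundamental transformation. Verifying this compatibility — equivalently, proving that both sides of the quasisymmetric $\gamma$-expansion agree as $F$-expansions rather than merely as polynomials in $p$ and $q$ — is the delicate combinatorial heart of the argument.
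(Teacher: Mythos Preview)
Your quasisymmetric lift is not set up correctly. The Shareshian--Wachs Eulerian quasisymmetric function for derangements is
\[
\sum_{w\in\dD_n} F_{n,\DEX(w)}(\bx)\, t^{\exc(w)},
\]
indexed by the set $\DEX(w)$ (not $\Des(w)$), and it carries \emph{no} separate $q^{\maj-\exc}$ coefficient. The point of $\DEX$ is precisely that $|\DEX(w)|=\des^*(w)$ and $\Sum(\DEX(w))=\maj(w)-\exc(w)$, so that applying the (non-stable) principal specialization~(\ref{eq:speF*S(x)}) to $F^*_{n,\DEX(w)}$ already produces $p^{\des^*(w)}q^{\maj(w)-\exc(w)}$. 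Your $D_n$, with both $F_{n,\Des(w)}$ and an explicit $q^{\maj-\exc}$ factor, would specialize to something with the wrong $q$-exponent; and the ``stable principal specialization $\mathrm{ps}_p$'' you invoke yields $p^{\Sum(S)}$, not $p^{|S|}$, so it cannot recover $p^{\des(w)}$ at all. To track $p$ and $q$ simultaneously one must use the generating function $\sum_{m\ge 1}(\,\cdot\,)(1,q,\dots,q^{m-1})\,p^{m-1}$ of~(\ref{eq:speF*S(x)}), not the stable version.

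More substantively, the paper does \emph{not} attempt a direct valley-hopping argument on derangements. It instead exploits the fact that $\sum_{w\in\dD_n} F_{n,\DEX(w)}(\bx)\,t^{\exc(w)}$ is \emph{symmetric} and equals $\sum_{\lambda\vdash n} R_\lambda(t)\,s_\lambda(\bx)$; principal specialization then gives $\sum_\lambda R_\lambda(t)\,f^\lambda(p,q)$. One inserts the $\gamma$-expansion of $R_\lambda(t)$ from Corollary~\ref{cor:PRgamma} and uses the Robinson--Schensted correspondence to fuse each pair $(P,Q)$ into a permutation $w$ with $\Des(w)=\Des(Q)$, $\des(w^{-1})=\des(P)$, $\maj(w^{-1})=\maj(P)$. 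The inverse-descent data thus comes for free from RSK; no action on derangements need preserve it. Your plan to build a valley-hopping action on $\dD_n$ that respects the full inverse descent set, together with a fundamental-transformation step that simultaneously sends $(\des,\maj-\exc)$ to $(\des(u^{-1})+1,\maj(u^{-1}))$, is exactly the step the symmetric-function route is designed to avoid, and you have not indicated why such an action should exist.
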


The following result, which is derived using the same
methods in Section~\ref{subsec:sfmethods}, generalizes
the specialization $p=1$ of Theorem~\ref{thm:SWdn}.
We denote by $\fix(w)$ the number of fixed points of 
$w \in \fS_n$.
\begin{theorem} \label{thm:ASWAn}
{\rm (Shareshian--Wachs~\cite[Corollary~4.6]{SW10}
\cite[Theorem~6.1]{SW17})}
For all $n \ge 1$ and $0 \le k \le n$
\begin{equation}
\label{eq:ASWAn}
\sum_{w \in \fS_n: \, \fix(w) = k} q^{\maj(w)-\exc(w)} 
x^{\exc(w)} \ = \ \binom{n}{k}_q
\sum_{i=0}^{\lfloor (n-k)/2 \rfloor} \xi_{n-k,i} (q) \, 
                    x^i (1+x)^{n-k-2i},
\end{equation}
where $\binom{n}{k}_q$ is a $q$-binomial coefficient,
$\xi_{n,i} (q) := \xi_{n,i} (1,q)$ and $\xi_{n,i} 
(p, q)$ is as in the statement of 
Theorem~\ref{thm:SWdn}.
\end{theorem}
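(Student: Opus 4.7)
The plan is to deduce Theorem~\ref{thm:ASWAn} by combining the $p=1$ case of Theorem~\ref{thm:SWdn} with a $q$-analogue of the classical fixed-point--derangement factorization of permutations. Set
\begin{equation*}
A_n(q, x, t) \ := \ \sum_{w \in \fS_n} q^{\maj(w)-\exc(w)} x^{\exc(w)} t^{\fix(w)}, \qquad D_m(q, x) \ := \ \sum_{w \in \dD_m} q^{\maj(w)-\exc(w)} x^{\exc(w)}.
\end{equation*}
The coefficient of $t^k$ in $A_n(q,x,t)$ is the left-hand side of (\ref{eq:ASWAn}), and by Theorem~\ref{thm:SWdn} at $p=1$,
\begin{equation*}
D_{n-k}(q,x) \ = \ \sum_{i=0}^{\lfloor (n-k)/2 \rfloor} \xi_{n-k,i}(q)\, x^i (1+x)^{n-k-2i}.
\end{equation*}
Consequently, (\ref{eq:ASWAn}) is equivalent to the $q$-fixed-point identity
\begin{equation}
\label{eq:fixedpoint}
A_n(q, x, t) \ = \ \sum_{k=0}^n \binom{n}{k}_q t^k\, D_{n-k}(q, x),
\end{equation}
and the whole problem reduces to proving (\ref{eq:fixedpoint}).

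To establish (\ref{eq:fixedpoint}), I would follow the quasisymmetric function approach of Shareshian--Wachs outlined in Section~\ref{subsec:sfmethods}. Form the generating function
\begin{equation*}
\mathbf{A}_n(\by; q, x, t) \ := \ \sum_{w \in \fS_n} F_{n, \Des(w^{-1})}(\by)\, q^{\maj(w)-\exc(w)} x^{\exc(w)} t^{\fix(w)}
\end{equation*}
in the fundamental quasisymmetric basis, together with its derangement counterpart $\mathbf{D}_m(\by; q, x)$ obtained by restricting the sum to $\dD_m$. A Cauchy-type decomposition, which enumerates each $w$ by its fixed-point set together with the induced derangement on the complementary positions, yields the quasisymmetric identity
\begin{equation*}
\mathbf{A}_n(\by; q, x, t) \ = \ \sum_{k=0}^n t^k\, h_k(\by)\, \mathbf{D}_{n-k}(\by; q, x).
\end{equation*}
Applying the stable principal specialization $\by \mapsto (1, q, q^2, \dots)$, under which $F_{n, S}$ specializes (up to the standard normalization) to $q^{\maj(S)}/(q;q)_n$ and $h_k$ to $1/(q;q)_k$, clears and recombines the denominators into the $q$-binomial coefficient $\binom{n}{k}_q = (q;q)_n / ((q;q)_k (q;q)_{n-k})$ and delivers (\ref{eq:fixedpoint}) exactly.

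The main obstacle is verifying the quasisymmetric decomposition, which is the heart of the argument. Its combinatorial content is that, under the natural bijection $\fS_n \leftrightarrow \bigsqcup_k \binom{[n]}{k} \times \dD_{n-k}$ sending $w$ to the pair consisting of its fixed-point set and the derangement $u$ induced on the non-fixed positions, the statistic $\exc$ is preserved on the nose, while the difference $\maj(w) - \maj(u)$ is precisely the shuffle contribution absorbed by the $h_k$-factor at the symmetric function level. Confirming this bookkeeping requires a careful treatment of how $\Des(w^{-1})$ refines along the decomposition; this is the same argument used in the proof of \cite[Theorem~6.1]{SW17}, to which our statement specializes at $k = 0$. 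Once (\ref{eq:fixedpoint}) is in hand, substituting the $\gamma$-expansion of $D_{n-k}(q,x)$ from Theorem~\ref{thm:SWdn} yields (\ref{eq:ASWAn}).
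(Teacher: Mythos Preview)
Your overall strategy --- reduce the theorem to a $q$-analogue of the fixed-point/derangement identity $A_n(q,x,t)=\sum_k\binom{n}{k}_q t^k D_{n-k}(q,x)$, lift it to a quasisymmetric identity, then take stable principal specialization --- is exactly the route the paper follows. The gap is in the choice of quasisymmetric lift. You attach $F_{n,\Des(w^{-1})}(\by)$ to each permutation while \emph{also} keeping $q^{\maj(w)-\exc(w)}$ as a separate scalar weight; but then the specialization $\by\mapsto(1,q,q^2,\dots)$ introduces an \emph{additional} factor $q^{\maj(w^{-1})}/(q;q)_n$ on each term, so what you recover on the left is $\sum_w q^{\maj(w^{-1})+\maj(w)-\exc(w)}x^{\exc(w)}t^{\fix(w)}$ rather than $A_n(q,x,t)$. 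The same extra factor appears on the right, so even granting your quasisymmetric decomposition you would not obtain the desired fixed-point identity.

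The paper (following \cite{SW10,SW17}) avoids this by using the set $\DEX(w)$ in place of $\Des(w^{-1})$: it works with $\sum_w F_{n,\DEX(w)}(\bx)\,t^{\exc(w)}r^{\fix(w)}$, with \emph{no} explicit $q$-weight, and appeals to \cite[Equation~(1.3)]{SW10} for the factorization of the coefficient of $r^k z^n$ as $h_k(\bx)$ times the derangement generating function. The crucial point is that $\Sum(\DEX(w))=\maj(w)-\exc(w)$, so the stable principal specialization of $F_{n,\DEX(w)}$ \emph{already} produces the desired $q$-weight, and the $h_k$ factor specializes to $1/(q;q)_k$, yielding the $q$-binomial after clearing denominators. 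Your ``Cauchy-type decomposition'' for $\Des(w^{-1})$ under the fixed-point bijection is a different (and unproved) statement; the factorization the paper uses for $\DEX$ is the substantial content of \cite[Theorem~1.2]{SW10} and does not reduce to a simple shuffle argument.
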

\begin{problem}
Find a $p$-analog of Theorem~\ref{thm:ASWAn} which 
reduces to Theorem~\ref{thm:SWdn} for $k=0$. 
\end{problem}

For a generalization of Theorem~\ref{thm:ASdn} to 
$r$-colored permutations and some related results,
see Section~\ref{subsubsec:colored}. We now briefly
describe an application to the $\gamma$-positivity
of binomial Eulerian polynomials.
\begin{remark} \label{rem:dnBinAn} \rm
By the symmetry $\widetilde{A}_n(x) = x^n 
\widetilde{A}_n(1/x)$ of the binomial Eulerian 
polynomial $\widetilde{A}_n(x)$, its defining 
equation~(\ref{eq:defbinAn}) can be rewritten 
as
\[ \widetilde{A}_n(x) \ = \ \sum_{m=0}^n 
   \binom{n}{m} x^{n-m} A_m(x) . \]
Replacing $A_m(x)$ by the expression 
$\sum_{k=0}^m \binom{m}{k} d_k(x)$ and changing 
the order of summation results in the formula
\begin{equation}
\label{eq:binAndn}
\widetilde{A}_n(x) \ = \ \sum_{k=0}^n \binom{n}{k} 
  d_k(x) \, (1+x)^{n-k}.
\end{equation}
Using the $\gamma$-expansion of 
Theorem~\ref{thm:ASdn} for the derangement 
polynomials, we conclude that (\ref{eq:SWbinAn}) 
holds with  
\begin{equation}
\label{eq:binAndn-gamma}
\widetilde{\gamma}_{n,i} \ = \ \sum_{k=2i}^n 
  \binom{n}{k} \xi_{k,i}.
\end{equation}
Because of the third interpretation for the 
coefficients $\xi_{n,i}$ given in 
Theorem~\ref{thm:ASdn}, this formula is 
equivalent to the combinatorial interpretation 
for $\widetilde{\gamma}_{n,i}$ predicted 
in~\cite{PRW08}. This approach can be generalized 
in the context of $r$-colored permutations; details 
will appear elsewhere.
\end{remark}

\subsubsection{Involutions}
\label{subsubsec:invo}

Let $\iI_n := \{ w \in \fS_n: w^{-1} = w \}$ be the 
set of involutions in $\fS_n$ and 
\begin{equation}
\label{eq:defIn}
I_n(x) \ = \ \sum_{w \in \iI_n} x^{\des(w)} 
\end{equation}
be the polynomial defining the Eulerian distribution
on $\iI_n$. For the first few values of $n$, we have
\[ I_n(x) \ = \ \begin{cases}
    1, & \text{if \ $n=1$} \\
    1 + x, & \text{if \ $n=2$} \\
    1 + 2x + x^2, & \text{if \ $n=3$} \\
    1 + 4x + 4x^2 + x^3, & \text{if \ $n=4$} \\
    1 + 6x + 12x^2 + 6x^3 + x^4, & 
                           \text{if \ $n=5$} \\
    1 + 9x + 28x^2 + 28x^3 + 9x^4 + x^5, & 
                           \text{if \ $n=6$} \\
    1 + 12x + 57x^2 + 92x^3 + 57x^4 + 12x^5 + x^6, 
                         & \text{if \ $n=7$}.  
                \end{cases} \]
The polynomial $I_n(x)$ seems to have first appeared
in~\cite{Str80}, where V.~Strehl proved its symmetry 
(conjectured by D.~Dumont). Strehl's argument uses 
basic properties of the Robinson--Schensted
correspondence to derive the alternative formula
\begin{equation}
\label{eq:StreIn}
I_n(x) \ = \ \sum_{Q \in \SYT(n)} x^{\des(Q)}, 
\end{equation}
where $\SYT(n)$ stands for the set of all standard 
Young tableaux with $n$ squares and $\des(Q)$ is the 
numbers of entries (called descents) $i \in [n-1]$
such that $i+1$ appears in $Q$ in a lower row than 
$i$, for $Q \in \SYT(n)$. This formula makes the 
symmetry of $I_n(x)$ apparent, since transposing a
standard Young tableau interchanges descents 
with ascents (nondescents).

Two basic results about $I_n(x)$ are as follows.
The proof of the second part, given in~\cite{GZ06}, 
uses the first part to find recursions for the 
coefficients of $I_n(x)$ and proceeds by induction
on $n$. 
\begin{theorem} \label{thm:In(x)}
\begin{itemize}
\itemsep=5pt
\item[\textrm{(a)}] 
{\rm (D\'esarm\'enien--Foata~\cite{DF85})} We have
\begin{equation}
\label{eq:genIn}
\sum_{n \ge 0} \, \frac{I_n(x)}{(1-x)^{n+1}} \, z^n 
   \ = \ \sum_{m \ge 0} \, \frac{x^m}{(1-z)^{m+1} 
         (1-z^2)^{m(m+1)/2}}, 
\end{equation}
where $I_0(x) := 1$.

\item[\textrm{(b)}] {\rm (Guo--Zeng~\cite{GZ06})}
The polynomial $I_n(x)$ is (symmetric and) unimodal 
for every positive integer $n$.
\end{itemize}
\end{theorem}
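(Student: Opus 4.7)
The plan is to establish (a) first via $P$-partition theory and the Cauchy identity for Schur functions, and then to derive (b) from (a) by induction on $n$. Both parts hinge on Strehl's reformulation (\ref{eq:StreIn}) of $I_n(x)$ as a sum of $x^{\des(Q)}$ over $Q \in \SYT(n)$, which opens the door to symmetric function techniques.

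For (a), I would begin from (\ref{eq:StreIn}) and apply the standard $P$-partition identity
\begin{equation*}
\sum_{Q \in \SYT(\lambda)} \frac{x^{\des(Q)}}{(1-x)^{n+1}} \ = \ \sum_{k \ge 0} s_\lambda(1^{k+1}) \, x^k ,
\end{equation*}
valid for every partition $\lambda \vdash n$, where $s_\lambda(1^{k+1})$ counts semistandard Young tableaux of shape $\lambda$ with entries in $[k+1]$. Summing over $\lambda \vdash n$, then over $n$ weighted by $z^n$, and exchanging summations yields
\begin{equation*}
\sum_{n \ge 0} \frac{I_n(x)}{(1-x)^{n+1}} \, z^n \ = \ \sum_{k \ge 0} x^k \sum_{\lambda} s_\lambda(\underbrace{z, z, \dots, z}_{k+1}) ,
\end{equation*}
where the inner sum ranges over all partitions. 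The classical identity $\sum_\lambda s_\lambda(\bx) = \prod_i (1-x_i)^{-1} \prod_{i<j} (1-x_i x_j)^{-1}$ then collapses the inner sum to $(1-z)^{-(k+1)} (1-z^2)^{-\binom{k+1}{2}}$, and relabeling $m = k$ recovers the right-hand side of (\ref{eq:genIn}).

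For (b), I would extract from (\ref{eq:genIn}) a recurrence for the coefficients $I_{n,k} := [x^k] I_n(x)$. Expanding the right-hand side of (\ref{eq:genIn}) as a power series in $z$ and matching the $z^n$ coefficient on each side expresses $I_n(x)$ as an explicit polynomial combination of $I_{n'}(x)$ for smaller $n'$. Combined with the symmetry $I_{n,k} = I_{n,n-1-k}$ afforded by transposition of tableaux in (\ref{eq:StreIn}), induction on $n$ would then propagate the chain of inequalities $I_{n,0} \le I_{n,1} \le \cdots \le I_{n,\lfloor (n-1)/2 \rfloor}$.

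The main obstacle lies in (b): the recurrence obtained from (a) is not manifestly monotone in the desired sense, and the technical core of \cite{GZ06} is to isolate the right linear combinations whose nonnegativity is preserved by the induction. By contrast, once the tableau viewpoint of (\ref{eq:StreIn}) is adopted, (a) is a clean consequence of the $P$-partition identity and the Schur function sum formula.
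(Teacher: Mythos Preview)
Your proposal is correct and aligns with what the paper describes. The paper itself does not prove Theorem~\ref{thm:In(x)}: part~(a) is attributed to D\'esarm\'enien--Foata~\cite{DF85} and part~(b) to Guo--Zeng~\cite{GZ06}, with the one-line remark that the proof of~(b) ``uses the first part to find recursions for the coefficients of $I_n(x)$ and proceeds by induction on $n$'' --- exactly the strategy you outline, including your honest acknowledgement that the induction is where the real work lies.

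For part~(a), your symmetric-function argument is precisely the computation the paper later carries out in the proof of Proposition~\ref{prop:athIn}: there the author derives the intermediate identity
\[
\frac{I_n(x)}{(1-x)^{n+1}} \ = \ \sum_{m \ge 1} \sum_{\lambda \vdash n} s_\lambda(1^m)\, x^{m-1}
\]
via Strehl's formula~(\ref{eq:StreIn}), the expansion~(\ref{eq:sFexpan}) and the specialization~(\ref{eq:speFS(x)}), and then goes in a different direction (toward characters). Your proposal continues instead by summing over $n$ and applying the Littlewood identity $\sum_\lambda s_\lambda(\bx) = \prod_i (1-x_i)^{-1} \prod_{i<j}(1-x_ix_j)^{-1}$ at $x_1=\cdots=x_m=z$, which immediately gives~(\ref{eq:genIn}). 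So your route to~(a) is the natural one and is implicitly present in the paper.
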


The polynomials $I_n(x)$ are not real-rooted (in 
fact, not even log-concave~\cite{BBS09}) for $n$ 
large enough, but the following data suggests they 
may be $\gamma$-positive:
\[ I_n(x) \ = \ \begin{cases}
    1, & \text{if \ $n=1$} \\
    1 + x, & \text{if \ $n=2$} \\
    (1 + x)^2, & \text{if \ $n=3$} \\
    (1 + x)^3 + x(1 + x), & \text{if \ $n=4$} \\
    (1 + x)^4 + 2x(1 + x)^2 + 2x^2, & 
                           \text{if \ $n=5$} \\
    (1 + x)^5 + 4x(1 + x)^3 + 6x^2(1 + x), & 
                           \text{if \ $n=6$} \\
    (1 + x)^6 + 6x(1 + x)^4 + 18x^2(1 + x)^2, 
                         & \text{if \ $n=7$}.  
                \end{cases} \]

Somewhat surprisingly, the following intriguing 
conjecture is still open (an analogous statement
for fixed-point free involutions is conjectured
\cite[Conjecture~4.3]{GZ06} for large $n$).
\begin{conjecture} \label{conj:In(x)}
{\rm (Guo--Zeng~\cite[Conjecture~4.1]{GZ06})}
The polynomial $I_n(x)$ is $\gamma$-positive
for every positive integer $n$.
\end{conjecture}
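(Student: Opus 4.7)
The plan is to pass, via the Robinson--Schensted correspondence, to Strehl's formulation \eqref{eq:StreIn}, $I_n(x)=\sum_{Q\in \SYT(n)} x^{\des(Q)}$, and then to construct a $\ZZ_2^r$-action on $\SYT(n)$ modeled on the modified Foata--Sch\"utzenberger--Strehl valley-hopping action of Section~\ref{subsec:hopping}. What is needed is a decomposition $\SYT(n) = \bigsqcup_\alpha \mathcal{O}_\alpha$ into orbits of sizes $2^{n-1-2i_\alpha}$ on which $\des$ takes each value $k\in\{i_\alpha,\dots,n-1-i_\alpha\}$ with multiplicity $\binom{n-1-2i_\alpha}{k-i_\alpha}$; each such orbit then contributes $x^{i_\alpha}(1+x)^{n-1-2i_\alpha}$, and $\gamma$-positivity follows, with $\gamma_{n,i}$ equal to the number of orbits whose ``core'' index $i_\alpha$ equals $i$.

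First I would try to translate the valley-hopping action directly to $\iI_n$ itself. The naive flip at a free index $i$ destroys the involutive property, so the natural modification is to act simultaneously on $i$ and $w(i)$ in the arc diagram of $w$. The technical heart of the proof is then a local case analysis, on the configuration of an arc $\{i, w(i)\}$ relative to its neighbors, verifying that (a) the simultaneous flip yields another involution whose descent count differs from the original by exactly $\pm 1$, and (b) the arcs that cannot be flipped correspond to positions that are ``forced'' peaks or valleys in every member of the orbit. Fixed points of $w$, which under Robinson--Schensted correspond to odd-length columns in the associated tableau, would need to be handled separately, perhaps inductively. As a backup route, one could instead try to extract a recurrence of the form $I_n(x) = \sum_k c_{n,k}\, x^{a_k}(1+x)^{b_k}\, I_{n_k}(x)$ with nonnegative $c_{n,k}$, in the spirit of identity~\eqref{eq:KMSdn} for the derangement polynomials, or to realize $I_n(x)$ as the $h$-polynomial of some flag simplicial complex and invoke Gal's conjecture (Section~\ref{subsec:gal}).

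The main obstacle, and the reason this conjecture has resisted since 2006, is that the involutive symmetry $w^{-1}=w$ is rigid with respect to any local descent-changing move: a modification of $w$ at position $i$ is forced by symmetry to induce a simultaneous modification at position $w(i)$, and these two perturbations rarely cooperate to produce the required net change in $\des(w)$ without leaving $\iI_n$. Success along any of the routes above therefore seems to demand a genuinely new combinatorial understanding of how arcs of an involution interact with its ascent/descent pattern, or, equivalently, a natural geometric model in which an action carrying the desired $\gamma$-decomposition becomes transparent.
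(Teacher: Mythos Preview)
The statement you are attempting to prove is listed in the paper as Conjecture~\ref{conj:In(x)} and is explicitly described there as ``still open.'' There is no proof in the paper to compare your proposal against; the paper records the conjecture, provides numerical evidence, and moves on.

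Your proposal is not a proof but a research programme, and you yourself flag this: the paragraph beginning ``The main obstacle\dots'' acknowledges that none of the approaches you sketch has been carried through. The valley-hopping idea on $\iI_n$ runs into exactly the difficulty you name---flipping at $i$ forces a simultaneous change at $w(i)$, and there is no evident reason the two moves should combine to change $\des$ by $\pm 1$ while preserving $w^{-1}=w$. The tableau route via~\eqref{eq:StreIn} has the same problem in disguise: the natural promotion/evacuation-type moves on $\SYT(\lambda)$ that interact well with descents do not obviously organize into $\ZZ_2^r$-orbits of the required binomial type across all shapes simultaneously. Your backup suggestion of realizing $I_n(x)$ as the $h$-polynomial of a flag sphere would, even if successful, only reduce the problem to Gal's conjecture, which is itself open. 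In short, each branch of your plan either has a known obstruction or replaces one open problem by another; a genuine proof would need to overcome at least one of these, and your proposal does not indicate how.
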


The previous conjecture suggests that it may be 
interesting to study the distribution of the 
descent set $\Des(w)$ for $w \in \iI_n$. For 
instance, for
$S \subseteq [n-1]$ let $\beta_n(S)$ denote the 
number of permutations $w \in \fS_n$ such that
$\Des(w) = S$. It is well known 
(see~\cite[Section~1.6.3]{StaEC1}) that $\beta_n(S)
= \beta_n([n-1] \sm S)$ for every $S \subseteq 
[n-1]$ and that for each $n \ge 1$, when $S$ 
ranges over all subsets of $[n-1]$, the quantity  
$\beta_n (S)$ attains its maximum for $S = \{1, 
3, 5,\dots\} \cap [n-1]$. 

Similarly, for
$S \subseteq [n-1]$ let $\delta_n(S)$ denote the 
number of involutions $w \in \iI_n$ such that
$\Des(w) = S$. Strehl's proof of the symmetry 
of $I_n(x)$ shows that $\delta_n(S) = \delta_n
([n-1] \sm S)$ for every $S \subseteq [n-1]$.
\begin{question}
Is it true that for each $n \ge 1$, when $S$ ranges 
over all subsets of $[n-1]$, the quantity  
$\delta_n (S)$ attains its maximum for $S = 
\{1, 3, 5,\dots\} \cap [n-1]$?
\end{question}

It is natural to consider the $\bB_n$-analog
\begin{equation}
\label{eq:defInB}
I^B_n(x) \ = \ \sum_{w \in \iI^B_n} x^{\des_B(w)} 
\end{equation}
of $I_n(x)$, where $\iI^B_n := \{ w \in \bB_n: 
w^{-1} = w \}$ is the set of involutions in the 
hyperoctahedral group $\bB_n$. Presumably (but not
obviously), the right-hand side of the defining 
equation~(\ref{eq:defInB}) is unaffected when 
$\des_B$ is replaced with the Coxeter group descent
statistic $\des^B$ for $\bB_n$. For the first few 
values of $n$, we have
\[ I^B_n(x) \ = \ \begin{cases}
    1 + x, & \text{if \ $n=1$} \\
    1 + 4x + x^2, & \text{if \ $n=2$} \\
    1 + 9x + 9^2 + x^3, & \text{if \ $n=3$} \\
    1 + 17x + 40x^2 + 17x^3 + x^4, & 
                          \text{if \ $n=4$} \\
    1 + 28x + 127x^2 + 127x^3 + 28x^4 + x^5, & 
                          \text{if \ $n=5$} \\
    1 + 43x + 331x^2 + 634x^3 + 331x^4 + 43x^5 + x^6, 
                            & \text{if \ $n=6$}.  
                \end{cases} \]

The symmetry of $I^B_n(x)$ can be demonstrated by an 
analog of Strehl's argument, using basic 
properties (see~\cite[Proposition~5.1]{AAER17})
of the Robinson--Schensted correspondence of type
$B$ and replacing $\SYT(n)$ with the set of all
standard Young bitableaux with a total of $n$
squares; see~\cite[Section~3.2]{MouMS} for the 
details. 

The following analog 
\begin{equation}
\label{eq:genInB}
\sum_{n \ge 0} \, \frac{I^B_n(x)}{(1-x)^{n+1}} \, z^n 
   \ = \ \sum_{m \ge 0} \, \frac{x^m}{(1-z)^{2m+1} 
         (1-z^2)^{m^2}}, 
\end{equation}
of Equation~(\ref{eq:genIn}) for $I^B_n(x)$, where 
$I^B_0(x) := 1$, was found by 
V.~Moustakas~\cite{MouMS}, who also showed that 
$I^B_n(x)$ is unimodal for every positive integer 
$n$. As expected, the following data suggests that 
$I^B_n(x)$ may be $\gamma$-positive for every $n$:
\[ I^B_n(x) \ = \ \begin{cases}
    1 + x, & \text{if \ $n=1$} \\
    (1 + x)^2 + 2x, & \text{if \ $n=2$} \\
    (1 + x)^3 + 6x(1 + x), & \text{if \ $n=3$} \\
    (1 + x)^4 + 13x(1 + x)^2 + 8x^2, & 
                             \text{if \ $n=4$} \\
    (1 + x)^5 + 23x(1 + x)^3 + 48x^2(1 + x), & 
                             \text{if \ $n=5$} \\
    (1 + x)^6 + 37x(1 + x)^4 + 168x^2(1 + x)^2 + 
                      56x^3, & \text{if \ $n=6$}.  
                \end{cases} \]

The formulas in the following proposition, which 
seem not to have appeared in the literature 
explicitly before, express $I_n(x)$ and 
$I^B_n(x)$ in terms of Eulerian polynomials of 
types $A$ and $B$, respectively. We sketch their 
proofs, which we find interesting. We recall (see,
for instance, \cite[Section~2.3]{AAER17}) that 
the cycle form of elements of $\bB_n$ involves 
positive (paired) cycles and negative (balanced) 
cycles.
\begin{proposition} \label{prop:athIn}
For $n \ge 1$,
\begin{align}
I_n(x) & \ = \ \frac{1}{n!} \sum_{w \in \fS_n} 
             (1-x)^{n-c(w^2)} A_{c(w^2)}(x), 
                           \label{eq:athIn} \\
\nonumber & \\ 
I^B_n(x) & \ = \ \frac{1}{2^n n!} 
       \sum_{w \in \bB_n} (1-x)^{n-c_{+}(w^2)} 
       B_{c_{+}(w^2)}(x), \label{eq:athInB}
\end{align}
where $c(u)$ stands for the number of cycles of 
$u \in \fS_n$ and $c_{+}(v)$ stands for the 
number of positive cycles of $v \in \bB_n$.
\end{proposition}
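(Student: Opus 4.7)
The plan is to establish both identities by a common generating-function argument that reduces each to an instance of the exponential formula applied to the squaring map $w \mapsto w^2$. For (\ref{eq:athIn}), I would multiply both sides by $(1-x)^{-(n+1)}$ and invoke the Worpitzky-type identity $A_c(x)/(1-x)^{c+1} = \sum_{m \ge 0} (m+1)^c x^m$. Extracting the coefficient of $x^m$, the claim becomes
\[ [x^m] \, \frac{I_n(x)}{(1-x)^{n+1}} \ = \ \frac{1}{n!} \sum_{w \in \fS_n} (m+1)^{c(w^2)}. \]
Combining this with (\ref{eq:genIn}) from Theorem~\ref{thm:In(x)}(a) reduces the proposition to the exponential generating function identity
\[ \sum_{n \ge 0} \frac{z^n}{n!} \sum_{w \in \fS_n} k^{c(w^2)} \ = \ \frac{1}{(1-z)^k (1-z^2)^{k(k-1)/2}}, \]
valid for $k = m+1$ (and, by polynomial equality in $k$, for every $k$).

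I would prove this EGF identity via the exponential formula, using that an $\ell$-cycle of $w$ becomes one $\ell$-cycle of $w^2$ when $\ell$ is odd and splits into two $(\ell/2)$-cycles when $\ell$ is even. The single-cycle EGF is therefore $\sum_{\ell \text{ odd}} k z^\ell/\ell + \sum_{\ell \text{ even}} k^2 z^\ell/\ell = -\tfrac{k(k-1)}{2} \log(1+z) - \tfrac{k(k+1)}{2} \log(1-z)$, and exponentiating gives $(1+z)^{-k(k-1)/2} (1-z)^{-k(k+1)/2}$, which equals the required right-hand side via $(1-z^2)^{-k(k-1)/2} = (1-z)^{-k(k-1)/2}(1+z)^{-k(k-1)/2}$. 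For (\ref{eq:athInB}) I would run the identical strategy, using (\ref{eq:genInB}), the type $B$ Worpitzky identity $B_c(x)/(1-x)^{c+1} = \sum_{m \ge 0} (2m+1)^c x^m$, and the exponential formula for $\bB_n$ with the standard $1/(2^n n!)$ normalization.

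The principal obstacle is then the type $B$ cycle arithmetic: one must determine $c_{+}(w^2)$ on each of four subcases. A positive $\ell$-cycle of $w$ squares to one positive $\ell$-cycle if $\ell$ is odd, and to two positive $(\ell/2)$-cycles if $\ell$ is even; a negative $\ell$-cycle squares to one positive $\ell$-cycle if $\ell$ is odd, and to two negative $(\ell/2)$-cycles (hence zero positive cycles) if $\ell$ is even. Together with the fact that $\bB_\ell$ contains exactly $2^{\ell-1}(\ell-1)!$ positive and $2^{\ell-1}(\ell-1)!$ negative $\ell$-cycles, the cycle EGF sums to $-\tfrac{(k-1)^2}{4} \log(1+z) - \tfrac{(k+1)^2}{4} \log(1-z)$; setting $k = 2m+1$ yields EGF $(1+z)^{-m^2}(1-z)^{-(m+1)^2}$, which matches the right-hand side of (\ref{eq:genInB}) after writing $(1-z^2)^{-m^2} = (1-z)^{-m^2}(1+z)^{-m^2}$.
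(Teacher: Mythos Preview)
Your argument is correct but follows a genuinely different route from the paper's. The paper proves (\ref{eq:athIn}) directly from the definition of $I_n(x)$: it passes via the Robinson--Schensted correspondence to the identity $\frac{I_n(x)}{(1-x)^{n+1}} = \sum_{m \ge 1} \sum_{\lambda \vdash n} s_\lambda(1^m) x^{m-1}$, then expands each $s_\lambda$ in power sums and invokes the character-theoretic fact that $\sum_{\lambda \vdash n} \chi^\lambda(u)$ counts the square roots of $u$ in $\fS_n$; the type $B$ case is handled analogously with Poirier's signed quasisymmetric functions and the $\bB_n$-characteristic map. In particular, the paper's proof does \emph{not} assume the D\'esarm\'enien--Foata and Moustakas generating functions (\ref{eq:genIn}) and (\ref{eq:genInB}); those identities are in effect recovered along the way.

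Your approach, by contrast, takes (\ref{eq:genIn}) and (\ref{eq:genInB}) as input and trades the character theory for the exponential formula and a direct case analysis of how positive and negative cycles behave under $w \mapsto w^2$. The payoff is an elementary, entirely generating-function proof that avoids any symmetric/quasisymmetric function machinery. The cost is the dependence on Theorem~\ref{thm:In(x)}(a) and its type~$B$ analog, whereas the paper's method is self-contained from the definition and also explains how the formula generalizes to $I_{n,k}(x)$ via the character sum for $k$th roots (see (\ref{eq:athInk})). Both arguments would extend to $k$th roots: yours via the exponential formula for the cycle structure of $w^k$, the paper's via \cite{Roi14}.
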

\begin{proof}
Recall from \cite[Section~7.19]{StaEC2} that the 
fundamental quasisymmetric function associated to 
$S \subseteq [n-1]$ is defined as
\begin{equation} \label{eq:defFS(x)}
  F_{n, S} (\bx) \ =
  \sum_{\substack{1 \le i_1 \le i_2 \le \cdots 
  \le i_n \\ j \in S \,\Rightarrow\, 
  i_j < i_{j+1}}}
  x_{i_1} x_{i_2} \cdots x_{i_n},
\end{equation}
where $\bx = (x_1, x_2,\dots)$ is a sequence of 
commuting independent indeterminates, and that 
\begin{equation} \label{eq:speFS(x)}
  \sum_{m \ge 1} F_{n, S} (1^m) x^{m-1} \ = \ 
  \frac{x^{|S|}}{(1-x)^{n+1}}.
\end{equation}
Applying this equality for $S = \Des(w)$, summing
over all involutions $w \in \iI_n$, changing the 
order of summation on the left-hand side and using 
the correspondence between involutions and standard
Young tableaux gives  
\begin{align*}
\frac{I_n(x)}{(1-x)^{n+1}} & \ = \ \sum_{m \ge 1} 
\sum_{w \in \iI_n} F_{n, \Des(w)} (1^m) x^{m-1} \ = 
\sum_{m \ge 1} \sum_{Q \in \SYT(n)} F_{n, \Des(Q)} 
(1^m) x^{m-1} \\
& \ = \ \sum_{m \ge 1} \sum_{\lambda \vdash n} 
\sum_{Q \in \SYT(\lambda)} F_{n, \Des(Q)} (1^m) 
x^{m-1}. 
\end{align*}
Using the well known expansion 
\cite[Theorem~7.19.7]{StaEC2}
\begin{equation} \label{eq:sFexpan}
  s_\lambda(\bx) \ = \ \sum_{Q \in \SYT (\lambda)} 
  F_{n, \Des(Q)} (\bx)
\end{equation}
of the Schur function $s_\lambda(\bx)$ associated 
to $\lambda \vdash n$, our formula for $I_n(x)$ 
may be rewritten as
\[ \frac{I_n(x)}{(1-x)^{n+1}} \ = \ \sum_{m \ge 1} 
\sum_{\lambda \vdash n} s_\lambda (1^m) x^{m-1}. \]
Expanding now $s_\lambda(\bx)$ in the power-sum 
basis (see~\cite[Theorem~7.18.5]{StaEC2}) and 
changing the order of summation gives  
\[ \frac{I_n(x)}{(1-x)^{n+1}} \ = \ \frac{1}{n!}
\sum_{m \ge 1} \sum_{\lambda \vdash n} \sum_{u \in 
\fS_n} \chi^\lambda(u) m^{c(u)} x^{m-1} \ = \ 
\frac{1}{n!} \sum_{u \in \fS_n} \sum_{m \ge 1} 
m^{c(u)} x^{m-1} \sum_{\lambda \vdash n} 
\chi^\lambda(u), \]
where $\chi^\lambda$ is the irreducible character
of $\fS_n$ corresponding to $\lambda \vdash n$. 
The desired expression (\ref{eq:athIn}) for $I_n(x)$ 
follows by applying Worpitzky's identity 
$\sum_{m \ge 1} m^k x^{m-1} = A_k(x)/(1-x)^{k+1}$ 
and the fact (see~\cite[p.~58]{Isa76}) that 
$\sum_{\lambda \vdash n} \chi^\lambda(u)$ is equal  
to the number of $w \in \fS_n$ satisfying $w^2 = u$, 
for every $u \in \fS_n$.

The proof of the expression (\ref{eq:athInB}) for
$I^B_n(x)$ is similar, provided one uses Poirier's 
signed quasisymmetric functions~\cite{Poi98} to 
replace the functions $F_{n,S}(\bx)$. In the sequel,
we assume some familiarity 
with~\cite[Section~2]{AAER17}, especially with the
notion of signed descent set for signed permutations
and standard Young bitableaux.
 
Let $\bx = (x_1, x_2,\dots)$ and $\by = (y_1, 
y_2,\dots)$ be sequences of commuting independent 
indeterminates. Given $w \in \bB_n$, let 
$\varepsilon = (\varepsilon_1, 
\varepsilon_2,\dots,\varepsilon_n) \in \{-, +\}^n$ 
be the vector with $i$th coordinate equal to 
the sign of $w(i)$ and let $\Des(w)$ be the set 
consisting of those indices $i \in [n-1]$ for which 
either $\varepsilon_i = +$ and $\varepsilon_{i+1} = 
-$, or $\varepsilon_i = \varepsilon_{i+1}$ and 
$|w(i)| > |w(i+1)|$. The signed quasisymmetric 
function associated (in a more general setting) to 
$w$ by Poirier~\cite{Poi98} may be defined as 
\begin{equation} \label{eq:defF(xy)}
  F_w (\bx, \by) \ =
  \sum_{\substack{1 \le i_1 \le i_2 \le \cdots 
  \le i_n \\ j \in \Des(w) \, 
  \Rightarrow \, i_j < i_{j+1}}}
  z_{i_1} z_{i_2} \cdots z_{i_n},
\end{equation}
where $z_{i_j} = x_{i_j}$ if $\varepsilon_j = +$, 
and $z_{i_j} = y_{i_j}$ if $\varepsilon_j = -$;
see~\cite[Sections~2.2 and~2.4]{AAER17}. There is
a similar definition of $F_Q (\bx, \by)$ for every
standard Young bitableau $Q$; 
see~\cite[Section~2]{AAER17} for a uniform 
treatment of these functions. One can check that 
\begin{equation} \label{eq:speF(xy)}
  \sum_{m \ge 1} F_w (1^m,01^{m-1}) x^{m-1} \ = \ 
  \frac{x^{\des_B(w)}}{(1-x)^{n+1}},
\end{equation}
where $F_w (1^m,01^{m-1})$ stands for the 
specialization $x_1 = \cdots = x_m = y_2 = \cdots 
= y_m = 1$, $y_1 = 0$ and $x_i = y_i = 0$ for 
$i > m$ of the function $F_w (\bx,\by)$. Let 
$\SYB(\lambda,\mu)$ denote the set of standard 
Young bitableaux of shape $(\lambda,\mu)$.
Following the proof for $I_n(x)$ described earlier
and using the one-to-one correspondence between 
involutions in $\bB_n$ and standard Young 
bitableaux with a total of $n$ squares, as well 
as the expansion 
\begin{equation} \label{eq:sxyFexpan}
  s_\lambda(\bx) s_\mu(\by) \ = \ \sum_{Q \in \SYB 
  (\lambda,\mu)} F_Q (\bx, \by)
\end{equation}
of~\cite[Proposition~4.2]{AAER17} instead 
of~(\ref{eq:sFexpan}), results in the equation
\[ \frac{I^B_n(x)}{(1-x)^{n+1}} \ = \ \sum_{m \ge 1} 
\sum_{(\lambda, \mu) \vdash n} s_\lambda (1^m) 
s_\mu(1^{m-1}) x^{m-1}. \]
Finally, denote by $\chi^{\lambda, \mu}$ the 
irreducible $\bB_n$-character associated to the 
bipartition $(\lambda, \mu)$ of $n$. One uses 
the characteristic 
map~\cite[Equation~(2.5)]{AAER17} for $\bB_n$ to 
expand $s_\lambda (\bx) s_\mu(\by) = \ch
(\chi^{\lambda,\mu})$ in the power sum basis, the 
type $B$ Worpitzky identity $\sum_{m \ge 1} 
(2m-1)^k x^{m-1} = B_k(x) / (1-x)^{k+1}$ and the 
fact (see again~\cite[p.~58]{Isa76}) that 
$\sum_{(\lambda, \mu) \vdash n} \chi^{\lambda, 
\mu} (v)$ is equal to the number of square roots 
of $v$ in $\bB_n$, for every $v \in \bB_n$, to 
reach the desired conclusion.
\end{proof}

\begin{remark} \rm
Another proof of the symmetry of $I_n(x)$ and 
$I^B_n(x)$ can be infered from 
Proposition~\ref{prop:athIn} as follows. Replacing $x$
by $1/x$ in the right-hand sides of 
Equations~(\ref{eq:athIn}) and~(\ref{eq:athInB}) and
using the symmetry of the Eulerian polynomials $A_k(x)$
and $B_k(x)$, as well as the fact that $n - c(u^2)$ and 
$n - c_{+}(v^2)$ are even for all $u \in \fS_n$ and
$v \in \bB_n$, shows that $x^{n-1} I_n (1/x) = I_n(x)$ 
and $x^n I^B_n (1/x) = I^B(x)$, as desired. 
\qed
\end{remark}

The polynomial $I_n(x)$ affords a natural Coxeter group 
generalization. Given a finite Coxeter group $W$ 
(together with a set of simple generators), let $\iI_W 
:= \{ w \in W: w^{-1} = w\}$ be the set of involutions 
in $W$ and define
\begin{equation}
\label{eq:defIW}
I_W(x) \ := \ \sum_{w \in \iI_W} x^{\des(w)}
\end{equation}
where, as in Equation~(\ref{eq:defW(x)}), $\des(w)$ 
is the number of right descents of $w$ (the notions 
of left and right descents coincide for involutions). 
The polynomial $I_W(x)$ is equal to
$I_n(x)$, when $W$ is the symmetric group $\fS_n$,
and presumably to $I^B_n(x)$, when $W$ is the 
hyperoctahedral group $\bB_n$. 

The symmetry of $I_W(x)$ follows from the work of 
Hultman~\cite[Section~5]{Hu07}, who showed (in 
a more general context) that $I_W(x)$ is equal 
to the $h$-polynomial of a Boolean cell complex 
which is homeomorphic to a sphere. Thus, it seems 
natural to ask the following question.
\begin{question}
For which finite Coxeter groups $W$ is $I_W(x)$ 
unimodal, or even $\gamma$-positive?
\end{question}

Another possible generalization of $I_n(x)$ is  
provided by the polynomial 
\begin{equation}
\label{eq:defInk}
I_{n,k}(x) \ := \ \sum_{w \in \fS_n: w^k = e} 
                     x^{\des(w)},
\end{equation}
defined for positive integers $k$, where $e \in 
\fS_n$ stands for the identity permutation. This
polynomial is no longer symmetric for $k \ge 3$,
but it may still be worthwhile to investigate
its unimodality. Following the proof of the first 
part of Proposition~\ref{prop:athIn} and using 
the main result of~\cite{Roi14}, in the equivalent 
form stated in~\cite[Theorem~7.7]{AAER17}, to 
evaluate the quasisymmetric generating function 
of the descent set over all $k$-roots of $e \in 
\fS_n$, results in the generalization
\begin{equation}
\label{eq:athInk}
I_{n,k}(x) \ = \ \frac{1}{n!} \sum_{w \in \fS_n} 
             (1-x)^{n-c(w^k)} A_{c(w^k)}(x) 
\end{equation}
of Equation~(\ref{eq:athIn}).

\subsubsection{Multiset permutations}
\label{subsubsec:multi}

Let $\lambda = (\lambda_1, \lambda_2,\dots,\lambda_k)
\vdash n$ be an integer partition of $n$. We denote 
by $M_\lambda$ the multiset consisting of 
$\lambda_i$ copies of $i$ for each $i \in [k]$ and by 
$\fS(M_\lambda)$ the set of all permutations of 
$M_\lambda$, written in one-line notation. The sets 
of \emph{ascents, descents} and \emph{excedances} of 
$w = (a_1, a_2,\dots,a_n) \in \fS(M_\lambda)$ are 
defined as 

\begin{itemize}
\item[$\bullet$] $\Asc(w) := \{ i \in [n-1]: a_i 
                                 \le a_{i+1} \}$, 																
\item[$\bullet$] $\Des(w) := \{ i \in [n-1]: a_i 
                                     > a_{i+1} \}$, 
\item[$\bullet$] $\Exc(w) := \{ i \in [n-1]: a_i 
                                    > j_i \}$,
\end{itemize}
where $(j_1, j_2,\dots,j_n)$ is the unique 
permutation of $M_\lambda$ with no descents; the 
cardinalities of these sets are denoted by $\asc(w)$,
$\des(w)$ and $\exc(w)$, respectively. We say that 
$w$ is a \emph{Smirnov permutation} of type $\lambda$ 
if it has no two equal successive entries, and a 
\emph{derangement} of type $\lambda$ if $a_i \ne j_i$ 
for all indices $i \in [n]$. We denote by $\sS_\lambda$ 
and $\dD_\lambda$ the set of all Smirnov permutations 
and derangements of type $\lambda$, respectively. 

The following result, which is a consequence of 
\cite[Theorem~5.1]{LSW12} and its proof, generalizes 
Theorems~\ref{thm:FSSAn} and~\ref{thm:ASdn} in the 
context of permutations of multisets.   
\begin{theorem} \label{thm:LSWmulti}
{\rm 
(Linusson--Shareshian--Wachs~\cite[Section~5]{LSW12})}
For all partitions $\lambda \vdash n$,
\begin{equation}
\label{eq:LSWmulti1}
\sum_{w \in \sS_\lambda} x^{\des(w)} \ = \ 
\sum_{i=0}^{\lfloor (n-1)/2 \rfloor} 
\gamma_{\lambda,i} x^i (1+x)^{n-1-2i}
\end{equation}
and 
\begin{equation}
\label{eq:LSWmulti2}
\sum_{w \in \dD_\lambda} x^{\exc(w)} \ = \ 
\sum_{i=0}^{\lfloor n/2 \rfloor} 
\xi_{\lambda,i} x^i (1+x)^{n-2i},
\end{equation}
where

\begin{itemize}
\item[$\bullet$] $\gamma_{\lambda,i}$ is the number 
of permutations $w \in \fS(M_\lambda)$ for which 
$\Asc(w) \in \Stab([n-2])$ has $i$ elements, and
\item[$\bullet$] $\xi_{\lambda,i}$ is the number 
of permutations $w \in \fS(M_\lambda)$ for which 
$\Asc(w) \in \Stab([2,n-2])$ has $i-1$ elements.
  \end{itemize}
\end{theorem}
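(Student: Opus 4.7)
The plan is to adapt the modified Foata--Sch\"utzenberger--Strehl valley-hopping action used in Theorem~\ref{thm:FSSAn} and Theorem~\ref{thm:ASdn} to the setting of multiset permutations, handling the two formulas in parallel. For (\ref{eq:LSWmulti1}), fix $w = (a_1, \dots, a_n) \in \sS_\lambda$ and adopt the convention $a_0 = a_{n+1} = +\infty$. Classify each interior index $i \in [n]$ as a \emph{peak} ($a_{i-1} < a_i > a_{i+1}$), a \emph{valley} ($a_{i-1} > a_i < a_{i+1}$), an \emph{ascent run} ($a_{i-1} < a_i < a_{i+1}$), or a \emph{descent run} ($a_{i-1} > a_i > a_{i+1}$). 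For each index $i$ that is an ascent or descent run, define an involution $\varphi_i$ that removes $a_i$ and reinserts it at the unique other position in its maximal monotone run where it remains a non-peak non-valley entry, swapping it between ascent run and descent run status. Because $w$ is Smirnov (no equal adjacent entries), the reinsertion can be carried out without ever creating equal neighbors; this is the point where the Smirnov hypothesis is used.

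The key observation is that the $\varphi_i$ commute (they act on disjoint groups of positions), so they generate an abelian group action on $\sS_\lambda$ whose orbits are indexed by the Smirnov permutations $w^\ast$ having no descent run, equivalently $\Des(w^\ast) \in \Stab([n-2])$. By the symmetry of the construction, an orbit with representative $w^\ast$ satisfying $|\Asc(w^\ast)| = i$ has $n-1-2i$ freely flippable positions, and each flip of $\varphi_i$ toggles a single descent. Consequently,
\[
\sum_{w \in \mathrm{orbit}(w^\ast)} x^{\des(w)} \; = \; x^i (1+x)^{n-1-2i},
\]
and summation over orbits yields (\ref{eq:LSWmulti1}).

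For (\ref{eq:LSWmulti2}), I would run the analogous argument after transporting the excedance statistic on $\dD_\lambda$ to a descent-like statistic via a multiset extension of the fundamental bijection of Foata--Sch\"utzenberger (precisely the ingredient used to derive the fifth interpretation in Theorem~\ref{thm:FSSAn} and the first interpretation in Theorem~\ref{thm:ASdn}). Under this encoding, the derangement condition becomes a prohibition against double excedances which translates, on the sequence side, into a prohibition against descent runs at the first and last interior positions; this is precisely what produces the stability requirement $\Stab([2,n-2])$ in place of $\Stab([n-2])$. The valley-hopping action then applies verbatim to the resulting sequences.

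The main technical obstacle will be checking that both actions are well-defined: in the Smirnov case, that $\varphi_i$ never produces two equal adjacent entries, and in the derangement case, that the corresponding action never creates an index $i$ with $a_i = j_i$. In both instances the resolution is local, reducing to a case analysis of what the value $a_i$ can equal relative to the values of its two new neighbors after the hop; the restriction to $\sS_\lambda$ (respectively $\dD_\lambda$) is exactly what makes these local checks succeed. Once the action is in place, the orbit-counting step is routine and mirrors the classical argument of Theorem~\ref{thm:FSSAn}.
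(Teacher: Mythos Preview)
Your valley-hopping plan for~(\ref{eq:LSWmulti1}) is the right idea, but the crucial claim ``because $w$ is Smirnov, the reinsertion can be carried out without ever creating equal neighbors'' is false. Take $w=(2,1,2)\in\sS_\lambda$ with $\lambda=(2,1)$ and $a_0=a_4=\infty$: position~$1$ is a descent run, and hopping $a_1=2$ to the right of the valley produces $(1,2,2)$, which is not Smirnov. This is not a pathology one can argue away locally; it is the generic situation whenever a value being hopped equals the value just past the next peak or valley. The paper's proof of~(\ref{eq:Ge3}) in Section~\ref{subsec:hopping} deals with exactly this by allowing only the \emph{valid} hops (those that stay Smirnov), so orbits are indexed by minimal Smirnov representatives which in general still have some double ascents, and then supplying a second bijection (applying the remaining \emph{invalid} hops) from those minimal representatives to \emph{all} multiset words $u\in\fS(M_\lambda)$ with $\Asc(u)\in\Stab([n-2])$. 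Note that the latter set is strictly larger than the Smirnov words with that ascent condition (e.g., $(3,2,2,1)$ qualifies), so your description of the orbit index set is also off. Once you add this second step, your argument for~(\ref{eq:LSWmulti1}) becomes essentially the paper's.

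For~(\ref{eq:LSWmulti2}) the gap is more serious. The Foata--Sch\"utzenberger fundamental transformation rests on the cycle decomposition of an ordinary permutation; multiset permutations have no such decomposition, and there is no ``multiset extension of the fundamental bijection'' with the properties you need waiting on the shelf. Your reading of what the derangement condition becomes is also garbled: $w\in\dD_\lambda$ means $a_i\ne j_i$ for all $i$, not ``no double excedance,'' and the target condition $\Asc(u)\in\Stab([2,n-2])$ forces $1,n-1\in\Des(u)$ together with no two consecutive ascents---it is not a prohibition on descent runs at the ends. The proof in~\cite[Section~5]{LSW12} does not proceed by transplanting Foata's bijection; connecting $\sum_{w\in\dD_\lambda}x^{\exc(w)}$ to the stated $\gamma$-coefficients requires a genuinely different bijective or poset-theoretic argument, and that is the missing idea in your sketch.
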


\subsubsection{Colored permutations}
\label{subsubsec:colored}

Theorem~\ref{thm:ASdn} has been generalized to the 
wreath product group $\ZZ_r \wr \fS_n$. Recall that 
the elements of $\ZZ_r \wr \fS_n$ can be viewed as 
$r$-colored permutations of the form $\sigma \times 
\bz$, where $\sigma = (\sigma(1), 
\sigma(2),\dots,\sigma(n)) \in \fS_n$ and $\bz = (z_1, 
z_2,\dots,z_n) \in \{0, 1,\dots,r-1\}^n$ (the number 
$z_i$ is thought of as the color assigned to 
$\sigma(i)$). To define the notions of descent and 
excedance for colored permutations, we follow 
\cite[Section~2]{Stei92} \cite[Section~3.1]{Stei94}. 
A \emph{descent} of $\sigma \times \bz \in \ZZ_r 
\wr \fS_n$ is any index $i \in [n]$ such that either 
$z_i > z_{i+1}$, or $z_i = z_{i+1}$ and $\sigma(i)
> \sigma(i+1)$, where $\sigma(n+1) := n+1$ and 
$z_{n+1} := 0$ (in particular, $n$ is a descent if 
and only if $\sigma(n)$ has nonzero color). An 
\emph{excedance} of $\sigma \times \bz$ is any 
index $i \in [n]$ such that either $\sigma(i) > i$, 
or $\sigma(i) = i$ and $z_i > 0$. Let $\des(w)$ and 
$\exc(w)$ be the number of descents and excedances,
respectively, of $w \in \ZZ_r \wr \fS_n$. The Eulerian 
polynomial 
\begin{equation}
\label{eq:defAnr}
A_{n,r}(x) \ := \ \sum_{w \in \ZZ_r \wr \fS_n} 
x^{\des(w)} \ = \ \sum_{w \in \ZZ_r \wr \fS_n} 
x^{\exc(w)} 
\end{equation}
for $\ZZ_r \wr \fS_n$ (the second equality being
the content of \cite[Theorem~3.15]{Stei92} 
\cite[Theorem~15]{Stei94}) was defined and studied 
by E.~Steingr\'imsson~\cite{Stei92, Stei94}, who 
showed it to be real-rooted for all $n, r$; it 
reduces to the Eulerian polynomials $A_n(x)$ and 
$B_n(x)$ when $r=1$ and $r=2$, respectively. The 
derangement polynomial for $\ZZ_r \wr \fS_n$ is 
defined as
\begin{equation}
\label{eq:defdnr}
d_{n,r}(x) \ := \ \sum_{w \in \dD_{n,r}} x^{\exc(w)}, 
\end{equation}
where $\dD_{n,r}$ denotes the set of all derangements
(colored permutations without fixed points of zero 
color) in $\ZZ_r \wr \fS_n$.
This polynomial was introduced and studied by C.~Chow 
and T.~Mansour~\cite{CM10}, who showed 
that it is real-rooted for all positive integers 
$n, r$; it reduces to $d_n(x)$ for $r=1$. For
$r=2$ it was first studied by W.~Chen, R.~Tang and 
A.~Zhao~\cite{CTZ09} and, independently, by 
C.~Chow~\cite{Ch09}. For the first few values of $n$, 
we have
\[ d_{n,2} (x) \ = \ \begin{cases}
    x, & \text{if \ $n=1$} \\
    4x + x^2, & \text{if \ $n=2$} \\
    8x + 20x^2 + x^3, & \text{if \ $n=3$} \\
    16x + 144x^2 + 72x^3 + x^4, & \text{if \ $n=4$} \\
    32x + 752x^2 + 1312x^3 + 232x^4 + x^5, & 
                           \text{if \ $n=5$} \\
    64x + 3456x^2 + 14576x^3 + 9136x^4 + 716x^5 + x^6, 
                                  & \text{if \ $n=6$}.  
                \end{cases} \]

Even though $d_{n,r}(x)$ and $A_{n,r}(x)$ are no 
longer symmetric for $r \ge 2$ and $r \ge 3$, 
respectively, the theory of $\gamma$-positivity is 
still relevant to their study. Indeed, the 
following theorem shows that $d_{n,r}(x)$ 
is equal to the sum of two $\gamma$-positive, hence
symmetric and unimodal, polynomials whose centers of
symmetry differ by 1/2 and thus implies its 
unimodality. We denote by $\Des(w)$ the set of 
descents of $w \in \ZZ_r \wr \fS_n$ and set $\Asc(w) 
:= [n] \sm \Des(w)$. For $r=1$, these notions differ 
from the ones already defined for permutations $w \in 
\fS_n$ only in the fact that $\Asc(w)$ is now forced 
to contain $n$. We call $w \in \ZZ_r \wr \fS_n$ 
\emph{down-up}, if $\Des(w) = \{1, 3, 5,\dots\} 
\cap [n]$. 
\begin{theorem} \label{thm:athdnr}
{\rm (Athanasiadis~\cite[Theorem~1.3 and 
Corollary~6.1]{Ath14})} For all positive integers 
$n, r$
\begin{equation} \label{eq:athdnr}
  d_{n,r} (x) \ = \ \sum_{i=0}^{\lfloor n/2 \rfloor} 
  \xi^+_{n,r,i} \, x^i (1+x)^{n-2i} \ \, + 
  \sum_{i=0}^{\lfloor (n+1)/2 \rfloor} \xi^-_{n,r,i}
       \, x^i (1+x)^{n+1-2i},
\end{equation}
where $\xi^+_{n, r, i}$ is the number of permutations 
$w \in \ZZ_r \wr \fS_n$ for which $\Asc(w) \in 
\Stab([2,n])$ has $i$ elements and contains $n$, and 
$\xi^-_{n, r, i}$ is the number of permutations $w \in 
\ZZ_r \wr \fS_n$ for which $\Asc(w) \in \Stab([2,n-1])$ 
has $i-1$ elements.

In particular, $d_{n,r} (x)$ is unimodal with a peak 
at $\lfloor (n+1)/2 \rfloor$. Moreover, if $r \ge 2$, 
then $(-1)^{\lfloor \frac{n+1}{2} \rfloor} \, 
d^r_n (-1)$ is equal to the number of down-up colored 
permutations in $\ZZ_r \wr \mathfrak{S}_n$.
\end{theorem}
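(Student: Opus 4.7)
The plan is to establish~(\ref{eq:athdnr}) via an adaptation of the Foata--Strehl valley-hopping action to the colored setting. The two summands on the right-hand side, having centers of symmetry $n/2$ and $(n+1)/2$, will be proved $\gamma$-positive separately and will correspond to two disjoint families of orbits under the action, distinguished by the behavior of the rightmost position.

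First, I would convert excedances of colored derangements into descents of colored permutations by means of a variant of the fundamental transformation of Foata--Sch\"utzenberger, in the spirit of the third and fourth interpretations of $\xi_{n,i}$ in Theorem~\ref{thm:ASdn}. This recasts $d_{n,r}(x)$ as a $\des$-generating polynomial over a suitable subset of $\ZZ_r \wr \fS_n$ and naturally explains the appearance of the stable-set constraints $\Stab([2,n])$ and $\Stab([2,n-1])$. For each colored permutation $w = \sigma \times \bz$ and each index $i \in [n]$, I would then classify $i$ as a single ascent, single descent, double ascent, or double descent (using the order $<_r$ and the boundary conventions $\sigma(n+1) := n+1$, $z_{n+1} := 0$), and define commuting involutions $\varphi_i$ that toggle the double status at position $i$ while preserving all other positions and all colors. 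The orbit of $w$ under the resulting $\ZZ_2^k$-action contributes $x^d(1+x)^m$ to the descent-generating polynomial, where $d$ is the number of single descents and $m$ the number of double positions common to all elements of the orbit.

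The two terms of~(\ref{eq:athdnr}) will then correspond to the dichotomy at position $n$. Since in the colored setting $n \in \Des(w)$ precisely when $\sigma(n)$ has nonzero color, and the hopping action preserves colors, this property is an orbit invariant. Orbits whose representative has $n$ as a single ascent (forcing $\sigma(n)$ to have color zero) yield $\Asc(w) \in \Stab([2,n])$ of size $i$ containing $n$ and contribute the $\xi^+$-terms $x^i(1+x)^{n-2i}$; the remaining orbits yield representatives with $\Asc(w) \in \Stab([2,n-1])$ and produce the $\xi^-$-terms $x^i(1+x)^{n+1-2i}$, the increase by one in the exponent reflecting that $n$ is now absorbed into the single-descent count rather than the ascent set. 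The main obstacle will be the consistent definition of the involutions $\varphi_i$ at the boundary together with the verification that the colored-derangement condition is preserved under the action. Once~(\ref{eq:athdnr}) is established, the unimodality of $d_{n,r}(x)$ with peak at $\lfloor(n+1)/2\rfloor$ follows because it is a sum of two unimodal polynomials whose centers differ by $1/2$. Finally, at $x = -1$ the only surviving contributions are those for which the $(1+x)$-factor has exponent zero (that is, $n = 2i$ in the first sum when $n$ is even, or $n+1 = 2i$ in the second sum when $n$ is odd); in either case the surviving coefficient $\xi^\pm$ counts precisely the down-up colored permutations in $\ZZ_r \wr \fS_n$, with sign $(-1)^{\lfloor(n+1)/2\rfloor}$.
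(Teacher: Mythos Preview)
Your approach is genuinely different from the one the paper records. The paper does not prove Theorem~\ref{thm:athdnr} via valley hopping at all; as stated just after the theorem, the proof in \cite{Ath14} proceeds by specializing Theorem~\ref{thm:LSWrees} (the Linusson--Shareshian--Wachs formula for the M\"obius number of a Rees product) to the case where $\pP^{-}$ is the poset of $r$-colored subsets of $[n]$. The two summands in~(\ref{eq:athdnr}) then arise directly from the two sums in~(\ref{eq:lsw2}), with the combinatorial interpretations of $\xi^\pm_{n,r,i}$ coming from the known description of $b_\pP(S)$ for that poset. No bijection converting $\exc$ to $\des$ and no group action on colored permutations is invoked.

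Your proposal is precisely the kind of argument the paper explicitly asks for as an open problem (``Find a direct combinatorial proof of Theorem~\ref{thm:athdnr}''), and the paper further singles out as unresolved the task of finding, for general $r$, an interpretation of $\xi^\pm_{n,r,i}$ via permutations without double excedance and proving its equivalence to the stated one. So while your high-level structure is correct---the dichotomy at position $n$ according to whether $\sigma(n)$ has color zero does separate the $\xi^+$ and $\xi^-$ contributions, and the centers of symmetry line up---the step you flag as ``the main obstacle'' is the entire difficulty. Specifically, you need a colored analog of the fundamental transformation that carries $\dD_{n,r}$ with $\exc$ to a subset of $\ZZ_r \wr \fS_n$ with $\des$ that is (i) closed under your involutions $\varphi_i$, and (ii) has orbit representatives matching the $\Stab$-conditions exactly. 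For $r=1$ such machinery exists (cf.\ \cite{AS12, SuWa14}); for $r=2$ Shin and Zeng \cite{SZ16} supply something related but via continued fractions rather than a group action; for general $r$ this is, as far as the survey indicates, not in the literature. Your outline does not supply this bijection, and without it the proof is not complete.
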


The following question seems natural to ask.
\begin{question}
Is there a positive expansion for $A_{n,r}(x)$ of the 
type provided by Equation~(\ref{eq:athdnr}) 
for $d_{n,r}(x)$? Is there a $(p,q)$-analog which
reduces to Theorem~\ref{thm:SWAn} for $r=1$?
\end{question}
\begin{problem}
Find a $(p,q)$-analog of Theorem~\ref{thm:athdnr}
which reduces to Theorem~\ref{thm:SWdn} for $r=1$. 
\end{problem}

The proof of Theorem~\ref{thm:athdnr} uses a general
result of Linusson, Shareshian and 
Wachs~\cite[Corollary~3.8]{LSW12} on the M\"obius 
function of the Rees product for posets, which we 
discuss in Section~\ref{subsec:homo}.
\begin{problem}
Find a direct combinatorial proof of 
Theorem~\ref{thm:athdnr}. 
\end{problem}

In the case $r=2$, further discussed in the
sequel, a different combinatorial interpretation for
the numbers $\xi^+_{n,2,i}$ and $\xi^{-}_{n,2,i}$,
in terms of permutations with no double excedance, 
was found by Shin and Zeng~\cite[Section~2]{SZ16}.
This suggests the problem to find a combinatorial 
interpretation for the numbers $\xi^+_{n,r,i}$ and 
$\xi^{-}_{n,r,i}$ which generalizes the first 
interpretation of the numbers $\xi_{n,i}$, stated 
in Theorem~\ref{thm:ASdn}, and prove directly its 
equivalence to the interpretation of 
Theorem~\ref{thm:athdnr}. 

Let us denote by $d^+_{n,r}(x)$ and $d^{-}_{n,r}(x)$ 
the first and second summand, respectively, in the 
right-hand side of~(\ref{eq:athdnr}). A geometric 
interpretation of the former will be discussed in 
Section~\ref{subsec:exa}. The following statement 
appeared as \cite[Conjecture~3.7.10]{Sav13} in the 
special case $r=2$; see also 
\cite[Question~4.11]{Ath16a}.
\begin{conjecture} \label{conj:dnr}
The polynomials $d^+_{n,r}(x)$ and $d^{-}_{n,r}(x)$ 
are real-rooted for all positive integers $n,r$. 
\end{conjecture}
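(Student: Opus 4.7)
The plan is to prove real-rootedness by induction on $n$, carrying a joint interlacing property for $d^+_{n,r}(x)$ and $d^-_{n,r}(x)$ which is strong enough to be propagated through the induction step. First, I would extract a combinatorial recursion for the coefficients $\xi^\pm_{n,r,i}$ from their ascent-set description in Theorem~\ref{thm:athdnr}. Given a colored permutation $w \in \ZZ_r \wr \fS_n$ whose ascent set lies in $\Stab([2,n])$ with $n \in \Asc(w)$ (counted by $\xi^+$), or in $\Stab([2,n-1])$ (counted by $\xi^-$), one expects a clean recursion by removing the entry in position $n$ and recording its color, its value, and whether $n \in \Asc(w)$. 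Translating this into a polynomial identity, I would aim for a matrix recurrence
\begin{equation*}
\begin{pmatrix} d^+_{n,r}(x) \\ d^-_{n,r}(x) \end{pmatrix} \ = \ M_n(x) \begin{pmatrix} d^+_{n-1,r}(x) \\ d^-_{n-1,r}(x) \end{pmatrix},
\end{equation*}
where $M_n(x)$ is a $2 \times 2$ matrix of polynomials in $x$ whose degrees are dictated by the degrees $n$ and $n+1$ of $d^{\pm}_{n,r}$; the operator $x(1+x)\frac{d}{dx}$, which is central to the Foata--Strehl-type actions underlying the $\Stab$-admissibility interpretations, will likely appear as well.

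Second, I would invoke a standard preservation principle: if the entries of $M_n(x)$ satisfy appropriate sign/interlacing conditions at the (necessarily negative) real roots of $d^\pm_{n-1,r}(x)$, then mutual interlacing of $d^+_{n-1,r}(x)$ and $d^-_{n-1,r}(x)$ propagates to $d^+_{n,r}(x)$ and $d^-_{n,r}(x)$, and in particular both remain real-rooted. The known real-rootedness of the sum $d_{n,r}(x) = d^+_{n,r}(x) + d^-_{n,r}(x)$ due to Chow--Mansour serves both as a sanity check and as a constraint on $M_n(x)$, since it forces $d^+_{n,r}$ and $d^-_{n,r}$ to be compatible in the sense of Chudnovsky--Seymour.

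The main obstacle will be the verification of the interlacing condition imposed by $M_n(x)$. Two complications merit mention. First, since $d^+_{n,r}$ has degree $n$ and center of symmetry $n/2$, whereas $d^-_{n,r}$ has degree $n+1$ and center $(n+1)/2$, the natural interlacing relation between the two is slightly asymmetric, and the inductive base cases require careful calibration. Second, because the recursion mixes the two polynomials, real-rootedness must be propagated through a matrix rather than a scalar criterion, which is typically more delicate than the single-sequence analogue. As an alternative route, especially tractable for $r=2$ where the Shin--Zeng interpretation~\cite{SZ16} provides an additional combinatorial handle, one could attempt to realize $d^\pm_{n,r}(x)$ as weighted enumerations of Motzkin-type lattice paths and deduce real-rootedness from positivity in an associated J-fraction expansion.
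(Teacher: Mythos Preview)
The statement you attempt to prove is a \emph{conjecture} in the paper, not a theorem: the paper offers no proof whatsoever of Conjecture~\ref{conj:dnr}, and merely records in a note added in revision that the result was subsequently established by Br\"and\'en--Solus and, independently, by Gustafsson--Solus. So there is no ``paper's own proof'' to compare your proposal against.

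As for the proposal itself, it is a plausible research plan but not a proof. The two central steps are both left as hopes rather than facts: you \emph{aim for} a $2\times 2$ matrix recurrence relating $(d^+_{n,r}, d^-_{n,r})$ to $(d^+_{n-1,r}, d^-_{n-1,r})$ without actually deriving one from the ascent-set description in Theorem~\ref{thm:athdnr}, and you \emph{would invoke} an interlacing-preservation principle without specifying which one or checking that the (unknown) entries of $M_n(x)$ satisfy its hypotheses. The difficulty is real: removing the entry in position $n$ from a colored permutation with $\Asc(w)\in\Stab([2,n])$ does not obviously land in the corresponding set for $n-1$ (the constraint on position $n-1$ changes), so a clean two-term recursion of the shape you write is not automatic; and even once one has a recursion, the matrix version of interlacing preservation requires delicate sign conditions that can easily fail. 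The approaches that ultimately succeeded (see the references in the note added in revision) do proceed via interlacing and symmetric decompositions, so your instincts are in the right direction, but the actual work lies precisely in the steps you have left unspecified.
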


We close our discussion with generalizations of 
$A_n(x)$ and $d_n(x)$ to $r$-colored permutations 
which are different from, but related to (especially 
for $r=2$), $A_{n,r}(x)$ and $d_{n,r}(x)$. 
The \emph{flag excedance} of a colored permutation 
$w = \sigma \times \bz \in \ZZ_r \wr \fS_n$ was 
defined by E.~Bagno and D.~Garber~\cite{BG06} as
\begin{equation} \label{eq:deffexc}
  \fexc(w) \ = \ r \cdot \exc (\sigma) \ + \, 
                                  \sum_{i=1}^n z_i,
\end{equation}
where the sum of the colors $z_i$ takes place in 
$\ZZ$. The generating polynomial of $\fexc$ over 
all $r$-colored permutations factors nicely 
\cite{BB07, FH11} (see also 
\cite[Proposition~2.2]{Ath14}) as 
\begin{equation}
\label{eq:defAfnr}
A^\fexc_{n,r}(x) \ := \ \sum_{w \in \ZZ_r \wr \fS_n} 
x^{\fexc(w)} \ = \ (1 + x + x^2 + \cdots + x^{r-1})^n 
\, A_n(x). 
\end{equation}
The polynomial
\begin{equation}
\label{eq:deffnr}
f_{n,r}(x) \ := \ \sum_{w \in \dD_{n,r}} x^{\fexc(w)} 
\end{equation}
was studied by P.~Mongelli~\cite[Section~3]{Mo13} in 
the case $r=2$ and by Z.~Lin~\cite[Section~2.4.1]{Lin14} 
and by H.~Shin and J.~Zeng~\cite{SZ16} for any 
$r \ge 1$; it reduces to $d_n(x)$ for $r=1$. For 
$r=2$ and for the first few values of $n$, we have
\[ f_{n,2} (x) \ = \ \begin{cases}
    x, & \text{if \ $n=1$} \\
    x + 3x^2 + x^3, & \text{if \ $n=2$} \\
    x + 7x^2 + 13x^3 + 7x^4 + x^5, & 
                                 \text{if \ $n=3$} \\
    x + 15x^2 + 57x^3 + 87x^4 + 57x^5 + 15x^6 + x^7, &  
                                  \text{if \ $n=4$} \\
    x + 31x^2 + 201x^3 + 551x^4 + 761x^5 + 551x^6 + 
                           201x^7 + 31x^8 + x^9, & 
                           \text{if \ $n=5$}.  
                \end{cases} \]
The symmetry of $f_{n,r}(x)$ is nearly obvious (see 
\cite[Proposition~2.5]{Ath14}); its unimodality was 
shown in \cite[Theorem~2.4.11]{Lin14} 
\cite[Corollary~4]{SZ16} (where the first reference 
offers a generalization and $q$-analog as well).
The connection between the generating polynomials
for $\exc$ and $\fexc$, respectively, over $\ZZ_r 
\wr \fS_n$ and $\dD_{n,r}$ are the formulas 
\begin{align}
A_{n,r}(x) & \ = \ \widetilde{{\rm E}}_r 
       (A^\fexc_{n,r}(x)),   \label{eq:athAnrfexc} \\
d_{n,r}(x) & \ = \ \widetilde{{\rm E}}_r (f_{n,r}(x)),     
                           \label{eq:athdnrfexc}
\end{align}
where $\widetilde{{\rm E}}_r: \RR[x] \to \RR[x]$ is 
the linear operator defined by setting 
$\widetilde{{\rm E}}_r (x^m) = x^{\lceil m/r \rceil}$
for $m \in \NN$; see \cite[Proposition~2.3]{Ath14} and 
its proof. Equation~(\ref{eq:athdnrfexc}) implies (see
\cite[Section~5]{Ath14}) that
\begin{align}
d^+_{n,r}(x) & \ = \ \sum_{w \in (\dD_{n,r})^b} 
       x^{\fexc(w)/r} ,   \label{eq:athdnr+fexc} \\
& \nonumber \\
d^{-}_{n,r}(x) & \ = \ \sum_{w \in \dD_{n,r} \sm   
 (\dD_{n,r})^b} x^{\lceil \frac{\fexc(w)}{r} \rceil},     
                              \label{eq:athdnr-fexc}
\end{align}
where $(\dD_{n,r})^b$ stands for the set of derangements 
$w \in \dD_{n,r}$ such that $\fexc(w)$ is divisible by 
$r$ (colored permutations with this property are 
sometimes called \emph{balanced}). 

The polynomial $f_{n,r}(x)$ is not $\gamma$-positive
for $r \ge 3$, the case $r=2$ being more interesting.
As discussed in \cite[Section~5]{Ath14}, it follows 
from Equations~(\ref{eq:athdnr+fexc}) 
and~(\ref{eq:athdnr-fexc}) that $d_{n,2}(x)$ and $f_{n,2}
(x)$ determine one another in a simple way. The 
$\gamma$-positivity of $f_{n,2}(x)$ (which is not 
real-rooted for all $n$) follows from the formula
in \cite[Proposition~3.4]{Mo13}
\begin{equation}
\label{eq:Mofn2}
f_{n,2}(x) \ = \ \sum_{k=0}^n \binom{n}{k} x^k 
                 (1+x)^{n-k} \, d_{n-k}(x) 
\end{equation}
and the $\gamma$-positivity of $d_n(x)$ 
(Mongelli~\cite[Conjecture~8.1]{Mo13} further 
conjectured that $f_{n,2}(x)$ is log-concave for all 
$n$). The corresponding $\gamma$-coefficients for the 
first few values of $n$ are given by
\[ f_{n,2} (x) \ = \ \begin{cases}
    x, & \text{if \ $n=1$} \\
    x(1 + x)^2 + x^2, & \text{if \ $n=2$} \\
    x(1 + x)^4 + 3x^2(1 + x)^2 + x^3, & 
                                 \text{if \ $n=3$} \\
    x(1 + x)^6 + 9x^2(1 + x)^4 + 6x^3(1 + x)^2 + x^4, &  
                                  \text{if \ $n=4$} \\
    x(1 + x)^8 + 23x^2(1 + x)^6 + 35x^3(1 + x)^4 + 
    10x^4(1 + x)^2 + x^5, &     \text{if \ $n=5$}.  
                \end{cases} \]
The following elegant combinatorial interpretation 
for these coefficients was found by Shin and 
Zeng~\cite{SZ16}.
\begin{theorem} \label{thm:SZfn2}
{\rm (Shin--Zeng \cite[Cor.~5]{SZ16})}
We have $f_{n,2}(x) = \sum_{i=0}^{\lfloor n/2 \rfloor} 
\hat{\xi}_{n,i} x^i (1+x)^{2n-2i}$, where 
$\hat{\xi}_{n,i}$ is equal to the number of 
permutations $w \in \fS_n$ with $i$ weak excedances 
and no double excedance. 
In particular, $f_{n,2}(-1) = (-1)^n$ for $n \ge 1$. 
\end{theorem}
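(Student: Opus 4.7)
The plan is to combine Mongelli's formula~(\ref{eq:Mofn2}) with the $\gamma$-expansion of the derangement polynomials provided by the first interpretation in Theorem~\ref{thm:ASdn}. Substituting $d_{n-k}(x) = \sum_{j} \xi_{n-k,j}\, x^j (1+x)^{n-k-2j}$ into Mongelli's identity and regrouping by $i := k+j$, I obtain
\[
f_{n,2}(x) \ = \ \sum_{i} x^i (1+x)^{2n-2i} \sum_{k+j=i} \binom{n}{k} \xi_{n-k,j}.
\]
Thus it suffices to show that $\sum_{k+j=i} \binom{n}{k} \xi_{n-k,j}$ equals $\hat{\xi}_{n,i}$, the number of $w \in \fS_n$ with exactly $i$ weak excedances and no double excedance.

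To establish this combinatorial identity, I would decompose any $w \in \fS_n$ into its fixed-point set $F$ and the derangement $\sigma$ obtained by restricting $w$ to $[n] \sm F$ and identifying $[n] \sm F$ with $[n-|F|]$ in the unique order-preserving way. The two facts needed are straightforward from the definitions: (i) a fixed point of $w$ is never a double excedance of $w$, and the order-preserving identification preserves both excedances and double excedances of the non-fixed-point block, so $w$ has no double excedance if and only if $\sigma \in \dD_{n-|F|}$ does not; and (ii) the weak excedances of $w$ are exactly the $|F|$ fixed points together with the (strict) excedances of $\sigma$ at non-fixed-point positions. Choosing $k = |F|$ fixed positions in $\binom{n}{k}$ ways and then, by the first interpretation in Theorem~\ref{thm:ASdn}, a derangement in $\dD_{n-k}$ with $j$ excedances and no double excedance yields a $w$ with $k+j$ weak excedances and no double excedance; summing over $k+j=i$ produces $\hat{\xi}_{n,i}$ exactly.

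For the evaluation $f_{n,2}(-1) = (-1)^n$, I would substitute $x=-1$ in the $\gamma$-expansion: every term vanishes except the one with $2n-2i=0$, giving $f_{n,2}(-1) = \hat{\xi}_{n,n}\,(-1)^n$. A permutation with $n$ weak excedances satisfies $w(i) \ge i$ for every $i \in [n]$, and a descending induction starting from $i=n$ forces $w$ to be the identity, which has no double excedance; hence $\hat{\xi}_{n,n}=1$. There is no serious obstacle in this plan, as the argument is essentially algebraic rewriting followed by a standard fixed-point/derangement decomposition of permutations; the only step meriting a careful check is the claim that the excedance structure (and the absence of double excedances) transfers cleanly between $w$ and the induced $\sigma$ under the order-preserving identification, which is immediate from the definitions.
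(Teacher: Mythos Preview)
Your argument is correct. The algebraic rearrangement of Mongelli's identity~(\ref{eq:Mofn2}) using the $\gamma$-expansion of $d_{n-k}(x)$ from Theorem~\ref{thm:ASdn} is exactly right, and the fixed-point/derangement decomposition cleanly identifies the resulting coefficient $\sum_{k+j=i}\binom{n}{k}\xi_{n-k,j}$ with $\hat\xi_{n,i}$. The two verifications you flag (that excedances and double excedances transfer under the order-preserving relabeling, and that only the identity has $n$ weak excedances) are indeed routine.

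As for comparison with the paper: the paper does not give its own proof of Theorem~\ref{thm:SZfn2}; it merely records the result as due to Shin and Zeng~\cite{SZ16}, whose methods in that work are based on bijections and continued-fraction expansions rather than on Mongelli's formula. What the paper \emph{does} say, just before the statement, is that the bare $\gamma$-positivity of $f_{n,2}(x)$ follows from~(\ref{eq:Mofn2}) together with the $\gamma$-positivity of $d_n(x)$ --- precisely your starting point. You then go further and pin down the combinatorial meaning of the coefficients by the fixed-point decomposition. This is the same mechanism the paper itself employs in Remark~\ref{rem:dnBinAn} to interpret the $\gamma$-coefficients of the binomial Eulerian polynomial $\widetilde A_n(x)$ (compare your identity $\hat\xi_{n,i}=\sum_k\binom{n}{k}\xi_{n-k,i-k}$ with Equation~(\ref{eq:binAndn-gamma})). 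So your route is more elementary than the original Shin--Zeng derivation and is fully in the spirit of the surrounding text.

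One incidental remark: your computation $\hat\xi_{n,n}=1$ shows that the summation in the displayed statement should run up to $i=n$ rather than $\lfloor n/2\rfloor$; the data listed just before the theorem (e.g.\ the terminal $x^n$ term for each $n$) confirm this.
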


\subsection{Coxeter--Narayana polynomials}
\label{subsec:nara}

Let $W$ be a finite Coxeter group with set of simple
generators $S$ and let $\ell_T: W \to \NN$ be the 
length (known as \emph{absolute length}) function 
with respect to the generating set $T = \{ wsw^{-1}: 
s \in S, w \in W\}$ of all reflections in $W$. Choose
a Coxeter element $c \in W$ and set  
\[ \nc_W = \nc_W(c) \ : = \ \{ w \in W: \, \ell_T(w)
             + \ell_T(w^{-1}c) = \ell_T(c) \}. \]
This set, endowed with a natural partial order which 
is graded by absolute length, was introduced by 
D.~Bessis~\cite{Be03} and independently by T.~Brady 
and C.~Watt~\cite{BW02} as the \emph{poset of 
noncrossing partitions} associated to $W$; see 
\cite[Chapter~2]{Arm09} for more information (the 
isomorphism type of this poset does not depend on 
the choice of $c$). The generating polynomial
\begin{equation}
\label{eq:defC(W,x)}
\cat(W, x) \ := \ \sum_{w \in \nc_W} x^{\ell_T(w)} 
\end{equation}
of the absolute length function over $\nc_W$ admits numerous 
algebraic, combinatorial and geometric interpretations
and plays an important role in the subject of Catalan 
combinatorics of Coxeter groups~\cite[Chapter~5]{Arm09}.
For the symmetric group $\fS_n$, its coefficients, known
as \emph{Narayana numbers}, refine the $n$th Catalan 
number; explicitly, we have: 
\begin{equation}
\label{eq:ABD-C(W,x)}
    \cat(W, x) \ = \ \begin{cases}
    {\displaystyle \sum_{i=0}^n \, \frac{1}{i+1} \, 
    \binom{n}{i}   
    \binom{n+1}{i} x^i}, & \text{if \ $W = \fS_{n+1}$} 
    \\ & \\
    {\displaystyle \sum_{i=0}^n \binom{n}{i}^2 x^i},
    & \text{if \ $W = \bB_n$} \\ & \\
    {\displaystyle \sum_{i=0}^n \binom{n}{i} \left( 
    \binom{n-1}{i} + \binom{n-2}{i-2} \right) x^i}, & 
    \text{if \ $W = \bB^e_n$}
    \end{cases} 
\end{equation}
where (to avoid confusion with our notation for the 
set of derangements in $\fS_n$) $\bB^e_n$ 
stands for the group of even-signed permutations of 
$\Omega_n$ (which is a Coxeter group of type $D_n$).

The symmetry of $\cat(W, x)$ is a simple consequence 
of the definition. The following result can be verified 
with case-by-case computations; it is due essentially 
to R.~Simion and D.~Ullman~\cite[Corollary~3.1]{SU91} 
\cite[Proposition~11.14]{PRW08} for the group $\fS_n$, 
to A.~Postnikov, V.~Reiner and L.~Williams 
\cite[Proposition~11.15]{PRW08} for the group $\bB_n$ 
and to M.~Gorsky~\cite{Go10} for the group $\bB^e_n$
(see also Remark~\ref{rem:gammaCC++}).
\begin{theorem}
\label{thm:C(W,x)}
The polynomial $\cat(W, x)$ is $\gamma$-positive for 
every finite Coxeter group $W$. Moreover, writing 
\begin{equation}
\label{eq:gammaC(W,x)}
  \cat(W, x) \ = \ \sum_{i=0}^{\lfloor n/2 \rfloor}   
  \gamma_i(W) \, x^i (1+x)^{n-2i},
\end{equation}
where $n$ is the rank of $W$, we have the explicit 
formulas

\begin{equation}
\label{eq:ABD-gammaC(W,x)}
    \gamma_i (W) \ = \ \begin{cases}
    {\displaystyle \frac{1}{i+1} \binom{n}{i,i,n-2i}}, 
               & \text{if \ $W = \fS_{n+1}$} \\ & \\
    {\displaystyle \binom{n}{i,i,n-2i}},
                    & \text{if \ $W = \bB_n$} \\ & \\
    {\displaystyle \frac{n-i-1}{n-1} 
    \binom{n}{i,i,n-2i}}, & \text{if \ $W = \bB^e_n$}
    \end{cases} 
\end{equation}
for $0 \le i \le \lfloor n/2 \rfloor$.
\end{theorem}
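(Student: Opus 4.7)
The plan is to proceed case by case, using the classification of finite Coxeter groups. First I would observe that $\cat(W,x)$ is multiplicative under direct products of Coxeter groups, since $\nc_{W_1\times W_2}\cong\nc_{W_1}\times\nc_{W_2}$ as ranked posets under absolute length; and a product of two $\gamma$-positive polynomials (with respect to their ranks) is again $\gamma$-positive with respect to the sum of the ranks. This reduces the problem to the irreducible finite types: the three infinite families $\fS_{n+1}$, $\bB_n$, $\bB^e_n$, the dihedral groups $I_2(m)$, and the six exceptionals $H_3,H_4,F_4,E_6,E_7,E_8$. The dihedrals are immediate since $\cat(I_2(m),x)=1+mx+x^2=(1+x)^2+(m-2)x$ with $\gamma_0=1$ and $\gamma_1=m-2\ge 0$, and the exceptional types can be handled by direct numerical expansion of the known Coxeter--Catalan polynomials tabulated, for instance, in \cite{Arm09}.

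For each of the three infinite families I would substitute the proposed value of $\gamma_i(W)$ into the right-hand side of (\ref{eq:gammaC(W,x)}), extract the coefficient of $x^k$, and reduce the claim to a binomial identity. For $W=\bB_n$, after cancelling common factorials, the required identity simplifies to the classical Vandermonde sum $\sum_i\binom{k}{i}\binom{n-k}{i}=\binom{n}{k}$, which is immediate. For $W=\fS_{n+1}$, the analogous computation yields
\[
\frac{1}{k+1}\binom{n}{k}\binom{n+1}{k}\ =\ \sum_{i=0}^{\lfloor n/2\rfloor}\frac{1}{i+1}\binom{n}{i,i,n-2i}\binom{n-2i}{k-i},
\]
which expresses the standard decomposition of a Dyck path of length $2n$ with $k$ peaks into a Motzkin skeleton of length $n$ (with $i$ up-steps, $i$ down-steps and $n-2i$ level steps, enumerated by the Motzkin number $\frac{1}{i+1}\binom{n}{i,i,n-2i}$) together with a choice of which $k-i$ of the level steps to raise into an additional peak. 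Either a bijective argument of this kind, or a direct generating-function manipulation based on $C(x)=(1-\sqrt{1-4x})/(2x)$ under the substitution $x\mapsto x/(1+x)^2$, completes the type-$A$ case.

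The type-$D$ verification is the main obstacle, because the factor $(n-i-1)/(n-1)$ in $\gamma_i(\bB^e_n)$ is not a product of factorials and is not handled by a single Vandermonde summation. Here the cleanest route is to rewrite $\binom{n-1}{i}+\binom{n-2}{i-2}$ in the type-$D$ formula of (\ref{eq:ABD-C(W,x)}) so as to express $\cat(\bB^e_n,x)$ as an explicit integer combination of type-$B$ Narayana polynomials of ranks $n$ and $n-1$ and then to substitute the already-established type-$B$ $\gamma$-expansion; the resulting terms collect, after some bookkeeping, into $\sum_i\frac{n-i-1}{n-1}\binom{n}{i,i,n-2i}x^i(1+x)^{n-2i}$, as required. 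Nonnegativity of $\gamma_i(\bB^e_n)$ then follows from $n-i-1\ge 0$ for all $i\le\lfloor n/2\rfloor$ and $n\ge 2$. I would also check the small-rank base cases $n\le 3$, where $D_n$ coincides with a Coxeter group of different type, to confirm the formula there as well.
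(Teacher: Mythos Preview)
Your proposal is correct and follows essentially the same route the paper indicates: the paper does not give a detailed proof but states that the result ``can be verified with case-by-case computations'' and cites Simion--Ullman and Postnikov--Reiner--Williams for types $A$ and $B$ and Gorsky for type~$D$, with the exceptional types handled numerically. Your type-$B$ reduction to Vandermonde and your type-$A$ argument via the Motzkin-skeleton bijection (or equivalently the substitution $x\mapsto x/(1+x)^2$ in the Catalan generating function) are exactly the standard verifications behind those references. The type-$D$ sketch is the weakest point: ``after some bookkeeping'' hides a genuine binomial computation, but the strategy of expressing $\cat(\bB^e_n,x)$ in terms of type-$B$ Narayana polynomials and substituting their known $\gamma$-expansions does go through. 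One concrete way to finish is to check that $\cat(\bB_n,x)-\cat(\bB^e_n,x)=\sum_i\binom{n}{i}\binom{n-2}{i-1}x^i$ and verify that this equals $\sum_i\frac{i}{n-1}\binom{n}{i,i,n-2i}x^i(1+x)^{n-2i}$, which is again a Vandermonde-type identity.

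The paper also points to a second route (Remark~\ref{rem:gammaCC++}): one shows the stronger statement that $\cat^{++}(W,x)$ is $\gamma$-positive (Theorem~\ref{thm:C++(W,x)}) and then uses the identity $\gamma(W,x)=\sum_{J\subseteq S}\xi(W_J,x)$ to deduce $\gamma$-positivity of $\cat(W,x)$. This is conceptually cleaner because it explains \emph{why} the $\gamma$-coefficients are nonnegative rather than merely verifying the closed forms, but it still relies on case-by-case computation of the $\xi_i(W)$. Your direct approach has the advantage of being self-contained and of yielding the explicit formulas~(\ref{eq:ABD-gammaC(W,x)}) along the way.
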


For an extension of the $\gamma$-positivity of $\cat
(W, x)$ to well-generated complex reflection groups, 
see \cite[Theorem~1.3]{Mu17}. Elegant $q$-analogs
of the $\gamma$-positivity of $\cat(\fS_n, x)$ are
given in~\cite[Theorem~1.2]{BP14} 
and~\cite[Theorem~4]{LF17}.

As is usual in Catalan combinatorics, the polynomial 
$\cat(W, x)$ has a positive analog, defined as  
\begin{equation}
\label{eq:defC+(W,x)}
\cat^+(W, x) \ := \ \sum_{J \subseteq S} 
                   \, (-x)^{|S \sm J|} \, \cat(W_J, x), 
\end{equation}
where $W_J$ is the standard parabolic subgroup of $W$
associated to $J \subseteq S$; we have the explicit
formulas:
\begin{equation}
\label{eq:ABD-C+(W,x)}
    \cat^+(W, x) \ = \ \begin{cases}
    {\displaystyle \sum_{i=0}^n \, \frac{1}{i+1} \, 
    \binom{n-1}{i} \binom{n}{i} x^i}, 
                    & \text{if \ $W = \fS_{n+1}$} \\
    & \\
    {\displaystyle \sum_{i=0}^n \binom{n-1}{i} 
    \binom{n}{i} x^i}, & \text{if \ $W = \bB_n$} \\ 
    & \\
    {\displaystyle \sum_{i=0}^n \left( \binom{n-2}{i} 
    \binom{n}{i} + \binom{n-2}{i-2} \binom{n-1}{i} 
    \right) x^i}, & \text{if \ $W = \bB^e_n$}.
    \end{cases} 
\end{equation}

This polynomial is not symmetric, except for the case
of the symmetric groups. However, the polynomial 
\begin{equation}
\label{eq:defC++(W,x)}
\cat^{++}(W, x) \ := \ \sum_{J \subseteq S} 
              \, (-1)^{|S \sm J|} \, \cat^+(W_J, x) 
\end{equation}
turns out to be symmetric and to have nonnegative 
coefficients for all $W$. This polynomial was first 
considered in an enumerative-geometric context 
in~\cite{AS12}, explained in Section~\ref{subsec:exa}, 
and more recently in a different context in~\cite{BR18}, 
where our notation comes from. For the symmetric and 
hyperoctahedral groups, the work~\cite{AS12} provides 
combinatorial interpretations for the coefficients of 
$\cat^{++}(W, x)$ and the corresponding 
$\gamma$-coefficients $\xi_i(W)$, in terms of 
noncrossing partitions with restrictions.
\begin{theorem} \label{thm:C++(W,x)}
{\rm (Athanasiadis--Savvidou~\cite{AS12})}
The polynomial $\cat^{++}(W, x)$ is $\gamma$-positive 
for every finite Coxeter group $W$. Moreover, writing 
\begin{equation}
\label{eq:gammaC++(W,x)}
  \cat^{++}(W, x) \ = \ \sum_{i=0}^{\lfloor n/2 \rfloor}   
  \xi_i(W) \, x^i (1+x)^{n-2i},
\end{equation}
where $n$ is the rank of $W$, we have $\xi_0 (W) = 0$
and the explicit formulas

\begin{equation}
\label{eq:ABD-gammaC++(W,x)}
    \xi_i (W) \ = \ \begin{cases}
    {\displaystyle \frac{1}{n-i+1} \binom{n}{i} 
    \binom{n-i-1}{i-1} }, 
                & \text{if \ $W = \fS_{n+1}$} \\ & \\
    {\displaystyle \binom{n}{i} \binom{n-i-1}{i-1} },
                    & \text{if \ $W = \bB_n$} \\ & \\
    {\displaystyle \frac{n-2}{i} \binom{2i-2}{i-1} 
    \binom{n-2}{2i-2}}, & \text{if \ $W = \bB^e_n$}
    \end{cases} 
\end{equation}
for $1 \le i \le \lfloor n/2 \rfloor$.
\end{theorem}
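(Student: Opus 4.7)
The plan is to extract a closed form for the $\gamma$-coefficients $\xi_i(W)$ directly from the definition, and then evaluate it for each classical family. First I would switch the order of summation in the double inclusion-exclusion defining $\cat^{++}(W,x)$: substituting $L = J\sm K$ in the inner sum and using $\sum_{L \subseteq S\sm K}(-1)^{|S\sm K|-|L|}(-x)^{|L|}=(-1)^{|S\sm K|}(1+x)^{|S\sm K|}$ collapses the formula to the much cleaner
\begin{equation*}
\cat^{++}(W,x) \ = \ \sum_{K\subseteq S}(-1)^{|S\sm K|}(1+x)^{|S\sm K|}\,\cat(W_K,x).
\end{equation*}
Now I substitute the $\gamma$-expansion of $\cat(W_K,x)$ provided by Theorem~\ref{thm:C(W,x)} and note that $(1+x)^{|S\sm K|}\cdot(1+x)^{|K|-2i}=(1+x)^{n-2i}$, which produces the central identity
\begin{equation*}
\xi_i(W) \ = \ \sum_{J\subseteq S}(-1)^{|S\sm J|}\,\gamma_i(W_J).
\end{equation*}
The vanishing $\xi_0(W)=0$ falls out at once, since $\gamma_0(W_J)=1$ for every $J$ and the signed sum of $(-1)^{|S\sm J|}$ over $J\subseteq S$ is zero when $n\ge 1$.

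The next step is to evaluate this signed sum for each classical family. Parabolic subgroups of the classical irreducibles decompose as direct products, and the $\gamma$-coefficients of $\cat$ are multiplicative in the convolution sense: $\gamma_i(W_1\times W_2)=\sum_{j+k=i}\gamma_j(W_1)\gamma_k(W_2)$. For type $A$ the sum becomes
\begin{equation*}
\xi_i(\fS_{n+1}) \ = \ \sum_{\alpha\models n+1}(-1)^{\ell(\alpha)-1}\!\!\sum_{i_1+\cdots+i_{\ell(\alpha)}=i}\prod_j\gamma_{i_j}(\fS_{\alpha_j}),
\end{equation*}
which is amenable to the generating-function identity $\sum_{r\ge 1}(-1)^{r-1}H^r=H/(1+H)$ applied to $H(x,t):=\sum_{m\ge 1}\cat(\fS_m,x)\,t^m$. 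Extracting the coefficient of $x^i(1+x)^{n-2i}t^{n+1}$ from $H/(1+H)$, and comparing with a standard Narayana identity (or using Lagrange inversion on the Narayana generating function $C=1+tC+xtC^2$), would yield the claimed $\xi_i(\fS_{n+1})=\frac{1}{n-i+1}\binom{n}{i}\binom{n-i-1}{i-1}$. The analogous argument with the type $B$ generating function based on $\gamma_i(\bB_m)=\binom{m}{i,i,m-2i}$ yields the $\bB_n$ formula.

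The main obstacle is the type $D$ case. Parabolic subgroups of $\bB^e_n$ are not all of type $D$: generically they are products of factors of type $A$ and at most one factor of type $D$. This breaks the clean single-variable generating function approach and forces a bivariate bookkeeping separating the type $A$ and type $D$ contributions. I would handle this by writing $\gamma_i(\bB^e_m)$ as the type $B$ expression $\gamma_i(\bB_m)=\binom{m}{i,i,m-2i}$ minus a lower-order correction, so that the sum over parabolics splits into a pure type $A$ composition sum (which reuses the work above) and a correction sum; the coefficient $\frac{n-2}{i}\binom{2i-2}{i-1}\binom{n-2}{2i-2}$ should emerge after a Vandermonde-type collapse. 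Once the formulas are in hand, $\gamma$-positivity for the three classical families is immediate from nonnegativity of the explicit expressions, and for exceptional Coxeter groups it is verified by direct finite computation using the known structure of $\nc_W$.
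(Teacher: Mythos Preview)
Your approach is sound and different from the one the paper attributes to \cite{AS12}. The identity
\[
\cat^{++}(W,x)\;=\;\sum_{K\subseteq S}(-1)^{|S\sm K|}(1+x)^{|S\sm K|}\,\cat(W_K,x)
\]
and its consequence $\xi_i(W)=\sum_{J\subseteq S}(-1)^{|S\sm J|}\gamma_i(W_J)$ are correct (this is precisely the M\"obius inverse of Equation~\eqref{eq:gamma-xi(W,x)} in Remark~\ref{rem:gammaCC++}), and they immediately give symmetry of $\cat^{++}(W,x)$ and $\xi_0(W)=0$. From there your plan to push the known explicit $\gamma_i(W_J)$ of Theorem~\ref{thm:C(W,x)} through composition-type generating functions is a legitimate route to the closed forms in types $A$, $B$ and $D$, with the exceptional cases handled by direct computation.

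By contrast, the survey does not reprove the theorem here but refers to \cite{AS12}, where the argument is geometric and combinatorial: $\cat^{++}(W,x)$ is identified with the local $h$-polynomial $\ell_\Pi(\Gamma_W,x)$ of the cluster subdivision (see Section~\ref{subsubsec:cluster}), and for the symmetric and hyperoctahedral groups the $\xi_i(W)$ are interpreted directly as counts of noncrossing partitions satisfying certain restrictions. That approach yields $\gamma$-positivity from a combinatorial interpretation rather than from the sign of a closed-form expression, and it explains why the numbers are nonnegative; your approach is more mechanical but has the virtue of being entirely self-contained once Theorem~\ref{thm:C(W,x)} is available. The soft spot in your outline is the type~$D$ computation: the parabolic structure of $\bB^e_n$ (a mix of one possible $D_k$ factor with several $A$ factors, plus the degenerate parabolics missing one branch of the fork, which are purely of type $A$) makes the bookkeeping genuinely heavier than your sketch suggests, and ``should emerge after a Vandermonde-type collapse'' would need to be made concrete before the argument is complete.
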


\begin{remark} \label{rem:gammaCC++}
\rm Equations~(\ref{eq:defC+(W,x)}) 
and~(\ref{eq:defC++(W,x)}) can be inverted to give 

\begin{align}
\cat(W, x) & \ = \ \sum_{J \subseteq S} 
                 \, x^{|S \sm J|} \, \cat^+(W_J, x), \\
\cat^+(W, x) & \ = \ \sum_{J \subseteq S} \, 
                     \cat^{++}(W_J, x), 
\end{align}
respectively. Hence,

\begin{align*}
\cat(W, x) & \ = \ \sum_{J \subseteq S} \, x^{|S \sm J|} 
\, \cat^+(W_J, x) \ = \ \sum_{J \subseteq S} \, 
x^{|S \sm J|} \, \sum_{I \subseteq J} \, 
                                   \cat^{++}(W_I, x) \\
& \ = \ \sum_{I \subseteq S} \, (x + 1)^{n - |I|} \,
\cat^{++}(W_I, x),
\end{align*}
where $n$ is the rank of $W$. Setting
\[ \gamma(W, x) \ = \ \sum_{i=0}^{\lfloor n/2 \rfloor} 
   \gamma_i(W) x^i, \ \ \ \ \ \ \ \ 
\xi(W, x) \ = \ \sum_{i=0}^{\lfloor n/2 \rfloor}  
\xi_i(W) x^i, \]
the previous formula for $\cat(W, x)$ translates into
the equation
\begin{equation}
\label{eq:gamma-xi(W,x)}
\gamma(W, x) \ = \ \sum_{J \subseteq S} \xi(W_J, x).
\end{equation}
Alternatively, given the geometric interpretations 
of $\cat(W, x)$ and $\cat^{++}(W, x)$ discussed in 
Section~\ref{subsec:exa}, 
Equation~(\ref{eq:gamma-xi(W,x)}) is a special case  
of \cite[Corollary~5.5]{Ath12} 
\cite[Equation~(6)]{Ath16a}; see also 
Theorem~\ref{thm:ath-gammaxi} in the sequel. 

In particular, the $\gamma$-positivity statement of 
Theorem~\ref{thm:C++(W,x)} is stronger than that of
Theorem~\ref{thm:C(W,x)}.
\qed
\end{remark}

\medskip
The numbers $\gamma_1(W)$ and $\xi_1(W)$ are equal 
to the number of nonsimple reflections in $W$ and 
the number of reflections in $W$ which do not belong 
to any proper standard parabolic subgroup, 
respectively; see the first remark in 
\cite[Section~5]{AS12}.

\begin{problem}
Find a proof of the $\gamma$-positivity of $\cat
(W, x)$ and $\cat^{++}(W, x)$, as well as algebraic
or combinatorial interpretations for the numbers 
$\gamma_i(W)$ and $\xi_i(W)$, which do not depend 
on the classification of finite Coxeter groups. 
\end{problem}

\subsection{Polynomials arising from enriched poset
partitions}
\label{subsec:enriched}

Let $(\pP,\omega)$ be a labeled poset with $n$ 
elements, as in Section~\ref{subsubsec:posetEuler},
and recall that $\Omega_m := \{1, -1, 2, -2,\dots,m, 
-m\}$. For $m \in \NN$, denote by 
$\Omega'_{\pP,\omega}(m)$ the number of maps 
$f: \pP \to \Omega_m$ which are such that for 
all $x, y \in \pP$ with $x <_\pP y$:
\begin{itemize}
\item[$\bullet$] $|f(x)| \le |f(y)|$, 
\item[$\bullet$] $|f(x)| = |f(y)| \Rightarrow
                 f(x) \le f(y)$,  
\item[$\bullet$] $f(x) = f(y) > 0 \Rightarrow
                 \omega(x) < \omega(y)$, 
\item[$\bullet$] $f(x) = f(y) < 0 \Rightarrow
                  \omega(x) > \omega(y)$,  
\end{itemize}
where $\Omega'_{\pP,\omega}(0) := 0$. These 
maps are called \emph{enriched 
$(\pP,\omega)$-partitions}; their theory was 
developed by Stembridge~\cite{Ste97}, in analogy 
with Stanley's theory of $(\pP,\omega)$-partitions. 
The function $\Omega'_{\pP,\omega}(m)$, studied 
in \cite[Section~4]{Ste97}, turns out to be a 
polynomial in $m$ of degree at most $n$ and hence
\begin{equation} \label{eq:defA'}
  \sum_{m \ge 0} \Omega'_{\pP,\omega}(m) x^m \ = \   
  \frac{\aA'_{\pP,\omega}(x)}{(1-x)^{n+1}}
\end{equation}
for some polynomial $\aA'_{\pP,\omega}(x)$ of degree
at most $n$. As pointed out by Stembridge to the 
author, the more explicit formula 
\begin{equation} \label{eq:SteA'}
  \aA'_{\pP,\omega}(x) \ = \ x \sum_{\varepsilon: 
  \pP \to \{-1,1\}} \aA_{\pP, \varepsilon \omega} (x)
\end{equation}
follows from \cite[Theorem~3.6]{Ste97}, where 
$\aA_{\pP, \varepsilon \omega} (x)$ is the ordinary
$(\pP, \varepsilon \omega)$-Eulerian polynomial 
defined as in Section~\ref{subsubsec:posetEuler},
with labels taken from the totally ordered set
$\ZZ$. 

The following result is a restatement of 
\cite[Theorem~4.1]{Ste97}; it appeared several 
years before the work on $\gamma$-positivity of 
Br\"and\'en~\cite{Bra04, Bra08} and Gal~\cite{Ga05}.
A \emph{peak} of a permutation $w \in \fS_n$ is any 
index $2 \le i \le n-1$ such that $w(i-1) < w(i) > 
w(i+1)$. 
\begin{theorem} \label{thm:SteEnriched}
{\rm (Stembridge~\cite{Ste97})}
For every $n$-element labeled poset $(\pP,\omega)$
\begin{equation} \label{eq:SteEnriched}
  \aA'_{\pP,\omega} (x) \ = \ 
  \sum_{i=0}^{\lfloor (n-1)/2 \rfloor} p_{n,i} \, 
             2^{2i+1} \, x^{i+1} (1+x)^{n-1-2i},
\end{equation}
where $p_{n,i}$ is the number of permutations $w \in 
\lL(\pP, \omega)$ (see 
Definition~\ref{def:posetEuler}) with $i$ peaks.

In particular, $\aA'_{\pP,\omega} (x)$ is 
$\gamma$-positive. 
\end{theorem}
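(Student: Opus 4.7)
The plan is to exploit formula~(\ref{eq:SteA'}), which expresses $\aA'_{\pP,\omega}(x)$ as the $x$-multiple of $\sum_\varepsilon \aA_{\pP,\varepsilon\omega}(x)$ over all sign functions $\varepsilon: \pP \to \{-1,1\}$. The natural bijection $\lL(\pP,\omega) \to \lL(\pP,\varepsilon\omega)$ that replaces each label $\omega(p)$ by $\varepsilon(p)\omega(p)$ allows us to swap the order of summation. Writing $\varepsilon \cdot w$ for the signed sequence obtained from $w \in \lL(\pP,\omega)$ by applying the signs entry-by-entry (according to which element of $\pP$ labels each position), we reduce the theorem to
\[
\aA'_{\pP,\omega}(x) \ = \ x \sum_{w \in \lL(\pP,\omega)} \ \sum_{\varepsilon \in \{-1,1\}^n} x^{\des(\varepsilon \cdot w)},
\]
where descents are computed with respect to the usual total order of $\ZZ$.

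The heart of the argument is then the following key lemma: for any sequence $w = (a_1,\dots,a_n)$ of distinct positive integers,
\[
\sum_{\varepsilon \in \{-1,1\}^n} x^{\des(\varepsilon \cdot w)} \ = \ 2^{2p+1} \, x^p \, (1+x)^{n-1-2p},
\]
where $p = p(w)$ denotes the number of peaks of $w$. Granting this identity, stratifying the outer sum over $w \in \lL(\pP,\omega)$ by peak count, the extra factor of $x$ promotes $x^p$ to $x^{p+1}$ and yields exactly~(\ref{eq:SteEnriched}).

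To prove the lemma, the plan is a position-by-position factorization. Since every entry of $w$ is positive, an immediate case check shows that $i \in [n-1]$ is a descent of $\varepsilon \cdot w$ if and only if either $i \in \Des(w)$ and $\varepsilon_i = +$, or $i \in \Asc(w)$ and $\varepsilon_{i+1} = -$. Consequently the exponent of $x$ splits as a sum of independent contributions coming from each sign variable, and the total sum factors as
\[
\prod_{k=1}^n \left( x^{\chi_D(k)} \, + \, x^{\chi_A(k-1)} \right),
\]
where $\chi_D, \chi_A$ are the indicator functions of $\Des(w)$ and $\Asc(w)$, with the conventions $\chi_A(0) = \chi_D(n) = 0$. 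Each factor evaluates to $2x$ when $k$ is a peak of $w$, to $2$ when $k$ is a valley, and to $1+x$ in every remaining case (double ascents, double descents, and the boundary indices $k=1$ and $k=n$).

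The main, though routine, obstacle is then to check that the product always collapses to $2^{2p+1} x^p (1+x)^{n-1-2p}$, regardless of the four scenarios determined by whether $1 \in \Des(w)$ and whether $n-1 \in \Asc(w)$. This uses the classical fact that peaks and valleys alternate along $w$: the number $v$ of valleys equals $p$ (in the A--A and D--D boundary scenarios), $p-1$ (A--D scenario), or $p+1$ (D--A scenario), and the corresponding boundary factors from $\{2, 1+x\}$ adjust the final exponent of $(1+x)$ to be uniformly $n-1-2p$. A clean way to streamline the bookkeeping is to append virtual boundary entries $a_0 = a_{n+1} = 0$, which makes position $0$ an ascent and position $n$ a descent; after this extension each index $k \in [n]$ becomes either a peak, a valley, or an interior position of a monotone run, and the identity drops out uniformly from the factorization above.
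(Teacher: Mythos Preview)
Your approach is correct and is precisely the ``more direct proof, based on Equation~(\ref{eq:SteA'})'' that the paper alludes to in Section~\ref{subsec:sfmethods} but does not spell out (the paper itself simply cites Stembridge, whose original argument goes through peak quasisymmetric functions). The reduction to the key lemma via~(\ref{eq:SteA'}) and the factorization
\[
\sum_{\varepsilon}x^{\des(\varepsilon\cdot w)} \;=\; \prod_{k=1}^n\bigl(x^{\chi_D(k)}+x^{\chi_A(k-1)}\bigr)
\]
are both valid, and your four-scenario case analysis does yield $2^{2p+1}x^p(1+x)^{n-1-2p}$ in each case.

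Two small slips to fix. First, the blanket claim that the boundary indices $k=1$ and $k=n$ always contribute $1+x$ is wrong: with your conventions $\chi_A(0)=\chi_D(n)=0$, the $k=1$ factor is $2$ when $1\in\Asc(w)$ and $1+x$ when $1\in\Des(w)$, and similarly for $k=n$. You effectively correct this in the next paragraph, so this is only an expository inconsistency. Second, the virtual boundaries $a_0=a_{n+1}=0$ do \emph{not} streamline the bookkeeping: they make position~$0$ an ascent and position~$n$ a descent, contradicting your conventions $\chi_A(0)=\chi_D(n)=0$ and mis-predicting the factors at $k=1,n$. The convention that makes everything uniform is $a_0=a_{n+1}=+\infty$; then positions $1$ and $n$ are never peaks, the extended sequence always has exactly $p$ peaks and $p+1$ valleys, and the product collapses immediately to $(2x)^p\cdot 2^{p+1}\cdot(1+x)^{n-1-2p}$ with no cases.
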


This theorem reduces to the $\gamma$-positivity of 
the Eulerian polynomial $A_n(x)$ when $\pP$ is the 
$n$-element antichain, since in this case 
$\aA'_{\pP,\omega} (x) = x \, 2^n A_n(x)$.

\subsection{Polynomials arising from poset homology}
\label{subsec:homo}

A $\gamma$-positivity theorem, due to S.~Linusson,
J.~Shareshian and M.~Wachs~\cite{LSW12}, which 
captures as special cases some of the basic examples 
we have seen so far, comes from the study of the 
homology of the Rees product of posets, a construction
due to A.~Bj\"orner and V.~Welker~\cite{BW05}.
Given finite graded posets $\pP$ and $\qQ$ with rank 
functions $\rho_\pP$ and $\rho_\qQ$, respectively, 
their \emph{Rees product} is defined
in~\cite{BW05} as $\pP \ast \qQ = \{ (p, q) \in \pP 
\times \qQ: \rho_\pP (p) \ge \rho_\qQ (q) \}$,
partially ordered by setting $(p_1, q_1) \preceq 
(p_2, q_2)$ if all of the following conditions are 
satisfied:
\begin{itemize}
\itemsep=0pt
\item[$\bullet$] $p_1 \preceq p_2$ holds in $\pP$,
\item[$\bullet$] $q_1 \preceq q_2$ holds in $\qQ$ and
\item[$\bullet$] $\rho_\pP (p_2) - \rho_\pP (p_1) \ge 
\rho_\qQ (q_2) - \rho_\qQ (q_1)$.
\end{itemize}

\noindent
The poset $\pP \ast \qQ$ is graded with rank function 
given by $\rho(p,q) = \rho_\pP (p)$ for $(p,q) \in \pP 
\ast \qQ$. 

To state the result of~\cite{LSW12}, we need 
to recall a few more definitions. Let $\pP$ be a finite 
graded poset, as before, and assume further that $\pP$ 
is bounded, with minimum element $\hat{0}$ and maximum 
element $\hat{1}$, and that it has rank $n+1$. For 
$S \subseteq [n]$, we denote by $a_\pP (S)$ the number of 
maximal chains of the rank-selected subposet 
\begin{equation} \label{eq:P_S}
\pP_S \ := \ \{ p \in \pP: \rho_\pP (p) \in S \} \, 
\cup \, \{\hat{0}, \hat{1} \}
\end{equation}
of $\pP$ and set
\begin{equation} \label{eq:b(S)1}
b_\pP (S) \ := \ \sum_{T \subseteq S} (-1)^{|S-T|}
\, a_\pP (T).
\end{equation}
The numbers $a_\pP (S)$ and $b_\pP (S)$ are important
enumerative invariants of $\pP$; see 
\cite[Section~3.13]{StaEC1}. The numbers $b_\pP(S)$ 
are nonnegative, if $\pP$ is Cohen--Macaulay over some 
field, and afford a simple combinatorial interpretation, 
if $\pP$ admits an $R$-labeling. The number $(-1)^{n-1}
b_\pP ([n])$ is the \emph{M\"obius number} of the poset
$\bar{\pP}$, obtained from $\pP$ by removing $\hat{0}$
and $\hat{1}$; it is denoted by $\mu(\bar{\pP})$. The 
posets obtained from $\pP$ by removing $\hat{0}$ or 
$\hat{1}$ are denoted by $\pP_{-}$ or $\pP^-$, 
respectively. The poset whose Hasse diagram is a 
complete $x$-ary tree of height $n$, rooted at the 
minimum element, is denoted by $\tT_{x,n}$. For the 
notion of EL-shellability, the reader is referred 
to \cite[Section~2]{LSW12} and references therein. 
The following theorem is a restatement 
of~\cite[Corollary~3.8]{LSW12}. 

\begin{theorem} \label{thm:LSWrees}
{\rm (Linusson--Shareshian--Wachs~\cite{LSW12})} 
Let $\pP$ be a finite bounded graded poset of rank 
$n+1$. If $\pP$ is EL-shellable, then 
  \begin{align} \label{eq:lsw1}  
    \mu \, ((\pP^- \ast \tT_{x,n})_{-}) 
    \ = & \
    \sum_{S \in \Stab ([n-1])} b_\pP ([n] \sm S) 
    \, x^{|S|} (1+x)^{n-2|S|} \ \ + \\
    & \nonumber \\
    & \ \sum_{S \in \Stab ([n-2])} 
    b_\pP ([n-1] \sm S) \, x^{|S|+1} (1+x)^{n-1-2|S|} 
    \nonumber
  \end{align}
and 
  \begin{align} \label{eq:lsw2}
    \mu \, (\bar{\pP} \ast \tT_{x,n-1}) 
    \ = & \
    \sum_{S \in \Stab ([2, n-2])} b_\pP ([n-1] \sm S) 
    \, x^{|S|+1} (1+x)^{n-2-2|S|} \ \ + \\
    & \nonumber \\
    & \ \sum_{S \in \Stab ([2, n-1])} b_\pP ([n] \sm S) 
    \, x^{|S|} (1+x)^{n-1-2|S|} 
      \nonumber
  \end{align}
for every positive integer $x$.
\end{theorem}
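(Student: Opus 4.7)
The plan is to exploit EL-shellability of the Rees products in question together with the standard identity that, for a bounded EL-shellable poset of rank $r$, the M\"obius number of its proper part equals $(-1)^{r-1}$ times the number of strictly decreasing maximal chains under the labeling. So the first step is to build, starting from an EL-labeling $\lambda$ of $\pP$, a natural EL-labeling of $\pP\ast\tT_{x,n}$ (and hence of $(\pP^-\ast\tT_{x,n})_-$ and $\bar{\pP}\ast\tT_{x,n-1}$, after deleting $\hat{0}$ and/or $\hat{1}$). Following Bj\"orner--Welker, I label each cover $(p_1,q_1)\lessdot(p_2,q_2)$ by $\lambda(p_1\lessdot p_2)$ when $q_1=q_2$, and by the pair $(\lambda(p_1\lessdot p_2),j)$ when $q_2$ is the $j$th child of $q_1$, with the pair-labels inserted into the total order on the label alphabet so that on every closed interval the increasing chain is unique.

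Next I would parameterize the strictly decreasing maximal chains. Each projects to a maximal chain $c$ of $\bar{\pP}$ equipped with, at each of the $n$ rank-increase steps, a choice of either a vertical stay-move or a child edge labeled by some $j\in[x]$. Write $S\subseteq[n]$ for the set of steps at which a child edge is chosen. A direct analysis of the total order on the Rees-product labels shows that the chain is strictly decreasing exactly when $S$ contains no two consecutive indices (so that $S$ lies in the appropriate $\Stab$ family), and the descent set of $c$ is the complement of $S$ in a suitable ambient set determined by the boundary conditions. Each $i\in S$ contributes a factor $x$ from the choice of child, and each position $i\notin S$ that remains unconstrained is free to be either an ascent or a descent of $c$, contributing a factor $1+x$. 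Summing over chains $c$ of $\bar{\pP}$ with a prescribed descent set $T$ produces $b_\pP(T)$ by the flag-number interpretation of EL-shellability, which transforms the count into the sums appearing in~(\ref{eq:lsw1}) and~(\ref{eq:lsw2}).

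The main obstacle will be justifying the splitting of each right-hand side into two sums. In~(\ref{eq:lsw1}), for example, one sum ranges over $S\in\Stab([n-1])$ weighted by $b_\pP([n]\sm S)$ and the other over $S\in\Stab([n-2])$ weighted by $b_\pP([n-1]\sm S)$. These correspond to whether the topmost step of the Rees-product chain is a vertical $\pP$-step (whose label participates in the descent analysis at position $n$) or a child edge (which absorbs that position into the tree component); a parallel dichotomy at the bottom step explains~(\ref{eq:lsw2}) via the removal of $\hat{0}$. The delicate task is to verify that each decreasing chain is counted exactly once and that the asserted weight $x^{a}(1+x)^{b}$ emerges with the correct exponents; a valley-hopping involution of Foata--Strehl type on the positions outside $S$ offers the most transparent route to producing the $(1+x)^{n-2|S|}$ factors by pairing $\pP$-ascent with $\pP$-descent configurations at these free positions.
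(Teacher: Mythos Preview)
The paper does not supply its own proof of this theorem: it is stated as a restatement of \cite[Corollary~3.8]{LSW12}, and the surrounding text only adds that the EL-shellability hypothesis can be dropped, citing \cite{Ath17}. So there is no in-paper argument to compare against; one can only compare your outline to the proof in \cite{LSW12}.

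Your overall strategy matches that of \cite{LSW12}: construct an EL-labeling of the Rees product from the given EL-labeling of $\pP$ (this is indeed done in that paper, building on \cite{BW05}), identify the M\"obius number with a signed count of decreasing maximal chains, and then regroup that count into the displayed $\gamma$-positive form. So at the level of method you are on the right track.

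Where your proposal is genuinely incomplete is the passage from the raw descending-chain count to the specific right-hand sides. Two points are not yet justified. First, the assertion that the chain is strictly decreasing ``exactly when $S$ contains no two consecutive indices'' and that ``the descent set of $c$ is the complement of $S$'' is a concrete claim about how the pair-labels are interleaved into the total order; it depends sensitively on that choice, and you have not specified the interleaving or checked it. Second, the sentence ``each position $i\notin S$ that remains unconstrained is free to be either an ascent or a descent of $c$, contributing a factor $1+x$'' cannot be literally correct: for a single decreasing chain there is no freedom at any position, so the $(1+x)$ factors cannot arise position-by-position in the way you describe. In \cite{LSW12} the $(1+x)$ factors come from a regrouping of the sum over descending chains (equivalently, an involution pairing ascent/descent configurations at positions adjacent to the tree-moves), which is closer to what you gesture at in your final sentence about a Foata--Strehl type involution. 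That last sentence is the correct idea, but it is doing all the real work and needs to be made precise: you must specify the involution on pairs (decreasing $\pP$-chain, subset $S$) and verify it has the claimed orbit structure, including the boundary behavior at positions $1$ and $n$ that produces the two separate sums in each of~(\ref{eq:lsw1}) and~(\ref{eq:lsw2}).
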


This theorem turns out to be valid without the 
assumption of EL-shellability; see~\cite{Ath17}. 
When $\pP^{-}$ has a maximum element, the first 
and the second summand of the right-hand sides of 
Equations~(\ref{eq:lsw1}) and~(\ref{eq:lsw2}), 
respectively, vanishes and hence the left-hand 
sides are $\gamma$-positive polynomials in $x$, 
provided $\pP$ is Cohen--Macaulay over some field.
\begin{example} \rm
Suppose that $\pP^{-}$ is the Boolean lattice of 
subsets of the set $[n]$, ordered by inclusion.
Then, the left-hand sides of 
Equations~(\ref{eq:lsw1}) and~(\ref{eq:lsw2}) are
equal to $xA_n(x)$ and $d_n(x)$, respectively 
(this is the content of~\cite[Equation~(3.1)]{SW09} 
in the former case, and is implicit in~\cite{SW09} 
in the latter). The number $b_\pP (S)$ is known 
to count permutations $w \in \fS_n$ such that 
$\Des(w) = S$ \cite[Corollary~3.13.2]{StaEC1}. Thus, 
Theorem~\ref{thm:LSWrees} reduces to the 
$\gamma$-positivity of $A_n(x)$ and $d_n(x)$ 
(specifically, to the third interpretations given 
for $\gamma_{n,i}$ and $\xi_{n,i}$ in 
Theorems~\ref{thm:FSSAn} and~\ref{thm:ASdn}, 
respectively) in this case. 
\qed
\end{example}

The $p=1$ specializations of Theorems~\ref{thm:SWAn} 
and~\ref{thm:SWdn} can be viewed as the special 
case of Theorem~\ref{thm:LSWrees} in which $\pP^{-}$ 
is the lattice of subspaces of an $n$-dimensional 
vector space over a field with $q$ elements. This 
follows from Equations~(1.3) and~(1.4) 
in~\cite{LSW12} and from the known interpretation
\cite[Theorem~3.13.3]{StaEC1} of $b_\pP (S)$ for 
this poset. Theorem~\ref{thm:athdnr} is derived in
\cite[Section~4]{Ath14} from the special case of 
Theorem~\ref{thm:LSWrees} in which $\pP^{-}$ is the
set of $r$-colored subsets of $[n]$, ordered by 
inclusion.
 
Similarly, Theorem~\ref{thm:LSWmulti} can be viewed 
as the special case of Theorem~\ref{thm:LSWrees} in 
which $\pP^{-}$ is the product of chains whose 
lengths are the parts of $\lambda$; see 
\cite[Section~5]{LSW12}. The proof of 
Theorem~\ref{thm:LSWmulti} given there, however, 
involves a different approach and thus, it 
would be interesting to prove directly that the 
left-hand sides of Equations~(\ref{eq:lsw1}) 
and~(\ref{eq:lsw2}) are equal to those of 
Equations~(\ref{eq:LSWmulti1}) 
and~(\ref{eq:LSWmulti2}), respectively, in this 
case. 
\begin{remark} \rm
Theorem~\ref{thm:LSWmulti} can be applied, more 
generally, when $\pP$ is the distributive lattice 
of order ideals of an $n$-element poset $\qQ$, 
since then $b_\pP (S)$ counts linear extensions 
of $\qQ$ with descent set equal to $S$
\cite[Theorem~3.13.1]{StaEC1}. Are there 
combinatorial interpretations for the coefficients 
of the left-hand sides of Equations~(\ref{eq:lsw1}) 
and~(\ref{eq:lsw2}) which reduce to those of 
Theorem~\ref{thm:LSWmulti} when $\qQ$ is a disjoint
union of chains of cardinalities equal to the parts 
of $\lambda$? 
\end{remark}

\subsection{Polynomials arising from symmetric 
functions} \label{subsec:sym}

Let $\bx = (x_1, x_2,\dots)$ be a sequence of 
commuting independent indeterminates (to avoid
confusion with this notation, in this section 
we consider polynomials in the variable $t$, 
rather than $x$). For integer partitions 
$\lambda$, consider the polynomials $P_\lambda
(t)$ and $R_\lambda(t)$ which are defined by 
the equations

\begin{equation} \label{eq:Bre1}
\frac{(1-t) H(\bx; z)} {H(\bx; tz) - tH(\bx; z)} = 
\frac{{\displaystyle \sum_{k \ge 0} h_k(\bx) z^k}}
   {{\displaystyle 1 - \sum_{k \ge 2} \, (t + t^2 + \cdots 
    + t^{k-1}) \, h_k(\bx) z^k}} = \sum_\lambda \, 
    P_\lambda(t) s_\lambda (\bx) \, z^{|\lambda|} 
\end{equation}
and

\begin{equation} \label{eq:Bre2}
\frac{1-t} {H(\bx; tz) - tH(\bx; z)} = 
\frac{1} {{\displaystyle 1 - \sum_{k \ge 2} \, (t + t^2 
    + \cdots + t^{k-1}) \, h_k(\bx) z^k}} = \sum_\lambda 
    \, R_\lambda(t) s_\lambda (\bx) \, z^{|\lambda|}, 
\end{equation}

\noindent
where $H(\bx; z) = \sum_{n \ge 0} h_n(\bx) z^n$ is the 
generating function for the complete homogeneous 
symmetric functions in $\bx$ and $\lambda$ ranges over
all partitions (including the one without any parts).
The left-hand sides of Equations~(\ref{eq:Bre1}) 
and~(\ref{eq:Bre2}) were first considered by Stanley 
\cite[Propositions~12 and~13]{Sta89} and, since then,
they have appeared in various algebraic-geometric and 
combinatorial contexts; see the relevant discussions 
in \cite[Section~7]{SW10} \cite[Section~4]{LSW12} 
\cite{SW16b}, as well as Section~\ref{sec:gen} in the 
sequel. Stanley~\cite{Sta89} observed that for $\lambda 
\vdash n$, the polynomials $P_\lambda(t)$ and 
$R_\lambda(t)$ have nonnegative and symmetric 
coefficients, with centers of symmetry $(n-1)/2$ and 
$n/2$, respectively, which satisfy  
\begin{equation} \label{eq:refAn}
\sum_{\lambda \vdash n} f^\lambda P_\lambda(t) \ = \ 
   A_n(t)
\end{equation}
and
\begin{equation} \label{eq:refdn}
\sum_{\lambda \vdash n} f^\lambda R_\lambda(t) \ = \ 
   d_n(t),
\end{equation}
where $f^\lambda$ is the number of standard Young 
tableaux of shape $\lambda$. This shows that the 
coefficients of $z^n$ in the functions appearing in
Equations~(\ref{eq:Bre1}) and~(\ref{eq:Bre2}) 
correspond, via Frobenius characteristic, to graded
$\fS_n$-representations whose graded dimensions are
equal to $A_n(t)$ and $d_n(t)$, respectively. Thus,
the left-hand sides of Equations~(\ref{eq:Bre1}) 
and~(\ref{eq:Bre2}) can be considered as 
representation-theoretic analogs of $A_n(t)$ and
$d_n(t)$, respectively. Combinatorial interpretations
for the coefficients of $P_\lambda(t)$ and 
$R_\lambda(t)$ were provided by 
Stembridge~\cite[Section~4]{Ste92}; a different one 
for $P_\lambda(t)$ can be derived as a special case 
of \cite[Theorem~6.3]{SW16a}.

The unimodality of $P_\lambda(t)$ and $R_\lambda(t)$,
which refines that of $A_n(t)$ and $d_n(t)$, was 
proved by Stanley~\cite[Proposition~12]{Sta89} and
Brenti~\cite[Corollary~1]{Bre90}, respectively.
Given that $A_n(t)$ and $d_n(t)$ are 
$\gamma$-positive, it is natural to ask whether 
$P_\lambda(t)$ and $R_\lambda(t)$ have the same 
property. An affirmative answer follows from 
explicit formulas which refine 
Theorems~\ref{thm:FSSAn} and~\ref{thm:ASdn}.
These formulas are essentially equivalent to
the following unpublished result of Gessel, 
stated without proof in \cite[Section~4]{LSW12} 
\cite[Section~7]{SW10} \cite[Section~3]{SW17}.
We write $E(\bx; z) = \sum_{n \ge 0} e_n(\bx) 
z^n$ for the generating function for the 
elementary symmetric functions in $\bx$. For a
map $w: [n] \to \ZZ_{>0}$ we set $\bx_w := 
x_{w(1)} x_{w(2)} \cdots x_{w(n)}$ and, as 
usual, define by $\Asc(w) := [n-1] \sm \Des(w)$ 
and $\Des(w) := \{ i \in [n-1]: w(i) > w(i+1) 
\}$ the set of \emph{ascents} and \emph{descents} 
of $w$ and denote by $\asc(w)$ and $\des(w)$ the 
cardinalities of these sets, respectively.
\begin{theorem} \label{thm:Ge}
{\rm (Gessel, unpublished)} 
We have

\begin{equation} \label{eq:Ge1}
\frac{(1-t) H(\bx; z)} {H(\bx; tz) - tH(\bx; z)} 
\ = \ 1 \, + \, \sum_{n \ge 1} z^n 
\sum_{\scriptsize \begin{array}{c} w: [n] \to \ZZ_{>0}
    \\ \Des(w) \in \Stab ([n-2]) \end{array}}
t^{\des(w)} (1+t)^{n-1 - 2\des(w)} \, \bx_w,
\end{equation}
\begin{equation} \label{eq:Ge2}
\frac{1-t} {H(\bx; tz) - tH(\bx; z)} 
\ = \ 1 \, + \, \sum_{n \ge 2} z^n 
\sum_{\scriptsize \begin{array}{c} w: [n] \to \ZZ_{>0}
    \\ \Des(w) \in \Stab ([2,n-2]) \end{array}}
t^{\des(w)+1} (1+t)^{n-2 - 2\des(w)} \, \bx_w,
\end{equation}
\begin{equation} \label{eq:Ge3}
\frac{(1-t) E(\bx; z)} {E(\bx; tz) - tE(\bx; z)} 
\ = \ 1 \, + \, \sum_{n \ge 1} z^n 
\sum_{\scriptsize \begin{array}{c} w: [n] \to \ZZ_{>0}
    \\ \Asc(w) \in \Stab ([n-2]) \end{array}}
t^{\asc(w)} (1+t)^{n-1 - 2\asc(w)} \, \bx_w
\end{equation}
and

\begin{equation} \label{eq:Ge4}
\frac{1-t} {E(\bx; tz) - tE(\bx; z)} 
\ = \ 1 \, + \, \sum_{n \ge 2} z^n 
\sum_{\scriptsize \begin{array}{c} w: [n] \to \ZZ_{>0}
    \\ \Asc(w) \in \Stab ([2,n-2]) \end{array}}
t^{\asc(w)+1} (1+t)^{n-2 - 2\asc(w)} \, \bx_w.
\end{equation}
\end{theorem}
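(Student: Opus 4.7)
Since the four identities of Theorem~\ref{thm:Ge} are closely linked, I will focus on (\ref{eq:Ge1}); equation (\ref{eq:Ge2}) is handled by an entirely analogous argument with $P_\lambda$ replaced by $R_\lambda$, while (\ref{eq:Ge3}) and (\ref{eq:Ge4}) follow from (\ref{eq:Ge1}) and (\ref{eq:Ge2}), respectively, by applying the standard involution $\omega$ on symmetric functions. This involution swaps $H(\bx; z)$ with $E(\bx; z)$, sends $s_\lambda$ to $s_{\lambda'}$, and, under the induced map on fundamental quasisymmetric functions, sends $F_{n, S}$ to $F_{n, [n-1] \sm S}$; the resulting swap of descents and ascents in words carries the restriction $\Des(w) \in \Stab([n-2])$ to $\Asc(w) \in \Stab([n-2])$, exactly as appears in (\ref{eq:Ge3})--(\ref{eq:Ge4}).

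The key ingredient for (\ref{eq:Ge1}) is a tableau-level refinement of Theorem~\ref{thm:FSSAn}, due essentially to Stembridge~\cite[Section~4]{Ste92}: for every $\lambda \vdash n$,
\begin{equation} \label{eq:tabP-gamma}
P_\lambda(t) \ = \sum_{\substack{T \in \SYT(\lambda) \\ \Des(T) \in \Stab([n-2])}} t^{\des(T)} (1+t)^{n-1-2\des(T)}.
\end{equation}
Summing (\ref{eq:tabP-gamma}) over $\lambda \vdash n$ with multiplicities $f^\lambda$ recovers the $\gamma$-expansion of $A_n(t)$ via (\ref{eq:refAn}) and the third interpretation of $\gamma_{n,i}$ in Theorem~\ref{thm:FSSAn}; the shape-refinement (\ref{eq:tabP-gamma}) itself can be obtained by constructing a suitable sign-reversing involution on $\SYT(\lambda)$ (a tableau-level analog of the Foata--Sch\"utzenberger valley-hopping action) whose fixed points are exactly the tableaux $T$ with $\Des(T) \in \Stab([n-2])$.

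Given (\ref{eq:tabP-gamma}), combine (\ref{eq:Bre1}) with the Schur-to-fundamental expansion (\ref{eq:sFexpan}) to rewrite the degree-$n$ component of the LHS of (\ref{eq:Ge1}) as
\[
\sum_{\lambda \vdash n} P_\lambda(t) \, s_\lambda(\bx) = \sum_{\substack{(T, Q) \in \SYT(\lambda)^2 \\ \Des(T) \in \Stab([n-2])}} t^{\des(T)} (1+t)^{n-1-2\des(T)} \, F_{n, \Des(Q)}(\bx),
\]
where the inner sum runs over pairs of standard Young tableaux of the same shape. By RSK, such pairs biject with permutations $\sigma \in \fS_n$ via $(P(\sigma), Q(\sigma))$, with $\Des(T) = \Des(\sigma^{-1})$ and $\Des(Q) = \Des(\sigma)$; setting $\tau = \sigma^{-1}$ rewrites the expression as
\[
\sum_{\substack{\tau \in \fS_n \\ \Des(\tau) \in \Stab([n-2])}} t^{\des(\tau)} (1+t)^{n-1-2\des(\tau)} \, F_{n, \Des(\tau^{-1})}(\bx).
\]
The proof then concludes by invoking the classical word-standardization identity
$\sum_{w : \Des(w) = S} \bx_w = \sum_{\tau \in \fS_n :\, \Des(\tau) = S} F_{n, \Des(\tau^{-1})}(\bx)$,
valid for every $S \subseteq [n-1]$ (see, e.g., \cite[Section~7.19]{StaEC2}); specializing to $S \in \Stab([n-2])$ and weighting by $t^{|S|}(1+t)^{n-1-2|S|}$ identifies the expression above with the degree-$n$ part of the RHS of (\ref{eq:Ge1}).

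The main obstacle is establishing the tableau $\gamma$-expansion (\ref{eq:tabP-gamma}) and its $R_\lambda$-analog needed for (\ref{eq:Ge2}) (which refines Theorem~\ref{thm:ASdn} in exactly the same spirit, with $\Stab([2, n-2])$ replacing $\Stab([n-2])$ and the extra factor of $t$ accounting for the fact that $d_n(t)$ is divisible by $t$). Once these shape-refinements of the Foata--Sch\"utzenberger identity are in place at the tableau level, the remaining steps are routine manipulations via RSK, the Schur--fundamental expansion, word standardization, and the involution $\omega$.
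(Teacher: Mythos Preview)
Your approach reverses the paper's logical flow. The paper proves (\ref{eq:Ge3}) directly by a valley-hopping argument on \emph{Smirnov words} (maps $w:[n]\to\ZZ_{>0}$ with no two equal consecutive values): one starts from Stanley's interpretation
\[
\frac{(1-t)\,E(\bx; z)}{E(\bx; tz) - tE(\bx; z)} \ = \ 1 + \sum_{n \ge 1} z^n \sum_{w \in \sS(n)} t^{\asc(w)}\,\bx_w,
\]
groups the Smirnov words into orbits under a modified hopping action, and shows that each orbit contributes a single term $t^{\asc(v)}(1+t)^{n-1-2\asc(v)}\,\bx_v$. No tableaux, no RSK. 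Identity (\ref{eq:Ge1}) then follows via $\omega$, and the tableau expansion you call (\ref{eq:tabP-gamma}) is precisely what the paper \emph{derives from} Theorem~\ref{thm:Ge} as Corollary~\ref{cor:PRgamma}, not the other way around.

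Your reduction from (\ref{eq:tabP-gamma}) to (\ref{eq:Ge1}) via RSK and the ribbon-Schur/word-standardization identity is correct; it is literally the inverse of the paper's derivation of Corollary~\ref{cor:PRgamma}. The gap is the proof of (\ref{eq:tabP-gamma}) itself. You attribute it to Stembridge~\cite[Section~4]{Ste92}, but that reference supplies a combinatorial interpretation for the \emph{coefficients} of $P_\lambda(t)$, not its $\gamma$-expansion. Your proposed ``tableau-level analog of valley hopping'' on $\SYT(\lambda)$ is not a known construction, and the ordinary Foata--Strehl action on $\fS_n$ does \emph{not} preserve RSK shape (for instance, hopping $(1,2,3)$ to $(2,1,3)$ changes the shape from $(3)$ to $(2,1)$), so it does not descend to an action on $\SYT(\lambda)$ for fixed $\lambda$. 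Without an independent proof of (\ref{eq:tabP-gamma}) your argument is circular: it assumes exactly the shape-by-shape refinement that Theorem~\ref{thm:Ge} is used to establish.
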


\smallskip
Equations~(\ref{eq:Ge1}) and~(\ref{eq:Ge2}) can  
be shown to be equivalent to~(\ref{eq:Ge3}) 
and~(\ref{eq:Ge4}), respectively, via an application 
of the standard involution $\omega$ on symmetric 
functions. Perhaps not surprisingly, 
Equations~(\ref{eq:Ge3}) and~(\ref{eq:Ge4}) can be 
deduced \cite[Corollary~4.1]{Ath17} as special cases 
of an equivariant version of Theorem~\ref{thm:LSWrees}; 
see \cite[Theorem~1.2]{Ath17} and 
Theorem~\ref{thm:Athrees} in the sequel. A direct 
combinatorial proof of~(\ref{eq:Ge3}) will be 
sketched in Section~\ref{subsec:hopping}. 
\begin{corollary} \label{cor:PRgamma}
For $\lambda \vdash n$,
\begin{equation}
\label{eq:Pgamma}
P_\lambda (t) \ = \ \sum_{i=0}^{\lfloor (n-1)/2 \rfloor} 
\gamma^\lambda_i \, t^i (1+t)^{n-1-2i}
\end{equation} 
and
\begin{equation}
\label{eq:Rgamma}
R_\lambda (t) \ = \ \sum_{i=0}^{\lfloor n/2 \rfloor} 
\xi^\lambda_i \, t^i (1+t)^{n-2i},
\end{equation} 
where

\begin{itemize}
\item[$\bullet$] $\gamma^\lambda_i$ is the number 
of tableaux $Q \in \SYT(\lambda)$ for which 
$\Des(Q) \in \Stab([n-2])$ has $i$ elements, and
\item[$\bullet$] $\xi^\lambda_i$ is the number 
of tableaux $Q \in \SYT(\lambda)$ for which 
$\Des(Q) \in \Stab([2,n-2])$ has $i-1$ elements.
  \end{itemize}
\end{corollary}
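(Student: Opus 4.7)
The plan is to deduce both parts of the corollary directly from Theorem~\ref{thm:Ge}, by expanding the right-hand sides of~(\ref{eq:Ge1}) and~(\ref{eq:Ge2}) in the Schur basis and then matching coefficients in~(\ref{eq:Bre1}) and~(\ref{eq:Bre2}). Since the two halves follow by identical reasoning, I focus on establishing~(\ref{eq:Pgamma}).

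The first step is to notice that the weight $t^{\des(w)}(1+t)^{n-1-2\des(w)}$ attached to each map $w:[n]\to\ZZ_{>0}$ on the right-hand side of~(\ref{eq:Ge1}) depends only on the descent set $T = \Des(w)$. Grouping the inner sum by $T$ converts the right-hand side of~(\ref{eq:Ge1}) into
\[
  1 \ + \ \sum_{n \ge 1} z^n \sum_{T \in \Stab([n-2])} t^{|T|} (1+t)^{n-1-2|T|} \, M_{n,T}(\bx),
\]
where $M_{n,T}(\bx) := \sum_{w: \Des(w) = T} \bx_w$, with the inner sum running over maps $w:[n] \to \ZZ_{>0}$. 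The second step is to expand $M_{n,T}$ in the Schur basis. By the Robinson--Schensted correspondence, such maps $w$ are in weight-preserving bijection with pairs $(P,Q)$ consisting of a semistandard Young tableau $P$ and a standard Young tableau $Q$ of a common shape, satisfying $\bx_w = \bx_P$ and $\Des(w) = \Des(Q)$. Fixing $Q$ and summing $\bx_P$ over all semistandard Young tableaux of the shape of $Q$ yields the Schur function of that shape, so that
\[
  M_{n,T}(\bx) \ = \ \sum_\lambda |\{Q \in \SYT(\lambda): \Des(Q) = T\}| \, s_\lambda(\bx).
\]

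Substituting this expression into the previous display and interchanging the sums over $\lambda$ and $T$ converts the right-hand side of~(\ref{eq:Ge1}) into a Schur expansion in which the coefficient of $s_\lambda(\bx) z^n$ equals $\sum_Q t^{|\Des(Q)|}(1+t)^{n-1-2|\Des(Q)|}$, summed over $Q \in \SYT(\lambda)$ with $\Des(Q) \in \Stab([n-2])$. This is precisely the proposed $\gamma$-expansion of $P_\lambda(t)$. Comparison with~(\ref{eq:Bre1}) and the linear independence of the Schur basis then establishes~(\ref{eq:Pgamma}). The proof of~(\ref{eq:Rgamma}) is identical, starting from~(\ref{eq:Ge2}) and~(\ref{eq:Bre2}); the extra factor of $t$ in the weight produces the index shift $i = |\Des(Q)|+1$ present in the definition of $\xi^\lambda_i$. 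The only substantive ingredient beyond Theorem~\ref{thm:Ge} is the RSK-based Schur expansion of $M_{n,T}$, a classical identity also expressible as the Schur expansion of a ribbon Schur function; the rest is purely formal bookkeeping.
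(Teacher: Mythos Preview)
Your proof is correct and follows essentially the same route as the paper's. The paper groups the maps $w$ by descent set $B$, recognizes $\sum_{w:\Des(w)=B}\bx_w$ as the ribbon (skew hook) Schur function $s_B(\bx)$, and then uses the classical expansion $s_B(\bx)=\sum_{\lambda\vdash n}|\{Q\in\SYT(\lambda):\Des(Q)=B\}|\,s_\lambda(\bx)$ to compare Schur coefficients with~(\ref{eq:Bre1}) and~(\ref{eq:Bre2}); your $M_{n,T}(\bx)$ is precisely this ribbon Schur function, and your RSK argument is one standard way to obtain its Schur expansion (as you yourself note at the end).
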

In particular, $P_\lambda (t)$ and $R_\lambda (t)$
are $\gamma$-positive for every partition $\lambda$.
\begin{proof}
For $B \subseteq [n-1]$, we consider the skew hook 
shape whose row lengths, read from bottom to top, 
form the composition of $n$ corresponding to $B$ and
denote by $s_B(\bx)$ the skew Schur function
associated to this shape. As in the proof of 
\cite[Corollary~3.2]{SW17}, we note that, because of 
the obvious correspondence between monomials $\bx_w$
and semistandard Young tableaux of skew hook shape, 
Equation~(\ref{eq:Ge1}) may be rewritten as
\begin{equation} \label{eq:Ge1b}
\frac{(1-t) H(\bx; z)} {H(\bx; tz) - tH(\bx; z)} 
\ = \ 1 \, + \, \sum_{n \ge 1} z^n 
\sum_{i=0}^{\lfloor (n-1)/2 \rfloor} \gamma_{n,i} 
(\bx) \, t^i (1+t)^{n-1-2i}
\end{equation}
where
\begin{equation} \label{eq:Ge1bb}
\gamma_{n,i} (\bx) \ = \sum_{\scriptsize 
\begin{array}{c} B \in \Stab ([n-2]) \\ |B| = i 
\end{array}} s_B(\bx).
\end{equation}

Expanding $s_B(\bx)$ by the well known rule
\begin{equation} \label{eq:skewhookexpand}
s_B(\bx) \ = \ \sum_{\lambda \vdash n} 
\, |\{ Q \in \SYT(\lambda): \Des(Q) = B\}| 
\cdot s_\lambda(\bx) 
\end{equation}
in the basis of Schur functions and comparing 
the coefficients of $s_\lambda(\bx)$ on the two 
sides of~(\ref{eq:Ge1b}) gives the desired 
formula for $P_\lambda(t)$. A similar argument 
works for $R_\lambda(t)$.
\end{proof}

The author is not aware of any results in the 
literature concerning the roots of $P_\lambda(t)$ 
and $R_\lambda(t)$.

\subsection{Polynomials arising from trees}
\label{subsec:other}

This section discusses two more instances 
of $\gamma$-positivity in combinatorics. For 
further examples, see \cite{Bra08, Li17} and 
Sections~\ref{subsec:exa} 
and~\ref{subsec:gmethods}. 

A tree $T$ on the vertex set $[n]$ is called 
\emph{rooted} if it has a distinguished vertex, 
called \emph{root}. An edge $\{a, b\}$, with 
$a < b$, of such a tree $T$ is called a 
\emph{descent} if the unique path in $T$ which
connects $a$ with the root passes through $b$.
Let $t_n(x)$ be the polynomial in which the 
coefficient of $x^k$ equals the number of 
rooted trees on the vertex set $[n]$ with 
exactly $k$ descents. The following elegant 
$x$-analog 
\begin{equation} \label{eq:ERtrees}
t_n(x) \ = \ \prod_{i=1}^{n-1} \left( (n-i) + ix
\right)
\end{equation}
of Cayley's formula, which is a specialization of 
results of \"O.~E\u{g}ecio\u{g}lu and J.~Remmel
\cite{ER86}, implies the $\gamma$-positivity 
of $t_n(x)$. Several explicit combinatorial 
interpretations for the corresponding 
$\gamma$-coefficients were found by R.~Gonz\'alez 
D'Le\'on~\cite{Gon16}.

A tree $T$ on the vertex set $[n]$ is called 
\emph{alternating}, or \emph{intransitive} 
\cite{Pos97}, if every vertex of $T$ is either
less than all its neighbors, in which case it 
is called a \emph{left vertex}, or greater 
than all its neighbors, in which case it is 
called a \emph{right vertex}. Let $g_n(x)$ be
the polynomial in which the coefficient of 
$x^k$ equals the number of alternating trees 
on the vertex set $[n]$ with exactly $k$ left 
vertices. These polynomials were considered
in \cite{Ge96, Pos97}; for the first few 
values of $n$, we have
\[  g_n (x) \ = \ \begin{cases}
    1, & \text{if \ $n=1$} \\
    x, & \text{if \ $n=2$} \\
    x + x^2, & \text{if \ $n=3$} \\
    x + 5x^2 + x^3, &  \text{if \ $n=4$} \\
    x + 17x^2 + 17x^3 + x^4, & \text{if \ $n=5$} \\ 
    x + 49x^2 + 146x^3 + 49x^4 + x^5, & 
                               \text{if \ $n=6$} \\
    x + 129x^2 + 922x^3 + 922x^4 + 129x^5 + x^6, & 
                               \text{if \ $n=7$}.  
                \end{cases} \]

\begin{theorem} \label{thm:GGT17}
{\rm 
(Gessel--Griffin--Tewari~\cite[Theorem~5.9]{GGT17})} 
The polynomial $g_n(x)$ is $\gamma$-positive for
every positive integer $n$. 
\end{theorem}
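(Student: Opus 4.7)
The plan is to prove the $\gamma$-positivity of $g_n(x)$ by constructing a combinatorial group action on alternating trees on $[n]$, in the spirit of the modified Foata--Strehl valley-hopping paradigm that recurs throughout this survey. First I would set up the target expansion. In every alternating tree on $[n]$ the vertex $1$ is a left vertex (since its label is smaller than that of any neighbor) and the vertex $n$ is a right vertex, so $g_n(0) = 0$; combined with the palindromicity of $g_n(x)$ under the label-reversing involution $i \mapsto n+1-i$, which swaps left and right vertices, this gives $g_n(x) = x\,\tilde g_n(x)$ with $\tilde g_n(x)$ palindromic of degree $n-2$ about $(n-2)/2$. The goal is thus the expansion
\[
  g_n(x) \ = \ \sum_{j=1}^{\lfloor n/2 \rfloor} \gamma_{n,j}\,
  x^j (1+x)^{n-2j}
\]
with $\gamma_{n,j}\in\NN$.

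Second, I would try to define an action of an elementary abelian $2$-group on the set of alternating trees on $[n]$, generated by local ``flips'' indexed by non-extremal vertices. Call $v\in [2,n-1]$ \emph{flippable in $T$} if there is a canonical alternating tree $T'$ on $[n]$, obtained from $T$ by rewiring only the edges incident to $v$, such that $v$ is a right vertex in $T'$ precisely when it was a left vertex in $T$, while every other vertex retains its type; flips at distinct flippable vertices should commute. This partitions the set of alternating trees into orbits on which $\ZZ_2^{k}$ acts freely, where $k$ is the number of flippable vertices. If the orbit representative in which every flippable vertex is set to ``right'' has $j$ left vertices (necessarily including vertex $1$), then palindromicity of the orbit's contribution to $g_n(x)$ forces $k = n - 2j$, and summing $x^{L(T)}$ over the $2^{n-2j}$ trees in the orbit yields exactly $x^j(1+x)^{n-2j}$: the $j$ forced-left vertices contribute $x^j$, and each of the $n-2j$ flippable vertices independently contributes $1+x$. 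Setting $\gamma_{n,j}$ equal to the number of such orbits then delivers the desired expansion with nonnegative integer coefficients.

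The main obstacle is the rigorous construction of the flipping operation. The left/right dichotomy on the vertex set of an alternating tree amounts to the unique $2$-coloring of a bipartite graph, so one cannot simply toggle a single vertex's color: any honest change at $v$ must be accompanied by a change in $v$'s incident edges, so that $v$'s new neighbors lie on the opposite side of the total order from the old ones, while the rest of the tree remains intact. The correct local surgery has to be found so that (a) the resulting object is again an alternating tree, (b) flips at different vertices commute, and (c) every flippable vertex is independently flippable within its orbit. A promising route is to pass through a bijective model in which these properties are automatic, for instance Postnikov's bijection between alternating trees on $[n]$ and a class of labeled binary trees in which the ``left vertex'' statistic becomes a descent-like statistic amenable to a standard valley-hopping argument, or a quasisymmetric-generating-function interpretation of $g_n(x)$ in the spirit of \cite{GGT17} whose positive expansion in Stembridge's peak basis would imply $\gamma$-positivity directly (compare Theorem~\ref{thm:SteEnriched}). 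I expect the design of the action, or of the transfer bijection, together with the verification that $\gamma_{n,j}$ really counts the ``minimum-left'' orbits, to be the most delicate part of the argument.
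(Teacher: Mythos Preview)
Your proposal is a plan rather than a proof, and you are candid about this: the central construction---a commuting family of local ``flips'' on alternating trees that toggle the left/right type of a single interior vertex while preserving alternation---is not supplied, and you correctly identify why it is not obvious. Since the left/right dichotomy is forced by the bipartition of the tree, any flip at $v$ must reroute \emph{all} edges at $v$ to a new set of neighbors on the other side of $v$ in the total order, and doing this in a way that (a) stays a tree, (b) keeps every other vertex's type fixed, and (c) commutes with flips at other vertices is a genuine design problem. Without that construction, the orbit-counting argument does not get off the ground, so as written the proposal has a real gap.

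For comparison, the paper does not give its own proof of this theorem; it records the result of Gessel--Griffin--Tewari and notes that their argument proceeds via symmetric functions and produces a combinatorial formula for the $\gamma$-coefficients in terms of certain plane binary trees. That is much closer to the second route you sketch at the end---passing through a symmetric or quasisymmetric generating function (and a bijection to a labeled-tree model where a descent/peak statistic replaces ``number of left vertices'')---than to the direct valley-hopping on alternating trees that you lead with. If you want to pursue this, I would invert the emphasis: take the symmetric-function/peak-expansion route as the primary line (in the spirit of Theorem~\ref{thm:SteEnriched} and Section~\ref{subsec:sfmethods}), and treat a direct involution on alternating trees as a separate, harder project.
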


The proof, which uses symmetric functions, yields 
a combinatorial formula for the 
$\gamma$-coefficients involving certain plane binary 
trees. An explicit combinatorial interpretation has 
also been found by V.~Tewari~\cite{Te17}. 

\section{Gamma-positivity in geometry}
\label{sec:geom}

This section discusses the main geometric contexts 
in which $\gamma$-positivity occurs. Many of the 
examples treated in Section~\ref{sec:comb} reappear 
here as interesting special cases of more general 
phenomena. Familiarity with basic notions on 
simplicial complexes and their geometric 
realizations is assumed; see \cite{Bj95} 
\cite[Chapter~2]{BP15} \cite{Koz08} 
\cite[Lecture~1]{Wa07} for detailed expositions. 
All simplicial complexes considered here are 
assumed to be finite. To keep our discussion as 
elementary as possible, we work with 
triangulations of spheres and balls, rather than 
with the more general classes of homology spheres 
and homology balls. 

We first review some basic definitions and 
background related to the face enumeration of 
simplicial complexes. For $i \ge 0$ we denote 
by $f_{i-1}(\Delta)$ the number of 
$(i-1)$-dimensional faces of a simplicial complex 
$\Delta$, where (unless $\Delta$ is empty) 
$f_{-1} (\Delta) := 1$.
\begin{definition} \label{def:h-poly} 
The $h$-polynomial of a simplicial complex 
$\Delta$ of dimension $n-1$ is defined as
\begin{equation} \label{eq:defh-poly} 
h(\Delta, x) \ := \ \sum_{i=0}^n f_{i-1}(\Delta) 
x^i (1-x)^{n-i} \ := \ \sum_{i=0}^n h_i(\Delta) x^i. 
\end{equation}
The sequence $h(\Delta) = (h_0(\Delta), 
h_1(\Delta),\dots,h_n(\Delta))$ is the $h$-vector 
of $\Delta$.
\end{definition}

The polynomial $h(\Delta, x)$ can be thought of as 
an $x$-analog of the number $f_{n-1} (\Delta)$ of 
facets (meaning, faces of maximum dimension $n-1$) 
of $\Delta$, since $h(\Delta, 1) = f_{n-1}(\Delta)$. 
We refer the 
reader to \cite{StaCCA} for the significance of the 
$h$-polynomial, as well as for important algebraic 
and combinatorial interpretations which are valid 
for special classes of simplicial complexes. 
\begin{example} \rm
The simplicial complex $\Delta$ shown in 
Figure~2 triangulates a 
two-dimensional simplex with eight vertices, 
fifteen edges and eight two-dimensional faces. 
Thus, $f_{-1} (\Delta) = 1$, $f_0(\Delta) = 8$, 
$f_1(\Delta) = 15$ and $f_2(\Delta) = 8$ and 
we may compute that $h (\Delta, x) = (1-x)^3 + 
8x(1-x)^2 + 15x^2 (1-x) + 8x^3 = 1 + 5x + 
2x^2$. \qed
\end{example} 

\begin{figure}
\begin{center}
\begin{tikzpicture}[scale=0.85]
\label{fg:two-dim}

   \draw(0,0) node(1){$\bullet$};
   \draw(2,0) node(2){$\bullet$};
   \draw(1,1) node(3){$\bullet$};
   \draw(2,2) node(4){$\bullet$};
   \draw(2,4) node(5){$\bullet$};
   \draw(3,0) node(6){$\bullet$};
   \draw(3,2) node(7){$\bullet$};
   \draw(4,0) node(8){$\bullet$};

   \draw(0,0) -- (4,0) -- (2,4) -- (0,0);
   \draw(0,0) -- (2,2) -- (3,2) -- (2,0);
   \draw(2,0) -- (1,1) -- (2,4);
   \draw(2,0) -- (2,4);
   \draw(3,0) -- (3,2);

\end{tikzpicture}
\caption{A two-dimensional simplicial complex}
\end{center}
\end{figure}
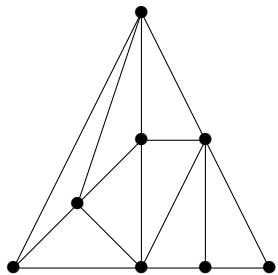

The relevance of the $h$-polynomial to the 
symmetry and unimodality of real polynomials stems 
from the following theorem, which combines important 
results by V.~Klee, G.~Reisner and R.~Stanley 
\cite{Kl64, Rei76, Sta75, Sta80} in geometric 
combinatorics; see~\cite{StaCCA} for more 
information. The first two parts hold 
more generally for the classes of Cohen--Macaulay 
and Eulerian, respectively, simplicial complexes.

\begin{theorem} \label{thm:KRS}
{\rm (Klee, Reisner, Stanley)}
Let $\Delta$ be $(n-1)$-dimensional.

\begin{itemize}
\item[{\rm (a)}]
  The polynomial $h(\Delta, x)$ has nonnegative 
  coefficients, i.e., we have $h_i(\Delta) \ge 0$ for
  $0 \le i \le n$, if $\Delta$ triangulates a ball or 
  a sphere.
\item[{\rm (b)}]
  The polynomial $h(\Delta, x)$ is symmetric, i.e., we 
  have $h_i(\Delta) = h_{n-i}(\Delta)$ for $0 \le i \le 
  n$, if $\Delta$ triangulates a sphere.
\item[{\rm (c)}] 
  The polynomial $h(\Delta, x)$ is unimodal, i.e., we 
  have 
  \begin{equation} \label{eq:GLBC}
    1 = h_0(\Delta) \le h_1(\Delta) \le \cdots \le 
                  h_{\lfloor n/2 \rfloor}(\Delta),
  \end{equation}
  if $\Delta$ is the boundary complex of a simplicial 
  polytope. 
\end{itemize}
\end{theorem}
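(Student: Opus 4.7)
The plan is to prove all three parts by way of the Stanley--Reisner ring $k[\Delta] = k[x_v : v \in V(\Delta)]/I_\Delta$, where $k$ is a field and $I_\Delta$ is the ideal generated by the squarefree monomials $\prod_{v \in \sigma} x_v$ indexed by the non-faces $\sigma \notin \Delta$. This is a graded $k$-algebra of Krull dimension $n$ whose Hilbert series equals
\[
    \Hilb(k[\Delta], x) \ = \ \frac{h(\Delta, x)}{(1-x)^n},
\]
so the combinatorial content of $h(\Delta, x)$ is encoded algebraically. All three parts will reduce to structural properties of $k[\Delta]$ when $\Delta$ is of the prescribed topological type.

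For part (a), I would invoke Reisner's criterion, which characterizes Cohen--Macaulayness of $k[\Delta]$ in terms of the vanishing of reduced simplicial homology of links of faces in all but the top degree. Triangulated balls and spheres satisfy this criterion (by computing the links, which are themselves balls or spheres), so $k[\Delta]$ is Cohen--Macaulay. Taking $k$ infinite, one can choose a linear system of parameters $\theta_1, \dots, \theta_n \in k[\Delta]_1$; by Cohen--Macaulayness the quotient $A := k[\Delta]/(\theta_1,\dots,\theta_n)$ is a finite-dimensional graded $k$-algebra with
\[
    \Hilb(A, x) \ = \ h(\Delta, x).
\]
Since $\dim_k A_i = h_i(\Delta) \ge 0$, part (a) follows.

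For part (b), I would first note that a triangulated $(n-1)$-sphere is Eulerian, meaning $\tilde{\chi}(\link_\Delta(\sigma)) = (-1)^{n-1-|\sigma|}$ for every face $\sigma$. From this one can derive the Dehn--Sommerville equations $h_i(\Delta) = h_{n-i}(\Delta)$ by a direct inclusion-exclusion manipulation of the defining equation~(\ref{eq:defh-poly}). Alternatively, and more conceptually, Stanley's theorem that $k[\Delta]$ is Gorenstein (of parameter zero) whenever $\Delta$ triangulates a sphere implies that the Artinian reduction $A$ is a graded Poincar\'e duality algebra, whence its Hilbert function is symmetric about $n/2$.

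For part (c), the main obstacle, I would follow Stanley's original route via toric geometry. Given a simplicial polytope $P$ with boundary complex $\Delta$, rational perturbation of its vertices yields a rational simplicial polytope defining a projective toric variety $X_P$ whose rational cohomology ring $H^*(X_P, \mathbb{Q})$ is isomorphic (doubling degrees) to an Artinian reduction of $k[\Delta]$, with Betti numbers $\dim H^{2i}(X_P, \mathbb{Q}) = h_i(\Delta)$. Since $X_P$ is projective (though typically singular, with finite-quotient singularities), the hard Lefschetz theorem applies: multiplication by a suitable degree-$1$ class $\omega$ induces injections $H^{2i} \to H^{2i+2}$ for $i < n/2$. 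This yields the desired inequalities~(\ref{eq:GLBC}). The hard step here is the appeal to hard Lefschetz on a singular toric variety, which requires intersection cohomology or, in McMullen's later combinatorial reformulation, the construction of a Lefschetz element directly in the Artinian reduction of $k[\Delta]$; I would flag this as the deep input on which the whole argument rests.
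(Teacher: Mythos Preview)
The paper does not actually prove Theorem~\ref{thm:KRS}; it is stated without proof as a classical result, with citations to the original works of Klee~\cite{Kl64}, Reisner~\cite{Rei76} and Stanley~\cite{Sta75, Sta80}, and a pointer to~\cite{StaCCA} for a fuller treatment. The only additional remark the paper makes is that parts~(a) and~(b) hold more generally for Cohen--Macaulay and Eulerian complexes, respectively, and that part~(c) is Stanley's Generalized Lower Bound Theorem.

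Your outline is the standard route through these results and matches what one finds in the cited sources: Reisner's criterion gives Cohen--Macaulayness for balls and spheres, whence nonnegativity of the $h$-vector via an Artinian reduction; the Dehn--Sommerville relations (or the Gorenstein property) give symmetry for spheres; and Stanley's toric argument, invoking hard Lefschetz for the projective simplicial toric variety associated to a rational simplicial polytope, gives unimodality. Your caveats about the depth of the hard Lefschetz input and the alternative via McMullen's polytope algebra are appropriate. There is nothing to correct; the proposal simply supplies the argument the survey chose to omit.
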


\smallskip
Part (c), proved by R.~Stanley~\cite{Sta80}, is known 
as the \emph{Generalized Lower Bound Theorem} for 
simplicial polytopes, since the inequalities $h_i
(\Delta) \ge h_{i-1}(\Delta)$ impose a lower bound 
for each face number $f_{i-1}(\Delta)$ in terms of 
the numbers $f_{j-1}(\Delta)$ for $1 \le j < i$. 

\subsection{Flag triangulations of spheres}
\label{subsec:gal}

Let $\Delta$ be an (abstract) simplicial complex 
on the vertex set $V$. We say that $\Delta$ is 
\emph{flag} if we have $F \in \Delta$ for every $F 
\subseteq V$ for which all two-element subsets of 
$F$ belong to $\Delta$ and refer to \cite{Ath11} 
\cite[Section~III.4]{StaCCA} for a glimpse of the 
combinatorial properties of this fascinating class 
of simplicial complexes. The complex shown in 
Figure~2, for example, is flag. 

The following major open problem in geometric 
combinatorics was posed by \'S.~Gal~\cite{Ga05}
who disproved a more optimistic conjecture,
claiming that $h(\Delta, x)$ is real-rooted for 
every flag triangulation of a sphere.
\begin{conjecture} \label{conj:Gal}
{\rm (Gal~\cite[Conjecture~2.1.7]{Ga05})} 
The polynomial $h(\Delta, x)$ is $\gamma$-positive 
for every flag simplicial complex $\Delta$ which 
triangulates a sphere. 
\end{conjecture}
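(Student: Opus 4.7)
Since Gal's conjecture has resisted attack for over a decade, any honest plan must be partly aspirational. My approach would be induction on the dimension $n-1$, exploiting the fact that links of vertices in a flag sphere triangulation are themselves flag triangulations of spheres of dimension $n-2$. The base cases $n = 1, 2$ are trivial, and the cross-polytope boundary, where $h(\Delta, x) = (1+x)^n$ and hence $\gamma_i = \delta_{i,0}$, serves as the prototypical extremal case whose role is to anchor the ``$\gamma$-minimal'' end of the problem.

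\textbf{Key steps.} First, I would fix a vertex ordering and, following the philosophy that underlies Theorems~\ref{thm:FSSAn} and~\ref{thm:ASdn}, attach to each facet $F$ of $\Delta$ a subset $\Asc(F) \subseteq [n-1]$ mimicking an ``ascent set'' via the chosen order, designed so that $h_i(\Delta)$ equals the number of facets whose ascent set has cardinality $i$. Second, I would define a candidate $\gamma_i(\Delta)$ as the number of facets whose ascent set belongs to $\Stab([n-2])$ (or a suitable variant), and try to show by a pairing argument on facets whose ascent sets differ by a swap of a non-stable pair that
\begin{equation*}
h(\Delta, x) \ = \ \sum_{i=0}^{\lfloor n/2 \rfloor} \gamma_i(\Delta) \, x^i (1+x)^{n-2i}.
\end{equation*}
Third, I would attempt to identify $\gamma_i(\Delta)$ with the number of $i$-dimensional faces of an auxiliary flag complex built from the ``interior ascents'' of $\Delta$, which would match the Nevo--Petersen refinement predicting that $(\gamma_0, \gamma_1, \ldots)$ is itself an $f$-vector of a flag complex. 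The inductive step would come by recognizing that, upon deleting the top vertex, the contribution of links to the pairing splits along the sphere/ball decomposition of the deletion, and invoking the inductive hypothesis on each link (which is a flag sphere of smaller dimension).

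\textbf{Main obstacle.} The pairing argument in the second step is exactly where \emph{every} known proof strategy has so far broken. Unlike permutations of $[n]$, where the Foata--Strehl group action provides a canonical involution with explicit fixed points, or Coxeter complexes and barycentric subdivisions, where a group action or an operadic substitution rule organizes the facets, a generic flag sphere carries no such symmetry. One natural program is to show that the flag condition forces the existence of a local ``modified valley-hopping'' action on the facets, respecting a chosen vertex order; doing so would simultaneously yield the $\gamma$-expansion and its combinatorial interpretation. A tractable intermediate goal -- and the one I would actually try to execute first -- is to prove that $\gamma$-positivity is preserved under flag edge subdivision (the inverse of an edge contraction that preserves the flag property); this single operation, iterated from cross-polytope boundaries, produces a rich class of flag spheres and would already be a substantial advance. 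The missing ingredient is a matching argument compatible with both the Dehn--Sommerville relations and the no-induced-missing-face condition, which is precisely the algebraic--combinatorial gap that keeps the conjecture open.
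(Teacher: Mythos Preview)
The statement you are addressing is a \emph{conjecture}, not a theorem: the paper does not prove it, and indeed nobody has. So there is no ``paper's own proof'' to compare against; the paper only records the known partial results (low dimension, few vertices, edge subdivisions of the cross-polytope boundary, barycentric subdivisions, flag nestohedra). Your honesty that the plan is aspirational is appropriate, and your identification of the core obstacle---the absence of any canonical Foata--Strehl-type action on the facets of a generic flag sphere---is exactly right and is why the problem remains open.

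There is, however, one concrete error in your proposal. You describe as ``a tractable intermediate goal'' and ``a substantial advance'' the statement that $\gamma$-positivity is preserved under flag edge subdivision, starting from the cross-polytope boundary. This is \emph{already known}: it is implicit in Gal's original paper and is spelled out in Remark~\ref{rem:edge-sub} of the present survey. The key identity is
\[
h(\Delta_e,x)\;=\;h(\Delta,x)\;+\;x\,h(\link_\Delta(e),x),
\]
and since every link in an edge subdivision is either a link in $\Delta$, or an edge subdivision or suspension of such a link, induction immediately gives Gal's conjecture for the entire class of flag spheres reachable from $\Sigma_n$ by iterated edge subdivisions. So your proposed first milestone is not new; the difficulty is precisely that not every flag sphere arises this way, and for those that do not, no workable substitute for the pairing step is known.
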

This conjecture extends an earlier conjecture of
R.~Charney and M.~Davis~\cite{CD95} on the sign 
of $h(\Delta, -1)$; it is also open for the more 
restrictive class of flag boundary complexes of 
simplicial polytopes.
\begin{example} \rm
Consider a triangulation $\Delta$ of the  
one-dimensional sphere with $m$ vertices (cycle 
of length $m$). We have $f_{-1} (\Delta) = 1$ 
and $f_0(\Delta) = f_1(\Delta) = m$ and hence 
$h (\Delta, x) = (1-x)^2 + mx(1-x) + mx^2 = 1 + 
(m-2)x + x^2$ is $\gamma$-positive if and only 
if $m \ge 4$. Note that this is exactly the 
necessary and sufficient condition for $\Delta$
to be flag. \qed
\end{example} 

Conjecture~\ref{conj:Gal} can be viewed as a 
Generalized Lower Bound Conjecture for flag 
triangulations of spheres $\Delta$. To be more 
precise, we may write
\begin{equation} \label{eq:def-gammavector} 
h(\Delta, x) \ = \ \sum_{i=0}^{\lfloor n/2 \rfloor} 
\gamma_i(\Delta) x^i (1+x)^{n-2i},
\end{equation}
where $n-1 = \dim(\Delta)$ and $\gamma_0 (\Delta) = 
1$. The vector $\gamma(\Delta) := (\gamma_0(\Delta),
\gamma_1(\Delta),\dots,\gamma_{\lfloor n/2 \rfloor}
(\Delta))$ is known as the \emph{$\gamma$-vector} of 
$\Delta$ and Gal's conjecture states that $\gamma_i
(\Delta) \ge 0$ for all $i$. Just as in the case of 
Theorem~\ref{thm:KRS} (c), these inequalities impose 
a lower bound on each $h_i(\Delta)$ in terms of the 
numbers $h_j(\Delta)$ for $0 \le j < i$, and hence a
lower bound on each face number $f_{i-1}(\Delta)$ 
in terms of the face numbers $f_{j-1}(\Delta)$ for 
$1 \le j < i$. 

For instance, since $\gamma_1(\Delta) = h_1 
(\Delta) - n = f_0(\Delta) - 2n$, the inequality 
$\gamma_1(\Delta) \ge 0$ states that every flag 
triangulation of the $(n-1)$-dimensional sphere has 
at least $2n$ vertices, a fact which is easy to 
prove by induction on $n$. 
The inequality $\gamma_2(\Delta) \ge 0$ already 
provides a challenge. Since 
\[ \gamma_2 (\Delta) \ = \ h_2(\Delta) \, - \, 
   (n-2) \gamma_1 (\Delta) \, - \, {n \choose 2} 
   \ = \ f_1 (\Delta) \, - \, (2n-3) f_0(\Delta) 
   \, + \, 2n(n-2), \]
its validity turns out to be equivalent to the 
following conjectural flag analog of Barnette's 
Lower Bound Theorem~\cite{Ba73}. We recall that 
the suspension (simplicial join with a 
zero-dimensional sphere) of a flag triangulation
of a sphere is a flag triangulation of a 
sphere of one dimension higher.  
\begin{conjecture} \label{conj:gamma2}
Among all flag triangulations of the 
$(n-1)$-dimensional sphere with a given number 
$m$ of vertices, the $(n-2)$-fold 
suspension over the one-dimensional sphere 
with $m-2n+4$ vertices has the smallest 
possible number of edges. 
\end{conjecture}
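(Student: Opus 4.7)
The plan is to use the equivalence, asserted just before the statement, between the conjecture and the inequality $\gamma_2(\Delta) \geq 0$ for every flag triangulation $\Delta$ of $S^{n-1}$. I first check equality for the proposed minimizer. Since the $h$-polynomial is multiplicative under simplicial join and $h(S^0, x) = 1+x$, each suspension preserves the $\gamma$-vector; starting from $h(C_k, x) = (1+x)^2 + (k-4)x$, the $\gamma$-vector of $\Sigma^{n-2} C_k$ is $(1, k-4, 0, \dots, 0)$. With $k = m - 2n + 4$ this gives $\gamma_2(\Sigma^{n-2} C_{m-2n+4}) = 0$, so by the formula $f_1(\Delta) = (2n-3)\,f_0(\Delta) - 2n(n-2) + \gamma_2(\Delta)$ recalled in the text, the iterated suspension attains exactly the edge count claimed by the conjecture. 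The nontrivial content is therefore the inequality $\gamma_2(\Delta) \geq 0$, for which equality characterizes the minimizers.

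To analyze $\gamma_2(\Delta)$ I would use the identity
\[
\gamma_2(\Delta) \ = \ \tfrac{1}{2} \sum_{v \in V(\Delta)} \gamma_1(\mathrm{link}_\Delta(v)) \ - \ (n-2)\,\gamma_1(\Delta),
\]
obtained by combining $\sum_v \deg(v) = 2 f_1(\Delta)$ with $\gamma_1(\Delta) = f_0(\Delta) - 2n$ and $\gamma_1(\mathrm{link}_\Delta(v)) = \deg(v) - 2(n-1)$. Each vertex link of $\Delta$ is a flag triangulation of $S^{n-2}$, so the terms $\gamma_1(\mathrm{link}_\Delta(v))$ are individually nonnegative by the easy case $\gamma_1 \geq 0$ established in the text; the task reduces to the averaged bound
\[
\sum_{v \in V(\Delta)} \gamma_1(\mathrm{link}_\Delta(v)) \ \geq \ 2(n-2)\,\gamma_1(\Delta).
\]

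To establish this bound I would attempt a rigidity-theoretic approach modeled on Kalai's proof of Barnette's Lower Bound Theorem for simplicial spheres. The concrete target would be to show that the 1-skeleton of every flag triangulation of $S^{n-1}$ is generically $(2n-3)$-rigid; a standard framework count would then yield $f_1(\Delta) \geq (2n-3)\,f_0(\Delta) - \binom{2n-2}{2}$, which exceeds the required bound $(2n-3)\,f_0(\Delta) - 2n(n-2)$ whenever $n \geq 3$. The induction on $n$ would pass from the inductively available rigidity of vertex links (each a flag triangulation of $S^{n-2}$) to that of $\Delta$, using the flag hypothesis to control the local structure at each vertex. A weaker algebraic fallback would exploit the fact that the Stanley--Reisner ideal of a flag complex is generated by squarefree quadratics, and seek a Lefschetz-type injectivity of multiplication by a generic linear form in degrees $1 \to 2$ of an Artinian reduction that accounts for these quadratic relations.

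The main obstacle is exactly this strengthening of rigidity. Kalai's theorem only guarantees $n$-rigidity for the 1-skeleton of a triangulated $(n-1)$-sphere, and upgrading the rigidity dimension by $n-3$ under the flag assumption is an open problem tightly intertwined with Gal's conjecture. Both the rigidity-theoretic and the algebraic routes stumble at the same gap: neither the Cohen--Macaulay inequality $h_2 \geq h_1$ nor the individual bounds $\gamma_1(\mathrm{link}_\Delta(v)) \geq 0$ are strong enough to force the averaged inequality above when $\gamma_1(\Delta)$ is large, so a genuinely new combinatorial or algebraic input will almost certainly be required to finish the argument.
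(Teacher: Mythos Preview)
This statement is recorded in the paper as an open \emph{conjecture}; the paper does not prove it, but rather observes that it is equivalent to the inequality $\gamma_2(\Delta)\ge 0$ for all flag triangulations $\Delta$ of $S^{n-1}$, which is itself a special case of Gal's conjecture. So there is no ``paper's own proof'' to compare against, and the honest situation is that you are sketching an attack on an open problem.

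Your verification of the equality case and your identity
\[
\gamma_2(\Delta) \ = \ \tfrac12\sum_{v}\gamma_1(\link_\Delta(v)) - (n-2)\,\gamma_1(\Delta)
\]
are both correct and useful. The rigidity strategy, however, has a concrete internal inconsistency. If the $1$-skeleton of every flag $(n-1)$-sphere were generically $(2n-3)$-rigid, the standard count would give
\[
f_1(\Delta) \ \ge \ (2n-3)f_0(\Delta) - \binom{2n-2}{2} \ = \ (2n-3)f_0(\Delta) - (n-1)(2n-3),
\]
and substituting into $\gamma_2(\Delta)=f_1(\Delta)-(2n-3)f_0(\Delta)+2n(n-2)$ yields $\gamma_2(\Delta)\ge 2n(n-2)-(n-1)(2n-3)=n-3$. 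For $n\ge 4$ this is strictly positive, so it would rule out \emph{any} flag $(n-1)$-sphere with $\gamma_2=0$. But you have just computed that the conjectured minimizer $\Sigma^{n-2}C_{m-2n+4}$ has $\gamma_2=0$; hence its $1$-skeleton is \emph{not} generically $(2n-3)$-rigid for $n\ge 4$, and the target you set for the inductive rigidity argument is provably false. The mismatch is arithmetic: the constant term $2n(n-2)$ in the expression for $\gamma_2$ is not of the form $\binom{d+1}{2}$ for $d=2n-3$, so no single rigidity dimension reproduces the desired inequality exactly. Any successful approach will have to aim at the precise bound $f_1\ge (2n-3)f_0-2n(n-2)$ rather than at a cruder rigidity inequality, and your own averaged-link inequality already isolates what must actually be shown.
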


For a generalization of this statement, see 
\cite[Conjecture~1.4]{Ne09}. A conjecture 
concerning the possible vectors which can 
arise as $\gamma$-vectors of flag 
triangulations of spheres is proposed 
in~\cite{NP11}. 

An edge subdivision of a simplicial complex 
$\Delta$ is a stellar subdivision of $\Delta$ 
on any of its edges; we refer to 
\cite[Section~6]{Ath12} \cite[Section~5.3]{CD95}
\cite[Section~2.4]{Ga05} for the precise 
definition and Figure~3 for
an example. Edge subdivisions preserve flagness
and homeomorphism type. We will denote by 
$\Sigma_n$ the simplicial join of $n$ copies of the 
zero-dimensional sphere (this simplicial complex 
is combinatorially isomorphic to the boundary 
complex of the $n$-dimensional cross-polytope; 
it satisfies $h(\Sigma_n, x) = (1+x)^n$ or, 
equivalently, $\gamma(\Sigma_n) = (1,0,\dots,0)$).
Apart from the results on barycentric subdivisions
and nested complexes, discussed separately in 
the sequel, Gal's conjecture is known to hold for 
flag triangulations of spheres:
\begin{itemize}
\itemsep=0pt
\item[$\bullet$] of dimension less than five
                 \cite[Theorem~11.2.1]{DO01}
                 \cite[Corollary~2.2.3]{Ga05},
\item[$\bullet$] with at most $2n+3$ vertices, i.e., 
                 with $\gamma_1(\Delta) \le 3$  
                 \cite[Theorem~1.3]{NP11}, 
\item[$\bullet$] which can be obtained from $\Sigma_n$ 
                 by successive edge subdivisions
                 \cite[Section~2.4]{Ga05},
\item[$\bullet$] for other special classes of flag 
                 triangulations of spheres, discussed 
                 in Section~\ref{subsec:exa}.
\end{itemize}

\begin{remark} \label{rem:edge-sub} \rm
The proof of the third statement above is only 
implicit in \cite[Section~2.4]{Ga05}. To make it 
more explicit, let us denote by $\Delta_e$ the edge 
subdivision of $\Delta$ on its edge $e \in \Delta$. 
Gal observes \cite[Proposition~2.4.3]{Ga05} that
\begin{equation} \label{eq:h-edge-sub} 
h(\Delta_e, x) \ = \ h(\Delta, x) \, + \, x \, 
h(\link_\Delta(e), x),
\end{equation}
where $\link_\Delta (F) = \{ G \sm F: G \in \Delta, 
\, F \subseteq G\}$ is the \emph{link} of $F \in 
\Delta$ in $\Delta$. In particular, if $\Delta$ and 
$\link_\Delta (e)$ satisfy Gal's conjecture, then so  
does $\Delta$ \cite[Corollary~2.4.7]{Ga05}. One can 
verify that every link of a nonempty face in 
$\Delta_e$ is combinatorially isomorphic to either 
the link of a nonempty face in $\Delta$, or to an 
edge subdivision or the suspension of the link of a 
nonempty face in $\Delta$. Thus, it follows by 
induction and Equation~(\ref{eq:h-edge-sub}) that if
$\Delta$ can be obtained from $\Sigma_n$ by successive 
edge subdivisions, then $\Delta$ and all links of its 
faces satisfy Gal's conjecture. 
\end{remark}

\begin{figure}
\begin{center}
\begin{tikzpicture}[scale=0.85]
\label{fg:edge-sub}

   \draw(-4,0) node(1){$\bullet$};
   \draw(-2,0) node(2){$\bullet$};
   \draw(-2,2) node(4){$\bullet$};
   \draw(-2,4) node(5){$\bullet$};
   \draw(-1,0) node(6){$\bullet$};
   \draw(-1,2) node(7){$\bullet$};
   \draw(0,0) node(8){$\bullet$};

   \draw(-4,0) -- (0,0) -- (-2,4) -- (-4,0);
   \draw(-4,0) -- (-2,2) -- (-1,2) -- (-2,0);
   \draw(-2,0) -- (-2,4);
   \draw(-1,0) -- (-1,2);

   \draw(4,0) node(9){$\bullet$};
   \draw(6,0) node(10){$\bullet$};
   \draw(5,1) node(11){$\bullet$};
   \draw(6,2) node(12){$\bullet$};
   \draw(6,4) node(13){$\bullet$};
   \draw(7,0) node(14){$\bullet$};
   \draw(7,2) node(15){$\bullet$};
   \draw(8,0) node(16){$\bullet$};

   \draw(4,0) -- (8,0) -- (6,4) -- (4,0);
   \draw(4,0) -- (6,2) -- (7,2) -- (6,0);
   \draw(6,0) -- (5,1) -- (6,4);
   \draw(6,0) -- (6,4);
   \draw(7,0) -- (7,2);
 
   \draw(2,1.4) node(17){$\longrightarrow$};

\end{tikzpicture}
\caption{An edge subdivision}
\end{center}
\end{figure}
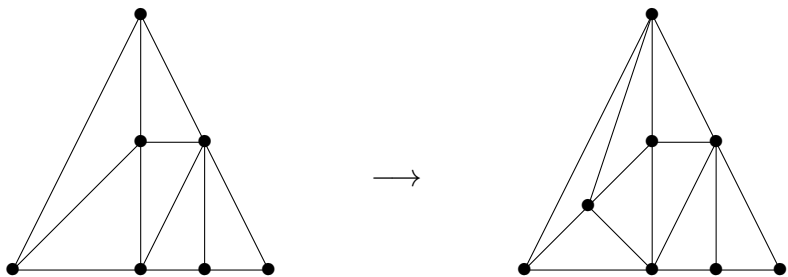

\subsubsection{Barycentric subdivisions}
\label{subsubsec:Karu}

An important class of flag simplicial complexes is 
that of order complexes~\cite[Section~III.4]{StaCCA}. 
Recall that the \emph{order complex} of a poset
$\pP$ is defined as the simplicial complex $\Delta
(\pP)$ of all chains (totally ordered subsets) of 
$\pP$. When $\pP$ is the poset of (nonempty) faces 
of a regular CW-complex $X$, the complex $\Delta
(\pP)$ is by definition the (first) barycentric 
subdivision of $X$. As explained, for instance, in
\cite[Section~2.3]{Ga05}, the following result is 
a direct consequence of important theorems of
K.~Karu and R.~Stanley on the nonnegativity of the 
cd-index of Gorenstein* posets~\cite{Ka06} (for 
introductions to the cd-index, see 
\cite[Section~III.4]{StaCCA} 
\cite[Section~3.17]{StaEC1}). 
\begin{theorem} \label{thm:Karu}
{\rm (Karu~\cite{Ka06}, Stanley~\cite{Sta94}; 
see~\cite[Corollary~2.3.5]{Ga05})} 
Conjecture~\ref{conj:Gal} holds for every order 
complex $\Delta$ which triangulates a sphere. In
particular, it holds for barycentric subdivisions
of regular CW-spheres.
\end{theorem}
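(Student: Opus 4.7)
The plan is to translate the statement into the language of graded posets and their cd-indices, and then invoke Karu's nonnegativity theorem as a black box. Write $\Delta = \Delta(\bar{\pP})$, where $\pP$ is a graded poset of rank $n+1$ with minimum element $\hat{0}$ and maximum element $\hat{1}$, and $\bar{\pP} = \pP \sm \{\hat{0}, \hat{1}\}$. The hypothesis that $\Delta$ triangulates a sphere is precisely the condition that $\pP$ be Gorenstein$^*$ (in particular, Eulerian). In the barycentric subdivision case the face poset of the regular CW-sphere, with a $\hat{0}$ and $\hat{1}$ adjoined, supplies such a $\pP$.

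First, I would express $h(\Delta, x)$ in terms of the flag $h$-vector $b_\pP(\cdot)$ of $\pP$, defined in (\ref{eq:b(S)1}), via the standard identity
\[
h(\Delta(\bar{\pP}), x) \ = \ \sum_{S \subseteq [n]} b_\pP(S) \, x^{|S|} \ = \ \Psi_\pP(1, x),
\]
where $\Psi_\pP(a, b) = \sum_{S \subseteq [n]} b_\pP(S) \, u_S(a, b)$ is the ab-index of $\pP$, with $u_S$ the noncommutative monomial having letter $b$ in position $i$ iff $i \in S$ and $a$ otherwise. Second, I would invoke the Bayer--Klapper theorem for Eulerian posets to rewrite $\Psi_\pP(a, b) = \Phi_\pP(c, d)$ as a polynomial in the noncommuting variables $c = a+b$ and $d = ab + ba$. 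Under the specialization $a = 1$, $b = x$ one has $c \mapsto 1+x$ and $d \mapsto 2x$; since $c$ has $ab$-degree $1$ and $d$ has $ab$-degree $2$, any cd-monomial containing exactly $k$ copies of $d$ has $n-2k$ copies of $c$ and therefore specializes to $2^k x^k (1+x)^{n-2k}$. Grouping terms of $\Phi_\pP$ according to the number of $d$'s yields
\[
h(\Delta, x) \ = \ \sum_{k=0}^{\lfloor n/2 \rfloor} 2^k \sigma_k(\Phi_\pP) \, x^k (1+x)^{n-2k},
\]
where $\sigma_k(\Phi_\pP)$ denotes the sum of the coefficients of those cd-monomials of $\Phi_\pP$ that contain exactly $k$ copies of $d$.

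Third, I would appeal to Karu's theorem, which asserts that $\Phi_\pP$ has nonnegative integer coefficients whenever $\pP$ is Gorenstein$^*$. This forces $\sigma_k(\Phi_\pP) \ge 0$ for every $k$, and hence the $\gamma$-coefficients $\gamma_k(\Delta) = 2^k \sigma_k(\Phi_\pP)$ of the expansion in (\ref{eq:def-gammavector}) are nonnegative, as desired. The ``in particular'' assertion is immediate, since the augmented face poset of a regular CW-sphere is Gorenstein$^*$.

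The main obstacle is of course Karu's nonnegativity theorem itself, whose proof requires genuinely new geometric input (a cd-index-compatible subdivision machinery refining Stanley's construction for simplicial posets). Once it is granted, however, the remainder of the argument is mechanical: the substitution $c \mapsto 1+x$, $d \mapsto 2x$ automatically converts cd-monomial nonnegativity into $\gamma$-positivity, and even gives the sharper structural information that the $\gamma$-vector decomposes as a nonnegative combination of contributions indexed by cd-monomials of $\Phi_\pP$.
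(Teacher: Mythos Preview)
Your proposal is correct and follows precisely the route the paper indicates: reduce to the cd-index of the Gorenstein$^*$ poset $\pP$, observe that the specialization $c\mapsto 1+x$, $d\mapsto 2x$ sends cd-monomials to the terms $x^k(1+x)^{n-2k}$ of the $\gamma$-expansion, and then invoke Karu's nonnegativity theorem (with Stanley's earlier result covering the shellable CW case). The paper does not spell this out but refers to \cite[Section~2.3]{Ga05} for exactly this argument, so your write-up simply makes explicit what the paper cites.
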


As recorded in \cite[Section~7.3]{CD95} 
\cite[p.~103]{StaCCA}, the connection between the 
cd-index of an Eulerian poset $\pP$ and the last
entry of the $\gamma$-vector of $\Delta(\pP)$ was 
observed by E.~Babson. The nonnegativity of
the cd-index was proved for the augmented face 
posets of certain shellable CW-spheres by Stanley 
\cite[Theorem~2.2]{Sta94} and for arbitrary
Gorenstein* posets by Karu \cite[Theorem~1.3]{Ka06}
(Stanley's result suffices to conclude the validity
of Gal's conjecture for barycentric subdivisions of
boundary complexes of convex polytopes).
For barycentric subdivisions of Boolean complexes 
which triangulate a sphere, more elementary proofs 
of Theorem~\ref{thm:Karu} can be found 
in~\cite{BW08, NPT11}.

There is an interesting analog of 
Theorem~\ref{thm:Karu} for triangulations of balls, 
rather than spheres. As explained in
\cite[Section~4]{KMS17}, the following result is 
a direct consequence of~\cite[Theorem~2.5]{EK07}.
Here $\partial \Delta$ denotes the boundary of a 
triangulated ball $\Delta$. 
\begin{theorem} \label{thm:EKaru}
{\rm (Ehrenborg--Karu~\cite{EK07}; 
see~\cite[Theorem~4.6]{KMS17})} 
The polynomial $h(\Delta, x) - h(\partial \Delta, x)$
is $\gamma$-positive for every order complex $\Delta$ 
which triangulates a ball. In particular, this holds 
for barycentric subdivisions of regular CW-balls.
\end{theorem}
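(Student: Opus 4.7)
The plan is to derive the statement from the nonnegativity of a \emph{relative cd-index} attached to the pair consisting of a triangulated ball and its boundary sphere, in close analogy with the deduction of Theorem~\ref{thm:Karu} from Karu's theorem on the cd-index of Gorenstein* posets. First, I would realize the given order complex as $\Delta = \Delta(\bar P)$, where $P$ is a graded poset with $\hat 0$ and $\hat 1$ whose proper part $\bar P$ is the face poset of the triangulated ball. The boundary subcomplex $\partial\Delta$ is then the order complex $\Delta(\bar Q)$ of the augmented sub-poset $Q \subseteq P$ whose proper part collects the faces lying on the boundary sphere. A direct computation with the flag $f$-vector shows that $h(\Delta,x) = \Psi_P(1,x)$ and $h(\partial\Delta,x) = \Psi_Q(1,x)$, where $\Psi_P$ and $\Psi_Q$ are the ab-indices of $P$ and $Q$; hence
\begin{equation*}
h(\Delta,x) - h(\partial\Delta,x) \ = \ \Psi_{P,Q}(1,x), \qquad \text{where} \quad \Psi_{P,Q}(a,b) := \Psi_P(a,b) - \Psi_Q(a,b).
\end{equation*}

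Next, I would invoke the key algebraic input, Theorem~2.5 of~\cite{EK07}, which asserts that for Gorenstein* pairs of this type the relative ab-index $\Psi_{P,Q}(a,b)$ actually lies in the cd-subalgebra of $\ZZ\langle a,b\rangle$ and has nonnegative integer coefficients, providing the ball version of Karu's theorem. Ehrenborg and Karu prove this by an inductive subdivision argument refining Karu's decomposition technique for Gorenstein* posets, executed in a way that simultaneously tracks the interior and boundary contributions.

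Finally, evaluation at $a=1,\, b=x$ sends $c \mapsto 1+x$ and $d = ab+ba \mapsto 2x$, so each cd-monomial $c^{i_0} d c^{i_1} \cdots d c^{i_k}$ of total degree $n$ (with $i_0 + \cdots + i_k + 2k = n$) becomes $2^k\, x^k (1+x)^{n-2k}$. A nonnegative cd-expansion of $\Psi_{P,Q}$ therefore produces a nonnegative $\gamma$-expansion of $h(\Delta,x) - h(\partial\Delta,x)$, which is the desired conclusion; the case of barycentric subdivisions of a regular CW-ball is immediate, since the barycentric subdivision is precisely the order complex of the augmented face poset, and that face poset together with its boundary subposet forms a Gorenstein* pair of the required kind. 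The main obstacle is the middle step: in the pure sphere case, Eulerianness forces $\Psi_P$ into the cd-subalgebra automatically and Karu's theorem supplies positivity, whereas in the ball case neither of these holds for free and the boundary must be controlled throughout the induction, which is precisely the substantive contribution of~\cite{EK07}.
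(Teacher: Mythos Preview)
Your approach is correct and matches what the survey indicates: the paper gives no self-contained argument but records that the statement is ``a direct consequence of~\cite[Theorem~2.5]{EK07}'' as explained in~\cite[Section~4]{KMS17}, and you have accurately reconstructed that deduction --- the relative cd-index of the ball/boundary pair is the key object, and the specialization $a\mapsto 1$, $b\mapsto x$ (so $c\mapsto 1+x$, $d\mapsto 2x$) turns each cd-monomial into a term $2^k x^k(1+x)^{n-2k}$, exactly as in the sphere case underlying Theorem~\ref{thm:Karu}.

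One technical point deserves care. Your formula $\Psi_{P,Q}:=\Psi_P-\Psi_Q$ subtracts ab-polynomials of different homogeneous degrees (since $P$ and $Q$ have ranks differing by one), so this difference cannot itself be a homogeneous cd-polynomial; in particular $\Psi_P$ alone is \emph{not} in the cd-subalgebra because $P$ is not Eulerian. What Ehrenborg--Karu actually work with is a homogeneous relative object --- equivalently describable via the semisuspension of $P$, or as $\Psi_P-\Psi_Q\cdot a$ --- and it is this degree-$n$ polynomial that they show lies in $\ZZ_{\ge 0}\langle c,d\rangle$. Its specialization at $(1,x)$ still equals $h(\Delta,x)-h(\partial\Delta,x)$ (since $a\mapsto 1$), so once this bookkeeping is corrected your argument goes through unchanged. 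Relatedly, the general hypothesis ``order complex which triangulates a ball'' should be read as requiring the near-Gorenstein* condition (so that $\partial\Delta$ is itself the order complex of a lower subposet); this is automatic for barycentric subdivisions of regular CW-balls, which is the case actually used later in the paper.
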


\subsubsection{Flag nested complexes}
\label{subsubsec:nesto}

We begin with a few definitions, following~\cite{Ze06}.
A \emph{building set} on the ground set $[n]$ is a
collection $\bB$ of nonempty subsets of $[n]$ such 
that:
\begin{itemize}
\itemsep=0pt
\item[$\bullet$] $\bB$ contains all singletons $\{i\}                    
                 \subseteq [n]$, and
\item[$\bullet$] if $I, J \in \bB$ and $I \cap J \ne 
                \varnothing$, then $I \cup J \in \bB$. 
\end{itemize}
We denote by $\bB_{\max}$ the set of maximal (with 
respect to inclusion) elements of $\bB$ and say that 
$\bB$ is \emph{connected} if $[n] \in \bB$. A set $N
\subseteq \bB \sm \bB_{\max}$ is called \emph{nested}
if for all $k \ge 2$ and all $I_1, I_2,\dots,I_k \in 
N$ such that none of the $I_i$ contains another, their
union $I_1 \cup I_2 \cup \cdots \cup I_k$ is not in 
$\bB$. The simplicial complex $\Delta_\bB$ on the 
vertex set $\bB \sm \bB_{\max}$, consisting of all 
nested sets, is the \emph{nested complex} associated 
to $\bB$. For example, if $\bB$ consists of all nonempty
subsets of $[n]$, then $\Delta_\bB$ is the barycentric 
subdivision of the boundary of the simplex $2^{[n]}$.

The nested complex $\Delta_\bB$ is isomorphic to the
boundary complex of a simplicial polytope of dimension
$n - |\bB_{\max}|$ \cite[Theorem~3.14]{FS05} 
\cite[Theorem~7.4]{Pos09} \cite[Theorem~6.1]{Ze06}; 
the corresponding polar-dual simple polytope is called 
a \emph{nestohedron}. Nestohedra form an important 
class of Postnikov's generalized 
permutohedra~\cite{Pos09} which includes 
permutohedra, graph-associahedra and other well 
studied simple polytopes in geometric combinatorics. 
As shown in \cite[Theorem~4]{FY04} and explained in 
\cite[Remark~6.6]{PRW08}, the complex $\Delta_\bB$ 
can be constructed as follows. Assume (without loss
of generality) that $\bB$ is connected and consider 
the boundary $\partial \sigma$ of the simplex 
$\sigma$ on the vertex set $[n]$. Choose any 
ordering of the nonsingleton elements of $\bB 
\sm \bB_{\max}$ which respects the reverse of the 
inclusion order. Starting with $\partial \sigma$, 
and following the chosen ordering, for each 
nonsingleton $I \in \bB \sm \bB_{\max}$ perform a 
stellar subdivision on the face $I$. The resulting 
simplicial complex is $\Delta_\bB$.
\begin{example} \label{ex:build} \rm
For $n=4$, consider the building set $\bB$ 
consisting of $\{1\}$, $\{2\}$, $\{3\}$, $\{4\}$,
$\{3,4\}$, $\{2,3,4\}$ and $\{1,2,3,4\}$. The
nested complex $\Delta_\bB$ triangulates the boundary 
of the three-dimensional simplex on the vertex set 
$\{\{1\},\{2\},\{3\},\{4\}\}$; it has eight facets,
six of which are shown in Figure~4 (the two unlabeled 
vertices being $\{3,4\}$ and $\{2,3,4\}$). The 
remaining facets are the nonvisible facets $\{\{1\},
\{2\},\{3\}\}$ and $\{\{1\},\{2\},\{4\}\}$ of the 
simplex. Note that $\Delta_\bB$ corresponds indeed
to the simplicial complex obtained by stellarly 
subdividing the boundary complex of the simplex on 
the vertex set $\{1,2,3,4\}$ first on the face 
$\{2,3,4\}$ and then on $\{3,4\}$.
\qed
\end{example}

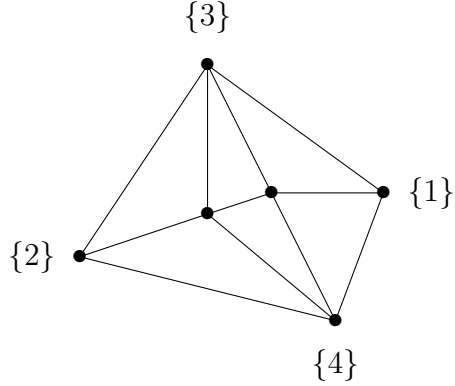
\begin{figure}
\begin{center}
\begin{tikzpicture}[scale=0.85]
\label{fg:nest}

   \draw(0,1) node(1){$\bullet$};
   \draw(2,4) node(2){$\bullet$};
   \draw(4,0) node(3){$\bullet$};
   \draw(3,2) node(4){$\bullet$};
   \draw(2,1.67) node(5){$\bullet$};
   \draw(4.75,2) node(6){$\bullet$};
   \draw(5.5,2) node(7){$\{1\}$};
   \draw(-0.75,1) node(8){$\{2\}$};
   \draw(2,4.75) node(9){$\{3\}$};
   \draw(4,-0.7) node(10){$\{4\}$};

   \draw(0,1) -- (4,0) -- (2,4) -- (0,1);
   \draw(0,1) -- (2,1.67) -- (3,2);
   \draw(2,4) -- (2,1.67) -- (4,0);
   \draw(4,0) -- (4.75,2) -- (2,4);
   \draw(3,2) -- (4.75,2);

\end{tikzpicture}
\caption{A nested complex}
\end{center}
\end{figure}

Gal's conjecture was verified for flag nested 
complexes by V.~Volodin~\cite{Vo10}, who showed 
that they can be obtained from the boundary 
complex of a cross-polytope by successive edge 
subdivisions. The main result of \cite{PRW08} 
provides an explicit combinatorial interpretation  
of the entries of the $\gamma$-vector of a large 
family of flag nested complexes. To state this 
result, we need to introduce a few definitions 
from \cite{PRW08}. Let $\bB$ be a building set 
on the ground set $[n]$. The \emph{restriction}
of $\bB$ to $I \subseteq [n]$ is defined as the
family of all subsets 
of $I$ which belong to $\bB$. The restrictions 
of $\bB$ to the elements of $\bB_{\max}$ are the 
\emph{connected components} of $\bB$. The building 
set $\bB$ is called \emph{chordal} if for every
$\{i_1, i_2,\dots,i_k\} \in \bB$ with $i_1 < i_2
< \cdots < i_k$ and all indices $1 \le r \le k$ 
we have $\{i_r, i_{r+1},\dots,i_k\} \in \bB$. 
Nested complexes of chordal building sets are 
always flag \cite[Proposition~9.7]{PRW08}. We 
denote by $\fS_\bB$ the set of all permutations
$w \in \fS_n$ such that $w(i)$ and $\max\{w(1),
w(2),\dots,w(i)\}$ lie in the same connected 
component of the restriction of $\bB$ to $\{w(1),
w(2),\dots,w(i)\}$, for every $i \in [n]$. The 
following result combines Corollary~9.6 with 
Theorem~11.6 in~\cite{PRW08} and, as shown in  
\cite[Section~10]{PRW08}, provides a 
common generalization for the $\gamma$-positivity
of the Eulerian and binomial Eulerian polynomials 
$A_n(x)$ and $\widetilde{A}_n(x)$ and the Narayana
polynomials $\cat(W, x)$ for the symmetric and
hyperoctahedral groups, discussed in 
Section~\ref{sec:comb}.
\begin{theorem} \label{thm:PRW}
{\rm 
(Postnikov--Reiner--Williams~\cite{PRW08})}
For every connected, chordal building set $\bB$
on the ground set $[n]$
\begin{equation} \label{eq:PRW} 
h(\Delta_\bB, x) \ = \ \sum_{i=0}^n
h_{\bB, i} \, x^i \ = \ \sum_{i=0}^{\lfloor n/2 
\rfloor} \gamma_{\bB, i} \, x^i (1+x)^{n-2i},
\end{equation}
where 

\begin{itemize}
\item[$\bullet$] $h_{\bB,i}$ is the number of 
permutations $w \in \fS_\bB$ with $i$ descents, 
                 and
\item[$\bullet$] $\gamma_{\bB,i}$ is the number 
of permutations $w \in \fS_\bB$ for which 
$\Des(w) \in \Stab([n-2])$ has $i$ elements.
  \end{itemize}
\end{theorem}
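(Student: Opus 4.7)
My plan is to prove both statements in Theorem~\ref{thm:PRW} by first exhibiting a bijection between $\fS_\bB$ and the facets of $\Delta_\bB$, then transferring the descent statistic via a shelling, before extracting the $\gamma$-expansion with the valley-hopping action. The bijection $\varphi: \fS_\bB \to \{\text{facets of } \Delta_\bB\}$ is constructed as follows. For $w \in \fS_n$ and each $i \in [n]$, let $B_i(w)$ be the connected component of the restriction of $\bB$ to $\{w(1),\ldots,w(i)\}$ that contains $w(i)$, and set $N_w := \{B_i(w) : 1 \le i \le n\} \setminus \bB_{\max}$. Using chordality, one checks that $N_w$ is a nested set; when $w \in \fS_\bB$ its members are pairwise distinct and $|N_w| = n-1$, so $N_w$ has the correct cardinality to be a facet of $\Delta_\bB$ (which has dimension $n-2$). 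Conversely, a maximal nested set endows $[n]$ with a canonical rooted-forest structure, and the $\fS_\bB$-condition singles out a unique linear extension, giving the inverse of $\varphi$.

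Next, to prove the equality $h(\Delta_\bB, x) = \sum_{w \in \fS_\bB} x^{\des(w)}$, I would exhibit a shelling of $\Delta_\bB$ in which the restriction set of the facet $\varphi(w)$ is exactly $\Des(w)$. Such a shelling is built inductively using the description of $\Delta_\bB$ as iterated stellar subdivisions of $\partial \sigma$ on the nonsingleton elements of $\bB \setminus \bB_{\max}$, taken in any order refining reverse inclusion. Alternatively, one may track the $h$-polynomial through the stellar subdivision formula
\[
h(\Delta', x) \ = \ h(\Delta, x) \, + \, (x + x^2 + \cdots + x^{d-1}) \, h(\link_\Delta(F), x),
\]
valid for stellar subdivision on a face $F$ of cardinality $d$, and identify each new contribution as the sum of $x^{\des(w)}$ over the permutations $w \in \fS_\bB$ whose nested set $N_w$ contains $F$, via an inductive argument on the number of nonsingleton elements of $\bB$.

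For the $\gamma$-expansion, I would invoke the Foata--Sch\"utzenberger--Strehl valley-hopping involutions: for each interior position $i \in [2, n-1]$, the involution $\phi_i$ moves the letter $w(i)$ within its maximal valley run, so that $\phi_2,\ldots,\phi_{n-1}$ generate a $\ZZ_2^{n-2}$-action on $\fS_n$ whose orbits are indexed by permutations with $\Des(w) \in \Stab([n-2])$, and whose descent generating function on an orbit with representative having $k$ descents equals $x^k(1+x)^{n-1-2k}$. The key step, and the main obstacle, is to verify that this action restricts to $\fS_\bB$: one must show that relocating $w(i)$ within a valley run preserves, at every prefix, the condition that $w(j)$ lies in the same connected component of $\bB |_{\{w(1),\ldots,w(j)\}}$ as $\max\{w(1),\ldots,w(j)\}$. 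Chordality of $\bB$, namely the closure of $\bB$ under upper tails of its elements, is precisely what ensures that rearranging an internal letter does not alter the connected-component decomposition of any initial segment; without this property the hopping action can leave $\fS_\bB$ and the claimed interpretation of $\gamma_{\bB, i}$ breaks down.
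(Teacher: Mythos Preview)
The survey does not supply its own proof of this theorem; it attributes the result to \cite{PRW08} (combining Corollary~9.6 there for the $h$-polynomial interpretation with Theorem~11.6 for the $\gamma$-expansion) and, in Section~\ref{subsec:hopping}, lists \cite[Section~11]{PRW08} among the applications of valley hopping.  Your proposal follows exactly that strategy---a bijection $\fS_\bB\leftrightarrow\{\text{facets of }\Delta_\bB\}$ carrying $\des$ to the shelling statistic, followed by a Foata--Strehl action restricted to $\fS_\bB$---so it agrees with the approach the paper points to.

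Two minor corrections to your sketch.  First, the sets $B_i(w)$ are pairwise distinct for \emph{every} $w\in\fS_n$, since $w(i)\in B_i(w)$ while $w(i)\notin B_j(w)\subseteq\{w(1),\dots,w(j)\}$ for $j<i$; so distinctness is not what characterizes $\fS_\bB$.  The map $w\mapsto N_w$ is defined and hits a facet for all $w$, but it is many-to-one, and $\fS_\bB$ is a system of canonical representatives for its fibers.  Second, with the boundary convention $w_0=w_{n+1}=0$ that produces the condition $\Des(w)\in\Stab([n-2])$, the hopping involutions are not confined to interior positions: position~$n$ can be a double descent and position~$1$ a double ascent, and both must be allowed to hop.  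Finally, your heuristic that chordality ``does not alter the connected-component decomposition of any initial segment'' is not literally true (hopping changes the underlying prefix sets themselves); the actual invariance argument in \cite{PRW08} is more delicate and is where the substance of the proof lies.
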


The nested complex $\Delta_\bB$ of 
Example~\ref{ex:build} has eight facets. The 
elements of $\fS_\bB$, written in one-line 
notation, are $1234$, $1243$, $1342$, $1432$, 
$2341$, $2431$, $3421$ and $4321$ and
$h(\Delta_\bB, x) = 1 + 3x + 3x^2 + x^3 = 
(1+x)^3$, in agreement with 
Theorem~\ref{thm:PRW}.

\subsection{Flag triangulations of simplices}
\label{subsec:local}

This section focuses on a variant of the 
$h$-polynomial of a triangulation of a sphere, 
the local $h$-polynomial, defined for 
triangulations of simplices (and for more general 
topological, rather than geometric, simplicial 
subdivisions of the simplex; see 
\cite[Part~I]{Sta92} \cite{Ath12, Ath16a}). 
The local $h$-polynomial was introduced by 
Stanley~\cite{Sta92} and plays a key role in his 
enumerative theory of triangulations of simplicial 
complexes, developed in order to study their effect 
on the $h$-polynomial of a simplicial complex.

Let $V$ be an $n$-element set and $\Gamma$ 
be a triangulation of the simplex $2^V$. The 
restriction of $\Gamma$ to the face $F \subseteq 
V$ of the simplex is a triangulation of $F$, 
denoted by $\Gamma_F$.
\begin{definition} \label{def:localh}
{\rm (Stanley~\cite[Definition~2.1]{Sta92})} 
The local $h$-polynomial of $\Gamma$ (with respect 
to $V$) is defined as
\begin{equation} \label{eq:deflocalh}
\ell_V (\Gamma, x) \ := \ \sum_{F \subseteq V} 
(-1)^{n - |F|} \, h (\Gamma_F, x) \ := \ \ell_0 + 
\ell_1 x + \cdots + \ell_n x^n.
\end{equation}
The sequence $\ell_V (\Gamma) = (\ell_0, 
\ell_1,\dots,\ell_n)$ is the local $h$-vector of 
$\Gamma$ (with respect to $V$).
\end{definition}

For example, the complex of 
Figure~2 is naturally a triangulation $\Gamma$ of 
a two-dimensional simplex $2^V$. Since 
$h(\Gamma, x) = 1 + 5x + 2x^2$ and the restrictions 
of $\Gamma$ on the three edges of the simplex have 
$h$-polynomials $1$, $1+x$ and $1+2x$, we have 
$\ell_V (\Gamma, x) = (1 + 5x + 2x^2) - 1 - (1+x) -
(1+2x) + 1 + 1 + 1 - 1 = 2x + 2x^2$.

The following theorem shows that the local 
$h$-polynomial has properties similar to those of
the $h$-polynomial of a triangulation of the sphere,
stated in Theorem~\ref{thm:KRS}. For the notion of a 
regular triangulation, we refer to 
\cite[Chapter~5]{DRS10} 
\cite[Definition~5.1]{Sta92}.
\begin{theorem} \label{thm:proplocal} 
{\rm (Stanley~\cite{Sta92})}
Let $\Gamma$ be a triangulation of an 
$(n-1)$-dimensional simplex $2^V$ and set $\ell_V 
(\Gamma, x) = \ell_0 + \ell_1 x + \cdots + \ell_n 
x^n$.

\begin{itemize}
\item[{\rm (a)}]
The polynomial $\ell_V (\Gamma, x)$ has 
nonnegative coefficients.
\item[{\rm (b)}]
The polynomial $\ell_V (\Gamma, x)$ is 
symmetric, i.e., we have $\ell_i = \ell_{n-i}$ for 
$0 \le i \le n$.
\item[{\rm (c)}]
The polynomial $\ell_V (\Gamma, x)$ is 
unimodal, i.e., we have $\ell_0 \le \ell_1 \le 
\cdots \le \ell_{\lfloor d/2 \rfloor}$, if $\Gamma$ 
is regular. 
\end{itemize}
\end{theorem}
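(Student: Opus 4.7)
The plan is to treat the three parts via the Stanley--Reisner ring $k[\Gamma]$ over a field $k$, with $n = |V|$. Because $\Gamma$ triangulates a ball, Reisner's criterion gives that $k[\Gamma]$ is Cohen--Macaulay, so for a generic linear system of parameters $\theta_1,\dots,\theta_n$, the Artinian reduction $A(\Gamma) := k[\Gamma]/(\theta_1,\dots,\theta_n)$ has Hilbert series $h(\Gamma, x)$. The same construction applied to each restriction yields $A(\Gamma_F)$ with Hilbert series $h(\Gamma_F, x)$; the natural surjection $A(\Gamma) \twoheadrightarrow A(\Gamma_F)$, obtained by zeroing out the variables of those vertices whose carrier in $2^V$ is not contained in $F$, makes these compatible across face inclusions.

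For part (a), I would identify $\ell_V(\Gamma, x)$ with the Hilbert series of a distinguished graded submodule $L_V(\Gamma) \subseteq A(\Gamma)$, roughly the submodule of elements that vanish in $A(\Gamma_F)$ for every proper $F \subsetneq V$. A Mayer--Vietoris/inclusion-exclusion computation over the Boolean lattice $2^V$ shows that the Hilbert series of this submodule equals $\sum_{F \subseteq V} (-1)^{n-|F|} h(\Gamma_F, x) = \ell_V(\Gamma, x)$, yielding $\ell_i \ge 0$ at once.

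For part (b), symmetry reflects a duality on $L_V(\Gamma)$ arising from the Gorenstein property of $k[\Gamma]$ relative to its boundary. At the polynomial level I would instead verify $x^n \ell_V(\Gamma, 1/x) = \ell_V(\Gamma, x)$ directly: substituting the Dehn--Sommerville-type reciprocity for each triangulated ball $\Gamma_F$ (which expresses $x^{|F|} h(\Gamma_F, 1/x)$ in terms of $h(\Gamma_F, x)$ and the $h$-polynomials of its boundary restrictions $\Gamma_G$, $G \subsetneq F$) into the alternating sum in the definition, the boundary corrections telescope and one recovers $\ell_V(\Gamma, x)$.

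For part (c), when $\Gamma$ is regular I would invoke hard Lefschetz. A regular triangulation corresponds to a convex subdivision of $2^V$, so via McMullen's polytope algebra, or equivalently via the rational cohomology of the associated projective toric variety equipped with an ample class $\omega$, multiplication by $\omega^{n-2i}$ on $A(\Gamma)$ induces isomorphisms between the degree-$i$ and degree-$(n-i)$ parts for $i \le n/2$. Restricting this Lefschetz action to the submodule $L_V(\Gamma)$ produces injections between its graded pieces symmetric about $n/2$, hence the unimodality $\ell_0 \le \ell_1 \le \cdots \le \ell_{\lfloor n/2 \rfloor}$. The hard part will be precisely this final step: showing that a generic linear form on $A(\Gamma)$ restricts to a Lefschetz element on the local submodule $L_V(\Gamma)$, rather than merely on $A(\Gamma)$ itself, is the genuinely deep algebraic-geometric input and is what distinguishes (c) from the comparatively elementary parts (a) and (b).
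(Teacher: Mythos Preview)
The paper does not give its own proof of this theorem; it is quoted as a result of Stanley~\cite{Sta92} and used as background. So there is no in-paper proof to compare against.

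That said, your outline is essentially Stanley's original strategy, with two caveats worth flagging. First, for part~(a) the linear system of parameters cannot be chosen ``generically'': to make the surjections $A(\Gamma)\twoheadrightarrow A(\Gamma_F)$ compatible with a single choice of $\theta_1,\dots,\theta_n$, one needs the \emph{special} l.s.o.p.\ coming from the barycentric coordinates of the vertices of $\Gamma$ inside $2^V$ (this is Stanley's Definition~4.2 in~\cite{Sta92}, alluded to in the paper when it speaks of ``special linear system of parameters'' before~(\ref{eq:def-localfacering})). With that choice, the local face module $L_V(\Gamma)$ is defined as the image in $A(\Gamma)$ of the ideal generated by interior faces, and its Hilbert series is $\ell_V(\Gamma,x)$; your description as the common kernel of the restriction maps is the dual picture and needs a short argument to match.

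Second, for part~(c) your invocation of ``the associated projective toric variety'' is not quite right: $\Gamma$ is a ball, not a complete fan, so there is no compact toric variety whose ordinary cohomology carries hard Lefschetz. Stanley's argument in~\cite{Sta92} passes through the decomposition theorem for intersection cohomology of the toric map induced by the regular subdivision (equivalently, a relative hard Lefschetz), and it is precisely this that furnishes a Lefschetz element on $L_V(\Gamma)$ rather than merely on $A(\Gamma)$. You have correctly identified this as the deep step; just be aware that the geometric input is relative/IH hard Lefschetz, not the classical statement for a smooth projective variety.
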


The following analog of Conjecture~\ref{conj:Gal}
may come as no surprise to some readers; it is stated
more generally in \cite[Conjecture~5.4]{Ath12} for
a class of topological simplicial subdivisions which
models combinatorially geometric subdivisions.
\begin{conjecture} \label{conj:Ath}
{\rm (Athanasiadis~\cite{Ath12})} 
The polynomial $\ell_V(\Gamma, x)$ is 
$\gamma$-positive for every flag triangulation 
$\Gamma$ of the simplex $2^V$. 
\end{conjecture}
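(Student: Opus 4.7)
The plan is to parallel, at the level of local $h$-polynomials, the strategy Gal employed for Conjecture~\ref{conj:Gal} via edge subdivisions, using the derangement polynomials as a base case. First, I would try to establish a local analog of the edge subdivision identity (\ref{eq:h-edge-sub}): for a flag triangulation $\Gamma$ of $2^V$ and an interior edge $e$ of $\Gamma$ (one not lying in the triangulation of $\partial(2^V)$ induced by $\Gamma$), show that the edge subdivision $\Gamma_e$ satisfies
\[
\ell_V(\Gamma_e, x) \ = \ \ell_V(\Gamma, x) \, + \, x \cdot \ell^e_V(\Gamma, x),
\]
where $\ell^e_V(\Gamma, x)$ is a link-type contribution expressible in terms of local $h$-polynomials of smaller flag triangulations of simplices (those arising from restrictions of $\link_\Gamma(e)$ to appropriate faces of $2^V$). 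Combined with the fact that flagness is preserved under edge subdivisions and that restriction to a face $F \subseteq V$ commutes nicely with interior edge subdivisions, this identity would supply the key inductive step.

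Assuming such a formula, the proof would proceed by induction on $\dim \Gamma$ together with a secondary induction on the number of interior vertices. The natural base case is the first barycentric subdivision $\Gamma_0 = \sd(2^V)$, which is flag and for which Stanley~\cite{Sta92} computed $\ell_V(\Gamma_0, x) = d_n(x)$; its $\gamma$-positivity is supplied by Theorem~\ref{thm:ASdn}. The inductive step would then require showing that every flag triangulation $\Gamma$ of $2^V$ can be reached from $\Gamma_0$ by a sequence of interior edge subdivisions (or inverse moves) that preserves the flag condition throughout. A softer variant, sufficient to get $\gamma$-positivity without an explicit combinatorial interpretation of $\gamma_i(\ell_V(\Gamma))$, would be to produce merely some chain of such moves from $\Gamma$ to any flag model whose local $h$-polynomial is already known to be $\gamma$-positive, enlarging the library of base cases by cd-index methods in the spirit of Theorem~\ref{thm:EKaru} if necessary.

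The hard part is precisely this reduction step. Unlike the situation for flag nested complexes handled by Volodin~\cite{Vo10}, no systematic scheme is known that connects arbitrary flag triangulations of a simplex via interior edge subdivisions while keeping the flag condition intact; in particular, ``inverse'' edge subdivisions may fail to exist at all inside the class of flag complexes. Boundary edge subdivisions, although natural in other settings, affect all restriction polynomials $h(\Gamma_F, x)$ simultaneously and so disrupt the clean inductive framework above; to handle them one would likely need to track the full family $\{\ell_F(\Gamma_F, x)\}_{F \subseteq V}$ of local $h$-polynomials in parallel, making essential use of the fact that links in flag complexes are flag. Even a successful implementation of this program seems unlikely to avoid a genuinely new combinatorial input, since the conjecture already contains the $\gamma$-positivity of $d_n(x)$ as a special case and its overall difficulty is comparable to that of Gal's Conjecture~\ref{conj:Gal} itself.
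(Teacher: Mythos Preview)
This statement is a \emph{conjecture} in the paper, not a theorem; the paper offers no proof and explicitly lists it as open. What the paper records are partial verifications: the conjecture holds in dimension less than four, for flag triangulations obtainable from the trivial triangulation of $2^V$ by successive edge subdivisions (\cite[Proposition~6.1]{Ath12}), and for barycentric subdivisions of regular CW-subdivisions of the simplex (Theorem~\ref{thm:KMS}).

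Your proposal is not a proof but a strategy sketch, and you correctly identify its central gap yourself. The edge-subdivision approach you outline is essentially the content of the known partial result \cite[Proposition~6.1]{Ath12}: a local analog of the identity~(\ref{eq:h-edge-sub}) does exist there, and flag triangulations reachable from a known base by successive edge subdivisions do satisfy the conjecture. The obstruction you name --- that there is no known procedure connecting an \emph{arbitrary} flag triangulation of the simplex to a controlled base case via flagness-preserving (direct or inverse) edge moves --- is precisely why the conjecture remains open. One technical correction: the natural base for the edge-subdivision induction is the \emph{trivial} triangulation of $2^V$ (whose local $h$-polynomial vanishes), not $\sd(2^V)$; edge subdivisions only add faces, so one cannot descend from $\sd(2^V)$ to simpler flag triangulations by such moves, and there is no reason to expect every flag triangulation of $2^V$ to refine $\sd(2^V)$.

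In short, your plan recovers the scope of the existing partial results but does not go beyond them; your own closing paragraph accurately diagnoses why a complete proof along these lines is currently out of reach.
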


Since $\ell_V(\Gamma, x)$ is symmetric, with center
of symmetry $n/2$, we may write
\begin{equation} \label{eq:def-localgammavector} 
\ell_V(\Gamma, x) \ = \ \sum_{i=0}^{\lfloor n/2 \rfloor} 
\xi_i(\Gamma) x^i (1+x)^{n-2i}.
\end{equation}
The sequence $\xi(\Gamma) := (\xi_0(\Gamma),
\xi_1(\Gamma),\dots,\xi_{\lfloor n/2 \rfloor}
(\Gamma))$ is termed in~\cite{Ath12} as the 
\emph{local $\gamma$-vector} of $\Gamma$ (with 
respect to $V$) and the polynomial $\xi_V(\Gamma, 
x) := \sum_{i=0}^{\lfloor n/2 \rfloor} \xi_i
(\Gamma) x^i$ as the corresponding \emph{local 
$\gamma$-polynomial}. Thus, 
Conjecture~\ref{conj:Ath} claims that $\xi_i
(\Gamma) \ge 0$ for all $0 \le i \le 
\lfloor n/2 \rfloor$ and every flag triangulation 
$\Gamma$ of the simplex $2^V$. Apart from 
results on barycentric subdivisions, discussed in 
the sequel, Conjecture~\ref{conj:Ath} has been 
verified for flag triangulations of simplices:
\begin{itemize}
\itemsep=0pt
\item[$\bullet$] of dimension less than four
                 \cite[Proposition~5.7]{Ath12},
\item[$\bullet$] which can be obtained from the trivial 
                 triangulation by successive edge 
                 subdivisions \cite[Proposition~6.1]{Ath12},
\item[$\bullet$] for other special classes of flag 
                 triangulations of simplices, discussed 
                 in Section~\ref{subsec:exa}.
\end{itemize}

\medskip
The connection between $\gamma$-vectors and their
local counterparts is explained by the following 
result. We recall that $\Sigma_n$ stands for the 
boundary complex of the $n$-dimensional 
cross-polytope and refer to \cite{Ath12} 
\cite[Section~2]{Ath16a} for the precise 
definitions of the more general notions of 
simplicial subdivision which appear there. 
\begin{theorem} \label{thm:ath-gammaxi}
{\rm (Athanasiadis~\cite{Ath12} 
\cite[Theorem~3.7]{Ath16a})}
Every flag triangulation $\Delta$ of the 
$(n-1)$-dimensional sphere is a vertex-induced 
simplicial homology subdivision of $\Sigma_n$. 
Moreover,
\begin{equation} \label{eq:gamma-xi}
  \gamma (\Delta, x) \ = \ \sum_{F \in \Sigma_n} 
  \, \xi_F (\Delta_F, x).
  \end{equation}
\end{theorem}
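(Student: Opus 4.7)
The plan is to split the proof into two largely independent parts: (a) exhibit the subdivision structure, and (b) deduce the identity (\ref{eq:gamma-xi}) from Stanley's decomposition formula for the $h$-polynomial of a subdivision.

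For part (a), the goal is to produce a vertex-induced carrier map $c : \Delta \to \Sigma_n$ satisfying $c(\tau) = \bigcup_{v \in \tau} c(v)$ for every $\tau \in \Delta$, such that each restriction $\Delta_F := \{ \tau \in \Delta : c(\tau) \subseteq F \}$ triangulates the face $F \in \Sigma_n$, and such that the homological link condition required of a simplicial homology subdivision is satisfied. The construction would exploit flagness: the aim is to identify inside $\Delta$ a cross-polytopal ``skeleton'' consisting of $2n$ vertices arranged into $n$ antipodal pairs of non-edges (carrying the $\Sigma_n$ structure), and then to assign each remaining vertex of $\Delta$ to the unique minimal face of $\Sigma_n$ that supports its adjacency pattern. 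Flagness is essential here, since the fact that every minimal non-face of $\Delta$ is an edge is exactly what permits an antipodal carrier structure to be defined coherently in a vertex-by-vertex manner.

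For part (b), once the subdivision structure is in place, Stanley's multiplicative formula for the $h$-polynomial of a subdivision~\cite{Sta92} gives
\begin{equation*}
h(\Delta, x) \ = \ \sum_{F \in \Sigma_n} \ell_F(\Delta_F, x) \cdot h(\link_{\Sigma_n}(F), x).
\end{equation*}
Since $\Sigma_n$ is the boundary of the $n$-dimensional cross-polytope, $\link_{\Sigma_n}(F)$ is isomorphic to $\Sigma_{n-|F|}$, and hence $h(\link_{\Sigma_n}(F), x) = (1+x)^{n-|F|}$. Substituting the defining local $\gamma$-expansion $\ell_F(\Delta_F, x) = \sum_i \xi_i \, x^i (1+x)^{|F|-2i}$ and using $(1+x)^{|F|-2i}(1+x)^{n-|F|} = (1+x)^{n-2i}$, the factor depending on $|F|$ cancels in each term and one obtains
\begin{equation*}
h(\Delta, x) \ = \ \sum_i \Bigl( \sum_{F \in \Sigma_n} [x^i]\, \xi_F(\Delta_F, x) \Bigr) \, x^i (1+x)^{n-2i}.
\end{equation*}
By Theorem~\ref{thm:KRS}(b), $h(\Delta, x)$ is symmetric with center $n/2$, and the polynomials $x^i (1+x)^{n-2i}$ with $0 \le i \le \lfloor n/2 \rfloor$ form a basis of the subspace of such symmetric polynomials; comparing coefficients yields exactly~(\ref{eq:gamma-xi}).

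The main obstacle is clearly part (a): constructing a well-defined vertex-induced carrier map for an arbitrary flag triangulation of the sphere, and verifying the requisite homological condition on each $\Delta_F$ and on the relevant links, calls for delicate combinatorial-topological bookkeeping, and is where the full strength of flagness must be used. Once this is secured, part (b) reduces to the short algebraic manipulation sketched above.
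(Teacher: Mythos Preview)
Your two-part outline is correct, and part~(b) is exactly how the identity~(\ref{eq:gamma-xi}) is obtained once the subdivision structure is in hand: this is precisely the computation the survey carries out (in slightly different guise) in the proof of Corollary~\ref{cor:sd-Sigma}, using Theorem~\ref{thm:h-subdivide} together with $h(\link_{\Sigma_n}(F),x)=(1+x)^{n-|F|}$. The survey itself does not prove part~(a) but defers to~\cite{Ath12,Ath16a}.

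Your sketch for part~(a), however, does not reflect how the construction actually goes, and as written it would be hard to make rigorous. You propose to locate a global ``skeleton'' of $2n$ vertices in $\Delta$, in $n$ non-adjacent pairs, and then assign the remaining vertices to faces of $\Sigma_n$ according to adjacency. The difficulty is that there is no evident direct way to pick such a skeleton, or to verify afterwards that each $\Delta_F$ has the correct homology, without already having an inductive handle on the situation. The proof in~\cite{Ath12} instead proceeds by induction on $n$: choose any vertex $v$ of $\Delta$; then $\link_\Delta(v)$ is a flag homology $(n-2)$-sphere, hence by induction a vertex-induced homology subdivision of $\Sigma_{n-1}$. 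Writing $\Sigma_n = \{v,v'\} * \Sigma_{n-1}$, the closed star $\{v\}*\link_\Delta(v)$ is then manifestly a subdivision of $\{v\}*\Sigma_{n-1}$, and the substantive step is to show that the antistar of $v$ (the induced subcomplex on the vertices not adjacent to $v$, which is nonempty since a flag sphere is never a cone) is a homology ball giving a compatible subdivision of $\{v'\}*\Sigma_{n-1}$. Flagness enters exactly in controlling the antistar and in ensuring the vertex-induced property. The $2n$-vertex skeleton you envision does emerge from this process, but as a byproduct of the induction rather than as its starting point.
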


The version of Conjecture~\ref{conj:Ath} stated 
as \cite[Conjecture~5.4]{Ath12} applies to the 
summands in the right-hand side of 
(\ref{eq:gamma-xi}). As a result, 
\cite[Conjecture~5.4]{Ath12} implies Gal's
conjecture, as well as its stronger 
version \cite[Conjecture~2.1.7]{Ga05} for flag 
homology spheres. Moreover, as explained at the
end of \cite[Section~1]{Ath16b}, the validity 
of Conjecture~\ref{conj:Ath} for regular flag 
triangulations of simplices implies Gal's 
conjecture for flag boundary complexes of convex 
polytopes.

The following statement is a direct consequence 
of Theorem~\ref{thm:ath-gammaxi} and Stanley's 
monotonicity theorem \cite[Theorem~4.10]{Sta92}
(in the form of \cite[Corollary~2.9]{Ath16a}) 
for $h$-vectors of subdivisions of simplicial 
complexes; it was proved more generally for flag 
doubly Cohen--Macaulay complexes, using different 
methods, in \cite[Theorem~1.3]{Ath11}.
\begin{corollary} \label{cor:h-bounds}
Every flag triangulation $\Delta$ of the 
$(n-1)$-dimensional sphere satisfies 
\begin{equation} \label{eq:h-bounds}
h_i(\Delta) \ \ge \ {n \choose i} 
\end{equation} 
for $0 \le i \le n$.
\end{corollary}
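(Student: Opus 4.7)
The plan is to combine the two ingredients named in the hint following the statement. The key structural fact I would appeal to first is the opening assertion of Theorem~\ref{thm:ath-gammaxi}: every flag triangulation $\Delta$ of the $(n-1)$-sphere arises as a vertex-induced simplicial homology subdivision of $\Sigma_n$, the boundary of the $n$-dimensional cross-polytope. This realizes $\Delta$ as a refinement of a fixed reference complex whose face numbers are explicitly known.

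Next, I would record the elementary computation $h(\Sigma_n, x) = (1+x)^n$, which follows since $\Sigma_n$ is the simplicial join of $n$ copies of $S^0$; hence $h_i(\Sigma_n) = \binom{n}{i}$ for $0 \le i \le n$. This identifies the conjectured lower bound on $h_i(\Delta)$ with a genuine $h$-number of a concrete sphere triangulation, rather than with an abstract quantity.

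The final step is to invoke Stanley's monotonicity theorem for $h$-vectors under simplicial (homology) subdivisions \cite[Theorem~4.10]{Sta92}, in the form of \cite[Corollary~2.9]{Ath16a}, which gives the coefficient-wise inequality $h_i(\Delta) \ge h_i(\Sigma_n)$ for every simplicial homology subdivision $\Delta$ of $\Sigma_n$. Chaining this with the explicit expression for $h(\Sigma_n, x)$ yields $h_i(\Delta) \ge \binom{n}{i}$, as desired.

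The proof is essentially an assembly step, so there is no serious obstacle once the two named results are taken as black boxes. The substantive content is hidden inside Theorem~\ref{thm:ath-gammaxi}, whose proof uses flagness in an essential way to exhibit the subdivision map $\Delta \to \Sigma_n$; by contrast, the monotonicity step is formal. Note that neither ingredient requires a polytopal or regular hypothesis, so the inequality holds for arbitrary flag triangulations of the sphere; an alternative route, via \cite[Theorem~1.3]{Ath11} on flag doubly Cohen--Macaulay complexes, gives the same conclusion but does not exhibit $\Sigma_n$ as the natural reference complex.
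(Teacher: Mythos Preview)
Your proposal is correct and matches the paper's own argument essentially verbatim: the paper derives the corollary directly from Theorem~\ref{thm:ath-gammaxi} together with Stanley's monotonicity theorem \cite[Theorem~4.10]{Sta92} (in the form of \cite[Corollary~2.9]{Ath16a}), and likewise notes the alternative route via \cite[Theorem~1.3]{Ath11}. There is nothing to add.
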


\subsubsection{Barycentric subdivisions}
\label{subsubsec:KMS}

Barycentric subdivisions of polyhedral 
subdivisions of the simplex form a natural class
of its flag triangulations. The question of the 
validity of Conjecture~\ref{conj:Ath} for 
them was raised in \cite[Question~6.2]{Ath12} 
\cite[Problem~3.8]{Ath16a} and answered in the 
affirmative by Juhnke-Kubitzke, Murai and 
Sieg \cite[Theorem~1.3]{KMS17}. The last 
statement in the following theorem is a 
consequence of Equation~(\ref{eq:KMS}), 
Theorem~\ref{thm:EKaru} and the 
$\gamma$-positivity of the derangement 
polynomials $d_n(x)$.
\begin{theorem} \label{thm:KMS}
{\rm (Kubitzke--Murai--Sieg~\cite{KMS17})} 
For every triangulation $\Gamma$ of an
$(n-1)$-dimensional simplex $2^V$,
\begin{equation} \label{eq:KMS}
\ell_V (\Gamma, x) \ = \ \sum_{F \subseteq V} \,
(h(\Gamma_F, x) - h(\partial (\Gamma_F), x)) \,
d_{n-|F|} (x).
\end{equation} 
In particular, Conjecture~\ref{conj:Ath} holds 
for barycentric subdivisions of regular 
CW-complexes which subdivide the simplex.
\end{theorem}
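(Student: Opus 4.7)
The plan is to prove the identity~(\ref{eq:KMS}) by expressing both sides in terms of the local $h$-polynomials $\ell_G(\Gamma_G, x)$ for $G \subseteq V$ and then reducing the resulting identity to the derangement polynomial recurrence~(\ref{eq:KMSdn}). First, I would apply Stanley's local $h$-polynomial subdivision formula from~\cite{Sta92} twice: applied to $\Gamma_F$ as a triangulation of the simplex $\sigma_F$, it yields
\[ h(\Gamma_F, x) \ = \ \sum_{G \subseteq F} \ell_G(\Gamma_G, x), \]
whereas applied to $\partial \Gamma_F$ as a triangulation of the boundary sphere $\partial \sigma_F$, and using that the link of $G$ in $\partial \sigma_F$ is the boundary of the complementary simplex $\sigma_{F \sm G}$, it yields
\[ h(\partial \Gamma_F, x) \ = \ \sum_{G \subsetneq F} \ell_G(\Gamma_G, x) \, h(\partial \sigma_{F \sm G}, x), \]
where $h(\partial \sigma_k, x) = 1 + x + \cdots + x^{k-1}$ for $k \ge 1$.

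Next, I would substitute these expressions into the right-hand side of~(\ref{eq:KMS}) and interchange the order of summation. After grouping by $G$ and using that $1 - h(\partial \sigma_k, x) = -(x + x^2 + \cdots + x^{k-1})$ for $k \ge 1$ (vanishing when $k = 1$), the coefficient of $\ell_G(\Gamma_G, x)$ on the right-hand side becomes
\[ d_m(x) \ - \ \sum_{k=2}^{m} \binom{m}{k} d_{m-k}(x) \, (x + x^2 + \cdots + x^{k-1}), \]
with $m = n - |G|$. Applying the same substitution to the defining expression $\sum_F (-1)^{n-|F|} h(\Gamma_F, x)$ of $\ell_V(\Gamma, x)$ and using $(1-1)^m = \delta_{m,0}$, one sees that the corresponding coefficient on the left-hand side equals $\delta_{G,V}$. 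Hence the identity reduces to showing that the displayed expression equals $1$ when $m = 0$ and $0$ when $m \ge 1$; the former is immediate from $d_0(x) = 1$, while the latter is a reindexing (via $j = m - k$) of the derangement recurrence~(\ref{eq:KMSdn}), which has been established earlier in the paper.

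For the ``In particular'' statement, I would observe that when $\Gamma$ is the barycentric subdivision of a regular CW-complex $\Delta$ subdividing $\sigma_V$, each restriction $\Gamma_F$ is itself the barycentric subdivision of the regular CW-subcomplex of $\Delta$ supported on $\sigma_F$. Theorem~\ref{thm:EKaru} then guarantees that $h(\Gamma_F, x) - h(\partial \Gamma_F, x)$ is $\gamma$-positive with degree parameter $|F|$, while Theorem~\ref{thm:ASdn} gives that $d_{n-|F|}(x)$ is $\gamma$-positive with degree parameter $n - |F|$. Since the product of $\sum_i \alpha_i x^i (1+x)^{a-2i}$ with $\sum_j \beta_j x^j (1+x)^{b-2j}$ has the form $\sum_{\ell} \gamma_{\ell} x^{\ell} (1+x)^{a+b-2\ell}$, with $\gamma_{\ell} \ge 0$ whenever $\alpha_i, \beta_j \ge 0$, each summand on the right-hand side of~(\ref{eq:KMS}) is $\gamma$-positive with degree parameter $n$, and hence so is $\ell_V(\Gamma, x)$. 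The main obstacle is discovering the identity in the first place: the decisive observation is that, after the two applications of Stanley's subdivision formula, the coefficient of each local $h$-polynomial collapses to precisely the derangement recurrence~(\ref{eq:KMSdn}); without this recognition the combinatorial content of~(\ref{eq:KMS}) remains opaque.
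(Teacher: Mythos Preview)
Your argument is correct. The paper does not actually give a proof of the identity~(\ref{eq:KMS}); it simply cites~\cite{KMS17} and explains only the ``In particular'' clause, via Theorem~\ref{thm:EKaru} and the $\gamma$-positivity of $d_n(x)$, exactly as you do. So for that part your approach coincides with the paper's.

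For the identity itself you supply what the paper omits: two applications of Theorem~\ref{thm:h-subdivide} (once to $\Gamma_F$ over the full simplex $\sigma_F$, once to $\partial\Gamma_F$ over $\partial\sigma_F$, using $\link_{\partial\sigma_F}(G)=\partial\sigma_{F\sm G}$) reduce~(\ref{eq:KMS}) to a statement about the coefficient of each $\ell_G(\Gamma_G,x)$, and that statement is precisely the recurrence~(\ref{eq:KMSdn}) after the substitution $j=m-k$. This is a clean and valid derivation. It is worth noting that the argument is in fact an equivalence: specializing~(\ref{eq:KMS}) to the trivial triangulation $\Gamma=2^V$ (where $h(\Gamma_F,x)=1$ and $h(\partial\Gamma_F,x)=1+x+\cdots+x^{|F|-1}$) recovers~(\ref{eq:KMSdn}), so~(\ref{eq:KMS}) and~(\ref{eq:KMSdn}) are equivalent modulo Stanley's formula. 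Since both results are attributed in the paper to~\cite{KMS17}, your reduction makes their interdependence explicit rather than introducing any circularity.

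One small point you leave implicit but which is needed for the final step: for any triangulation $\Delta$ of a $(d-1)$-ball, the polynomial $h(\Delta,x)-h(\partial\Delta,x)$ is symmetric with center $d/2$ (this follows from the Dehn--Sommerville-type relation $h(\Delta,x)-x^d h(\Delta,1/x)=(1-x)\,h(\partial\Delta,x)$ together with the symmetry of $h(\partial\Delta,x)$). Hence each summand in~(\ref{eq:KMS}) is a product of $\gamma$-positive polynomials with centers $|F|/2$ and $(n-|F|)/2$, giving a common center $n/2$, as required.
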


\subsubsection{Nested subdivisions}
\label{subsubsec:nest-sub}

Let $\bB$ be a connected building set on the 
ground set $[n]$. We denote by $\Gamma_\bB$
the cone of the nested complex $\Delta_\bB$. 
Alternatively, $\Gamma_\bB$ is the simplicial 
complex on the vertex set $\bB$, consisting 
of all nested subsets of $\bB$ (rather than
$\bB \sm \bB_{\max}$), defined by the condition 
described in Section~\ref{subsubsec:nesto}. 
Since $\Delta_\bB$ triangulates the boundary of 
an $(n-1)$-dimensional simplex, the complex 
$\Gamma_\bB$ is a triangulation of this simplex
(for example, it is the barycentric subdivision, 
if $\bB$ consists of all nonempty subsets of 
$[n]$). We call $\Gamma_\bB$ the \emph{nested 
subdivision} associated to $\bB$.

No doubt, just as is the case with $\Delta_\bB$, 
one can show that whenever $\Gamma_\bB$ is a flag 
complex, it can be obtained from the trivial 
subdivision of the $(n-1)$-dimensional simplex by 
successive edge subdivisions. Thus, the 
$\gamma$-positivity of the local $h$-polynomial 
of $\Gamma_\bB$ follows from 
\cite[Proposition~6.1]{Ath12}. In view of 
Theorem~\ref{thm:PRW}, it seems natural to pose 
the following problem.

\begin{problem} \rm
Find a combinatorial interpretation for the 
local $\gamma$-polynomial of $\Gamma_\bB$,
for any connected chordal building set $\bB$.
\end{problem}

\subsection{Examples}
\label{subsec:exa}

This section discusses further examples of 
flag triangulations of spheres or simplices,
which appear naturally in algebraic-geometric 
contexts, and the $\gamma$-positivity of their 
$h$-polynomials or local $h$-polynomials. 
These examples provide algebraic-geometric 
interpretations for several of the 
$\gamma$-positive polynomials discussed in
Section~\ref{sec:comb}.

\subsubsection{Barycentric and edgewise 
subdivisions}
\label{subsubsec:baryedge}

As noted in Section~\ref{subsubsec:Karu},
the barycentric subdivision of a regular 
CW-complex $X$, denoted here by $\sd(X)$, is 
defined as the simplicial complex of all 
chains in the poset of nonempty faces of $X$.

Let $V$ be an $n$-element set and consider  
the barycentric subdivision $\sd(2^V)$ of the
simplex $2^V$. The facets of $\sd(2^V)$ are 
in one-to-one correspondence with the 
permutations of $V$. Thus, the $h$-polynomial 
of $\sd(2^V)$ is an $x$-analog of the number 
$n!$; it is well known, in fact, that 
\begin{equation} \label{eq:hpolyAn}
h(\sd(2^V), x) \ = \ A_n(x). 
\end{equation}   
Since coning a simplicial complex does not 
affect its $h$-polynomial, $A_n(x)$ is also 
equal to the $h$-polynomial of the barycentric 
subdivision $\sd(\partial(2^V))$ of the boundary 
complex of $2^V$. Moreover, given that $\sd(2^V)$ 
restricts to the barycentric subdivision $\sd(2^F)$ 
for every $F \subseteq V$, Stanley 
\cite[Proposition~2.4]{Sta92} showed that  
\begin{equation} \label{eq:localdn}
\ell_V (\sd(2^V), x) \ = \ \sum_{k=0}^n (-1)^{n-k} 
{n \choose k} A_k(x) \ = \ \sum_{w \in \dD_n} 
x^{\exc(w)} \ = \ d_n(x).
\end{equation}
Consequently, the unimodality of $A_n(x)$ and 
$d_n(x)$ follows from parts (c) of 
Theorems~\ref{thm:KRS} and~\ref{thm:proplocal}
and their $\gamma$-positivity is an instance 
of Conjectures~\ref{conj:Gal} 
and~\ref{conj:Ath}, respectively. A part of the
barycentric subdivision of the boundary complex
of the three-dimensional simplex is shown in 
Figure~5. The restrictions on the facets of the 
simplex are barycentric subdivisions of 
two-dimensional simplices. 
 
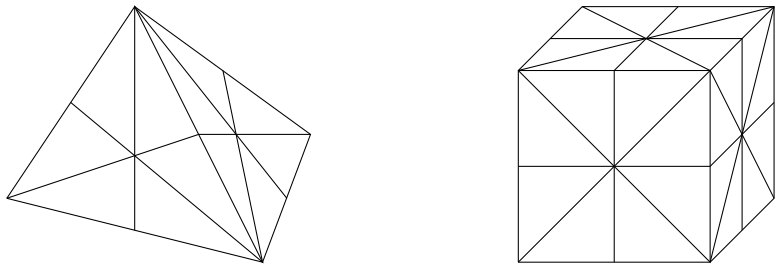
\begin{figure}
\begin{center}
\begin{tikzpicture}[scale=0.85]
\label{fg:bary}

   \draw(-4,1) -- (0,0) -- (-2,4) -- (-4,1);
   \draw(0,0) -- (0.75,2) -- (-2,4);
   \draw(-4,1) -- (-1,2) -- (0.75,2);
   \draw(-2,0.5) -- (-2,4) -- (0.38,1);
   \draw(-3,2.5) -- (0,0) -- (-0.62,3);

   \draw(4,0) -- (7,0) -- (7,3) -- (4,3) -- (4,0);
   \draw(4,3) -- (5,4) -- (8,4) -- (8,1) -- (7,0);
   \draw(7,3) -- (8,4);
   \draw(4,1.5) -- (7,1.5) -- (8,2.5);
   \draw(5.5,0) -- (5.5,3) -- (6.5,4);
   \draw(4.5,3.5) -- (7.5,3.5) -- (7.5,0.5);
   \draw(4,0) -- (7,3) -- (8,1);
   \draw(4,3) -- (7,0) -- (8,4);
   \draw(5,4) -- (7,3);
   \draw(4,3) -- (8,4);

\end{tikzpicture}
\caption{Two Coxeter complexes}
\end{center}
\end{figure}

The $r$-fold edgewise subdivision, denoted here
by $\esd_r(\Delta)$, is another elegant (but not 
as well known as barycentric subdivision) 
triangulation of a simplicial complex $\Delta$ 
with a long history in mathematics; see 
\cite[Section~1]{Ath16b} and references therein.
To describe it in a geometrically intuitive way, 
consider positive integers $n,r$ and the 
geometric simplex $\sigma_{n,r} = \{ (x_1, 
x_2,\dots,x_{n-1}) \in \RR^{n-1}: 0 \le x_{n-1} 
\le \cdots \le x_2 \le x_1 \le r \}$. The 
$r$-fold edgewise subdivision $\esd_r(2^V)$
of an $(n-1)$-dimensional simplex $2^V$ is 
realized as the triangulation of 
$\sigma_{n,r}$ whose facets are the 
$(n-1)$-dimensional simplices into which 
$\sigma_{n,r}$ is dissected by the affine 
hyperplanes in $\RR^{n-1}$ with equations  
\begin{itemize}
\itemsep=0pt
\item[$\bullet$] $x_i = k$, where $1 \le i \le n-1$,
\item[$\bullet$] $x_i - x_j = k$, where $1 \le i < j 
                 \le n-1$
\end{itemize}
for $k \in \{1, 2,\dots,r-1\}$. This triangulation 
has exactly $r^{n-1}$ facets; it is shown in Figure~6 
for $n=3$ and $r=4$. For an arbitrary simplicial 
complex $\Delta$, the $r$-fold edgewise subdivision
$\esd_r(\Delta)$ restricts to $\esd_r(2^F)$ for every
$F \in \Delta$; for a formal definition see, for
instance, \cite[Section~4]{Ath14} 
\cite[Section~4]{BW09}. 

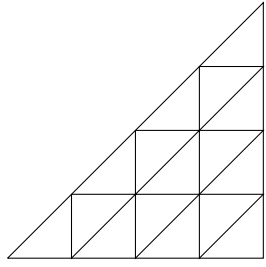
\begin{figure}
\begin{center}
\begin{tikzpicture}[scale=0.85]
\label{fg:edgewise}

  \draw(0,0) -- (4,0) -- (4,4) -- (0,0);
  \draw(1,1) -- (4,1) -- (3,0) -- (3,3) 
             -- (4,3) -- (1,0) -- (1,1); 
  \draw(2,2) -- (4,2) -- (2,0) -- (2,2); 

\end{tikzpicture}
\caption{An edgewise subdivision}
\end{center}
\end{figure}

The triangulation $\esd_r(\Delta)$ is flag for 
every flag simplicial complex $\Delta$; in 
particular, $\esd_r(2^V)$ is a flag triangulation 
of the simplex. Combinatorial interpretations 
for its local $h$-vector and local $\gamma$-vector 
can be given as follows. Let us denote by $\wW(n, 
r)$ the set of sequences $w = (w_0, w_1,\dots,w_n) 
\in \{0, 1,\dots,r-1\}^{n+1}$ satisfying $w_0 = w_n 
= 0$ and by $\sS(n,r)$ the set of those elements
of $\wW(n,r)$ (known as \emph{Smirnov words}) 
having no two consecutive entries equal. As usual,
we call $i \in \{0, 1,\dots,n-1\}$ an \emph{ascent} 
of $w \in \wW(n,r)$ if $w_i \le w_{i+1}$ and
$i \in [n-1]$ a \emph{double ascent} if 
$w_{i-1} \le w_i \le w_{i+1}$ (descents and 
double descents are defined similarly, using 
strict inequalities). We also denote 
by ${\rm E}_r$ the linear operator on the space 
$\RR[x]$ of polynomials in $x$ with real 
coefficients defined by setting ${\rm E}_r (x^m) 
= x^{m/r}$, if $m$ is divisible by $r$, and 
${\rm E}_r (x^m) = 0$ otherwise. For the second
interpretation of $\xi_{n, r, i}$ which appears 
in the following result (but not in \cite{Ath16a,
Ath16b}), see Remark~\ref{rem:GeEsdr}. 
\begin{theorem} \label{thm:AthEsdr}
{\rm (Athanasiadis~\cite[Theorem~4.6]{Ath16a} 
\cite{Ath16b})} 
The local $h$-polynomial of the $r$-fold edgewise
subdivision $\esd_r(2^V)$ of the $(n-1)$-dimensional
simplex on the vertex set $V$ can be expressed as
\begin{align*} 
\ell_V (\esd_r(2^V), x) & \ = \ {\rm E}_r \, 
               (x + x^2 + \cdots + x^{r-1})^n \ = \ 
                 \sum_{w \in \sS(n, r)} x^{\asc (w)}
								\\ & \ = \ 
\sum_{i=0}^{\lfloor n/2 \rfloor} \xi_{n, r, i} \,
               x^i (1+x)^{n-2i},
\end{align*}
where $\asc (w)$ is the number of ascents of $w \in 
\sS(n, r)$ and $\xi_{n, r, i}$ equals each of the 
following: 
\begin{itemize}
\itemsep=0pt
\item[$\bullet$] the number of sequences $w = (w_0, 
w_1,\dots,w_n) \in \sS(n, r)$ with exactly $i$ 
ascents which have the following property: for 
every double ascent $k$ of $w$ there exists a 
double descent $\ell > k$ such that $w_k = w_\ell$ 
and $w_k \le w_j$ for all $k < j < \ell$,

\item[$\bullet$] the number of $w \in \wW(n, r)$ 
for which $Asc(w) \in \Stab([n-2])$ has $i$ 
elements, where $\Asc (w)$ is the set of ascents 
of $w$.
\end{itemize}
\end{theorem}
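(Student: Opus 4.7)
The plan is to prove the three claims in turn, leaning on the Veronese description of $r$-fold edgewise subdivision, a partial-sum bijection, and a Foata--Strehl style valley-hopping action.

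First, to identify the local $h$-polynomial with ${\rm E}_r(x + x^2 + \cdots + x^{r-1})^n$, I would exploit the standard fact that $r$-fold edgewise subdivision corresponds on the level of Stanley--Reisner rings to the $r$th Veronese subalgebra, which yields
$$\frac{h(\esd_r(2^{[k]}), x)}{(1-x)^k} \;=\; \sum_{m \ge 0} \binom{rm+k-1}{k-1} x^m$$
for every $k \ge 1$. Substituting into Stanley's defining identity $\ell_V(\Gamma, x) = \sum_{F \subseteq V} (-1)^{n-|F|} h(\Gamma_F, x)$, collecting terms by $k = |F|$, and applying a short binomial identity --- essentially that $\sum_{k} \binom{n}{k} (-1)^{n-k} (1-x)^k \binom{rm+k-1}{k-1}$ counts compositions of $rm$ into exactly $n$ parts from $[1, r-1]$ --- reduces the expression to the claimed closed form.

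Second, for the Smirnov-word model, I would expand $(x + \cdots + x^{r-1})^n = \sum_{\mathbf{a} \in [1, r-1]^n} x^{a_1 + \cdots + a_n}$, after which ${\rm E}_r$ restricts to tuples with $r \mid \sum a_i$ and rescales. The partial-sum map $\mathbf{a} \mapsto w$ with $w_i := (a_1 + \cdots + a_i) \bmod r$ is a bijection onto $\sS(n, r)$, and the exponent $\sum a_i / r$ equals the number of ``wraps'' modulo $r$, which is the descent count $\des(w)$. The reversal involution $(w_0, \ldots, w_n) \mapsto (w_n, \ldots, w_0)$ on $\sS(n, r)$ swaps ascents and descents, so $\sum_w x^{\asc(w)} = \sum_w x^{\des(w)}$, delivering the second equality.

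Third, for the $\gamma$-expansion and the two interpretations of $\xi_{n,r,i}$, I would construct a valley-hopping action on $\sS(n, r)$ modeled on the classical Foata--Strehl construction: at each position $k$ that is either a double ascent or a double descent of $w$, the action toggles $w_k$ between its current value and a canonical mirrored alternative inside $\{0, \ldots, r-1\}$, while peaks, valleys, and the endpoints are fixed. Each orbit has a unique representative satisfying the double-ascent/double-descent matching condition stated in the theorem, and an orbit whose representative has $i$ ascents contributes $x^i(1+x)^{n-2i}$ to $\sum_w x^{\asc(w)}$, yielding both $\gamma$-positivity and the first interpretation. The second interpretation, ranging over the larger set $\wW(n, r)$, I would obtain by specializing Gessel's identity (\ref{eq:Ge3}) at $x_1 = \cdots = x_{r-1} = 1$, $x_j = 0$ for $j \ge r$; the left-hand side of the resulting rational identity has coefficient of $z^n$ equal to the local $h$-polynomial computed in the first step, while the right-hand side is already a $\gamma$-expansion indexed by elements of $\wW(n, r)$ with ascent set in $\Stab([n-2])$.

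The main obstacle will be specifying the valley-hopping action on Smirnov words. Unlike the classical case of permutations, the alphabet $\{0, \ldots, r-1\}$ is finite, and hopping a letter must simultaneously preserve the Smirnov no-repeat condition, the endpoint constraints $w_0 = w_n = 0$, and membership in the bounded alphabet. Identifying the correct hoppable positions, defining the mirrored value by comparison with the bounding peaks and valleys, and verifying that the distinguished orbit representative is exactly the one characterized in the theorem requires careful case analysis; once this is in place, the remaining combinatorial and generating-function manipulations are routine.
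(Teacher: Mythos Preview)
Your plan for the closed form and the Smirnov-word model is sound and matches the references the paper cites. For the first interpretation of $\xi_{n,r,i}$ you correctly identify valley hopping as the tool, as does the paper (Remark~\ref{rem:GeEsdr}, pointing to \cite{Ath16b}). Be careful, though, with your description of the action: valley hopping does not ``toggle $w_k$ between its current value and a mirrored alternative''. As in the paper's proof of (\ref{eq:Ge3}), the move $\psi_i$ \emph{deletes} the entry $w_i$ and \emph{reinserts} it at a different position, hopping it over the adjacent valley or peak; the Smirnov constraint is what renders some hops ``invalid'', and that valid/invalid distinction is exactly what drives the argument.

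The genuine gap is in your route to the second interpretation. Specializing (\ref{eq:Ge3}) at $x_1 = \cdots = x_{r-1} = 1$, $x_j = 0$ for $j \ge r$, makes the coefficient of $z^n$ on the left-hand side equal (via (\ref{eq:path-chromatic})) to $\sum_w t^{\asc(w)}$, summed over Smirnov maps $w \colon [n] \to [r-1]$. This polynomial has center of symmetry $(n-1)/2$, whereas $\ell_V(\esd_r(2^V), x)$ has center $n/2$, so the two cannot coincide; for instance, with $n=2$, $r=3$ the specialization gives $1+t$, while $\ell_V(\esd_3(2^V),x) = {\rm E}_3((x+x^2)^2) = 2x$. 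On the right-hand side you likewise obtain maps $u \colon [n] \to [r-1]$, not the sequences of $\wW(n,r)$ (which have length $n+1$, endpoints $0$, and alphabet $\{0,\ldots,r-1\}$), so the indexing set is wrong as well. The paper's method (Remark~\ref{rem:GeEsdr}) is instead internal to the valley-hopping argument on $\sS(n,r)$: once the minimal orbit representatives of the first interpretation are in hand, one applies to each of them all \emph{invalid} left-shifts---those that would create equal consecutive entries---thereby leaving $\sS(n,r)$ and landing in $\wW(n,r)$. This is precisely the analog of the bijection $f$ at the end of the proof of (\ref{eq:Ge3}), and it delivers the second interpretation directly.
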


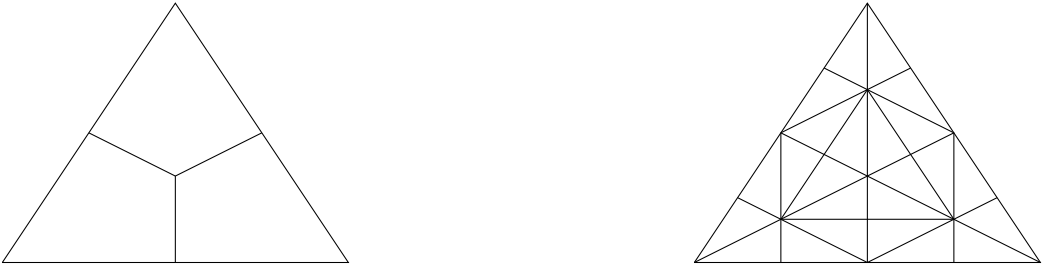
\begin{figure}
\begin{center}
\begin{tikzpicture}[scale=1.15]
\label{fg:Kn}

   \draw(-4,0) -- (0,0) -- (-2,3) -- (-4,0);
   \draw(-2,0) -- (-2,1) -- (-3,1.5);
   \draw(-2,1) -- (-1,1.5);

   \draw(4,0) -- (8,0) -- (6,3) -- (4,0);
   \draw(6,0) -- (6,3); 
   \draw(4,0) -- (7,1.5);
   \draw(5,1.5) -- (8,0);
   \draw(5,0.5) -- (7,0.5) -- (6,2) -- (5,0.5);
   \draw(5,0) -- (5,0.5) -- (4.5,0.75);
   \draw(7,0) -- (7,0.5) -- (7.5,0.75);
   \draw(5.5,2.25) -- (6,2) -- (6.5,2.25);
   \draw(6,0) -- (7,0.5) -- (7,1.5) -- (6,2) --
         (5,1.5) -- (5,0.5) -- (6,0);

\end{tikzpicture}
\caption{The triangulation $K_3$}
\end{center}
\end{figure}

The interpretations of the polynomials $A_n(x)$ 
and $d_n(x)$, described in this section, have 
interesting hyperoctahedral group analogs. 
Let $\cC_n$ denote the boundary complex of the 
standard $n$-dimensional unit cube. The facets 
of the barycentric subdivision $\sd(\cC_n)$ are 
in one-to-one correspondence with the signed 
permutations $w \in \bB_n$. As a result, the 
$h$-polynomial of $\sd(\cC_n)$ is an $x$-analog 
of the number $2^n n!$ and, in fact (see our 
discussion in Section~\ref{subsubsec:coxeter}), 
\begin{equation} \label{eq:hpolyBn}
h(\sd(\cC_n), x) \ = \ B_n(x),
\end{equation}   
where $B_n(x)$ is the $\bB_n$-Eulerian polynomial,
discussed in Section~\ref{subsubsec:CoxeterEuler}.
Consider now the cubical barycentric subdivision 
of an $(n-1)$-dimensional simplex $2^V$. This is 
a cubical complex whose face poset is isomorphic 
to the set of nonempty closed intervals in the 
poset of nonempty subsets of $V$, partially 
ordered by inclusion. Its (simplicial) barycentric
subdivision, denoted by $K_n$, is a triangulation
of $2^V$ with exactly $2^{n-1} n!$ facets. 
Figure~7 shows these subdivisions for $n=3$ (note
that $K_3$ is combinatorially isomorphic to the 
visible part of the complex $\sd(\cC_3)$ which is 
shown in Figure~5). We then have 
\cite[Remark~4.5]{Ath16a} 
\cite[Theorem~3.1.4]{Sav13}
\begin{equation} \label{eq:localdn2+}
\ell_V (K_n, x) \ = \ d^+_{n,2}(x),
\end{equation}   
where $d_{n,2}(x) = d^+_{n,2}(x) + d^{-}_{n,2}(x)$
is the decomposition of the derangement polynomial 
for $\bB_n$ discussed in 
Section~\ref{subsubsec:colored}. Thus, the 
$\gamma$-positivity of $B_n(x)$ and $d^+_{n,2}(x)$,
discussed in Section~\ref{sec:comb}, are again
instances of Conjectures~\ref{conj:Gal} 
and~\ref{conj:Ath}, respectively. 

The problem to interpret 
the polynomial $d^+_{n,r}(x)$, discussed in 
Section~\ref{subsubsec:colored}, as a local 
$h$-polynomial for all $r \ge 1$ was studied 
in~\cite{Ath14} and provided much of the 
motivation for that paper. It is not clear 
how to generalize the triangulation 
$K_n$ for this purpose. However, one can show 
directly (see \cite[Remark~4.5]{Ath16a}) that 
$\ell_V (K_n, x) =  \ell_V (\esd_2(\sd(2^V))$.
Thus, the $r$-fold edgewise subdivision 
$\esd_r(\sd(2^V))$ of the barycentric subdivision 
of the simplex $2^V$ is a reasonable candidate
to replace $K_n$ and, indeed, 
\cite[Theorem~1.2]{Ath14} shows that 
\begin{equation} \label{eq:localdnr+}
\ell_V (\esd_r(\sd(2^V)), x) \ = \ d^+_{n,r}(x).
\end{equation}   
The barycentric subdivision of the two-dimensional
simplex and its 2-fold edgewise subdivision are
shown in Figure~8. Further examples of 
barycentric and edgewise subdivisions appear 
in Section~\ref{subsec:gmethods}.

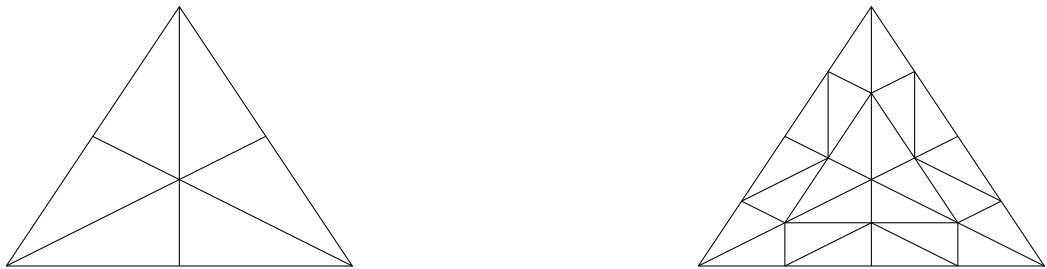
\begin{figure}
\begin{center}
\begin{tikzpicture}[scale=1.15]
\label{fg:baryedge}

   \draw(-4,0) -- (0,0) -- (-2,3) -- (-4,0);
   \draw(-4,0) -- (-1,1.5);
   \draw(-2,0) -- (-2,3);
   \draw(0,0) -- (-3,1.5);

   \draw(4,0) -- (8,0) -- (6,3) -- (4,0);
   \draw(6,0) -- (6,3); 
   \draw(4,0) -- (7,1.5);
   \draw(5,1.5) -- (8,0);
   \draw(5,0.5) -- (7,0.5) -- (6,2) -- (5,0.5);
   \draw(5,0) -- (5,0.5) -- (4.5,0.75);
   \draw(7,0) -- (7,0.5) -- (7.5,0.75);
   \draw(5.5,2.25) -- (6,2) -- (6.5,2.25);
   \draw(5,0) -- (6,0.5) -- (7,0);
   \draw(4.5,0.75) -- (5.5,1.25) -- (5.5,2.25);
   \draw(7.5,0.75) -- (6.5,1.25) -- (6.5,2.25);

\end{tikzpicture}
\caption{Barycentric and edgewise subdivisions}
\end{center}
\end{figure}

\subsubsection{Coxeter complexes}
\label{subsubsec:coxeter}

Let $W$ be a finite Coxeter group. Then $W$ can 
be realized as a reflection group in an 
$n$-dimensional Euclidean space, where $n$ is the
rank of $W$. The reflecting hyperplanes form a 
simplicial arrangement, known as the 
\emph{Coxeter arrangement}, whose face poset is 
isomorphic to that of a simplicial complex 
$\cox(W)$, called the \emph{Coxeter complex}. 
This complex is combinatorially isomorphic to
the barycentric subdivision of the boundary 
complex of the $n$-dimensional
simplex or cube (see Figure~5 for two 
three-dimensional pictures), when $W$ is the 
symmetric group $\fS_{n+1}$ or the hyperoctahedral
group $\bB_n$, respectively.

The facets of $\cox(W)$ are in a natural 
one-to-one correspondence with the elements 
of $W$. Thus, the $h$-polynomial of $\cox(W)$ 
is an $x$-analog of the order of the group
$W$ and, in fact, as was essentially shown 
by A.~Bj\"orner~\cite{Bj84} (see
\cite[Theorem~2.3]{Bre94a}), we have 
\begin{equation} \label{eq:hpolyW(x)}
h(\cox(W), x) \ = \ W(x),
\end{equation}   
where $W(x)$ is the $W$-Eulerian polynomial, 
discussed in 
Section~\ref{subsubsec:CoxeterEuler}.

Clearly, $\cox(W)$ is a triangulation of the 
$(n-1)$-dimensional sphere and, as explained 
in~\cite[Section~8.5]{Pet15}, this triangulation 
is flag. Thus, the $\gamma$-positivity of $W(x)$ 
(Theorem~\ref{thm:W(x)}) verifies Gal's 
conjecture in a special case. 

\subsubsection{Cluster complexes}
\label{subsubsec:cluster}

Let $W$ be a finite Coxeter group of rank $n$, 
viewed as a reflection group in an 
$n$-dimensional Euclidean space. Consider the 
root system $\Phi$ defined by the Coxeter 
arrangement associated to $W$ and a choice 
$\Phi^+$ of a positive system, along with 
corresponding simple system $\Pi$. The cluster
complex $\Delta_W$ is a flag simplicial complex
on the vertex set $\Phi^+ \cup (-\Pi)$, defined 
by S.~Fomin and A.~Zelevinsky \cite{FZ03}; we 
refer to~\cite{FR07} for an overview of the 
relevant theory and its connections to cluster 
algebras. This complex triangulates the 
$(n-1)$-dimensional sphere; its restriction on
the vertex set $\Phi^+$ is naturally a flag 
triangulation of the simplex $2^\Pi$, termed 
in \cite{AS12} as the \emph{cluster subdivision}. 
One then has \cite[Theorem~5.9]{FR07} 
\begin{equation} \label{eq:hpolyCat(x)}
h(\Delta_W, x) \ = \ \cat(W, x)
\end{equation}   
and \cite[Section~1.1]{AS12}
\begin{equation} \label{eq:localCat++(x)}
\ell_\Pi(\Gamma_W, x) \ = \ \cat^{++}(W, x),
\end{equation}   
where $\cat(W, x)$ and $\cat^{++}(W, x)$ were
defined in Section~\ref{subsec:nara}. Thus, 
the $\gamma$-positivity of these polynomials
(Theorems~\ref{thm:C(W,x)} 
and~\ref{thm:C++(W,x)}) verifies 
Conjectures~\ref{conj:Gal} and~\ref{conj:Ath}, 
respectively, in special cases.

\section{Methods}
\label{sec:methods}

A remarkable variety of methods has been employed to 
prove $\gamma$-positivity and often to describe the
$\gamma$-coefficients in some explicit way. This 
section reviews in some detail three such methods, 
namely valley hopping, methods of geometric 
combinatorics stemming from Stanley's seminal 
work~\cite{Sta92}, and (quasi)symmetric function 
methods. Although it seems difficult to provide an 
exhaustive list, or attempt some kind of 
classification, the author is aware of the following
methodological approaches which have been followed 
successfully to prove the $\gamma$-positivity of a 
polynomial $f(x)$:
\begin{itemize}
\itemsep=0pt
\item[$\bullet$] \emph{Combinatorial decompositions}:
     Assuming $f(x)$ enumerates a set of combinatorial  
     objects according to some statistic, one may try 
     to combinatorially decompose this set into parts,
     each contributing a binomial $x^i (1+x)^{n-2i}$ to
     this enumeration. One instance of this approach is
     valley hopping (see Section~\ref{subsec:hopping}) 
     and another is symmetric Boolean decompositions   
     of posets~\cite{Mu17, Pet13a, SU91}. Other 
     instances appear in \cite[Section~3]{AS12} 
     \cite{DPS09} \cite[Appendix~A]{Ste08} (see also 
     \cite[Section~13.2]{Pet15}).
\item[$\bullet$] \emph{Explicit combinatorial 
     formulas}: One may try to express $f(x)$ 
     explicitly as a sum of products of polynomials 
     known to be $\gamma$-positive, all products having 
     the same center of symmetry, using direct 
     combinatorial arguments, generating functions, 
     continued fraction expansions, and so on. The 
     prototypical example of this approach is 
     Br\"and\'en's~\cite{Bra04} beautiful proof of 
     Theorem~\ref{thm:BrAP(x)}; see also 
     \cite{Ch08, SZ12, SZ16, Ste08}. 
\item[$\bullet$] \emph{The theory of the cd-index of an
     Eulerian poset}: see Section~\ref{subsubsec:Karu}
     and \cite{DPS09} \cite[Appendix~A]{Ste08} for 
     related approaches.
\item[$\bullet$] \emph{Poset shellability and homology
     methods}: The prototypical example is the 
     use of shellability of Rees products of posets 
     in~\cite{LSW12}, leading to 
     Theorem~\ref{thm:LSWrees} and related results 
     discussed in Section~\ref{subsec:homo}; see also 
     Theorem~\ref{thm:Athrees} in the sequel.
\item[$\bullet$] \emph{Geometric methods}: Apart from 
     those discussed in Section~\ref{subsec:gmethods}, 
     geometric methods are employed in~\cite{Ai14, NP11},
     where the $\gamma$-positivity of $f(x)$ is proved by    
     constructing a simplicial complex whose $f$-vector 
     is shown to equal the $\gamma$-vector of $f(x)$. 
\item[$\bullet$] \emph{Recursive methods}: One can prove
     positivity of the $\gamma$-coefficients via 
     recursions, without providing any explicit
     interpretations or formulas. This is the case with 
     the proof of Theorem~\ref{thm:2sided} in~\cite{Li16} 
     and the $\gamma$-positivity of the restricted  
     Eulerian polynomials of~\cite[Section~4]{NPT11}.
\item[$\bullet$] \emph{Symmetric and quasisymmetric
     function methods}: see Section~\ref{subsec:sfmethods}.
\end{itemize}

\subsection{Valley hopping}
\label{subsec:hopping}

The idea of valley hopping is due to D.~Foata 
and V.~Strehl~\cite{FS74, FS76}, who used it 
to interpret combinatorially the 
$\gamma$-coefficients for the Eulerian 
polynomials (see Theorem~\ref{thm:FSSAn}).
It was rediscovered by L.~Shapiro, W.~Woan and 
S.~Getu~\cite{SWG83} and has found numerous 
applications to the enumeration of classes of 
permutations~\cite[Section~4]{AS12} 
\cite{Bra08, LZ15} \cite[Section~11]{PRW08} 
\cite{SuWa14} and related combinatorial 
objects~\cite[Section~3]{Ath16b}.

To explain this idea, let $w = (w_1, 
w_2,\dots,w_n) \in \fS_n$, where $w_i = w(i)$
for $1 \le i \le n$, be a permutation and set 
$w_0 = w_{n+1} = \infty$ and $\tilde{w} = (w_0, 
w_1,\dots,w_{n+1})$. Given a double ascent or 
double descent $i$ of $\tilde{w}$ (meaning, an 
index $i \in [n]$ such that $w_{i-1} < w_i < 
w_{i+1}$ or $w_{i-1} > w_i > w_{i+1}$, 
respectively), we define the permutation 
$\psi_i (w) \in \fS_n$ as follows: If $i$ is a 
double ascent of $\tilde{w}$, then $\psi_i (w)$ is 
the permutation obtained from $\tilde{w}$ by deleting 
$w_i$ and inserting it between $w_j$ and $w_{j+1}$, 
where $j$ is the largest index satisfying $0 \le j 
< i$ and $w_j > w_i$. Similarly, if $i$ is a double 
descent of $\tilde{w}$, then $\psi_i (w)$ is the 
permutation obtained from $\tilde{w}$ by deleting 
$w_i$ and inserting it between $w_j$ and $w_{j+1}$, 
where $j$ is the smallest index satisfying $i < j 
\le n$ and $w_i < w_{j+1}$. For the 
example of Figure~9 we have 
$\psi_1 (w) = (3, 2, 5, 4, 7, 8, 1, 6)$ and 
$\psi_8 (w) = (7, 3, 2, 5, 4, 8, 6, 1)$.

We call two permutations in $\fS_n$ equivalent 
if one can be obtained from the other by a 
sequence of moves of the form $w \mapsto \psi_i
(w)$. We leave to the reader to verify that 
this defines an equivalence relation on 
$\fS_n$, each equivalence class of which 
contains a unique permutation $u$ such that 
$\tilde{u}$ has no double ascent. Moreover, if 
$\tilde{u}$ has $k$ double descents for such $u 
\in \fS_n$, then the equivalence class $O(u)$ of 
$u$ has $2^k$ elements and exactly $\binom{k}{j}$ 
of them have $j$ ascents more than $u$. 
Therefore,
\begin{equation} \label{eq:orbits}
\sum_{w \in O(u)} \, x^{\asc(w)} \ = \ 
x^{\asc(u)} (1+x)^k \ = \ x^{\asc(u)}
      (1+x)^{n - 1 - 2\asc(u)}.
    \end{equation}
Summing over all equivalence classes yields 
the first combinatorial interpretation for 
$\gamma_{n,i}$, given in 
Theorem~\ref{thm:FSSAn}. 

\begin{figure}
\begin{center}
\begin{tikzpicture}[scale=0.8]
\label{fg:hop}

   \draw(1,3) node(1){$\bullet$};
   \draw(3,1) node(2){$\bullet$};
   \draw(3.5,0.5) node(3){$\bullet$};
   \draw(5,2) node(4){$\bullet$};
   \draw(5.5,1.5) node(8){$\bullet$};
   \draw(7.5,3.5) node(6){$\bullet$};
   \draw(13.5,2.5) node(5){$\bullet$};
   \draw(11,0) node(7){$\bullet$};

   \draw(0.4,3) node(9){7};
   \draw(2.4,1) node(10){3};
   \draw(3.5,-0.1) node(11){2};
   \draw(5,1.4) node(12){5};
   \draw(5.5,0.9) node(16){4};
   \draw(7.5,4.2) node(13){8};
   \draw(14.1,2.5) node(14){6};
   \draw(11,-0.6) node(15){1};

   \draw(0.5,3.5) -- (3.5,0.5) -- (5,2) -- (5.5,1.5) 
       -- (7.5,3.5) -- (11,0) -- (14.5,3.5);
   
   \draw [->] [dashed] (1,3) -- (6.9,3);
   \draw [->] [dashed] (3,1) -- (3.9,1);
   \draw [->] [dashed] (13.5,2.5) -- (8.6,2.5);

\end{tikzpicture}
\caption{Valley hopping for $w = (7,3,2,5,4,8,1,6)$}
\end{center}
\end{figure}
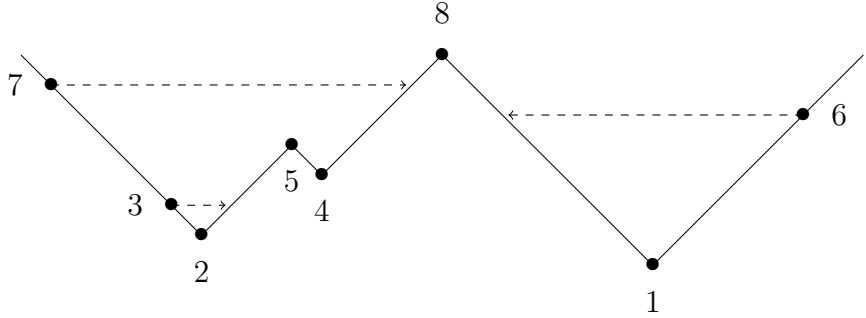

To illustrate the power of this method, let us 
modify it to prove Gessel's identity~(\ref{eq:Ge3}). 
We will use Stanley's interpretation (see 
\cite[Theorem~7.2]{SW10})
\begin{equation} \label{eq:path-chromatic}
\frac{(1-t) E(\bx; z)} {E(\bx; tz) - tE(\bx; z)} 
\ = \ 1 \, + \, \sum_{n \ge 1} z^n 
\sum_{w \in \sS(n)} t^{\asc(w)} \, \bx_w
\end{equation}
of the left-hand side of~(\ref{eq:Ge3}), where 
we have used notation of Section~\ref{subsec:sym} 
and $\sS(n)$ stands for the set of maps $w: [n] \to 
\ZZ_{>0}$ satisfying $w(i) \ne w(i+1)$ for every 
$i \in [n-1]$. 

\bigskip
\noindent
\emph{Proof of Equation~(\ref{eq:Ge3})}.
Given Equation~(\ref{eq:path-chromatic}), it 
suffices to show that 
\begin{equation} \label{eq:Ge-hop}
\sum_{w \in \sS(n)} t^{\asc(w)} \, \bx_w \ =
\sum_{\scriptsize \begin{array}{c} u: [n] \to 
     \ZZ_{>0} \\ \Asc(u) \in \Stab ([n-2]) 
     \end{array}}
     t^{\asc(u)} (1+t)^{n-1 - 2\asc(u)} \, \bx_u
\end{equation}
for all $n \ge 1$. For $w \in \sS(n)$ we write 
$w = (w_1, w_2,\dots,w_n)$ and define $\tilde{w}$,
as we did for permutations. For a double ascent $i$
of $\tilde{w}$, we denote by $\psi_i (w)$ the  
sequence obtained from $\tilde{w}$ by deleting 
$w_i$ and inserting it between positions $j$ and 
$j+1$, where $j$ is the largest index satisfying 
$0 \le j < i$ and $w_j > w_i$. We define $\psi_i 
(w)$ in a similar way for 
double descents $i$ of $\tilde{w}$ and 
say that a move $w \mapsto \psi_i(w)$ is 
\emph{valid} if $i$ is a double ascent or descent
of $\tilde{w}$ and $\psi_i(w) \in \sS(n)$. For the 
example of Figure~10 we have $\psi_4 (w) = 
(3, 2, 3, 2, 4, 3, 2, 1, 2)$, $\psi_6 (w) = 
(2, 3, 2, 3, 4, 2, 1, 2, 3)$, $\psi_7 (w) = 
(2, 3, 2, 3, 4, 3, 1, 2, 2)$ and $\psi_9 (w) 
= (2, 3, 2, 3, 4, 3, 2, 2, 1)$. The move $w \mapsto 
\psi_i(w)$ is valid for $i \in \{4, 6\}$ and 
invalid for $i \in \{7, 9\}$.

We call two elements of $\sS(n)$ equivalent 
if one can be obtained from the other by a 
sequence of valid moves of the form $w \mapsto 
\psi_i(w)$. We leave again to the reader to 
verify that this defines an equivalence 
relation on $\sS(n)$, each 
equivalence class of which contains a unique 
element $v$, call it the minimal representative, 
such that $\tilde{v}$ has a minimum number of 
double ascents. For example, the minimal 
representative of the equivalence class of 
the sequence shown in Figure~10 is 
$\psi_4(w)$. The reasoning which led to 
Equation~(\ref{eq:orbits}) also shows that the 
equivalence class $O(v)$ of a minimal 
representative $v$ satisfies
\begin{equation} \label{eq:orbits2}
\sum_{w \in O(v)} \, t^{\asc(w)} \, \bx_w \ = 
\ x^{\asc(v)} (1+x)^{n - 1 - 2\asc(v)} \, \bx_v.
\end{equation}
Summing over all equivalence classes gives 
\begin{equation} \label{eq:Ge-hop-v}
\sum_{w \in \sS(n)} t^{\asc(w)} \, \bx_w \ =
\sum \, t^{\asc(v)} (1+t)^{n-1 - 2\asc(v)} \, 
\bx_v,
\end{equation}
where the sum on the right-hand side runs over 
all minimal representatives $v$ of equivalence
classes. Applying all possible invalid moves 
to such an element $v$ which shift entries to 
the left (the order not making a difference) 
results in a map $u = f(v): [n] \to \ZZ_{>0}$ 
such that $\Asc(u) \in \Stab ([n-2])$ and 
$\bx_u = \bx_v$. Moreover, the map $f$ is a 
bijection from the set of minimal 
representatives $v$ to the set of such maps 
$u$ which preserves the weight $\bx_v$ and the 
proof follows. \qed 

\begin{remark} \label{rem:GeEsdr} \rm
The variant of valley hopping which was 
employed in the previous proof appeared in 
\cite[Section~3]{Ath16b} to derive the first
interpretation for $\xi_{n,r,i}$, given in
Theorem~\ref{thm:AthEsdr}. The last part of 
the argument in this proof yields the second 
interpretation. \qed
\end{remark}

\begin{figure}
\begin{center}
\begin{tikzpicture}[scale=0.8]
\label{fg:Gehop}

   \draw(7,1) node(4){$\bullet$};
   \draw(8,2) node(5){$\bullet$};
   \draw(9,1) node(7){$\bullet$};
   \draw(10,2) node(8){$\bullet$};
   \draw(11,3) node(9){$\bullet$};
   \draw(12,2) node(10){$\bullet$};
   \draw(13,1) node(11){$\bullet$};
   \draw(14,0) node(12){$\bullet$};
   \draw(15,1) node(13){$\bullet$};

   \draw(7,0.4) node(1){2};
   \draw(8,2.6) node(2){3};
   \draw(9,0.4) node(3){2};
   \draw(10,1.4) node(6){3};
   \draw(11,3.6) node(14){4};
   \draw(12,1.4) node(15){3};
   \draw(13,0.4) node(16){2};
   \draw(14,-0.6) node(17){1};
   \draw(15.6,1) node(17){2};

   \draw(5,3) -- (7,1) -- (8,2) -- (9,1) -- 
              (11,3) -- (14,0) -- (17,3);
  
   \draw [->] [dashed] (10,2) -- (6.1,2);
   \draw [->] [dashed] (12,2) -- (15.9,2);

\end{tikzpicture}
\caption{Valley hopping for 
              $w = (2,3,2,3,4,3,2,1,2)$}
\end{center}
\end{figure}
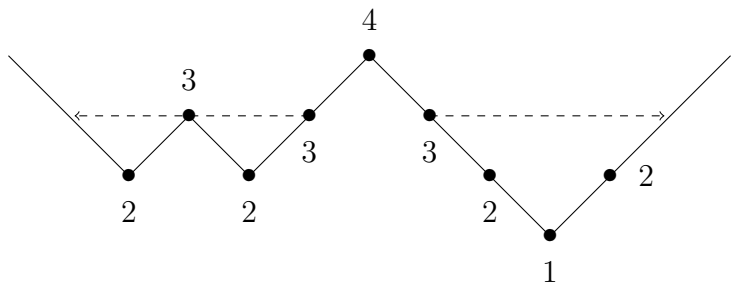

\subsection{Methods of geometric combinatorics}
\label{subsec:gmethods}

The enumerative theory of~\cite[Part~I]{Sta92},
developed by Stanley in order to study $h$-vectors 
of triangulations of simplicial complexes, together
with recent developments~\cite{Ath12}, provides 
a powerful method to prove $\gamma$-positivity of
polynomials which can often be defined purely in
combinatorial terms. This point of view, which is 
implicit in \cite[Section~4]{Ath16a} 
\cite[Chapter~3]{Sav13}, is further explained 
in this section. Familiarity with basic 
definitions from Section~\ref{sec:geom} is 
assumed.

The following statement explains the 
significance of the concept of local $h$-vector
in the theory of~\cite{Sta92}.
\begin{theorem} \label{thm:h-subdivide} 
{\rm (Stanley~\cite[Theorem~3.2]{Sta92})}
For every triangulation $\Delta'$ of a pure 
simplicial complex $\Delta$,
\begin{equation} \label{eq:h-subdivide}
h (\Delta', x) \ = \ \sum_{F \in \Delta} 
\ell_F (\Delta'_F, x) \, h (\link_\Delta (F), x).
\end{equation}
\end{theorem}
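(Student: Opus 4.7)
The plan is to decompose the defining sum of $h(\Delta', x)$ by the \emph{carrier} map, sending each face $G \in \Delta'$ to the unique minimal $F \in \Delta$ with $G \in \Delta'_F$, and to compare the result with the right-hand side after expanding each $\ell_F$ via its definition. The comparison will reduce to a simple identity localized at each face of $\Delta$.

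Starting from $h(\Delta', x) = \sum_{G \in \Delta'} x^{|G|}(1-x)^{n-|G|}$ and grouping by carrier, I would factor out $(1-x)^{n-|F|}$ to obtain
\[
h(\Delta', x) \ = \ \sum_{F \in \Delta} (1-x)^{n-|F|} \, \bar h_F(\Delta', x),
\qquad
\bar h_F(\Delta', x) \, := \!\! \sum_{\mathrm{carr}(G) = F} \!\! x^{|G|}(1-x)^{|F|-|G|}.
\]
Since the faces of $\Delta'$ with carrier contained in $F' \subseteq F$ are exactly those of $\Delta'_{F'}$, inclusion-exclusion gives $\bar h_F(\Delta', x) = \sum_{F' \subseteq F}(-1)^{|F|-|F'|}(1-x)^{|F|-|F'|} h(\Delta'_{F'}, x)$. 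Substituting and swapping sums produces
\[
h(\Delta', x) \ = \ \sum_{F' \in \Delta} h(\Delta'_{F'}, x) \, (1-x)^{n-|F'|} \!\!\!\! \sum_{F \in \Delta: F \supseteq F'} \!\!\!\! (-1)^{|F|-|F'|}.
\]

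On the other side, expanding $\ell_F(\Delta'_F, x)$ via Definition~\ref{def:localh} in the right-hand side of the desired formula and swapping sums yields
\[
\sum_{F \in \Delta} \ell_F(\Delta'_F, x) \, h(\link_\Delta(F), x) \ = \!\! \sum_{F' \in \Delta} \!\! h(\Delta'_{F'}, x) \!\!\!\! \sum_{F \in \Delta: F \supseteq F'} \!\!\!\! (-1)^{|F|-|F'|} h(\link_\Delta(F), x).
\]
Matching the two expressions reduces the theorem to the single identity
\[
(1-x)^{m} \sum_{G \in L}(-1)^{|G|} \ = \ \sum_{G \in L}(-1)^{|G|} h(\link_L(G), x),
\qquad L := \link_\Delta(F'), \ m := n-|F'|,
\]
obtained after reindexing $F = F' \cup G$ with $G \in L$ and using $\link_\Delta(F' \cup G) = \link_L(G)$.

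To verify this last identity, I would expand $h(\link_L(G), x)$ by its definition, reindex the double sum by $K := G \cup H \in L$ with $G \subseteq K$, and factor out $(1-x)^{m-|K|}$; the right-hand side becomes $\sum_{K \in L}(1-x)^{m-|K|} \sum_{G \subseteq K}(-1)^{|G|} x^{|K|-|G|}$. The inner sum equals $(x-1)^{|K|} = (-1)^{|K|}(1-x)^{|K|}$ by the binomial theorem, so the whole expression collapses to $(1-x)^m \sum_{K \in L}(-1)^{|K|}$, matching the left-hand side. I expect no real obstacle here: the argument is a carrier decomposition followed by a short binomial manipulation, and purity of $\Delta$ (assumed in the statement) guarantees that each link in $\Delta$ has the expected top dimension $m-1$, so that the $h$-polynomials on both sides are defined with matching normalizations.
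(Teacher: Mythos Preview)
The paper does not give a proof of this theorem; it is stated with a citation to Stanley~\cite[Theorem~3.2]{Sta92} and used as a tool. So there is no ``paper's own proof'' to compare against.

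That said, your argument is correct. The carrier decomposition and the subsequent inclusion--exclusion are the standard route to this identity, and your reduction to the localized identity at each $F' \in \Delta$ is clean. The final verification is fine: after reindexing by $K = G \cup H$ the inner sum $\sum_{G \subseteq K} (-1)^{|G|} x^{|K|-|G|}$ is indeed $(x-1)^{|K|}$, and purity of $\Delta$ (hence of each link $L$) ensures that all $h$-polynomials are computed with the correct top dimension, so the powers of $(1-x)$ match up. One small point worth stating explicitly is why the carrier map is well defined: for a (geometric or topological) triangulation $\Delta'$ of $\Delta$, every face $G \in \Delta'$ lies in the relative interior of a unique face of $\Delta$, and this face is the minimal $F$ with $G \in \Delta'_F$. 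You implicitly use this, and it is where the hypothesis that $\Delta'$ is a genuine subdivision of $\Delta$ enters.
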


This result implies that $h (\Delta', x)$ is 
$\gamma$-positive (as a sum of $\gamma$-positive 
polynomials with the same center of symmetry), 
if so are $\ell_F (\Delta'_F, x)$ and 
$h (\link_\Delta (F), x)$ for every $F \in 
\Delta$. For instance, we have the following 
statement.
\begin{corollary} \label{cor:sd-Sigma} 
Suppose $\Delta$ is either:
\begin{itemize}
\itemsep=0pt
\item[$\bullet$] the simplicial join $\Sigma_n$ of
                 $n$ copies of the zero-dimensional 
                 sphere, or
\item[$\bullet$] the barycentric subdivision of the 
                 boundary of a simplex.
\end{itemize}
Then, $h (\Delta', x)$ is $\gamma$-positive for 
every triangulation $\Delta'$ of $\Delta$ for
which $\ell_F (\Delta'_F, x)$ is $\gamma$-positive 
for every $F \in \Delta$.
\end{corollary}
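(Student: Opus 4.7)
The plan is to invoke Theorem~\ref{thm:h-subdivide}, which yields
\begin{equation*}
h(\Delta', x) \ = \ \sum_{F \in \Delta} \ell_F(\Delta'_F, x) \, h(\link_\Delta(F), x),
\end{equation*}
and then to verify that each summand on the right-hand side is $\gamma$-positive with the same center of symmetry. Since $\ell_F(\Delta'_F, x)$ is $\gamma$-positive by hypothesis and symmetric with center $|F|/2$, the task reduces to identifying $h(\link_\Delta(F), x)$ and showing that it is itself $\gamma$-positive with symmetry center combining with $|F|/2$ to a value independent of $F$. I will then use the elementary fact that the product of two $\gamma$-positive polynomials with centers of symmetry $a$ and $b$ is $\gamma$-positive with center $a+b$, which is immediate from the identity $x^i(1+x)^{d-2i} \cdot x^j(1+x)^{e-2j} = x^{i+j}(1+x)^{(d+e)-2(i+j)}$.

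For the first case $\Delta = \Sigma_n$, the link of any face $F$ with $|F|$ vertices is combinatorially isomorphic to $\Sigma_{n-|F|}$, whose $h$-polynomial is $(1+x)^{n-|F|}$, trivially $\gamma$-positive with center $(n-|F|)/2$; hence each summand is $\gamma$-positive with the common center $n/2$. For the second case, let $|V|=n$ and let $F = (F_1 \subsetneq \cdots \subsetneq F_k)$ be a face of $\sd(\partial(2^V))$, viewed as a chain of proper nonempty subsets of $V$. The main geometric input is the identification
\begin{equation*}
\link_\Delta(F) \ \cong \ \sd(\partial(2^{F_1})) \ast \sd(\partial(2^{F_2 \sm F_1})) \ast \cdots \ast \sd(\partial(2^{V \sm F_k})),
\end{equation*}
which follows by direct inspection: a face of the link consists of proper nonempty subsets of $V$ that fit independently into the gaps $\emptyset \subsetneq H \subsetneq F_1$, $F_i \subsetneq H \subsetneq F_{i+1}$, or $F_k \subsetneq H \subsetneq V$, and a gap whose relative complement has size $m$ contributes an isomorphic copy of $\sd(\partial(2^{[m]}))$.

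Since the $h$-polynomial of a simplicial join is the product of the $h$-polynomials of its factors, and $h(\sd(\partial(2^W)), x) = A_{|W|}(x)$ by Equation~(\ref{eq:hpolyAn}), we obtain
\begin{equation*}
h(\link_\Delta(F), x) \ = \ \prod_{i=0}^{k} A_{m_i}(x),
\end{equation*}
where $m_0 = |F_1|$, $m_i = |F_{i+1} \sm F_i|$ for $1 \le i < k$, $m_k = |V \sm F_k|$, and $\sum_i m_i = n$. Each factor $A_{m_i}(x)$ is $\gamma$-positive and symmetric with center $(m_i-1)/2$ by Theorem~\ref{thm:FSSAn}, so the product is $\gamma$-positive with center $(n-k-1)/2$. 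Combining with the center $k/2$ of $\ell_F(\Delta'_F, x)$ gives the uniform center $(n-1)/2$ for every summand, and hence $h(\Delta', x)$ is $\gamma$-positive. The one delicate step is the link identification in the barycentric case; the remainder is routine bookkeeping relying on the product-closure of $\gamma$-positivity under matched centers of symmetry.
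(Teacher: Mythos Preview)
Your proof is correct and follows essentially the same approach as the paper: both invoke Theorem~\ref{thm:h-subdivide}, identify $\link_{\Sigma_n}(F) \cong \Sigma_{n-|F|}$ in the first case, and in the second case identify $\link_{\sd(\partial(2^V))}(F)$ as a join of barycentric subdivisions of simplex boundaries, yielding $h(\link_\Delta(F), x) = \prod A_{m_i}(x)$. Your version is somewhat more explicit than the paper's in verifying that the centers of symmetry of the summands all coincide (the paper asserts this in the sentence preceding the corollary but does not spell out the arithmetic in the proof itself).
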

\begin{proof}
As already discussed, it suffices to confirm that 
$h (\link_\Delta (F), x)$ is $\gamma$-positive for 
all $F \in \Delta$. This is obvious if $\Delta 
= \Sigma_n$, since then $\link_\Delta (F)$ is 
isomorphic to $\Sigma_{n-|F|}$ and hence $h 
(\link_\Delta (F), x) = (1+x)^{n-|F|}$ for 
every $F \in \Delta$.

Suppose now that $\Delta = \sd(\partial(2^V))$
is the barycentric subdivision of the boundary 
complex of an $n$-dimensional simplex $2^V$.
Then, faces of $\Delta$ have the form $F = \{S_1,
S_2,\dots,S_k\}$, where $S_1 \subset S_2 \subset 
\cdots \subset S_k$ are nonempty proper subsets
of $V$. Setting $S_0 := \varnothing$ and $S_{k+1} 
:= V$, we have that $\link_\Delta(F)$ is the 
simplicial join of the complexes 
$\sd(\partial(2^{S_i \sm S_{i-1}}))$ for $i \in 
[k+1]$ and hence 
\begin{equation} \label{eq:sd-link}
h (\link_\Delta(F), x) \ = \ \prod_{i=1}^{k+1} 
A_{n_i}(x),
\end{equation}
where $n_i = |S_i \sm S_{i-1}|$ for $i \in [k+1]$.
This polynomial is indeed $\gamma$-positive, as a  
product of $\gamma$-positive polynomials.
Alternatively, $h (\link_\Delta(F), x)$ can be 
shown to be $\gamma$-positive for the barycentric
subdivision $\Delta$ of any regular CW-sphere by 
a stronger version of Theorem~\ref{thm:Karu} 
(applying to Gorenstein* order complexes).  
\end{proof}

\begin{example} \label{ex:h-esrsdsd} \rm
The following explicit formula for the 
$h$-polynomial of the $r$-fold edgewise subdivision 
of an $(n-1)$-dimensional simplicial complex 
$\Delta$ is a consequence of 
\cite[Corollary~1.2]{BW09} and 
\cite[Corollary~6.8]{BR05}:
  \begin{equation} \label{eq:hedge}
    h(\esd_r(\Delta), x) \ = \ 
    {\rm E}_r \left( (1 + x + x^2 + \cdots + 
    x^{r-1})^n \, h (\Delta, x) \right),
  \end{equation}

\noindent
where the linear operator ${\rm E}_r: \RR[x] \to 
\RR[x]$ was defined before Theorem~\ref{thm:AthEsdr}.
Thus, Corollary~\ref{cor:sd-Sigma}, combined with 
Theorem~\ref{thm:AthEsdr}, implies the 
$\gamma$-positivity of 
\begin{equation} \label{eq:h-esdrSigma}
  h(\esd_r(\Sigma_n), x) \ = \ 
  {\rm E}_r \left( (1 + x + x^2 + \cdots + x^{r-1})^n 
  \, (1+x)^n \right)
\end{equation}
and 
\begin{equation} \label{eq:h-sdsimplex}
  h(\esd_r(\sd(\partial (2^V))), x) \ = \ 
  {\rm E}_r \left( (1 + x + x^2 + \cdots + x^{r-1})^n 
  \, A_{n+1} (x) \right),
\end{equation}
where $2^V$ is an $n$-dimensional simplex. It would 
be interesting to find explicit combinatorial 
interpretations for the coefficients of the right-hand
sides of Equations~(\ref{eq:h-esdrSigma}) 
and~(\ref{eq:h-sdsimplex}) and their corresponding
$\gamma$-polynomials. 
Similar remarks apply to the second barycentric 
subdivision of the boundary complex of a simplex.
\qed
\end{example}

We now consider the following generalization of 
the concept of local $h$-polynomial. Let $V$ be 
an $n$-element set, $\Gamma$ be a triangulation 
of the simplex $2^V$ and $E \in \Gamma$.
\begin{definition} \label{def:rel-local}
{\rm (Athanasiadis~\cite[Remark~3.7]{Ath12} 
\cite[Definition~2.14]{Ath16a}, 
Nill--Schepers~\cite[Section~3]{Ni12})} 
The relative local $h$-polynomial of $\Gamma$ 
(with respect to $V$) at $E \in \Gamma$ is 
defined as
\[ \ell_V (\Gamma, E, x) \ = \sum_{\sigma(E) 
   \subseteq F \subseteq V} (-1)^{n - |F|} \, h  
   (\link_{\Gamma_F} (E), x), \]
where $\sigma(E)$ is the smallest face of $2^V$ 
containing $E$.
\end{definition}

This polynomial, which reduces to $\ell_V
(\Gamma, x)$ for $E = \varnothing$, has 
properties similar to those of $\ell_V
(\Gamma, x)$. Specifically, it has 
nonnegative and symmetric coefficients,
with center of symmetry $(n-|E|)/2$ 
(see \cite[Theorem~2.15]{Ath16a}) and, 
moreover, it is unimodal if $\Gamma$ is a
regular triangulation 
\cite[Remark~6.5]{KSt16} (see 
\cite[Section~2.3]{Ath16a} for more 
information). The significance of the 
relative local $h$-polynomial for us stems 
from the following statement. Note that if 
$\Gamma$ is a triangulation of the simplex 
$2^V$, then every triangulation $\Gamma'$ 
of $\Gamma$ induces a triangulation of $2^V$
whose local $h$-polynomial is denoted by 
$\ell_V(\Gamma', x)$.
\begin{proposition} \label{prop:hrel-local} 
{\rm (Athanasiadis~\cite[Section~3]{Ath12}
\cite[Proposition~2.15~(a)]{Ath16a})}
For every triangulation $\Gamma$ of the 
simplex $2^V$ and every triangulation 
$\Gamma'$ of $\Gamma$,
\begin{equation} \label{eq:h-local-subdivide}
\ell_V (\Gamma', x) \ = \ \sum_{E \in \Gamma} 
\ell_E (\Gamma'_E, x) \, \ell_V (\Gamma, E, x).
\end{equation}
\end{proposition}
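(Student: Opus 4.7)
The plan is to apply Theorem~\ref{thm:h-subdivide} to each restriction $\Gamma'_F$ (for $F \subseteq V$) appearing in the defining expansion of $\ell_V(\Gamma', x)$, and then interchange the order of summation so as to recognize the relative local $h$-polynomials on the right-hand side of the desired identity.

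More concretely, I would start from Definition~\ref{def:localh}, writing
\[
\ell_V(\Gamma', x) \ = \ \sum_{F \subseteq V} (-1)^{n - |F|} \, h(\Gamma'_F, x).
\]
For each fixed $F \subseteq V$, the restriction $\Gamma'_F$ is a triangulation of the pure simplicial complex $\Gamma_F$, so Theorem~\ref{thm:h-subdivide} gives
\[
h(\Gamma'_F, x) \ = \ \sum_{E \in \Gamma_F} \ell_E \bigl( (\Gamma'_F)_E, x \bigr) \, h(\link_{\Gamma_F}(E), x).
\]
The key bookkeeping observation is that, viewing $E$ geometrically as a simplex inside the face $|F|$ of $2^V$, the triangulation induced on $|E|$ by $\Gamma'_F$ coincides with that induced by $\Gamma'$ itself, so $(\Gamma'_F)_E = \Gamma'_E$. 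After substituting, I would switch the order of summation; for fixed $E \in \Gamma$, the condition $E \in \Gamma_F$ is equivalent to $\sigma(E) \subseteq F \subseteq V$, where $\sigma(E)$ is the carrier of $E$ in the simplex. This yields
\[
\ell_V(\Gamma', x) \ = \ \sum_{E \in \Gamma} \ell_E(\Gamma'_E, x) \sum_{\sigma(E) \subseteq F \subseteq V} (-1)^{n - |F|} \, h(\link_{\Gamma_F}(E), x),
\]
and the inner sum is exactly $\ell_V(\Gamma, E, x)$ by Definition~\ref{def:rel-local}, completing the proof.

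The main obstacle, if any, is the compatibility check $(\Gamma'_F)_E = \Gamma'_E$, which needs the conventions for "restriction to a face of the simplex" versus "restriction to a face of the triangulation" to mesh correctly; this is essentially a matter of unraveling how $\Gamma'$ is viewed as a triangulation of both $2^V$ and $\Gamma$, and amounts to observing that the carrier of any face of $\Gamma'_E$ is contained in $\sigma(E) \subseteq F$. A secondary point to verify is that Theorem~\ref{thm:h-subdivide} applies with equal dimensions on both sides, i.e.\ that $\Gamma_F$ is pure of dimension $|F|-1$ and $\Gamma'_F$ is a triangulation of it in the sense used in~\cite{Sta92}; this follows from the hypothesis that $\Gamma$ triangulates the simplex $2^V$ and $\Gamma'$ triangulates $\Gamma$. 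Once these identifications are in place, the argument is purely formal manipulation of finite sums.
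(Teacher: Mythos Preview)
The paper does not include a proof of this proposition; it merely cites \cite[Section~3]{Ath12} and \cite[Proposition~2.15~(a)]{Ath16a}. Your argument is correct and is precisely the expected one: expand $\ell_V(\Gamma',x)$ via Definition~\ref{def:localh}, apply Theorem~\ref{thm:h-subdivide} to each $h(\Gamma'_F,x)$, interchange summations using the carrier condition $E \in \Gamma_F \Leftrightarrow \sigma(E) \subseteq F$, and recognize Definition~\ref{def:rel-local} in the inner sum. The two compatibility checks you flag (that $(\Gamma'_F)_E = \Gamma'_E$ and that $\Gamma_F$ is pure so Theorem~\ref{thm:h-subdivide} applies) are both valid and are exactly the points one needs to verify.
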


As a result, $\ell_V (\Gamma', x)$ is 
$\gamma$-positive if so are $\ell_E (\Gamma'_E, 
x)$ and $\ell_V (\Gamma, E, x)$ for every $E \in 
\Gamma$. Using an argument similar to the one 
employed in the proof of 
Equation~(\ref{eq:sd-link}), one can show (see
\cite[Example~3.5.2]{Sav13}) that
\begin{equation} \label{eq:sdrel}
\ell_V (\sd(2^V), E, x) \ = \ d_{n_0}(x) \cdot 
\prod_{i=1}^k A_{n_i}(x),
\end{equation}
where $E = \{S_1, S_2,\dots,S_k\}$ with $\varnothing 
\subset S_1 \subset S_2 \subset \cdots \subset S_k 
\subseteq V$ is a face of $\sd(2^V)$, $d_{n_0}(x)$ 
is a derangement polynomial, $n_0 = |V \sm S_k|$ and 
$n_i = |S_i \sm S_{i-1}|$ for $1 \le i \le k$ (with 
the convention that $S_0 = \varnothing$). 
\begin{corollary} \label{cor:rel-triang-sd} 
The polynomial $\ell_V(\Gamma', x)$ is 
$\gamma$-positive for every triangulation $\Gamma'$ 
of $\sd(2^V)$ for which $\ell_E (\Gamma'_E, x)$ is 
$\gamma$-positive for every $E \in \sd(2^V)$.
\end{corollary}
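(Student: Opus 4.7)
The plan is to combine Proposition~\ref{prop:hrel-local} with the explicit factorization of relative local $h$-polynomials stated in Equation~(\ref{eq:sdrel}). Applying the proposition with $\Gamma = \sd(2^V)$ gives
\[ \ell_V(\Gamma', x) \ = \ \sum_{E \in \sd(2^V)} \ell_E(\Gamma'_E, x) \, \ell_V(\sd(2^V), E, x), \]
so it suffices to verify that each summand is $\gamma$-positive with respect to the common center of symmetry $n/2$. The factor $\ell_E(\Gamma'_E, x)$ is $\gamma$-positive by assumption (with center $|E|/2$), so the task reduces to showing that $\ell_V(\sd(2^V), E, x)$ is $\gamma$-positive (with center $(n-|E|)/2$) for each face $E \in \sd(2^V)$.

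For this, I would invoke Equation~(\ref{eq:sdrel}), which factors
\[ \ell_V(\sd(2^V), E, x) \ = \ d_{n_0}(x) \, \prod_{i=1}^{k} A_{n_i}(x) \]
when $E$ corresponds to the chain $\varnothing \subset S_1 \subset \cdots \subset S_k \subseteq V$. The Eulerian polynomials $A_{n_i}(x)$ are $\gamma$-positive by Theorem~\ref{thm:FSSAn} (with center $(n_i-1)/2$, or equivalently one may absorb the factor $x$ when $n_i\ge 1$), and the derangement polynomials $d_{n_0}(x)$ are $\gamma$-positive by Theorem~\ref{thm:ASdn} (with center $n_0/2$). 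A product of $\gamma$-positive polynomials is again $\gamma$-positive, and the centers of symmetry add: one checks that the resulting center is $n_0/2 + \sum_i (n_i-1)/2 + k/2 = (n-|E|)/2$ (accounting for the $x$-prefactors in $A_{n_i}(x)$ and the identity $|E| = k$), exactly matching what is needed to combine with $\ell_E(\Gamma'_E, x)$ to a $\gamma$-positive polynomial with center $n/2$.

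Summing over $E \in \sd(2^V)$ then yields a sum of $\gamma$-positive polynomials sharing a common center of symmetry, which is $\gamma$-positive, and this completes the argument. There is no real obstacle here beyond the bookkeeping of centers of symmetry; the content is entirely packaged in Proposition~\ref{prop:hrel-local}, the factorization~(\ref{eq:sdrel}), and the $\gamma$-positivity of $A_n(x)$ and $d_n(x)$. The only point that requires a moment's care is the degree/center check, which can be done uniformly by noting that if $f$ is $\gamma$-positive with center $a/2$ and $g$ is $\gamma$-positive with center $b/2$, then $fg$ is $\gamma$-positive with center $(a+b)/2$, an immediate consequence of the identity
\[ x^i(1+x)^{a-2i} \cdot x^j(1+x)^{b-2j} \ = \ x^{i+j}(1+x)^{(a+b)-2(i+j)}. \]
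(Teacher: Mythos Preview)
Your argument is correct and follows exactly the paper's (implicit) proof: apply Proposition~\ref{prop:hrel-local} with $\Gamma = \sd(2^V)$, then invoke the factorization~(\ref{eq:sdrel}) together with the $\gamma$-positivity of $A_n(x)$ and $d_n(x)$. One small slip in the bookkeeping: the center computation should read $n_0/2 + \sum_{i=1}^k (n_i-1)/2 = (n-k)/2 = (n-|E|)/2$ without the extra $+k/2$ term (and $A_{n_i}(x)$ has no $x$-prefactor, its constant term being $1$); your stated conclusion $(n-|E|)/2$ is nonetheless correct.
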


The special case for which $\Gamma'$ is the 
$r$-fold edgewise subdivision proves the 
$\gamma$-positivity of the polynomial $d^+_{n,r}(x)$ 
discussed in Section~\ref{subsubsec:colored}; see
\cite[Example~4.8]{Ath16a}. We give another 
interesting application.
\begin{example} \label{ex:local-sdsd} \rm
Suppose that $\Gamma'$ is the barycentric 
subdivision of $\sd(2^V)$ and that $V$ is an  
$n$-element set. Then, the induced 
triangulation of $2^V$ is the second 
barycentric subdivision $\sd^{(2)} (2^V)$.
An explicit, but rather complicated, formula
for $\ell_V(\sd^{(2)} (2^V), x)$ can be 
derived from the definition of local 
$h$-polynomial and \cite[Theorem~1]{BW08}. 
Proposition~\ref{prop:hrel-local}, combined 
with Equations~(\ref{eq:localdn}) 
and~(\ref{eq:sdrel}), implies that
\begin{equation} \label{eq:local-sdsd}
\ell_V(\sd^{(2)} (2^V), x) \ = \ \sum 
{n \choose n_0, n_1,\dots,n_k} \, d_k(x) \, 
d_{n_0}(x) \, A_{n_1} (x) \cdots A_{n_k}(x),
\end{equation}
where the sum ranges over all $k \ge 0$ and 
over all sequences $(n_0, n_1,\dots,n_k)$ of 
integers which satisfy $n_0 \ge 0$, 
$n_1,\dots,n_k \ge 1$ and sum to $n$. The 
$\gamma$-positivity of the polynomial $\ell_V
(\sd^{(2)} (2^V), x)$ follows from this 
formula (equivalently, from 
Corollary~\ref{cor:rel-triang-sd}).

Using the principle of inclusion-exclusion, 
one can show that the sum of the coefficients 
of $\ell_V(\sd^{(2)} (2^V), x)$ equals 
the number of pairs of permutations
$u, v \in \fS_n$ which have no common fixed 
point. The problem to interpret combinatorially
the coefficients of this polynomial and its 
corresponding local $\gamma$-polynomial was 
posed in \cite[Problem~4.12]{Ath16a}.
\qed
\end{example}

\subsection{Symmetric/quasisymmetric function 
methods} \label{subsec:sfmethods}

Symmetric and quasisymmetric functions provide 
standard tools for enumeration problems; see, 
for instance, \cite{BM16, MR15} \cite[Chapter~7]
{StaEC2}. Given a polynomial
$f(t)$, to be shown to be $\gamma$-positive, one 
may attempt to find a (quasi)symmetric function 
which gives $f(t)$ via appropriate specialization.
Expanding this function in some basis of 
(quasi)symmetric functions may lead to a different
formula for $f(t)$ for which $\gamma$-positivity 
is easier to prove. The first instance of this 
approach seems to be Stembridge's proof of 
Theorem~\ref{thm:SteEnriched} (although a more 
direct proof, based on Equation~(\ref{eq:SteA'}), 
is also possible), where the role of the basis of
quasisymmetric functions is played by Gessel's 
basis of fundamental quasisymmetric 
functions~(\ref{eq:defFS(x)}).

To illustrate this approach, we sketch proofs 
of Theorems~\ref{thm:SWAn},~\ref{thm:SWdn} 
and~\ref{thm:ASWAn} which derive these statements
from the main result on Eulerian quasisymmetric 
functions~\cite{SW10} of Shareshian and Wachs, 
following the reasoning of~\cite[Section~4]{SW17}. 
We use the notation introduced in the beginning 
of the proof of Proposition~\ref{prop:athIn} and 
set 
\begin{equation} \label{eq:defF*S(x)}
  F^*_{n, S} (\bx) \ := F_{n, n-S} (\bx) \ =
  \sum_{\substack{i_1 \ge i_2 \ge \cdots 
  \ge i_n \ge 1 \\ j \in S \, \Rightarrow \, 
  i_j > i_{j+1}}}
  x_{i_1} x_{i_2} \cdots x_{i_n}
\end{equation}
for $S \subseteq [n-1]$. The principal 
specializations of this function are given 
by the formula (see \cite[Lemma~5.2]{GR93})
\begin{equation} \label{eq:speF*S(x)}
  \sum_{m \ge 1} F^*_{n, S} (1, q,\dots,q^{m-1}) 
  p^{m-1} \ = \ \frac{p^{|S|} q^{\Sum(S)}}
  {(1-p)(1-pq) \cdots (1-pq^n)}
\end{equation}
which refines 
Equation~(\ref{eq:speFS(x)}), where $\Sum(S)$ 
stands for the sum of the elements of $S$. We 
also recall the formula (see 
\cite[Proposition~7.19.12]{StaEC2}) 
\begin{equation} \label{eq:speSchur}
  \sum_{m \ge 1} s_\lambda (1, q,\dots,q^{m-1}) 
  p^{m-1} \ = \ \frac{f^\lambda(p,q)}
  {(1-p)(1-pq) \cdots (1-pq^n)}
\end{equation}
for the principal specializations of $s_\lambda
(\bx)$, where 
\begin{equation} \label{eq:def-f(p,q)}
  f^\lambda(p,q) \ := \ \sum_{P \in \SYT(\lambda)}
  p^{\des(P)} q^{\maj(P)} 
\end{equation}
and $\maj(P) := \Sum(\Des(P))$ is the major index 
of $P$.

\bigskip
\noindent
\emph{Proof of Theorems~\ref{thm:SWAn} 
and~\ref{thm:SWdn}}. Let us denote by $A_n(p,q,t)$ 
the left-hand side of Equation~(\ref{eq:SWAn}). 
Combining the special case $r=1$ of 
\cite[Theorem~1.2]{SW10} with Equation~(\ref{eq:Bre1})
and extracting the coefficient of $z^n$ shows that
\begin{equation} \label{eq:DEX1}
\sum_{w \in \fS_n} F_{n, \DEX(w)} (\bx) \, t^{\exc(w)} 
  \ = \ \sum_{\lambda \vdash n} P_\lambda(t) s_\lambda
  (\bx)
\end{equation}
for $n \ge 1$. We refer to \cite[Section~2]{SW10} for 
the definition of $\DEX(w)$; the only properties of 
this set which are essential here are the facts that 
$|\DEX(w)| = \des^*(w)$ and that $\Sum(\DEX(w)) = 
\maj(w) - \exc(w)$ for $w \in \fS_n$; see 
\cite[Lemma~2.2]{SW10}. Thus, taking the generating 
function of the principal specialization of order 
$m$ on both sides of~(\ref{eq:DEX1}) and 
using Equations~(\ref{eq:speF*S(x)}) 
and~(\ref{eq:speSchur}) results in the identity
\begin{equation} \label{eq:SW1A(p,q,t)}
  A_n(p,q,t) \ = \ \sum_{\lambda \vdash n} 
  P_\lambda(t) f^\lambda(p,q).
\end{equation}
By the $\gamma$-expansion of $P_\lambda(t)$, 
given in Corollary~\ref{cor:PRgamma}, and the 
definition of $f^\lambda(p,q)$, this formula 
implies that
\begin{equation} \label{eq:SW2A(p,q,t)}
  A_n(p,q,t) \ = \ \sum_{\lambda \vdash n} 
  \sum_{\scriptsize \begin{array}{c} P, Q \in 
  \SYT(\lambda) \\ \Des(Q) \in \Stab ([n-2]) 
  \end{array}} p^{\des(P)} q^{\maj(P)} \,
  t^{\des(Q)} (1+t)^{n-1-\des(Q)}.
\end{equation}
Using the Robinson--Schensted correspondence 
and its properties \cite[Lemma~7.23.1]{StaEC2} 
to replace the pair $(P,Q)$ of standard Young 
tableaux of the same shape with a permutation 
$w \in \fS_n$ shows that the right-hand sides of 
Equations~(\ref{eq:SW2A(p,q,t)}) and~(\ref{eq:SWAn})
are equal and the proof of Theorem~\ref{thm:SWAn} 
follows. 

To prove Theorem~\ref{thm:SWdn}, one combines 
the special case $r=0$ of \cite[Theorem~1.2]{SW10} 
with Equation~(\ref{eq:Bre2}) instead to show that
\begin{equation} \label{eq:DEX2}
\sum_{w \in \dD_n} F_{n, \DEX(w)} (\bx) \, t^{\exc(w)} 
  \ = \ \sum_{\lambda \vdash n} R_\lambda(t) s_\lambda
  (\bx)
\end{equation}
and then takes principal specialization, as before.
\qed

\bigskip
For the proof of Theorem~\ref{thm:ASWAn} we need 
the formulas (see \cite[Section~7.19]{StaEC2}) for 
the stable principal specializations 
\begin{equation} \label{eq:stablespeF*S(x)}
  F^*_{n, S} (1, q, q^2,\dots) \ = \ 
  \frac{q^{\Sum(S)}}{(1-q)(1-q^2) \cdots (1-q^n)}
\end{equation}
and
\begin{equation} \label{eq:stablespeSchur}
  s_\lambda (1, q, q^2,\dots) \ = \ 
  \frac{f^\lambda(q)}{(1-q)(1-q^2) \cdots (1-q^n)},
\end{equation}
where $f^\lambda(q) := f^\lambda(1, q) = 
\sum_{P \in \SYT(\lambda)} q^{\maj(P)}$. 

\bigskip
\noindent
\emph{Proof of Theorem~\ref{thm:ASWAn}}. Let us 
denote by $A_{n,k}(q,t)$ the left-hand side of 
Equation~(\ref{eq:ASWAn}). Comparing the 
coefficients of $r^kz^n$ on both sides of 
\cite[Equation~(1.3)]{SW10} and applying 
Equation~(\ref{eq:Bre2}) gives 
\begin{align*} 
\sum_{w \in \fS_n: \, \fix(w)=k} F_{n, \DEX(w)} 
  (\bx) \, t^{\exc(w)} & \ = \ h_k(\bx) \, [z^{n-k}] 
  \, \frac{1-t} {H(\bx; tz) - tH(\bx; z)} \\
  & \ = \ h_k(\bx) \sum_{\lambda \vdash n-k} 
  R_\lambda(t) s_\lambda(\bx).
\end{align*}
Taking stable principal specialization, we get 
\[ \frac{A_{n,k}(q,t)}{(1-q)(1-q^2) \cdots 
   (1-q^n)} \ = \ \frac{1}{(1-q) \cdots (1-q^k)} 
   \sum_{\lambda \vdash n-k} R_\lambda(t) 
   \frac{f^\lambda(q)}{(1-q) \cdots 
   (1-q^{n-k})}. \]
Finally, we solve for $A_{n,k}(q,t)$ and specialize 
to $p=1$ the expression already obtained for 
the sum $\sum_{\lambda \vdash n-k} R_\lambda(t) 
f^\lambda(p,q)$ in the proof of 
Theorem~\ref{thm:SWdn} and the proof follows.
\qed

\bigskip
For other instances of the use of symmetric 
functions in $\gamma$-positivity, see the 
proof of Theorem~\ref{thm:GGT17} in~\cite{GGT17}
and \cite[Section~4]{Gon16}.

\section{Generalizations}
\label{sec:gen}

This section discusses three possible 
generalizations of $\gamma$-positivity, namely 
nonsymmetric $\gamma$-positivity, equivariant 
$\gamma$-positivity and $q$-$\gamma$-positivity,
which may provide interesting directions for 
future research. 

\subsection{Nonsymmetric $\gamma$-positivity} 
\label{subsec:nonsym}

Nonsymmetric polynomials which can be written 
as a sum of two $\gamma$-positive polynomials,
whose centers of symmetry differ by 1/2, have
appeared in Section~\ref{sec:comb}. We formalize
this situation here as follows. 

A nonzero polynomial $f(x) \in \RR[x]$ is said to 
have center $(i+j)/2$, where $i$ (respectively, 
$j$) is the smallest (respectively, largest) 
integer $k$ for which the coefficient of $x^k$ 
in $f(x)$ is nonzero. One can verify that if 
$f(x)$ has center $n/2$, then it can be written 
uniquely as a sum
\begin{equation} \label{eq:leftgamma}
  f(x) \ = \ f^+_\alpha(x) \, + \, f^{-}_\alpha(x)
\end{equation}
of two symmetric polynomials $f^+_\alpha(x),
f^{-}_\alpha(x)$ with centers of symmetry 
$(n-1)/2$ and $n/2$, respectively. Similarly, 
$f(x)$ can be written uniquely as a sum
\begin{equation} \label{eq:rightgamma}
  f(x) \ = \ f^+_\beta(x) \, + \, f^{-}_\beta(x)
\end{equation}
of two symmetric polynomials $f^+_\beta(x),
f^{-}_\beta(x)$ with centers of symmetry 
$(n+1)/2$ and $n/2$, respectively. We call 
$f(x)$ \emph{left $\gamma$-positive} 
(respectively, \emph{right $\gamma$-positive}) 
if both $f^+_\alpha(x)$ and $f^{-}_\alpha(x)$ 
(respectively, $f^+_\beta(x)$ and 
$f^{-}_\beta(x)$) are $\gamma$-positive. We 
leave it to the reader to verify that $f(x)$ is 
$\gamma$-positive if and only if it is both
left and right $\gamma$-positive. Clearly,
every left $\gamma$-positive 
or right $\gamma$-positive polynomial is unimodal, 
with a peak at its center $n/2$, or at $(n \pm 1)/2$. 
The derangement polynomials $d_{n,r}(x)$ are left 
$\gamma$-positive by 
Theorem~\ref{thm:athdnr}, while the left-hand
sides of Equations~(\ref{eq:lsw1}) 
and~(\ref{eq:lsw2}) and $h$-polynomials of order
complexes which triangulate a ball are right 
$\gamma$-positive by Theorems~\ref{thm:LSWrees}
and~\ref{thm:EKaru}, respectively. It would be 
interesting to find other classes of nonsymmetric
$\gamma$-positive polynomials.

\subsection{Equivariant $\gamma$-positivity} 
\label{subsec:equivariant}

Let $f(t) = \sum_i a_i t^i \in \NN[t]$ be a
$\gamma$-positive polynomial. It often happens 
that $a_i = \dim_\CC (A_i)$ for some non-virtual 
representations $A_i$ of a finite group $G$ 
(all representations considered in this section 
are finite dimensional and defined over the 
field of complex numbers). Then, it is natural 
to consider the polynomial $F(t) := \sum_i A_i 
t^i$, whose coefficients belong to the representation 
ring of $G$, as an equivariant analog of $f(t)$ 
and ask whether 
  \begin{equation} \label{eq:def-equigamma}
    F(t) \ = \ \sum_{i=0}^{\lfloor n/2 \rfloor} 
    \Gamma_i \, t^i (1+t)^{n-2i}
  \end{equation}
for some non-virtual $G$-representations 
$\Gamma_i$, where $n/2$ is the center of symmetry 
of $f(t)$. We then say that $F(t)$ is 
$\gamma$-positive. For the symmetric group $\fS_n$,
this concept reduces, via the Frobenius 
characteristic map, to that of Schur 
$\gamma$-positivity \cite[Section~3]{SW17}.

Especially interesting is the case where $f(t)$ is
the $h$-polynomial of a flag triangulation of the 
sphere, or the local $h$-polynomial of a flag 
triangulation of the simplex, as pointed out by 
Shareshian and Wachs \cite[Sections~5--6]{SW17}.
The following general discussion assumes familiarity 
with face rings of simplicial complexes 
\cite[Chapter~II]{StaCCA} and local face modules 
of triangulations of simplices \cite[Section~4]{Sta92} 
\cite[Section~III.10]{StaCCA} and avoids 
technicalities:
\begin{itemize}
\itemsep=0pt
\item[$\bullet$] Let $\Delta$ be a flag 
triangulation of the $(n-1)$-dimensional 
sphere, on which a finite group $G$ acts 
simplicially. Suppose that $\Theta$ is a 
linear system of parameters for the face 
ring $\CC[\Delta]$ such that the linear 
span of the elements of $\Theta$ is 
$G$-invariant. Then $G$ acts on each 
homogeneous component of the graded vector
space 
\begin{equation} \label{eq:def-facering}
  \CC(\Delta) \ := \ \CC[\Delta] / 
    \langle \Theta \rangle \ = \ 
    \bigoplus_{i=0}^n \, \CC(\Delta)_i,  
\end{equation}
whose graded dimension is equal to the 
$h$-vector of $\Delta$. The pair $(\Delta, G)$ 
exhibits the \emph{equivariant Gal phenomenon}
if there exists $\Theta$ for which $\sum_{i=0}^n 
\CC(\Delta)_i \, t^i$ is $\gamma$-positive.

\item[$\bullet$] Let $\Gamma$ be a flag 
triangulation of the $(n-1)$-dimensional simplex
$2^V$, on which a subgroup $G$ of the automorphism 
group of $2^V$ acts simplicially. 
Suppose that $\Theta$ is a special linear system 
of parameters, in the sense of 
\cite[Definition~4.2]{Sta92}, for the face ring 
of $\Gamma$ such that the linear span of the 
elements of $\Theta$ is $G$-invariant. Then $G$ 
acts on each homogeneous component of the local
face module
\begin{equation} \label{eq:def-localfacering}
  L_V (\Gamma) \ = \ \bigoplus_{i=0}^n \, 
                     L_V (\Gamma)_i 
\end{equation}
defined by $\Theta$, whose graded dimension is 
equal to the local $h$-vector of $\Gamma$. The 
pair $(\Gamma, G)$ exhibits the \emph{local 
equivariant Gal phenomenon} if there exists 
$\Theta$ for which $\sum_{i=0}^n L_V(\Gamma)_i 
\, t^i$ is $\gamma$-positive.
\end{itemize}

When $\Delta$ is the boundary complex of an 
$n$-dimensional simplicial polytope (more 
generally, the simplicial complex associated to 
a complete simplicial fan in $\RR^n$), there
exists $\Theta$ for which (\ref{eq:def-facering}) 
is isomorphic to the cohomology ring of the 
projective toric variety associated to $\Delta$, 
as a graded $G$-representation 
\cite[Section~10]{Da78}. Shareshian and 
Wachs~\cite[Section~5]{SW17} speculate that, in 
interesting situations (but not in general) in 
which $\Delta$ is flag, the pair $(\Delta, G)$ 
exhibits the equivariant Gal phenomenon for such 
linear system of parameters. For example, as 
pointed out by these authors, and in view of 
results of C.~Procesi~\cite{Pro90} and R.~Stanley 
\cite[Proposition~12]{Sta89} 
\cite[Proposition~4.20]{Sta92}, the Schur 
$\gamma$-positivity of the coefficient of 
$z^n$ in the left-hand sides of 
Equations~(\ref{eq:Bre1}) and~(\ref{eq:Bre2}), 
already discussed in Section~\ref{subsec:sym}, 
are instances of the equivariant Gal phenomenon 
and its local analog, with corresponding flag 
triangulations the barycentric subdivisions of 
the boundary of the $(n-1)$-dimensional simplex 
and the simplex itself. Another instance of the 
equivariant Gal phenomenon is the Schur 
$\gamma$-positivity of the coefficient of $z^n$ 
in 
\begin{equation} \label{eq:genrefbinom}
  \frac{(1-t) H(\bx; z) H(\bx; tz)} 
        {H(\bx; tz) - tH(\bx; z)},
\end{equation}
established in \cite[Theorem~3.4]{SW17}, with 
corresponding flag triangulation the boundary 
complex of the $n$-dimensional simplicial 
stellohedron. This result provides an equivariant
analog for the $\gamma$-positivity 
of the binomial Eulerian polynomials, discussed 
in Section~\ref{subsec:Euler}.

A very interesting example is the Coxeter 
complex $\cox(W)$, discussed in 
Section~\ref{subsubsec:coxeter}, on which the 
finite Coxeter group $W$ acts. When $W$ is 
crystallographic, its representation on the 
cohomology of the projective toric variety 
associated to $\cox(W)$ was studied by 
C.~Procesi~\cite{Pro90}, 
R.~Stanley~\cite[p.~529]{Sta89}, I.~Dolgachev and 
V.~Lunts~\cite{DL94}, J.~Stembridge~\cite{Ste94}, 
G.~Lehrer~\cite{Le08} and 
A.~Stapledon \cite[Section~8]{Sta11}. Given the 
evidence provided in the sequel, it is expected 
that the following conjectural equivariant 
analog of Theorem~\ref{thm:W(x)} can be shown
using the classification of finite Coxeter groups
(a proof which does not use the classification is
certainly more desirable).
\begin{conjecture} \label{con:eCox}
The pair $(\cox(W), W)$ exhibits the equivariant 
Gal phenomenon for every finite crystallographic
Coxeter group $W$.
\end{conjecture}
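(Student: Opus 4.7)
The plan is to proceed case-by-case using the classification of finite crystallographic Coxeter groups. First, I would reduce to the irreducible case: if $W = W_1 \times W_2$, then $\cox(W) = \cox(W_1) \ast \cox(W_2)$ and a $W$-invariant linear system of parameters can be assembled from $W_i$-invariant ones on the factors, so that the resulting graded $W$-representation on $\CC(\cox(W))$ is the external tensor product of those on the factors; since
\[ t^{i_1}(1+t)^{n_1-2i_1} \cdot t^{i_2}(1+t)^{n_2-2i_2} \ = \ t^{i_1+i_2}(1+t)^{(n_1+n_2)-2(i_1+i_2)}, \]
equivariant $\gamma$-positivity is preserved under such tensor products. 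For irreducible $W$, I would then choose $\Theta$ so that $\CC(\cox(W))$ is isomorphic, as a graded $W$-module, to the cohomology ring $H^*(X_W;\CC)$ of the smooth projective toric variety $X_W$ associated to the $W$-permutohedron; this identification reduces the conjecture to equivariant $\gamma$-positivity of the graded $W$-character of $H^*(X_W;\CC)$. The type $A_{n-1}$ case is then already covered by the discussion preceding the conjecture, since the Schur $\gamma$-positivity of the coefficient of $z^n$ in~(\ref{eq:Bre1}) corresponds, under Frobenius characteristic, to equivariant $\gamma$-positivity for $\fS_n$.

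For types $B_n$ and $D_n$, I would develop a parallel equivariant Eulerian quasisymmetric function theory using Poirier's signed quasisymmetric functions, as employed in the proof of Proposition~\ref{prop:athIn}. The goal is to construct a hyperoctahedral analog of~(\ref{eq:Bre1}) whose graded type-$B$ Frobenius characteristic equals $\sum_i [H^{2i}(X_W;\CC)]\, t^i$, and to prove its Schur $\gamma$-positivity by adapting the quasisymmetric methods of~\cite{SW17} and the Rees-product techniques of~\cite{LSW12} to the signed setting; the type-$D$ case should then follow by a restriction-of-scalars argument along $\bB_n^e \hookrightarrow \bB_n$, combined with an analysis of the parity constraint on signed cycle data. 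For the remaining exceptional types $F_4$, $E_6$, $E_7$, $E_8$, $G_2$, use the explicit graded character formulas of Stembridge~\cite{Ste94}, Lehrer~\cite{Le08} and Stapledon~\cite[Section~8]{Sta11} to compute $\sum_i [H^{2i}(X_W;\CC)]\, t^i$ and verify equivariant $\gamma$-positivity by direct (computer-aided) decomposition in the respective representation rings.

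The main obstacle will be the equivariant Eulerian framework for types $B$ and $D$. The non-equivariant $\gamma$-positivity in Theorem~\ref{thm:W(x)} rests on combinatorial arguments of the type underlying Theorem~\ref{thm:CPBn}, which separate sign and permutation data in ways that need not respect the $W$-module structure. To lift these to the equivariant level one must identify explicit graded $W$-submodules realizing the combinatorial $\gamma$-coefficients, and verify that they are genuine (rather than virtual) $W$-representations; the most natural route is to exhibit them as cohomology of $W$-stable subvarieties of $X_W$, in the spirit of Dolgachev--Lunts~\cite{DL94}, or alternatively via an equivariant enrichment of the valley-hopping arguments of Section~\ref{subsec:hopping}. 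Neither route has a worked-out model in the literature at present, and establishing non-virtuality in the absence of such a geometric construction would likely require substantially new representation-theoretic input, beyond what is needed for the non-equivariant statement.
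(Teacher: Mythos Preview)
The statement you are addressing is a \emph{conjecture} in the paper, not a theorem: the paper does not contain a proof of it. What the paper does provide is partial evidence. Type $A$ is settled by Corollary~\ref{cor:PRgamma} (the Schur $\gamma$-positivity of the $P_\lambda(t)$), exactly as you note. For type $B$, the paper announces Proposition~5.2 (Schur $\gamma$-positivity of the coefficient of $z^n$ in~(\ref{eq:genrefEulerB})), with proof deferred to~\cite{Ath17}; the stated method is not a direct signed-quasisymmetric analog of the Shareshian--Wachs theory, as you propose, but rather an application of the equivariant Rees-product theorem (Theorem~\ref{thm:Athrees}) to a suitable $\bB_n$-poset. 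Types $D$ and the exceptional types are left entirely open in the paper; no computation or argument is offered for them beyond the remark that computations of specific graded multiplicities in~\cite{Ste94, Le08, Sta11} are consistent with the conjecture.

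Your proposal is therefore a reasonable research plan rather than a proof, and you are candid about this in your final paragraph. Two comments on the plan itself. First, for type $B$ the Rees-product route already taken in~\cite{Ath17} is more efficient than building a new signed Eulerian quasisymmetric theory: once one has the Dolgachev--Lunts/Stembridge identification of $H^*(X_{\bB_n})$ with the coefficient of $z^n$ in~(\ref{eq:genrefEulerB}), Theorem~\ref{thm:Athrees} applied to the poset of $2$-colored subsets of $[n]$ yields the $\gamma$-positivity directly, bypassing the non-virtuality issues you flag. Second, your suggestion that type $D$ ``should follow by a restriction-of-scalars argument along $\bB_n^e \hookrightarrow \bB_n$'' is optimistic: the Coxeter complex of $\bB_n^e$ is not a $\bB_n^e$-stable subcomplex of $\cox(\bB_n)$ in any way that makes restriction of the toric cohomology straightforward, and the graded $\bB_n^e$-character of $H^*(X_{\bB_n^e})$ is genuinely different from the restriction of the $\bB_n$-character. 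A separate analysis, likely again via Theorem~\ref{thm:Athrees} or a direct computation, would be needed there.
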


As mentioned earlier, in the symmetric group 
case the conjecture follows from the 
$\gamma$-positivity of the polynomials $P_\lambda 
(t)$, shown in Corollary~\ref{cor:PRgamma}. In the
case of the hyperoctahedral group $\bB_n$, by 
\cite[Theorem~6.3]{DL94} or 
\cite[Theorem~7.6]{Ste94}, the Frobenius 
characteristic of the graded 
$\bB_n$-representation on the (even degree) 
cohomology of the projective toric variety 
associated to $\cox(\bB_n)$ is equal to the 
coefficient of $z^n$ in 
\begin{equation} \label{eq:genrefEulerB}
  \frac{(1-t) H(\bx; z) H(\bx; tz)} 
  {H(\bx, \by; tz) - tH(\bx, \by; z)},
\end{equation}
where $\by = (y_1, y_2,\dots)$ is another 
sequence of commuting independent 
indeterminates and $H(\bx, \by; z) = 
\sum_{n \ge 0} h_n(\bx, \by) z^n = H(\bx; z)
H(\by; z)$ is the generating function of the 
complete homogeneous symmetric functions $h_n
(\bx, \by)$ in the variable $z$. Thus, 
Conjecture~\ref{con:eCox} in this case is 
implied by the following result (the proof, 
which uses Theorem~\ref{thm:Athrees} stated in 
the sequel, will appear in the final version 
of~\cite{Ath17} or elsewhere).
\begin{proposition}
The coefficient of $z^n$ in (\ref{eq:genrefEulerB}) 
is Schur $\gamma$-positive for every $n \in \NN$.
\end{proposition}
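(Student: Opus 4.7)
The plan is to follow the template used in \cite[Corollary~4.1]{Ath17} to derive Gessel's identities (\ref{eq:Ge3}) and (\ref{eq:Ge4}) from the equivariant Rees product theorem \cite[Theorem~1.2]{Ath17}, adapting everything from the ordinary Boolean lattice to the Boolean lattice of signed subsets of $[n]$ (on which $\bB_n$ acts naturally). The first step is to establish a type $B$ analogue of Gessel's identity of the form
\begin{equation*}
\frac{(1-t) H(\bx; z) H(\bx; tz)}{H(\bx, \by; tz) - tH(\bx, \by; z)} \ = \ \sum_{n \ge 0} z^n \sum_{w} t^{\des_B(w)} (1+t)^{n - 2\des_B(w)} \, F_w^{\circ}(\bx, \by),
\end{equation*}
where the inner sum is over signed words $w$ of length $n$ whose signed descent set $\Des(w)$ (in the sense of Poirier, used in the proof of Proposition~\ref{prop:athIn}) lies in $\Stab([n-1])$ subject to an appropriate boundary convention, and $F_w^{\circ}(\bx, \by)$ is the natural monomial weight $z_{i_1} z_{i_2} \cdots z_{i_n}$ with $z_{i_j} = x_{i_j}$ or $y_{i_j}$ according to the sign of $w(j)$.

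Granted the identity above, the second step is routine: exactly as in the proof of Corollary~\ref{cor:PRgamma}, the monomials $F_w^{\circ}(\bx,\by)$ for signed words with a prescribed signed descent set assemble into a signed skew Schur function of hook shape, which by the expansion (\ref{eq:sxyFexpan}) of \cite[Proposition~4.2]{AAER17} is a nonnegative integer combination of products $s_\lambda(\bx) s_\mu(\by)$. Comparing coefficients of $s_\lambda(\bx) s_\mu(\by)$ on both sides then yields
\begin{equation*}
[z^n] \, \frac{(1-t) H(\bx; z) H(\bx; tz)}{H(\bx, \by; tz) - tH(\bx, \by; z)} \ = \ \sum_{i=0}^{\lfloor n/2 \rfloor} t^i (1+t)^{n-2i} \sum_{(\lambda, \mu) \vdash n} c_{\lambda, \mu, i} \, s_\lambda(\bx) s_\mu(\by)
\end{equation*}
with nonnegative integers $c_{\lambda,\mu,i}$ counting standard Young bitableaux with a suitable stability condition on their signed descent set, which is exactly the desired Schur $\gamma$-positive expansion. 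Via the type $B$ Frobenius characteristic (see \cite[Equation~(2.5)]{AAER17}), this translates into the equivariant statement of Conjecture~\ref{con:eCox} for $W = \bB_n$.

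The third and main step is to prove the identity asserted above by applying \cite[Theorem~1.2]{Ath17} to the proper part of the signed Boolean lattice $\pP$, equipped with a $\bB_n$-equivariant EL-labeling whose descents on maximal chains recover $\des_B$, and by taking the equivariant $h$-polynomial of the associated Rees product with $\tT_{x,n}$. The asymmetric appearance of the two alphabets in (\ref{eq:genrefEulerB})---with $H(\bx; z) H(\bx; tz)$ in the numerator but $H(\bx, \by; \cdot)$ in the denominator---should emerge from the fact that positive signs contribute via ordinary complete symmetric functions while negative signs contribute the extra factor that distinguishes the signed lattice from the ordinary Boolean lattice; the additional factor $H(\bx; tz)$ in the numerator is the type $B$ counterpart of the binomial shift that produced (\ref{eq:genrefbinom}) in the symmetric group case, and reflects the presence of the longest element (or, equivalently, the special vertex $\hat{0}$) in the signed setting.

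The main obstacle I anticipate is Step 3, namely identifying the correct $\bB_n$-equivariant EL-labeling of the signed Boolean lattice and carrying out the equivariant computation in \cite[Theorem~1.2]{Ath17} so that the alphabets $\bx$ and $\by$ appear with exactly the pattern above. Once this identification is made, Step 1 follows from \cite[Theorem~1.2]{Ath17} by a direct specialization, and Step 2 is formal. It may also be worthwhile to check the identity independently for small $n$ by computer, and to compare the resulting interpretation of $c_{\lambda,\mu,i}$ with the number of standard Young bitableaux of shape $(\lambda, \mu)$ having $i$ signed descents and no two consecutive signed descents---this is the natural type $B$ analogue of the interpretation of $\gamma^\lambda_i$ in Corollary~\ref{cor:PRgamma}, and would provide a combinatorial certificate of $\gamma$-positivity independent of the Rees product machinery.
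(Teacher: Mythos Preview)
The paper does not actually supply a proof of this proposition: it states only that the argument ``uses Theorem~\ref{thm:Athrees}'' and defers the details to the final version of~\cite{Ath17} ``or elsewhere.'' Your plan to apply that equivariant Rees product theorem to a $\bB_n$-poset is therefore exactly the route the paper indicates, and Steps~1--2 of your outline (rewriting the resulting identity in terms of signed quasisymmetric functions and then expanding via (\ref{eq:sxyFexpan})) are the natural type~$B$ analogue of the argument for Corollary~\ref{cor:PRgamma}.

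That said, your proposal is openly a sketch: you flag Step~3 as ``the main obstacle'' and do not carry it out, and this step is where all the content lies. One concrete caution on the choice of poset: taking $\pP^-$ to be the poset of signed (i.e., $2$-colored) subsets of $[n]$ is precisely what produces the \emph{two-term} right-$\gamma$-positive expansions of Theorem~\ref{thm:AthGeB}, whose left-hand sides differ from (\ref{eq:genrefEulerB}) by an extra factor in the numerator. To obtain a single $\gamma$-positive sum directly from Theorem~\ref{thm:Athrees} you need $\pP^-$ to have a maximum element, so that one of the two summands in (\ref{eq:main1}) or (\ref{eq:main2}) vanishes, as happens in the symmetric-group case where $\pP^-$ is the ordinary Boolean lattice. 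The signed subset poset does not have this property, so you will need either a different $\bB_n$-poset or an additional argument to pass from a two-term expansion to the symmetric expression~(\ref{eq:genrefEulerB}). Until that is pinned down and the representations $\beta_\pP(S)$ computed, the proof is not complete.
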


For example, writing 
  $$ \frac{(1-t) H(\bx; z) H(\bx; tz)} 
     {H(\bx, \by; tz) - tH(\bx, \by; z)} \ = \ 
     1 \ + \ \sum_{n \ge 1} z^n
     \sum_{i=0}^{\lfloor n/2 \rfloor} 
     \gamma^B_{n,i}(\bx,\by) \, t^i (1 + t)^{n-2i}, $$
we have 
\begin{align*} 
\gamma^B_{1,0} (\bx,\by) & \ = \ s_{(1)}(\bx), \\
\gamma^B_{2,0} (\bx,\by) & \ = \ s_{(2)}(\bx), \\
\gamma^B_{2,1} (\bx,\by) & \ = \ s_{(1,1)}(\bx) + 
          s_{(1)}(\bx) s_{(1)}(\by) + s_{(2)}(\by), \\
\gamma^B_{3,0} (\bx,\by) & \ = \ s_{(3)}(\bx), \\
\gamma^B_{3,1} (\bx,\by) & \ = \ 2s_{(2,1)}(\bx) + 
          s_{(1,1)}(\bx) s_{(1)}(\by) + 2s_{(2)}(\bx) 
          s_{(1)}(\by) + 2s_{(1)}(\bx) s_{(2)}(\by) + 
          s_{(3)}(\by). 
\end{align*}
At present, a combinatorial interpretation of the 
coefficients of the functions $\gamma^B_{n,i}
(\bx,\by)$ in the Schur basis is lacking. 

Other evidence for the validity of 
Conjecture~\ref{con:eCox} is provided by the 
computation of the graded multiplicity of the 
trivial \cite[Section~3]{Ste94}
and of the sign \cite[Theorem~3.5~(iii)]{Le08} 
and reflection representation \cite[Section~4]{Le08} 
of $W$ in the cohomology of the toric variety 
associated to $\cox(W)$; see also 
\cite[Remark~8.3]{Sta11}. 

A general result on equivariant (nonsymmetric) 
$\gamma$-positivity is the following equivariant
version of Theorem~\ref{thm:LSWrees}. Let $\pP$
be a finite graded poset, as in Section~\ref{subsec:homo},
and assume that a finite group $G$ acts on $\pP$ 
by order preserving bijections. Then $G$ defines a 
permutation representation $\alpha_\pP(S)$, induced 
by its action on the set of maximal chains of 
the rank-selected subposet $\pP_S$, for every $S 
\subseteq [n]$. One can consider the virtual 
$G$-representation
\begin{equation} \label{eq:beta(S)}
\beta_\pP(S) \ = \ \sum_{T \subseteq S} (-1)^{|S-T|}
\, \alpha_\pP(T),
\end{equation}
introduced by Stanley~\cite{Sta82}. When $\pP$ is 
Cohen--Macaulay over $\CC$, $\beta_\pP(S)$ coincides
with the non-virtual $G$-representation induced on 
the top homology group of $\bar{\pP}_S$ (see 
\cite{Sta82} \cite[Section~3.4]{Wa07}); the dimensions 
of $\alpha_\pP(S)$ and $\beta_\pP(S)$ are equal to 
the numerical invariants $a_\pP(S)$ and $b_\pP(S)$, 
respectively, defined in Section~\ref{subsec:homo}.
\begin{theorem} \label{thm:Athrees}
{\rm (Athanasiadis~\cite[Theorem~1.2]{Ath17})} 
Let $G$ be a finite group acting on a finite bounded 
graded poset $\pP$ of rank $n+1$ by order preserving 
bijections. If $\pP$ is Cohen--Macaulay over $\CC$,
then 
\begin{align} \label{eq:main1}
    \widetilde{H}_{n-1} ((\pP^- \ast T_{t,n})_{-}; \CC)  
  \ \cong_G & \ 
    \sum_{S \in \Stab ([n-1])}
    \beta_\pP ([n] \sm S) \, t^{|S|} (1+t)^{n-2|S|} 
    \ \ + \\ & \nonumber \\
  & \ \sum_{S \in \Stab ([n-2])} \beta_\pP 
    ([n-1] \sm S) \, t^{|S|+1} (1+t)^{n-1-2|S|} 
    \nonumber
  \end{align}
and 
  \begin{align} \label{eq:main2}
  \widetilde{H}_{n-1} (\bar{\pP} \ast T_{t,n-1}; \CC) 
  \ \cong_G & \ 
    \sum_{S \in \Stab ([2, n-2])} \beta_\pP 
    ([n-1] \sm S) \, t^{|S|+1} (1+t)^{n-2-2|S|} 
    \ \ + \\ & \nonumber \\
  & \ \sum_{S \in \Stab ([2, n-1])}
    \beta_\pP ([n] \sm S) \, t^{|S|} (1+t)^{n-1-2|S|} 
    \nonumber
  \end{align}
for every positive integer $t$, where $\cong_G$ 
denotes isomorphism of $G$-representations.

In particular, the left-hand sides of (\ref{eq:main1}) 
and (\ref{eq:main2}) are right $\gamma$-positive.
\end{theorem}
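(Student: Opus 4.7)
The plan is to follow the strategy of Linusson--Shareshian--Wachs~\cite{LSW12} but promote every identity from dimensions to $G$-characters, exploiting the fact that over $\CC$ a Cohen--Macaulay poset has reduced homology concentrated in top degree. Concretely, for any bounded graded $G$-poset $Q$ of rank $r+1$ that is Cohen--Macaulay over $\CC$, Stanley's equivariant version of the Philip Hall formula gives
\begin{equation*}
  \widetilde{H}_{r-1}(\bar{Q};\CC) \ \cong_G \ \beta_Q([r]),
\end{equation*}
where $\beta_Q([r])$ is the representation defined in (\ref{eq:beta(S)}); once the two Rees products appearing in Theorem~\ref{thm:Athrees} are shown to be Cohen--Macaulay over $\CC$, the problem reduces to identifying their $\beta$-representations with the right-hand sides of (\ref{eq:main1}) and (\ref{eq:main2}).

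First I would verify the Cohen--Macaulay property of $(\pP^- \ast \tT_{t,n})_{-}$ and $\bar{\pP} \ast \tT_{t,n-1}$. This extends the corresponding statement in the EL-shellable setting of \cite{LSW12} and can be established either by general results of Bj\"orner--Welker on Rees products of Cohen--Macaulay posets, or by a direct fiber-type argument using that the projection onto $\pP^-$ has Cohen--Macaulay fibers (truncated intervals in the tree $\tT_{t,n}$). Since $G$ acts trivially on the tree factor, no additional equivariant hypothesis is required at this stage.

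Next I would decompose each maximal chain of a rank-selected subposet $Q_S$ into its $\pP$-component and its $\tT_{t,n}$-component. Because $G$ acts trivially on the tree, the permutation module $\alpha_Q(S)$ factors as a tensor product of an $\alpha_\pP$-type module with a trivial $G$-module whose dimension is an explicit polynomial in $t$ arising from the rank-compatibility constraint $\rho_\pP(p_{i+1}) - \rho_\pP(p_i) \ge \rho_\qQ(q_{i+1}) - \rho_\qQ(q_i)$ of the Rees product. Re-indexing the alternating sum in the definition of $\beta_Q$ by the stability sets $S$ appearing in (\ref{eq:main1}) and (\ref{eq:main2}), the sum over admissible chains in $\tT_{t,n}$ with prescribed rank-gap sequence collapses into the binomial factor $t^{|S|}(1+t)^{n-2|S|}$, respectively $t^{|S|+1}(1+t)^{n-1-2|S|}$, by the same telescoping identity used in the non-equivariant proof of Theorem~\ref{thm:LSWrees}, while the M\"obius inversion defining $\beta_\pP$ from $\alpha_\pP$ produces the coefficient representations $\beta_\pP([n] \sm S)$ and $\beta_\pP([n-1] \sm S)$.

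The hard part will be tracking the sign cancellations and the two-parity split cleanly at the level of virtual $G$-characters. In \cite{LSW12} EL-shellability furnishes an explicit basis of top homology indexed by descent-free labeled chains, which exhibits the two-summand structure of the right-hand sides directly; without shellability one must instead verify the isomorphism in the representation ring of $G$. I would approach this by restriction to each cyclic subgroup $\langle g \rangle \le G$, where the character of $\beta_Q([r])$ evaluated at $g$ counts $g$-fixed chains in $\bar{Q}$, reducing the equivariant statement to a family of non-equivariant identities for the fixed-point subposet to which Theorem~\ref{thm:LSWrees} (or its proof) can be applied directly.
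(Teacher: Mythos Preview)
The paper is a survey and does not itself prove Theorem~\ref{thm:Athrees}; it merely quotes the result from~\cite{Ath17}. The only hints it offers are that Theorem~\ref{thm:Athrees} is the equivariant upgrade of Theorem~\ref{thm:LSWrees}, and the remark after Theorem~\ref{thm:LSWrees} that the latter ``turns out to be valid without the assumption of EL-shellability; see~\cite{Ath17}.'' Your overall plan---identify the top reduced homology with $\beta_Q([r])$ via Cohen--Macaulayness, then compute $\beta_Q$ by decomposing each permutation module $\alpha_Q(S)$ as an $\alpha_\pP$-module tensored with a trivial module coming from the tree factor, and finally reorganize the alternating sum---is exactly the kind of argument the paper's remarks point to and is, as far as one can tell from the survey, the approach of~\cite{Ath17}.

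One caution about your ``hard part'' paragraph. Restricting to a cyclic subgroup $\langle g\rangle$ and invoking Theorem~\ref{thm:LSWrees} for the fixed-point subposet $\pP^g$ is delicate: $\pP^g$ need be neither EL-shellable nor Cohen--Macaulay, and it may fail to have elements in every rank, so Theorem~\ref{thm:LSWrees} as stated in this paper does not apply to it. What does hold unconditionally is the \emph{numerical} identity behind Theorem~\ref{thm:LSWrees}---an identity among the chain-counting invariants $a_\pP(S)$ and $b_\pP(S)$ that requires no topological hypothesis (this is precisely what the paper alludes to when it says EL-shellability is unnecessary). Since $\chi_{\alpha_Q(S)}(g)$ counts $g$-fixed chains and the tree factor carries the trivial action, that numerical identity applied to $\pP^g$ gives the character value at $g$ on both sides, and hence the $G$-isomorphism. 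So your strategy works, but you should phrase the reduction as ``apply the purely enumerative identity underlying Theorem~\ref{thm:LSWrees}'' rather than ``apply Theorem~\ref{thm:LSWrees} to $\pP^g$''. Equivalently, and more cleanly, you can bypass fixed-point posets altogether: carry out the $\alpha\!\to\!\beta$ inclusion--exclusion directly in the representation ring of $G$, where the tree contributions are scalars and the same formal manipulation that proves the non-equivariant identity yields~(\ref{eq:main1}) and~(\ref{eq:main2}) with $b_\pP$ replaced by $\beta_\pP$.
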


As shown in \cite[Corollary~4.1]{Ath17}, the Schur 
$\gamma$-positivity of the coefficients of the 
left-hand sides of Equations~(\ref{eq:Ge1})
and~(\ref{eq:Ge2}) follows from the special case
of Theorem~\ref{thm:Athrees} in which $\pP^{-}$ 
is the Boolean lattice of subsets of $[n]$. Two 
more applications, establishing the Schur 
$\gamma$-positivity of the coefficients of two 
close relatives to (\ref{eq:genrefEulerB}), 
are given in \cite[Section~4]{Ath17}. We state 
these results here in a form similar to that of
Gessel's identities (\ref{eq:Ge1})--(\ref{eq:Ge4})
as follows (a more elementary proof should be 
possible). For a map $w: [n] \to \ZZ \sm \{0\}$ 
we write $\bz_w := z_{w(1)} z_{w(2)} \cdots 
z_{w(n)}$, where $z_{w(i)} = x_{w(i)}$ if $w(i) 
> 0$ and $z_{w(i)} = y_{w(i)}$ otherwise, and 
define $\Asc_B(w)$ as the set of indices $i \in 
[n]$ such that $w(i) >_r w(i+1)$ in the total 
order (\ref{eq:def<r}), where $w(n+1) := 0$. 
\begin{theorem} \label{thm:AthGeB}
{\rm (cf.~\cite[Corollaries~4.4 and~4.7]{Ath17})} 
We have 
\begin{align} \label{eq:Ath1}
\frac{(1-t) E(\bx; z) E(\bx; tz) E(\by; tz)} 
{E(\bx, \by; tz) - tE(\bx, \by; z)} 
\ = \ 1 & \ + \
\sum_{n \ge 1} z^n \, \sum_w
t^{\asc(w)} (1+t)^{n+1 - 2\asc(w)} \, \bz_w
\\ & \ + \ \sum_{n \ge 1} z^n \, \sum_w
t^{\asc(w)} (1+t)^{n - 2\asc(w)} \, \bz_w \nonumber
\end{align}
where, in the two sums, $w: [n] \to \ZZ \sm \{0\}$ 
runs through all maps for which $\Asc_B(w) \in 
\Stab([n])$ contains (respectively, does not 
contain) $n$ and
\begin{align} \label{eq:Ath2}
\frac{(1-t) E(\bx; z)} 
{E(\bx, \by; tz) - tE(\bx, \by; z)} 
\ = \ 1 & \ + \ \sum_{n \ge 1} z^n \,
\sum_w t^{\asc(w)} (1+t)^{n - 2\asc(w)} \, \bz_w
\\ & \ + \ \sum_{n \ge 1} z^n \, \sum_w
t^{\asc(w)} (1+t)^{n-1 - 2\asc(w)} \, \bz_w
\nonumber
\end{align}
where, in the two sums, $w: [n] \to \ZZ \sm \{0\}$ 
runs through all maps for which $\Asc_B(w) \in 
\Stab([2,n])$ contains (respectively, does not 
contain) $n$. 
\end{theorem}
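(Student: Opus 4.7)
My plan is to derive both identities in parallel with the derivation of Gessel's identities~(\ref{eq:Ge1})--(\ref{eq:Ge4}) sketched in \cite[Corollary~4.1]{Ath17}, but applied to a type $B$ analog of the Boolean lattice. Specifically, I would take $\pP$ to be the bounded graded poset of rank $n+1$ obtained by adjoining a top element $\hat{1}$ to the poset $\pP^{-}$ of \emph{signed subsets} of $[n]$: pairs $(A,\varepsilon)$ with $A \subseteq [n]$ and $\varepsilon \colon A \to \{+,-\}$, ordered by $(A,\varepsilon) \preceq (B,\delta)$ if and only if $A \subseteq B$ and $\delta|_A = \varepsilon$. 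The poset $\pP$ is Cohen--Macaulay over $\CC$, since its proper part has order complex homeomorphic to $\Sigma_n$, and it carries a natural $\bB_n$-action by coordinate permutation and sign flips.

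First I would identify the virtual $\bB_n$-representation $\beta_{\pP}(S)$ for $S \subseteq [n]$. By a type $B$ analog of \cite[Corollary~3.13.2]{StaEC1} applied to a natural $\bB_n$-equivariant EL-labeling of $\pP$, the character of $\beta_{\pP}(S)$ coincides with the permutation character on the set of signed permutations $w \in \bB_n$ whose signed descent set, in the sense of Proposition~\ref{prop:athIn}, equals the complement of $S$ in $[n]$. Applying the characteristic map together with Poirier's expansion~(\ref{eq:defF(xy)}), the Frobenius characteristic of $\beta_{\pP}(S)$ becomes a positive sum of the signed quasisymmetric functions $F_w(\bx,\by)$. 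Theorem~\ref{thm:Athrees} then expresses the top homologies of the Rees products $(\pP^{-} \ast \tT_{t,n})_{-}$ and $\bar{\pP} \ast \tT_{t,n-1}$ as non-virtual $\bB_n$-representations of the shape~(\ref{eq:main1}) and~(\ref{eq:main2}); these already have the left $\gamma$-positive form required by the right-hand sides of~(\ref{eq:Ath1}) and~(\ref{eq:Ath2}), after applying stable principal specialization termwise.

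The second step is to match the two left-hand sides with the Frobenius generating functions of these Rees-product homologies via the type $B$ analog~(\ref{eq:speF(xy)}) of Gessel's specialization, which converts $\bz_w$-weighted sums into products of elementary symmetric functions in $\bx$ and $\by$. The two-variable structure of Poirier's functions explains the appearance of $E(\bx,\by;z)$ and $E(\bx,\by;tz)$ in the denominators, while the numerator factors $E(\bx;z)\,E(\bx;tz)\,E(\by;tz)$ in~(\ref{eq:Ath1}) and the single factor $E(\bx;z)$ in~(\ref{eq:Ath2}) should reflect the difference between the homology of $(\pP^{-} \ast \tT_{t,n})_{-}$ and that of $\bar{\pP} \ast \tT_{t,n-1}$, arising from whether one removes only $\hat{0}$ from the Rees product or both $\hat{0}$ and $\hat{1}$.

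The main obstacle is the correct identification of $\beta_{\pP}(S)$ and the matching of numerators: the EL-labeling on $\pP$ must be arranged so that the emerging signed ascent statistic agrees with $\Asc_B(w)$ under the order $<_r$ of~(\ref{eq:def<r}), and the distinction between $\Asc_B(w) \in \Stab([n])$ containing or not containing $n$ must correspond cleanly to the index sets $S \in \Stab([n-1])$ and $S \in \Stab([n-2])$ appearing in~(\ref{eq:main1}) and~(\ref{eq:main2}). An elementary alternative, consistent with the parenthetical remark following the theorem, is to prove a signed analog of Stanley's chromatic interpretation~(\ref{eq:path-chromatic}) realizing each left-hand side as a weighted enumeration over signed Smirnov words $w \colon [n] \to \ZZ \sm \{0\}$ (with distinctness under $<_r$), and then run a signed version of the valley hopping of Section~\ref{subsec:hopping} on such words to collapse each orbit into a single binomial $t^{\asc(v)}(1+t)^{n-2\asc(v)}\,\bz_v$; the delicate point here is partitioning the orbits according to whether $n \in \Asc_B(v)$ and keeping track of the extra elementary-symmetric factors that distinguish~(\ref{eq:Ath1}) from~(\ref{eq:Ath2}).
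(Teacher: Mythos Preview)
Your proposal is essentially the approach the paper points to: the survey itself does not give a proof of Theorem~\ref{thm:AthGeB} but refers to \cite[Corollaries~4.4 and~4.7]{Ath17}, where the identities are obtained as applications of the equivariant Rees-product Theorem~\ref{thm:Athrees}, exactly as you outline (with $\pP^{-}$ the poset of signed subsets of $[n]$, i.e., the face poset of $\Sigma_n$, playing the role that the Boolean lattice plays for Gessel's identities). Your alternative valley-hopping route on signed Smirnov words is also consistent with the paper's remark that ``a more elementary proof should be possible''; so both strands of your plan match what the paper has in mind.
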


\subsection{$q$-$\gamma$-positivity} 
\label{subsec:q-gamma}

Let $a, b \in \NN$. A polynomial $f(q, x) \in \RR[q, 
x]$ is called \emph{$q$-$\gamma$-positive} of type 
$(a, b)$ if  
\begin{equation} \label{eq:q-gamma}
  f(q, x) \ = \ \sum_{i=0}^{\lfloor n/2 \rfloor}
  \gamma_i (q) \, x^i \prod_{k=i}^{n-1-i} (1 + 
  xq^{ka+b})
\end{equation}
for some polynomials $\gamma_i (q) \in \RR[q]$ 
with nonnegative coefficients. This concept, 
which reduces to that of $\gamma$-positivity 
for $q=1$, was formally introduced by K.~Dilks 
\cite[Chapter~4]{Di15} and, in more restrictive
form, by C.~Krattenthaler and M.~Wachs 
(unpublished), after such expansions were found
for certain $q$-analogs of the Eulerian 
polynomials for symmetric and hyperoctahedral 
groups by G.~Han, F.~Jouhet and J.~Zeng 
\cite{HJZ13}. Other polynomials which admit 
$q$-analogs which are $q$-$\gamma$-positive
include binomials and Narayana polynomials for 
symmetric and hyperoctahedral groups; see 
\cite[Chapter~4]{Di15}. 

\bigskip
\noindent \textbf{Acknowledgements}. 
The author wishes to thank Francesco Brenti, 
Persi Diaconis, Satoshi Murai, Kyle Petersen, 
Richard Stanley, John Stembridge, Vasu Tewari 
and Jiang Zeng for useful comments and information 
and the SLC organizers Christian Krattenthaler, 
Volker Strehl and Jean-Yves Thibon for their 
invitation.

\medskip
\noindent \textbf{Note added in revision.} 
Conjecture~\ref{conj:dnr} has been proven by 
P.~Br\"and\'en and L.~Solus~\cite[Section~3.3]{BrS18} 
and, independently, by N.~Gustafsson and 
L.~Solus~\cite[Section~5.1]{GS18}.

\end{document}